\colorlet{linkequation}{blue}
\newcommand*{\SavedEqref}{}
\let\SavedEqref\eqref
\renewcommand*{\eqref}[1]{%
  \begingroup
    \hypersetup{
      linkcolor=blue,
      linkbordercolor=blue,
    }%
    \SavedEqref{#1}%
  \endgroup
}
\def\bk{\mathbf{k}}
\def\kdim{\mathrm{kdim}}
\def\MCM{\mathrm{MCM}}
\def\opp{\mathrm{opp}}
\DeclareSymbolFont{extraup}{U}{zavm}{m}{n}
\DeclareMathSymbol{\varheart}{\mathalpha}{extraup}{86}
\DeclareMathSymbol{\vardiamond}{\mathalpha}{extraup}{87}
\def\CT@@do@color{%
  \global\let\CT@do@color\relax
        \@tempdima\wd\z@
        \advance\@tempdima\@tempdimb
        \advance\@tempdima\@tempdimc
        \kern-\@tempdimb
\transparent{0.6}%
        \leaders\vrule
                \hskip\@tempdima\@plus  1fill
        \kern-\@tempdimc
        \hskip-\wd\z@ \@plus -1fill }
\newcommand{\thickhline}{%
    \noalign {\ifnum 0=`}\fi \hrule height 1pt
    \futurelet \reserved@a \@xhline
}
\newcolumntype{"}{@{\hskip\tabcolsep\vrule width 1pt\hskip\tabcolsep}}
\newtheorem{Theorem}{Theorem}[section]
\newtheorem{Lemma}[Theorem]{Lemma}
\newtheorem{Proposition}[Theorem]{Proposition}
\newtheorem{Corollary}[Theorem]{Corollary}
\newtheorem{Definition}[Theorem]{Definition}
\newcommand{\Hom}{{\rm Hom}}
\newcommand{\A}{{\Bbb{A}}}
\newcommand{\Spec}{\mathrm{Spec}}
\newcommand{\im}{\mathrm{im}}
\newcommand{\Ext}{\mathrm{Ext}}
\newcommand{\bp}{\begin{Proposition}}
\newcommand{\ep}{\end{Proposition}}
\newcommand{\bl}{\begin{Lemma}}
\newcommand{\el}{\end{Lemma}}
\newcommand{\bt}{\begin{Theorem}}
\newcommand{\et}{\end{Theorem}}
\newcommand{\bd}{\begin{Definition}}
\newcommand{\ed}{\end{Definition}}
\newcommand{\End}{\mathrm{End}}
\newcommand{\Mod}{\mathrm{Mod}}
\newcommand{\Mat}{\mathrm{Mat}}
\newcommand{\eqdef}{\stackrel{{\rm def.}}{=}}
\DeclareFontFamily{U}{rsf}{}
\DeclareFontShape{U}{rsf}{m}{n}{<5> <6> rsfs5 <7> <8> <9> rsfs7 <10-> rsfs10}{}
\DeclareMathAlphabet\Scr{U}{rsf}{m}{n}
\def\N{\mathbb{N}}
\def\Z{\mathbb{Z}}
\def\C{\mathbb{C}}
\def\rk{{\rm rk}}
\def\GL{\mathrm{GL}}
\def\supp{\mathrm{supp}}
\def\fd{\mathfrak{d}}
\def\rD{\mathrm{D}}
\def\sing{\mathrm{sing}}
\def\cC{\mathcal{C}}
\def\MF{\mathrm{MF}}
\def\HMF{\mathrm{HMF}}
\def\fp{\mathrm{fp}}
\def\MF{\mathrm{MF}}
\def\HMF{\mathrm{HMF}}
\def\ZMF{\mathrm{ZMF}}
\def\BMF{\mathrm{BMF}}
\def\EF{\mathrm{EF}}
\def\HEF{\mathrm{HEF}}
\def\ZEF{\mathrm{ZEF}}
\def\Z{\mathbb{Z}}
\def\Perf{\mathrm{Perf}}
\def\bm{\mathbf{m}}
\def\sing{\mathrm{sing}}
\def\loc{\mathrm{loc}}
\def\uloc{\underline{\loc}}
\def\depth{\mathrm{depth}}
\def\mhef{\mathbf{hef}}
\newcommand{\be}{\begin{equation*}}
\newcommand{\ee}{\end{equation*}}
\newcommand{\ben}{\begin{equation}}
\newcommand{\een}{\end{equation}}
\newcommand{\beqa}{\begin{eqnarray*}}
\newcommand{\eeqa}{\end{eqnarray*}}
\newcommand{\beqan}{\begin{eqnarray}}
\newcommand{\eeqan}{\end{eqnarray}}
\newcommand \QQ {{\mathbb Q}}
\def\2{\bar{2}}
\def\3{\bar{3}}
\def\4{\bar{4}}
\newcommand{\id}{\mathrm{id}}
\newcommand{\Tr}{\mathrm{Tr}}
\newcommand{\diag}{\mathrm{diag}}
\def\lto{\longrightarrow}
\def\O{\mathrm{O}}
\def\cA{\mathcal{A}}
\def\cP{\mathcal{P}}
\def\cT{\mathcal{T}}
\def\G_2{\mathrm{G_2}}
\def\fI{\mathfrak{I}}
\def\Ann{\mathrm{Ann}}
\def\mf{\mathbf{f}}
\def\f{\mathfrak{f}}
\def\Ob{\mathrm{Ob}}
\def\HF{\mathrm{HF}}
\def\0{{\hat{0}}}
\def\1{{\hat{1}}}
\def\O{\mathrm{O}}
\def\mod{\mathrm{mod}}
\def\rD{\mathrm{D}}
\def\uHom{\underline{\Hom}}
\def\Cok{\mathrm{Cok}}
\def\cok{\mathrm{cok}}
\def\MCM{\mathrm{MCM}}
\def\uMCM{\underline{\MCM}}
\def\umod{\underline{\mod}}
\def\hmf{\mathrm{hmf}}
\def\zmf{\mathrm{zmf}}
\def\bmf{\mathrm{bmf}}
\def\mf{\mathrm{mf}}
\def\zef{\mathrm{zef}}
\def\hef{\mathrm{hef}}
\def\rad{\mathrm{rad}}
\def\Irr{\mathrm{Irr}}
\def\bd{\mathbf{d}}
\def\bdelta{\boldsymbol{\delta}}
\newcommand{\twopartdef}[4]
{
	\left\{
		\begin{array}{ll}
			#1 & \mbox{if } #2 \\
			#3 & \mbox{if } #4
		\end{array}
	\right.
}
\newcommand{\fourpartdef}[8]
{
	\left\{
		\begin{array}{ll}
			#1 & \mbox{if } #2 \\
			#3 & \mbox{if } #4 \\
                        #5 & \mbox{if } #6 \\
                        #7 & \mbox{if } #8
		\end{array}
	\right.
}
\def\depth{\mathrm{depth}}
\begin{document}

\title{Matrix factorizations over elementary divisor domains}

\author{Dmitry Doryn$^1$, Calin Iuliu Lazaroiu$^1$, Mehdi Tavakol$^2$}

\institute{$^1$Center for Geometry and Physics, Institute for Basic
  Science, Pohang 37673, Republic of Korea\\
$^2$Max-Planck Institut f\"ur Mathematik, Vivatsgasse 7, 
 Bonn 53111, Germany  \\
  \email{dmitry@ibs.re.kr, calin@ibs.re.kr, mehdi@mpim-bonn.mpg.de
}}

\date{}
 
\maketitle

\abstract{We study the homotopy category $\hmf(R,W)$ of matrix
factorizations of non-zero elements $W\in R^\times$, where $R$ is an
elementary divisor domain. When $R$ has prime elements and $W$ factors
into a square-free element $W_0$ and a finite product of primes of
multiplicity greater than one and which do not divide $W_0$, we show
that $\hmf(R,W)$ is triangle-equivalent with an orthogonal sum of the
triangulated categories of singularities $\rD_\sing(A_n(p))$ of the
local Artinian rings $A_n(p)=R/\langle p^n\rangle$, where $p$ runs
over the prime divisors of $W$ of order $n\geq 2$. This result holds
even when $R$ is not Noetherian. The triangulated categories
$\rD_\sing(A_n(p))$ are Krull-Schmidt and we describe them
explicitly. We also study the cocycle category $\zmf(R,W)$, showing
that it is additively generated by elementary matrix
factorizations. Finally, we discuss a few classes of examples.}

\tableofcontents

\section*{Introduction}

The study of open-closed topological Landau-Ginzburg models
\cite{tft,LG1,LG2,lg1} defined on a Stein manifold $X$ \cite{lg2}
leads naturally to the problem of understanding categories of
finitely-generated projective factorizations over the non-Noetherian
ring $\O(X)$ of holomorphic complex-valued functions defined on
$X$. The simplest interesting models of this type arise when $X$ is an
arbitrary borderless, smooth and connected non-compact Riemann surface
$\Sigma$ (which may have infinite genus), with superpotential given by
a non-vanishing holomorphic function $W:\Sigma\rightarrow \C$. In this
situation, the ring $R=\O(X)$ is a so-called elementary divisor domain
(see Appendix \ref{app:edd}), i.e. it has the property that any matrix
with entries from $R$ admits a Smith normal form. Since any elementary
divisor domain is a B\'ezout domain, this implies that any
finitely-generated projective $R$-module is free (see \cite{FS}),
hence the relevant category reduces to the usual homotopy category
$\hmf(R,W)$ of finite rank matrix factorizations over $R$.

In the present paper, we consider a similar problem for {\em any}
elementary divisor domain $R$ which has prime elements, showing that
the triangulated structure of the category $\hmf(R,W)$ can be
determined explicitly for any element $W\in R^\times$ which is {\em
critically-finite}, i.e. which can be written as a product $W=W_0W_c$,
where the {\em non-critical part} $W_0$ is a square-free element of
$R^\times$ and the {\em critical part} $W_c$ is a (non-empty) finite
product of prime elements of $R$, each of which has multiplicity
strictly greater than one (we also require that $W_0$ and $W_c$ are coprime). 
More precisely, we will prove the following
result, which can be viewed as a non-Noetherian extension of the
Buchweitz correspondence \cite{Buchweitz} to elementary divisor
domains:

\

\begin{Theorem}
\label{thm:main} 
Let $R$ be an elementary divisor domain which has prime elements and
$W$ be a critically-finite element of $R$ with critical part
$W_c=p_1^{n_1}\ldots p_N^{n_N}$, where $p_1,\ldots, p_N$ (with $N\geq
1$) are prime elements of $R$ which are not mutually associated in
divisibility and $n_i\geq 2$. Then there exist equivalences of
triangulated categories:
\ben
\label{tequiv} 
\hmf(R,W)\simeq \vee_{i=1}^N
\umod_{R/\langle
p^{n_i}\rangle}\simeq \vee_{i=1}^N \rD_\sing(R/\langle
p^{n_i}\rangle)~~,
\een
where $\umod_{R/\langle p^{n_i}\rangle}\simeq \rD_\sing(R/\langle
p^{n_i}\rangle)$ denotes the projectively-stabilized category of
finitely-generated modules (a.k.a. the category of singularities) of
the ring $R/\langle p^{n_i}\rangle$, a ring which is Artinian.
\end{Theorem}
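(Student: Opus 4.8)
The plan is to reduce the matrix-factorization problem over the (possibly non-Noetherian) elementary divisor domain $R$ to a finite orthogonal sum of local problems, one for each critical prime $p_i$, and then identify each local piece with $\rD_\sing(A_{n_i}(p_i))$ via Buchweitz's theorem. The first step is a \emph{decomposition} argument: since $W = W_0 W_c$ with $W_0$ square-free and coprime to $W_c = p_1^{n_1}\cdots p_N^{n_N}$, and the $p_i$ are pairwise non-associated primes, I would show that $\hmf(R,W)$ splits as an orthogonal direct sum indexed by the distinct irreducible factors of $W$. Concretely, given a matrix factorization $(P^0 \xrightarrow{a} P^1 \xrightarrow{b} P^0)$ with $ab = ba = W\cdot\id$, one wants to block-diagonalize it over a ``partition of $\Spec$'' coming from the coprime factorization $W = W_0\, p_1^{n_1}\cdots p_N^{n_N}$. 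Because $R$ is an elementary divisor domain, the relevant modules are free of finite rank, and the Chinese-remainder-type splitting should come from the Smith normal form of $a$: the elementary divisors of $a$ are (up to units) products of powers of the $p_i$ and factors of $W_0$, and grouping them by which prime they involve yields the orthogonal summands. The square-free factor $W_0$ contributes only \emph{trivial} (contractible) factorizations — this is the classical fact that $\hmf$ of a unit or of a square-free element over a regular-enough ring is zero — and I expect an analogue to hold here because localizing at a height-one prime $\langle p\rangle$ with $p\nmid W_0$ makes $W_0$ a unit.

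The second step is to \textbf{localize and complete away from the non-Noetherian pathology}. Having isolated the summand attached to a single prime $p = p_i$ of multiplicity $n = n_i\geq 2$, I claim $\hmf(R,W)$ restricted to this summand depends only on the image of $W$ in $R/\langle p^n\rangle$, equivalently only on $p^n$ up to units. The mechanism: a matrix factorization of $W$ whose elementary divisors are all powers of $p$ factors through $R_{\langle p\rangle}$, and $R_{\langle p\rangle}$ is a discrete valuation ring (a local Bézout domain of dimension one with principal maximal ideal $\langle p\rangle$ — this uses that $R$ is a Bézout domain and $p$ is prime), which \emph{is} Noetherian and regular. So the local summand is equivalent to $\hmf(R_{\langle p\rangle}, p^n)$. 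Here classical matrix-factorization theory over a DVR applies directly.

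The third step is the \textbf{Buchweitz/Eisenbud identification}: over the DVR $\mathcal{O} = R_{\langle p\rangle}$ with uniformizer $p$ and the element $p^n$, the theorem of Eisenbud and Buchweitz gives equivalences $\hmf(\mathcal{O}, p^n)\simeq \uMCM(\mathcal{O}/\langle p^n\rangle) \simeq \rD_\sing(\mathcal{O}/\langle p^n\rangle)$, and one checks $\mathcal{O}/\langle p^n\rangle \cong R/\langle p^n\rangle$ (both are the local Artinian ring $A_n(p)$; this is the statement that $R\to R_{\langle p\rangle}$ induces an isomorphism modulo $p^n$, since everything outside $\langle p\rangle$ is already invertible mod $p^n$). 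For an Artinian local ring, all finitely generated modules are maximal Cohen–Macaulay, so $\uMCM$ coincides with $\umod$, which justifies the middle term $\umod_{R/\langle p^n\rangle}$ in \eqref{tequiv}. Assembling the $N$ local equivalences and the vanishing of the $W_0$-part gives the orthogonal-sum decomposition, and one verifies the summands are genuinely orthogonal (no morphisms between factorizations supported at distinct primes) because $\Hom$ groups are computed after localizing at the union of the relevant primes, where the supports are disjoint.

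\textbf{The main obstacle} I anticipate is \emph{not} the Buchweitz step, which is standard once we are over a DVR, but rather making the first-step decomposition rigorous \emph{without Noetherian hypotheses}: I need to show that an arbitrary finite-rank matrix factorization of $W$ over $R$ can be put in a block form matching the coprime factorization $W = W_0 p_1^{n_1}\cdots p_N^{n_N}$, and that this block decomposition is compatible with the homotopy relation and with morphisms. The tool should be the Smith normal form (available by hypothesis on $R$) applied simultaneously to $a$ and $b$, together with the elementary number theory of the Bézout domain $R$ (coprimality of the $p_i$ lets one build idempotent-like elements in $R/\langle W\rangle$ via the Bézout identity, even though $R/\langle W\rangle$ need not be a product of local rings in the non-Noetherian case). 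Care is needed because $R/\langle W\rangle$ may fail to be a finite product of Artinian local rings when $R$ is not Noetherian, so the ``$e_i$'s'' giving the decomposition must be constructed by hand from the structure of the chosen Smith form rather than extracted from an abstract ring decomposition; showing the resulting functor is well-defined up to homotopy and an equivalence is where the real work lies.
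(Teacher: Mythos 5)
Your proposal follows essentially the same route as the paper's proof: Smith normal form reduces every factorization to a direct sum of elementary (then primary) factorizations, giving the orthogonal decomposition of $\hmf(R,W)$ over the critical primes with the square-free part contributing only zero objects; each summand is then identified with $\hmf(R_{(p_i)},p_i^{n_i})$ by localization at $p_i$ (with $R_{(p_i)}$ treated as a regular local ring, exactly as in the paper), and the Eisenbud--Buchweitz correspondence together with the fact that all finitely generated modules over the Artinian quotient are maximal Cohen--Macaulay yields $\umod_{R/\langle p_i^{n_i}\rangle}\simeq \rD_\sing(R/\langle p_i^{n_i}\rangle)$. The ``main obstacle'' you anticipate is handled in the paper not by hand-built idempotents in $R/\langle W\rangle$ but by the Krull--Schmidt property of $\hmf(R,W)$: primary factorizations have local endomorphism rings and those with distinct prime supports are mutually orthogonal, which gives the block decomposition directly.
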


\

\noindent Our proof relies on the fact that matrices over an
elementary divisor domain admit a Smith normal form, which allows us
to reduce the problem to understanding certain properties of
elementary matrix factorizations (i.e. those matrix factorizations
whose reduced rank equals one).  The latter were studied in
\cite{elbezout} for any B\'ezout domain. The triangulated categories
$\rD_\sing(R/\langle p^{n_i}\rangle)$ are Krull-Schmidt and they admit
Auslander-Reiten triangles; their Auslander-Reiten quivers are
determined in Section \ref{sec:artinian}. Together with Theorem
\ref{thm:main}, this gives a complete description of the category
$\hmf(R,W)$ when the hypothesis of the theorem is satisfied.

The paper is organized as follows. In Section \ref{sec:bezout}, we
recall a few definitions and constructions for matrix factorizations
over B\'ezout domains. In Section \ref{sec:artinian}, we discuss
finitely-generated modules over the quotient of a B\'ezout domain by a
principal primary ideal. Section \ref{sec:edd} considers the homotopy
category of matrix factorizations over an elementary divisor domain
for a critically-finite $W$, giving the proof of Theorem
\ref{thm:main}. Section 4 discusses some examples, while the
appendices collect information about matrices over greatest common
divisor (GCD) domains and about elementary divisor domains (EDD).

\paragraph{Notations and conventions}
We use the same notations and conventions as in \cite{elbezout}.  In
particular, given an element $x$ of a unital commutative ring $R$, the
symbol $(x)\in R/U(R)$ (where $U(R)$ is the group of units of $R$)
denotes the class of $x$ under association in divisibility. When $R$
is a GCD domain (see Appendix \ref{app:GCD}) and $x_1,\ldots, x_n\in
R$, the symbol $(x_1,\ldots, x_n)\in R/U(R)$ denotes the association
in divisibility class formed by the greatest common divisors of
$x_1,\ldots, x_n$. The symbol $\langle x_1,\ldots, x_n\rangle$ denotes
the ideal generated by $x_1,\ldots, x_n$. The symbol $\Z_2$ stands for
the field $\Z/2\Z$, whose elements we denote by $\0$ and $\1$. The
symbol $\N$ denotes the set of natural numbers {\em including zero},
while $\N^\ast\eqdef \N\setminus \{0\}$.

\section{Matrix factorizations over a B\'ezout domain} 
\label{sec:bezout} 

\noindent Categories of matrix factorizations over a B\'ezout domain
were studied in \cite{elbezout}, to which we refer the reader for more
detail. In this section, we recall some definitions and
constructions which will be used later on. Let $R$ be a B\'ezout domain
and $W\in R^\times$ be a non-zero element of $R$.

\subsection{Categories of matrix factorizations over $R$}

\noindent As in \cite{elbezout}, we consider the following categories:
\begin{itemize} 
\itemsep 0.0em
\item The $R$-linear and $\Z_2$-graded differential category
$\MF(R,W)$ of finite rank matrix factorizations of $W$ over $R$. Its
objects are pairs $a=(M,D)$ with $M$ a free $\Z_2$-graded
$R$-module of finite rank and $D$ an odd endomorphism of $M$ such
that $D^2=W\id_M$. Since $W$ is non-vanishing, the even and odd
components of $M$ have equal rank, which we denote by $\rho(a)$ and
call the {\em reduced rank} of $a$; we have $\rk M=2\rho(a)$.
Choosing a $\Z_2$-homogeneous basis of $M$ allows us to identify $M$
with the $R$-supermodule $R^{\rho(a)|\rho(a)}$ whose
$\Z_2$-homogeneous components are both equal to the free module
$R^{\oplus \rho(a)}$.  Then $D$ identifies with a square matrix of
size $2\rho(a)$ in block off-diagonal form:
\be
D=\left[\begin{array}{cc} 0 & v\\ u & 0\end{array}\right]~~,
\ee
where $u$ and $v$ are square matrices of size $\rho(a)$ with entries in
$R$. The condition $D^2=W\id_M$ amounts to the relations: 
\ben
\label{uvrels}
uv=vu=W I_{\rho(a)}~~,
\een
where $I_{\rho(a)}$ is the identity matrix of size $\rho(a)$. Since
$W\neq 0$, these conditions imply that the matrices $u$ and $v$ have
maximal rank. Given two objects $a_1=(M_1,D_1)$ and $a_2=(M_2,D_2)$
of $\MF(R,W)$, the $\Z_2$-graded $R$-module of morphisms from $a_1$ to
$a_2$ is given by the inner $\Hom$: 
\be
\Hom_{\MF(R,W)}(a_1,a_2)=\uHom_R(M_1,M_2)=\Hom_R^\0(M_1,M_2)\oplus
\Hom_R^\1(M_1,M_2)~~, 
\ee 
endowed with the differential determined by the condition: 
\be 
\fd_{a_1,a_2}(f)=D_2\circ f-(-1)^\kappa
f\circ D_1~~,~~\forall f\in \Hom_R^\kappa(M_1,M_2)~~, 
\ee where
$\kappa\in \Z_2$.
\item The $R$-linear and $\Z_2$-graded cocycle, coboundary and total
cohomology categories $\ZMF(R,W)$, $\BMF(R,W)$ and $\HMF(R,W)$ of
$\MF(R,W)$.
\item The subcategories $\mf(R,W)$, $\zmf(R,W)$, $\bmf(R,W)$ and
$\hmf(R,W)$ obtained from $\MF(R,W)$, $\ZMF(R,W)$, $\BMF(R,W)$ and
$\HMF(R,W)$ by restricting to morphisms of even degree. Notice that
$\hmf(R,W)$ is the usual homotopy category of finite rank matrix factorizations.
\end{itemize} 
It is clear that $\MF(R,W)$, $\BMF(R,W)$ and $\ZMF(R,W)$ admit double
direct sums (and hence all finite direct sums of at least two
elements). On the other hand, $\HMF(R,W)$ is an additive category. Two
matrix factorizations $a_1$ and $a_2$ of $W$ over $R$ are called {\em
strongly isomorphic} if they are isomorphic in the category
$\zmf(R,W)$. It is clear that two strongly isomorphic factorizations
are also isomorphic in $\hmf(R,W)$, but the converse need not hold.
Matrix factorizations for which $M=R^{\rho|\rho}$ form a dg
subcategory of $\MF(R,W)$ which is dg-equivalent with $\MF(R,W)$. We
will often tacitly identify $\MF(R,W)$ with this subcategory.  Given
two matrix factorizations $a_1=(R^{\rho_1|\rho_1}, D_1)$ and
$a_2=(R^{\rho_2|\rho_2}, D_2)$ of $W$ with
$D_i=\left[\begin{array}{cc} 0 & v_i\\ u_i & 0\end{array}\right]$ and
$u_i,v_i\in \Mat(\rho_i,\rho_i,R)$, a morphism $f\in
\Hom_{\mf(R,W)}(a_1,a_2)$ has matrix form:
\be
f=\left[\begin{array}{cc}
    f_{\0\0} & 0\\ 0 & f_{\1\1}\end{array}\right]
\ee
with $f_{\0\0}, f_{\1\1}\in \Mat(\rho_1,\rho_2,R)$ and we have:
\be
\fd_{a_1,a_2}(f)=D_2\circ f-f\circ D_1=\left[\begin{array}{cc} 0 & v_2\circ f_{\1\1}-f_{\0\0}\circ v_1\\ u_2\circ f_{\0\0}-f_{\1\1}\circ u_1 &
    0\end{array}\right]~~.
\ee

\subsection{The triangulated structure of $\hmf(R,W)$}

The category $\hmf(R,W)$ is naturally triangulated with
an involutive suspension functor. This triangulated structure is
defined as follows (see \cite{Langfeldt} for a detailed treatment).

\

\begin{Definition}
Let $a=(M,D)$ be a matrix factorization of $W$, where 
\be
D=\left[\begin{array}{cc} 
0 & v \\
u & 0
\end{array}\right] ~~.
\ee
The \emph{suspension} of $a$ is the matrix factorization $\Sigma
M\eqdef (M', D')$, where:
\be
(M')^{\0}\eqdef M^\1~~,~~(M')^{\1}\eqdef M^\0, 
\ee
and: 
\be
D'\eqdef \left[\begin{array}{cc} 0 & -u \\
-v & 0
\end{array}\right]~~.
\ee
Given two matrix factorizations $a_1=(M_1,D_1)$ and $a_2=(M_2,D_2)$ of $W$
and a morphism $f\in \Hom_{\hmf(R,W)}(a_1,a_2)$, its
suspension $\Sigma f$ coincides with $f$ when the latter is viewed as
an element of $\Hom_R^\0(M_1',M_2')$.
\end{Definition}

\

\noindent It is easy to  check that $\Sigma$ is an endofunctor of $\hmf(R,W)$ which satisfies 
$\Sigma^2=\id_{\hmf(R,W)}$. 

\

\begin{Definition}
\label{def:cone}
Let $a_i = (M_i,D_i)$ for $i\in \{1,2\}$ be two matrix factorizations of $W$
with $D_i=\left[\begin{array}{cc} 0 & v_i \\ u_i & 0
\end{array}\right]$ and $f:a_1\rightarrow a_2$ be a morphism 
in $\hmf(R,W)$ with $f=\left[\begin{array}{cc}
    f_{\0\0} & 0\\ 0 & f_{\1\1}\end{array}\right]$. Then:
\begin{itemize}
\item The \emph{mapping cone} $C(f)$ of $f$ is the matrix
  factorization $C(f)=(M,D)$ of $W$, where:
\be
M\eqdef M^{\0} \oplus M^{\1}~~\mathrm{with}~~ M^{\0}\eqdef M_1^{\1} \oplus M_2^{\0}~~,~~M^{\1}\eqdef M_1^{\0} \oplus M_2^{\1}
\ee
and:
\be
D\eqdef \left[\begin{array}{cc} 0 & v \\u & 0 \end{array}\right]~,~\mathrm{with}~
u\eqdef \left[\begin{array}{cc}  -v_1 & 0 \\ f_{\1\1} & u_2 \end{array}\right]~,~ v\eqdef \left[\begin{array}{cc} -u_1 & 0 \\f_{\0\0} & v_2 \end{array}\right]~~.
\ee
\item The morphism $\varphi_f:a_2 \to C(f)$ is defined via the following diagram:
\be
\begin{CD}
M_2^{\0} @>u_2>> M_2^{\1} @>v_2>>  M_2^{\0}\\
@VV\iota_1V @VV\iota_2V @VV\iota_1V \\
M_1^{\1} \oplus M_2^{\0} @>u>> M_1^{\0} \oplus M_2^{\1} @>v>> M_1^{\1} \oplus M_2^{\0}
\end{CD}~~,
\ee
where $\iota_1: M_2^{\0} \to M_1^{\1} \oplus M_2^{\0}$ and 
$\iota_2: M_2^{\1} \to M_1^{\0} \oplus M_2^{\1}$ are the inclusions.
\item The morphism $\psi_f:C(f) \to \Sigma a_1$ is defined via the following diagram:
\be
\begin{CD}
M_1^{\1} \oplus M_2^{\0} @>u>> M_1^{\0} \oplus M_2^{\1} @>v>> M_1^{\1} \oplus M_2^{\0}\\
@VV\pi_1V @VV\pi_2V @VV\pi_1V \\
M_1^{\1} @>-v>> M_1^{\0}  @>-u>> M_1^{\1} 
\end{CD}~~,
\ee
where $\pi_1$ and $\pi_2$ are the natural projections.
\end{itemize}
\end{Definition}

\

\noindent The following result is well-known (see \cite{Langfeldt} for details):

\

\begin{Theorem}
\label{distinguished}
The category $\hmf(R,W)$ is triangulated when equipped with the
suspension functor $\Sigma$ and with the collection of distinguished
triangles given by sequences isomorphic with those of the form:
\be
a_1 \stackrel{f}{\lto} a_2  \stackrel{\varphi_f}{\lto} C(f)  \stackrel{\psi_f}{\lto} \Sigma a_1~,
\ee
where $f:a_1\rightarrow a_2$ is any morphism in $\hmf(R,W)$. 
\end{Theorem}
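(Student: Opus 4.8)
The statement and its proof do not really depend on the B\'ezout hypothesis on $R$: the triangulated structure exists for the homotopy category of finite-rank matrix factorizations of a non-zero element over any commutative ring, and the arithmetic of $R$ plays no role. The plan is to recognize $\hmf(R,W)$ as the stable category of a Frobenius exact category and then invoke Happel's theorem. First I would equip $\mf(R,W)$ with the exact structure whose conflations are the short sequences $0\to a_1\to a_2\to a_3\to 0$ that split degreewise as sequences of the underlying $\Z_2$-graded $R$-modules. Next I would show that a matrix factorization $a$ is projective in this exact category if and only if it is injective if and only if it is \emph{contractible} (i.e. $\id_a$ is null-homotopic, equivalently $a\cong 0$ in $\hmf(R,W)$): a contractible factorization visibly splits off any conflation on either side, and conversely an injective $a$ admits a retraction in $\mf(R,W)$ of the degreewise-split monomorphism $\varphi_{\id_a}:a\to C(\id_a)$, which forces $\id_a=0$ in $\hmf(R,W)$ because $C(\id_a)$ is contractible (being the cone of an isomorphism). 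There are enough such objects, since $\varphi_{\id_a}:a\to C(\id_a)$ is a degreewise-split monomorphism into a contractible factorization and, dually, $\psi_{\id_{\Sigma a}}:C(\id_{\Sigma a})\to a$ is a degreewise-split epimorphism onto $a$ from one. Thus $\mf(R,W)$ is Frobenius and, by Happel's theorem, its stable category is triangulated, with suspension the cosyzygy functor and distinguished triangles the images of conflations.

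It then remains to match this abstract structure with the data in the statement. Since two even morphisms of matrix factorizations agree in $\hmf(R,W)$ exactly when their difference factors through a contractible object, $\hmf(R,W)$ is precisely this stable category. To identify the suspension I would produce, for each $a=(M,D)$, a conflation $0\to a\to I(a)\to a'\to 0$ with $I(a)$ contractible and $a'$ strongly isomorphic to the $\Sigma a$ defined above — a short $2\times 2$-block computation yields such an $I(a)$ — so the cosyzygy functor is naturally isomorphic to $\Sigma$, and $\Sigma^2=\id$ drops out, as already observed. Finally, every conflation is isomorphic in $\hmf(R,W)$ to one of the form $a_1\xrightarrow{f}\cyl(f)\to C(f)$ with $\cyl(f)$ homotopy-equivalent to $a_2$, so the triangles coming from conflations are exactly the triangles $a_1\xrightarrow{f}a_2\xrightarrow{\varphi_f}C(f)\xrightarrow{\psi_f}\Sigma a_1$ of Definition~\ref{def:cone}; establishing this reduces to checking that $\varphi_f$ and $\psi_f$ are cocycles, that $\varphi_f\circ f$ and $\psi_f\circ\varphi_f$ are null-homotopic, and that all signs are consistent — short explicit computations.

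A more hands-on alternative, closer to the constructions already in the text and essentially the argument of \cite{Langfeldt}, is to verify Verdier's axioms TR1--TR4 directly for the data $(\Sigma,C(f),\varphi_f,\psi_f)$: TR1 reduces to exhibiting a contracting homotopy for $C(\id_a)$; TR2 to an explicit homotopy equivalence $C(\varphi_f)\simeq\Sigma a_1$ compatible with the rotated morphisms (with $\Sigma^2=\id$ handling the de-rotation); TR3 to a fill-in $g_3:C(f)\to C(f')$ assembled from $g_1$, $g_2$ and a homotopy witnessing $g_2\circ f\sim f'\circ g_1$; and TR4 to the natural maps $C(f)\to C(gf)\to C(g)\to\Sigma C(f)$ attached to composable morphisms $f,g$, together with the fact that this triangle is isomorphic in $\hmf(R,W)$ to the cone triangle of $C(f)\to C(gf)$. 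In either approach the substance is routine block-matrix bookkeeping, and I expect the main obstacle to be the octahedral axiom TR4 in the direct route — equivalently, the careful comparison of conflation triangles with mapping-cone triangles in the Frobenius route — since that is where one must keep track of several $\left[\begin{smallmatrix}\ast&\ast\\\ast&\ast\end{smallmatrix}\right]$-blocks of size $\rho(a_1)+\rho(a_2)+\rho(a_3)$ and of the signs introduced by $\Sigma$. Since all of this is standard homological algebra of $\Z_2$-graded complexes, I would ultimately omit the computations and refer to \cite{Langfeldt}.
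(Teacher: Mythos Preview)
Your proposal is correct, and in fact it gives considerably more than the paper does: the paper offers no proof of this theorem at all, merely stating that the result is well-known and referring to \cite{Langfeldt} for details. Your second route --- the direct verification of TR1--TR4 for the explicit data $(\Sigma, C(f), \varphi_f, \psi_f)$ --- is precisely the content of that reference, so your final sentence (``omit the computations and refer to \cite{Langfeldt}'') is literally what the paper does. Your first route via the Frobenius structure on $\mf(R,W)$ is also standard and correct, and arguably cleaner conceptually; either approach is fine here, and your observation that the B\'ezout hypothesis plays no role in this particular statement is accurate.
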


\

\begin{Proposition}
\label{prop:uniteq}
Let $s$ be a unit of $R$. Then there exists a triangulated equivalence: 
\be
\hmf(R,sW)\simeq \hmf(R,W)  ~~.
\ee
\end{Proposition}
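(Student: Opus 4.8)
The plan is to construct an explicit triangulated functor $F\colon \hmf(R,W)\to\hmf(R,sW)$ that acts as the identity on objects and morphisms at the level of the underlying $\Z_2$-graded modules, rescaling only the differential, and then to check it is a triangle-equivalence. Concretely, given a matrix factorization $a=(M,D)$ of $W$ with $D=\left[\begin{smallmatrix} 0 & v\\ u & 0\end{smallmatrix}\right]$, one sets $F(a)\eqdef (M,D_s)$ where $D_s\eqdef \left[\begin{smallmatrix} 0 & sv\\ u & 0\end{smallmatrix}\right]$; then $D_s^2=\left[\begin{smallmatrix} svu & 0\\ suv & 0\end{smallmatrix}\right]=sW\,\id_M$ using \eqref{uvrels}, so $F(a)$ is indeed a matrix factorization of $sW$. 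On morphisms $f=\left[\begin{smallmatrix} f_{\0\0} & 0\\ 0 & f_{\1\1}\end{smallmatrix}\right]\in\Hom_{\mf(R,W)}(a_1,a_2)$ one puts $F(f)\eqdef f$ (same matrix). A direct computation with the formula $\fd_{a_1,a_2}(f)=D_2\circ f-f\circ D_1$ recorded in Section~\ref{sec:bezout} shows that the rescaling by $s$ in the off-diagonal block is compatible with composition and carries closed morphisms to closed morphisms and exact morphisms to exact morphisms; hence $F$ descends to a functor on the homotopy categories. Its inverse is the analogous functor built from the unit $s^{-1}$ (rescaling instead the lower block, or equivalently conjugating), so $F$ is an equivalence of additive categories.

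It remains to check $F$ is triangulated, i.e. that it commutes with the suspension functor $\Sigma$ and sends distinguished triangles to distinguished triangles. Commutation with $\Sigma$ is immediate from the definitions: applying $F$ to $\Sigma a=(M',D')$ with $D'=\left[\begin{smallmatrix} 0 & -u\\ -v & 0\end{smallmatrix}\right]$ and applying $\Sigma$ to $F(a)$ both yield a differential whose off-diagonal blocks are $-u$ and $-sv$ (up to the harmless sign placement), and these agree after the obvious natural isomorphism; one should exhibit this natural isomorphism explicitly, which amounts to a diagonal sign matrix. For the distinguished triangles, by Theorem~\ref{distinguished} it suffices to treat the standard triangle $a_1\xrightarrow{f}a_2\xrightarrow{\varphi_f}C(f)\xrightarrow{\psi_f}\Sigma a_1$ associated to a morphism $f$ in $\hmf(R,W)$. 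One compares $F(C(f))$ with $C(F(f))$ (the cone taken in $\hmf(R,sW)$): from the block formulas in Definition~\ref{def:cone}, the differential of the cone mixes $v_1,v_2,u_1,u_2$ and the components of $f$; after applying $F$, which rescales the appropriate blocks by $s$, one produces an explicit strong isomorphism (again a block-diagonal matrix with entries $1$ and $s$) between $F(C(f))$ and $C(F(f))$ intertwining $F(\varphi_f)$ with $\varphi_{F(f)}$ and $F(\psi_f)$ with $\psi_{F(f)}$. This exhibits the image of the standard triangle as isomorphic to a standard triangle, hence distinguished.

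The only mildly delicate point—and the one I would expect to consume the bookkeeping—is choosing the rescaling convention consistently so that all the sign conventions in the suspension (the $-u$, $-v$) and in the cone (the $-v_1$, $-u_1$ and the placement of $f_{\0\0}$, $f_{\1\1}$) line up, and so that the natural transformations $F\Sigma\Rightarrow\Sigma F$ and $F\circ C(-)\Rightarrow C(F(-))$ are genuinely natural (compatible with morphisms of triangles), not merely objectwise isomorphisms. A clean way to organize this is to observe that $F$ is nothing but the ``change of variables'' induced by the $R$-algebra isomorphism rescaling one half of each module, i.e. it is conjugation of $D$ by the block matrix $\diag(I_\rho, sI_\rho)$ followed by reading the result as a factorization of $sW$; phrased this way, compatibility with $\Sigma$ and with cones is automatic because conjugation is a dg-functor and commutes with the evident functorial constructions. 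With that observation in hand the proof is essentially a one-line verification that $D\mapsto \diag(I,sI)\,D\,\diag(I,sI)^{-1}$ squares to $sW$, together with the remark that the inverse functor uses $s^{-1}$.
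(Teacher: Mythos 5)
Your main construction coincides with the paper's proof: the paper defines a functor $\Phi_s:\zmf(R,W)\to\zmf(R,sW)$ by exactly your rescaling $D=\left[\begin{smallmatrix}0&v\\ u&0\end{smallmatrix}\right]\mapsto\left[\begin{smallmatrix}0&sv\\ u&0\end{smallmatrix}\right]$ with morphisms left unchanged, checks (as you do) that closed morphisms stay closed and coboundaries stay coboundaries, and concludes the equivalence $\hmf(R,W)\simeq\hmf(R,sW)$ together with preservation of distinguished triangles; your second paragraph merely makes explicit the comparison of $\Phi_s\Sigma$ with $\Sigma\Phi_s$ and of $\Phi_s(C(f))$ with $C(\Phi_s(f))$ via block-diagonal isomorphisms of the form $\diag(I,sI)$, a point the paper treats tersely. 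One warning: your closing ``clean way to organize this'' is false as stated. Conjugation preserves the square, so $\diag(I,sI)\,D\,\diag(I,sI)^{-1}=\left[\begin{smallmatrix}0&s^{-1}v\\ su&0\end{smallmatrix}\right]$ still squares to $W\id_M$, not to $sW\id_M$; the rescaling $v\mapsto sv$ is not an inner automorphism and genuinely changes the potential. Hence the proposed one-line conjugation argument cannot replace the bookkeeping of your second paragraph: you really do need the explicit natural isomorphisms there (or the paper's observation that $\Phi_s$ identifies the morphism complexes, so standard triangles are carried to sequences isomorphic to standard triangles). Also a small typo: $D_s^2=\left[\begin{smallmatrix}svu&0\\ 0&suv\end{smallmatrix}\right]$, with $suv$ on the diagonal.
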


\begin{proof}
Let $\Phi_s:\zmf(R,W)\rightarrow \zmf(R,s W)$ be the functor which
takes a factorization $a=(R^{\rho|\rho},D)$ of $W$ with
$D=\left[\begin{array}{cc} 0 & v\\ u & 0\end{array}\right]$ into the
factorization $\Phi_s(a)=(R^{\rho|\rho},D^s)$ of $sW$, where
$D^s=\left[\begin{array}{cc} 0 & sv\\ u & 0\end{array}\right]$ is a
factorization of $sW$ and leaves unchanged the morphism
$f=\left[\begin{array}{cc} f_{\0\0} & 0\\ 0 &
    f_{\1\1}\end{array}\right]$ from $a_1$ to $a_2$ into itself. Using the explicit expression:
\ben\label{first}
\fd_{a_1,a_2}^s(f)=D_2^s \circ f-f\circ D_1^s=\left[\begin{array}{cc} 0 & s v_2\circ f_{\1\1}- s f_{\0\0}\circ v_1\\ u_2\circ f_{\0\0}-f_{\1\1}\circ u_1 &
    0\end{array}\right]~~,
\een
we conclude that:
\be
D_2 \circ f-f\circ D_1 =0 \iff D_2^s \circ f-f\circ D_1^s =0 ~~.
\ee
This implies that the functor $\Phi_s$ is well-defined and\footnote{Notice that the right hand side is always a subset of the left hand
side for any element $s \in R$.  The equality holds since $s$ is a unit.}:
\be
\Hom_{\zmf(R,W)}(a_1,a_2)=\Hom_{\zmf(R,sW)}(\Phi_s(a_1),\Phi_s(a_2)) ~~.
\ee
The coboundary categories $\bmf(R,W)$ and $\bmf(R,sW)$ are also
related to each other in a similar way. More precisely, equation
\eqref{first} gives:
\be
\fd_{a_1,a_2}^s(f)=0 \iff \fd_{a_1,a_2}(f)=0 ~~,
\ee 
which implies the equality $\bmf(R,W)(a_1,a_2)=\bmf(R,sW)(\Phi_s(a_1),\Phi_s(a_2))$.  As a
result, the functor $\Phi_s$ gives an equivalence of categories from
$\hmf(R,W)$ to $\hmf(R,sW)$. Since the modules of morphisms naturally
coincide, we also conclude that $\Phi_s$ maps 
distinguished triangles into distinguished triangles. This follows
immediately from what we proved here and from Theorem \ref{distinguished}.
\qed
\end{proof}

\subsection{Localizations}

Let $S\subset R$ be a multiplicative subset of $R$ containing the
identity $1 \in R$. Let $\lambda_S:R \to R_S$ denote the natural ring
morphism from $R$ to the localization $R_S\eqdef S^{-1}R$ of $R$ at
$S$. For any $r\in R$, let $r_S\eqdef \lambda_S(r)=\frac{r}{1}\in R_S$
denote its extension. For any $R$-module $N$, let
$N_S=S^{-1}N=N\otimes_R R_S$ denote the localization of $N$ at
$S$. For any morphism of $R$-modules $f:N \rightarrow N' $, let
$f_S\eqdef f\otimes_R \id_{R_S}:N_S\rightarrow N'_S$ denote the
localization of $f$ at $S$. For any $\Z_2$-graded $R$-module
$M=M^\0\oplus M^\1$, we have $M_S=M^\0_S\oplus M^\1_S$, since the
localization functor is exact. In particular, localization at $S$
induces a functor from the category of $\Z_2$-graded $R$-modules to
the category of $\Z_2$-graded $R_S$-modules.

Let $a=(M,D)$ be a matrix factorization of $W$. The {\em localization
of $a$ at $S$} (see \cite{elbezout}) is the following matrix
factorization of $W_S$ over the ring $R_S$:
\be
a_S\eqdef (M_S,D_S)\in \Ob\MF(R_S,W_S)~~.
\ee
This extends to an even dg functor $\loc_S:\MF(R,W)\rightarrow
\MF(R_S,W_S)$, which is $R$-linear and preserves direct sums. In turn,
the latter induces functors $\ZMF(R,W)\rightarrow \ZMF(R_S,W_S)$,
$\BMF(R,W)\rightarrow \BMF(R_S,W_S)$, $\HMF(R,W)\rightarrow
\HMF(R_S,W_S)$ and $\hmf(R,W)\rightarrow \hmf(R_S,W_S)$, which we
again denote by $\loc_S$. 

\subsection{Critically-finite elements}

Since $R$ is a B\'ezout (and hence a GCD) domain, the irreducible
elements of $R$ are prime, which implies that any factorizable element
(i.e. an element with finite factorization into irreducibles) of $R$
has a unique prime factorization up to association. A divisor $d$ of
the element $W\in R^\times$ which is not a unit is called {\em
critical} if $d^2|W$. The {\em critical ideal} $\fI_W$ of $W$ is the
ideal consisting of all elements of $R$ which are divisible by every
critical divisor of $W$:
\ben
\label{fIW}
\fI_W\eqdef \{r\in R \, | \, d|r~\forall d\in R^\times~\mathrm{such~that}~d^2|W\}
\een
The following notion was introduced in \cite{elbezout}:

\

\begin{Definition}\label{def:crit-fin}
A non-zero non-unit $W$ of $R$ is called: 
\begin{itemize}
\itemsep 0.0em
\item {\em non-critical}, if $W$ has no critical divisors;
\item {\em critically-finite} if it has a factorization of the form: 
\ben
\label{Wcritform}
W=W_0 W_c~~\mathrm{with}~~W_c=p_1^{n_1}\ldots p_N^{n_N}~~,
\een
where $N\geq 1$, $n_j\geq 2$, $p_1,\ldots, p_N$ are critical prime
divisors of $W$ with $(p_i)\neq (p_j)$ for $i\neq j$ and $W_0$ is
non-critical and coprime with $W_c$.
\end{itemize}
\end{Definition} 
The elements $W_0$, $W_c$ and $p_i$ in the
factorization \eqref{Wcritform} are determined by $W$ up to
association, while $n_i$ are uniquely determined by $W$.

\begin{remark}
Let $W$ be a critically-finite element of $R$ with decomposition
\eqref{Wcritform}. Then the Chinese remainder theorem gives an
isomorphism of rings:
\be
R/\langle W\rangle \simeq R/\langle W_0\rangle \oplus R/\langle W_c\rangle~~.
\ee
When $R$ is a B\'ezout domain, the ring: 
\be
R/\langle W_c\rangle \simeq R/\langle p_1^{n_1}\rangle \oplus \ldots \oplus R/\langle p_N^{n_N}\rangle \simeq R/\langle p_1^{n_1}\ldots p_N^{n_N}\rangle
\ee
is Artinian and Gorenstein since $R/\langle p_i^{n_i}\rangle$ are Gorenstein
Artinian rings (see Section \ref{sec:artinian}). However, the rings
$R/\langle W_0\rangle$ and $R/\langle W\rangle$ need not be Noetherian.
\end{remark}

\subsection{Elementary matrix factorizations}

\noindent A matrix factorization $a=(M,D)$ of $W$ over $R$ is called
{\em elementary} if it has unit reduced rank, i.e. if $\rho(a)=1$.
Any elementary factorization is strongly isomorphic to one of the form
$e_v\eqdef (R^{1|1},D_v)$, where $v$ is a divisor of $W$ and
$D_v\eqdef \left[\begin{array}{cc} 0 & v\\ u & 0 \end{array}\right]$,
with $u\eqdef W/v\in R$. Let $\EF(R,W)$ denote the full subcategory of
$\MF(R,W)$ whose objects are the elementary factorizations of $W$ over
$R$. Let $\ZEF(R,W)$ and $\HEF(R,W)$ denote respectively the cocycle
and total cohomology categories of $\EF(R,W)$.  We also use the
notations $\zef(R,W)\eqdef \ZEF^\0(R,W)$ and $\hef(R,W)\eqdef
\HEF^\0(R,W)$ for the subcategories obtained by keeping only the even
morphisms. An elementary factorization is indecomposable in
$\zmf(R,W)$, but it need not be indecomposable in $\hmf(R,W)$.

\section{Finitely-generated modules over the quotient of a B\'ezout domain by a principal primary ideal}
\label{sec:artinian}

\noindent Let $R$ be a B\'ezout domain and $p\in R$ be a prime
element. In this section, we study the category of finitely-generated
modules over the quotient ring $R/\langle p^n\rangle$ (with $n\geq 2$)
and its stable category.

\subsection{The rings $A_n(p)$}

Fix an integer $n\geq 2$ and consider the quotient ring\footnote{This ring will later on also be 
denoted by $\Lambda$ for ease of notation.}:
\be
A_n(p)\eqdef R/\langle p^n\rangle~~.
\ee
Let $\bm_n(p)=p A_n(p)=\langle p\rangle /\langle p^n\rangle$ and $\bk_p=R/\langle p\rangle$.
The following result was proved in \cite{elbezout}.

\

\begin{Lemma} 
\label{lemma:prime}
The following statements hold:
\begin{enumerate}
\itemsep 0.0em
\item The principal ideal $\langle p\rangle $ generated by $p$ is maximal.
\item The primary ideal $\langle p^n\rangle $ is contained in a unique maximal ideal
  of $R$.
\item The quotient $A_n(p)$ is a quasi-local ring with maximal ideal
  $\bm_n(p)$ and residue field $\bk_p$.
\item $A_n(p)$ is a generalized valuation ring.
\end{enumerate}
\end{Lemma}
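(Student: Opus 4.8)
The plan is to prove the four statements in sequence, using only the fact that $R$ is a B\'ezout domain and $p\in R$ is prime. First I would establish (1): since $p$ is prime, the quotient $R/\langle p\rangle$ is an integral domain; the point is that in a B\'ezout domain it is in fact a field. Indeed, if $x\in R$ is not divisible by $p$, then $\gcd(x,p)$ is a unit (since $p$ is irreducible, its only divisors up to association are $1$ and $p$), so B\'ezout's property gives $ax+bp=1$ for some $a,b\in R$, whence $x$ is invertible modulo $\langle p\rangle$. Thus $R/\langle p\rangle$ is a field and $\langle p\rangle$ is maximal.

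Next, for (2), I would argue that any maximal (indeed any prime) ideal $\fp$ containing $\langle p^n\rangle$ must contain $p$ itself, because $\fp$ is radical and $p^n\in\fp$; hence $\fp\supseteq\langle p\rangle$, and since $\langle p\rangle$ is already maximal by (1), $\fp=\langle p\rangle$. So $\langle p^n\rangle$ is contained in the unique maximal ideal $\langle p\rangle$. Statement (3) is then essentially a reformulation: the maximal ideals of $A_n(p)=R/\langle p^n\rangle$ correspond to maximal ideals of $R$ containing $\langle p^n\rangle$, of which there is exactly one, namely the image $\bm_n(p)=\langle p\rangle/\langle p^n\rangle$ of $\langle p\rangle$; hence $A_n(p)$ is quasi-local with that maximal ideal, and its residue field is $A_n(p)/\bm_n(p)\simeq R/\langle p\rangle=\bk_p$ by the third isomorphism theorem.

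For (4), recall that a generalized valuation ring is a (commutative, not necessarily Noetherian) ring whose ideals are totally ordered by inclusion. I would first show that every element of $A_n(p)$ is, up to a unit, a power $p^k$ with $0\le k\le n$: given $x\in R$, either $p^n\mid x$ (so $x=0$ in $A_n(p)$), or there is a largest $k<n$ with $p^k\mid x$, say $x=p^k y$ with $p\nmid y$; then as in the proof of (1), $y$ is a unit modulo $\langle p^n\rangle$ (using $\gcd(y,p)=1$ and B\'ezout to write $ay+bp=1$, then noting a unit mod $\langle p\rangle$ lifts to a unit mod $\langle p^n\rangle$ because $\bm_n(p)$ is nilpotent). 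Consequently every principal ideal of $A_n(p)$ equals $\langle p^k\rangle$ for some $k$, and these are totally ordered: $\langle p^0\rangle\supseteq\langle p^1\rangle\supseteq\cdots\supseteq\langle p^{n-1}\rangle\supseteq\langle p^n\rangle=0$. Finally, since $R$ is B\'ezout, every finitely generated ideal of the quotient $A_n(p)$ is principal, hence of this form; and since $A_n(p)$ is Artinian (it has finite length, being filtered by the powers of $\bm_n(p)$ with one-dimensional successive quotients over $\bk_p$), every ideal is finitely generated, so in fact \emph{every} ideal of $A_n(p)$ is one of the $\langle p^k\rangle$. Thus the ideals are totally ordered and $A_n(p)$ is a generalized valuation ring.

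The only mildly delicate point is the lifting of units from $R/\langle p\rangle$ to $R/\langle p^n\rangle$ used in (1)$\Rightarrow$(4); it follows from the standard fact that if $I$ is a nilpotent ideal of a ring $S$ then an element of $S$ is a unit iff its image in $S/I$ is — applied to $S=A_n(p)$ and $I=\bm_n(p)$, which satisfies $I^n=0$. Everything else is a routine consequence of the B\'ezout hypothesis and primeness of $p$, so I expect no serious obstacle; this lemma is cited from \cite{elbezout} and the argument above is the natural one.
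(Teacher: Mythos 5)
Your proof is correct. The paper does not actually prove this lemma in the text --- it is quoted from \cite{elbezout} --- so there is no in-text argument to compare against; the argument you give is the natural self-contained one, and it uses exactly the manipulations the paper employs nearby: the B\'ezout identity $ar+bp=1$ for $r$ coprime to $p$ is the same trick that appears in the proof of Proposition \ref{prop:Artinian}, and the one delicate step you flag (lifting a unit of $\bk_p$ to a unit of $A_n(p)$ via nilpotency of $\bm_n(p)$) is handled correctly. One simplification: once you know that every nonzero element of $A_n(p)$ is a unit times $p^k$ with $0\leq k\leq n$, the total ordering of \emph{all} ideals is immediate --- any ideal equals $\langle p^{k_0}\rangle$ for the minimal exponent $k_0$ occurring among its elements --- so the detour through finite length and Artinianness, while correct, is not needed for statement 4 (and proving Artinianness there slightly preempts Proposition \ref{prop:Artinian}, though without any circularity since your composition-series argument is self-contained).
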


\

\begin{remark}
Let $Z(A_n(p))$ be the set of zero divisors, $N(A_n(p))$ be the
nilradical and $J(A_n(p))$ be the Jacobson radical of $A_n(p)$. Then
we have (see \cite[Exercise 1.1]{FS}):
\be
Z(A_n(p))=N(A_n(p))=J(A_n(p))=\bm_n(p)~~.
\ee
\end{remark}

\

\begin{Proposition}
\label{prop:Artinian}
$A_n(p)$ is an Artinian local principal ideal ring, whose ideals are
  $\langle p^i\rangle/\langle p^n\rangle$ for $i=0,\ldots, n$. 
\end{Proposition}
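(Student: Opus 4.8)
The plan is to show that $A_n(p) = R/\langle p^n\rangle$ is Artinian, that it is a principal ideal ring, and that its only ideals are the $\langle p^i\rangle/\langle p^n\rangle$ for $0\le i\le n$. By Lemma \ref{lemma:prime} we already know $A_n(p)$ is quasi-local with maximal ideal $\bm_n(p)=\langle p\rangle/\langle p^n\rangle$, so the main task is to classify its ideals. First I would lift the problem to $R$: ideals of $A_n(p)$ correspond bijectively to ideals $I$ of $R$ with $\langle p^n\rangle \subseteq I$. So it suffices to prove that every ideal $I$ of $R$ containing $p^n$ is one of the principal ideals $\langle p^i\rangle$ with $0\le i\le n$.

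The key step is the following. Let $I\supseteq \langle p^n\rangle$ be such an ideal and let $i\in\{0,\dots,n\}$ be the largest integer with $I\subseteq \langle p^i\rangle$ (this is well-defined: $I\subseteq\langle p^0\rangle=R$, and $I$ cannot be contained in $\langle p^{n+1}\rangle$ since $p^n\in I$ but $p^n\notin\langle p^{n+1}\rangle$ because $p$ is not a unit and $R$ is a domain). I claim $I=\langle p^i\rangle$. Pick $x\in I$ with $x\notin\langle p^{i+1}\rangle$; write $x=p^i y$ with $y\in R$ (possible since $x\in\langle p^i\rangle$), and note $p\nmid y$, i.e. $y\notin\langle p\rangle$. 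Since $\langle p\rangle$ is maximal by Lemma \ref{lemma:prime}(1), $y$ is a unit modulo $\langle p\rangle$; more usefully, $R$ is B\'ezout, so $\langle y,p\rangle = \langle d\rangle$ for some $d$, and $d\mid p$ with $d\nmid$-nontrivially forces $(d)=(1)$ since $p$ is prime and $p\nmid y$. Hence $\langle y,p\rangle = R$, so there exist $a,b\in R$ with $ay+bp=1$. Multiplying by $p^i$ gives $p^i = a(p^i y) + b p^{i+1} = ax + bp^{i+1}$. Now $ax\in I$, and I must show $p^{i+1}\in I$ to conclude $p^i\in I$. For this, observe that $p^{i+1}\mid p^n$ (since $i+1\le n$) and $p^n\in I$ — but that only gives $p^n\in I$, not $p^{i+1}\in I$. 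So instead I iterate: from $p^i = ax + bp^{i+1}$ I get that $p^i\in I + \langle p^{i+1}\rangle$; similarly $p^{i+1}\in I + \langle p^{i+2}\rangle$, and so on down to $p^{n-1}\in I+\langle p^n\rangle = I$. Working back up the chain, $p^{n-1}\in I$ forces $p^{n-2}\in I$, etc., until $p^i\in I$, whence $\langle p^i\rangle\subseteq I$. Combined with $I\subseteq\langle p^i\rangle$ from the choice of $i$, we get $I=\langle p^i\rangle$, modulo the identification $\langle p^i\rangle/\langle p^n\rangle$ in $A_n(p)$.

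Once the ideals of $A_n(p)$ are known to be exactly $\langle p^i\rangle/\langle p^n\rangle$ for $0\le i\le n$, everything else follows at once: there are finitely many of them and they form a chain $A_n(p)\supsetneq \bm_n(p)\supsetneq\bm_n(p)^2\supsetneq\cdots\supsetneq 0$, so $A_n(p)$ has finite length and is in particular Artinian; each of these ideals is principal (generated by the class of $p^i$), so $A_n(p)$ is a principal ideal ring; and $A_n(p)$ is local by Lemma \ref{lemma:prime}(3). I expect the main obstacle to be the Nakayama-style descent-and-ascent argument in the previous paragraph — concretely, being careful that $p^n\in I$ together with the B\'ezout identity genuinely propagates membership down to $p^i$, rather than merely to $p^n$; the cleanest packaging is probably to prove by downward induction on $j$ (from $j=n$ to $j=i$) that $p^j\in I + \langle p^{j}\rangle$ trivially and then by upward induction that $p^j\in I$, or equivalently to note that $\langle p^n\rangle\subseteq I$ and $p^i R + \langle p^{i+1}\rangle \ni x$ with $x\in I$ gives $p^i\in I + \langle p^{i+1}\rangle \subseteq I + \langle p^{i+2}\rangle\subseteq\cdots\subseteq I+\langle p^n\rangle = I$. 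The rest is bookkeeping.
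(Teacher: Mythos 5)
Your proof is correct in substance but follows a genuinely different route from the paper's. The paper leans on Lemma \ref{lemma:prime}(4): since $A_n(p)$ is a generalized valuation ring, its ideals are totally ordered by inclusion, so an ideal $I$ with $\langle p^n\rangle\subsetneq I\subsetneq\langle p\rangle$ can be sandwiched as $\langle p^i\rangle\subseteq I\subsetneq\langle p^{i-1}\rangle$; because $p^i\in I$ is then already available, a single B\'ezout identity $ar+bp=1$ multiplied by $p^{i-1}$ gives $p^{i-1}=ax+bp^i\in I$ and the classification follows in one stroke. You instead take $i$ maximal with $I\subseteq\langle p^i\rangle$, which only yields $I\not\subseteq\langle p^{i+1}\rangle$ rather than $\langle p^i\rangle\subseteq I$, so you must bootstrap $p^i\in I$; your descend-then-ascend argument does this correctly, provided the ``similarly'' in the descent is made explicit: multiplying $p^i=ax+bp^{i+1}$ by $p^{j-i}$ gives $p^j=axp^{j-i}+bp^{j+1}\in I+\langle p^{j+1}\rangle$ for all $j=i,\dots,n-1$, after which the upward induction starting from $p^{n-1}\in I+\langle p^n\rangle=I$ is sound. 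What your route buys is independence from the generalized-valuation-ring input: it uses only that $p$ is prime and $R$ is B\'ezout. Two caveats. First, your concluding ``cleanest packaging'' is wrong as written: the inclusions run the other way, $I+\langle p^{i+1}\rangle\supseteq I+\langle p^{i+2}\rangle$, so that one-liner must be discarded in favour of the two-pass argument you gave before it. Second, the whole bootstrap can be bypassed: since $(y,p)=(1)$ and $p$ is prime, $(y,p^{n-i})=(1)$, so $cy+dp^{n-i}=1$ for some $c,d\in R$, and multiplying by $p^i$ gives $p^i=cx+dp^n\in I$ directly. The remaining conclusions (Artinian from the finite chain of ideals, principal because every ideal is $\langle p^i\rangle/\langle p^n\rangle$, local by Lemma \ref{lemma:prime}(3)) agree with the paper's.
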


\begin{proof}
Let $I$ be an ideal of $R$ such that $\langle p^n\rangle \subsetneq I
\subsetneq \langle p\rangle$. Since $A_n(p)$ is a generalized
valuation ring by Lemma \ref{lemma:prime}, its ideals are totally
ordered by inclusion. Hence there exists an $i\in \{2,\ldots, n-1\}$
such that $\langle p^i\rangle \subset I\subsetneq \langle
p^{i-1}\rangle $.  Suppose that $I\setminus \langle p^i\rangle$ is
non-empty and take any element $x\in I\setminus \langle
p^i\rangle$. Then $x=rp^{i-1}$ for some $r\in R$ such that $p$ doesn't
divide $r$, i.e. $(r,p)=(1)$. Since $R$ is a B\'ezout domain, there exist
$a,b\in R$ such that $ar+bp=1$. Multiplying with $p^{i-1}$, this gives
$p^{i-1}=a x+bp^i$, which belongs to $I$ since both $x$ and $p^i$
belong to $I$. Thus $p^{i-1}\in I$, which implies $\langle
p^{i-1}\rangle\subset I$ and hence $I=\langle p^{i-1}\rangle$,
contradicting the fact that the inclusion $I\subset \langle
p^{i-1}\rangle $ is strict. It follows that every ideal of $R/\langle
p^n\rangle$ has the form $\langle p^i\rangle /\langle p^n\rangle$ for
some $i\in \{0,\ldots, n\}$. In particular, $R/\langle p^n\rangle$ is
an Artinian (and hence Noetherian) local ring. Since $R$ is a
Noetherian B\'ezout ring, it is also a principal ideal ring.\qed
\end{proof}

\

\begin{remark}
Since $A_n(p)$ has non-trivial divisors of zero, it cannot be a
regular local ring.  It was shown in {\rm \cite{Osofsky}} that the global
dimension of a generalized valuation ring which is not an integral
domain is necessarily infinite. Thus $\mathrm{gl}\dim
(A_n(p))=\infty$.  Also notice that $A_n(p)$ has length $n$ as a
module over itself.
\end{remark}

\

\noindent For simplicity, in the remainder of this section we denote
$A_n(p)$ by $\Lambda$, the residue field $\bk_n(p)$ by $\bk$ and the
maximal ideal $\bm_n(p)$ by $\bm$.

\subsection{The category $\mod_\Lambda$}

Let $\mod_\Lambda$ be the category of finitely-generated modules over
$\Lambda=A_n(p)$. Since $\Lambda$ is Artinian, the following statements are
equivalent for a $\Lambda$-module $M$ by the Akizuki-Hopkins-Lewitzki
theorem:
\begin{itemize}
\itemsep 0.0em
\item $M$ is Noetherian. 
\item $M$ is Artinian. 
\item $M$ is finitely-generated.
\item $M$ has finite composition length.
\end{itemize}

\noindent Let $\Lambda_i=\langle p^{n-i}\rangle /\langle
p^n\rangle=p^{n-i}\Lambda$ with $i\in \{0,\ldots, n\}$ be the ideals
of $\Lambda$, thus $\Lambda_0=0$, $\Lambda_{n-1}=\bm$ and
$\Lambda_n=\Lambda$. These form the finite ascending sequence:
\ben
\label{LambdaSeq}
0=\Lambda_0\subset \Lambda_1\subset \ldots \subset \Lambda_{n-1}\subset \Lambda_n=\Lambda~~.
\een
Let $V_i\eqdef \Lambda/\Lambda_{n-i}\simeq_R R/\langle p^i\rangle$
(with $i=0, \ldots, n$) be the cyclically-presented cyclic
$\Lambda$-modules with annihilators $\Ann(V_i)=\Lambda_{n-i}$. We have
natural isomorphisms of $R$-modules
$\varphi_i:V_i\stackrel{\sim}{\rightarrow} \Lambda_i$ given by taking
the element $x+\langle p^i\rangle$ ($x\in R$) of $V_i\simeq_R
R/\langle p^i\rangle$ to the element $p^{n-i}x+\langle p^n\rangle$ of
$\Lambda_i$. Unlike the ideals $\Lambda_i$ (which can be viewed as
{\em non-unital} $\Lambda$-algebras), the modules $V_i$ have a {\em
unital} $\Lambda$-algebra structure with unit
$1_{\Lambda}+\Lambda_{n-i}$. This unit is not preserved by the
$R$-module isomorphisms $\varphi_i$.  It is clear that the non-zero
cyclic modules $V_1,\ldots, V_n$ are indecomposable, with endomorphism
rings given by the local rings:
\be
\End_\Lambda(V_i)\simeq R/\langle p^i\rangle~~,~~\forall i\in \{1,\ldots, n\}~~.
\ee
Recall that a commutative ring $R$ is called an {\em $FGC$ 
(finitely-generated commutative) ring} if
every finitely-generated $R$-module is isomorphic with a finite direct
sum of cyclic modules. For any FGC ring $R$, the finite direct sum
decomposition of a finitely-generated $R$-module into non-zero
indecomposable cyclic modules is unique up to permutation and
isomorphism of the indecomposable cyclic summands \cite{Brandal}.

\

\begin{Proposition}
$\Lambda$ is an FGC ring whose indecomposable non-zero
  finitely-generated $\Lambda$-modules are the cyclic modules
  $V_1,\ldots, V_n$.  Moreover, the decomposition of a
  finitely-generated $\Lambda$-module into non-zero cyclic modules is
  unique up to permutation and isomorphism of factors, hence $\mod_\Lambda$ 
  is a Krull-Schmidt category. 
\end{Proposition}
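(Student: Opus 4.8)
The plan is to exploit the structure of $\Lambda=A_n(p)$ as a special principal ideal ring together with the self-injectivity of its Artinian quotients, and then to deduce the uniqueness assertion from the general statement about FGC rings recalled just above the proposition.

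First I would record two elementary facts. Since $\Lambda$ is Artinian, every finitely-generated $\Lambda$-module has finite composition length, so arguments by induction on the length $\ell(M)$ are available. And since the ideals of $\Lambda$ are precisely the totally ordered chain $\Lambda_{n-i}=\langle p^i\rangle/\langle p^n\rangle$ with $0\leq i\leq n$ (Proposition \ref{prop:Artinian}), every cyclic $\Lambda$-module is isomorphic to $\Lambda/\Lambda_{n-i}=V_i$ for a unique $i$; hence the non-zero cyclic modules are exactly $V_1,\ldots,V_n$, and each is indecomposable because $\End_\Lambda(V_i)\simeq R/\langle p^i\rangle$ is local (a field for $i=1$, quasi-local by Lemma \ref{lemma:prime}(3) for $i\geq 2$).

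The substantive step is to show that every finitely-generated $\Lambda$-module $M$ is a direct sum of cyclic modules, which I would prove by induction on $\ell(M)$. For $M\neq 0$ choose $x\in M$ whose order $m$ (the least integer with $\bar p^mx=0$) is maximal among orders of elements of $M$; this is possible since orders lie in $\{1,\ldots,n\}$. Then $\Ann_\Lambda(x)$, being one of the $\Lambda_j$, must equal $\Lambda_{n-m}$, so $\Lambda x\simeq V_m$; and maximality of $m$ forces $\bar p^mM=0$, i.e. $M$ is a module over $A_m(p)=\Lambda/\langle\bar p^m\rangle$. The ring $A_m(p)$ is Artinian local with one-dimensional socle $\langle p^{m-1}\rangle/\langle p^m\rangle$, hence Gorenstein, hence self-injective; therefore the free rank-one module $\Lambda x\simeq A_m(p)$ is injective in $\mod_{A_m(p)}$, so the inclusion $\Lambda x\hookrightarrow M$ of $A_m(p)$-modules (equivalently, of $\Lambda$-modules) splits. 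Writing $M=\Lambda x\oplus N$ with $\ell(N)<\ell(M)$ and applying the induction hypothesis to $N$ completes the step. (One could instead split $\Lambda x$ off by the standard hands-on argument valid over any special principal ideal ring, but the self-injectivity route is shorter and reuses the Gorenstein property already noted in Section \ref{sec:artinian}.)

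The remaining claims are then immediate. As $\Lambda$ has been shown to be FGC, the decomposition of a finitely-generated $\Lambda$-module into non-zero cyclic modules is unique up to permutation and isomorphism of summands by \cite{Brandal}; moreover any indecomposable non-zero finitely-generated $\Lambda$-module, being a direct sum of cyclics by the previous step, must be cyclic, hence one of $V_1,\ldots,V_n$. Finally, $\mod_\Lambda$ is Krull--Schmidt: this follows from the above (finite-length objects, decomposition into indecomposables with local endomorphism rings, and uniqueness), or alternatively from the general fact that finitely-generated modules over an Artinian ring form a Krull--Schmidt category. I expect the splitting lemma inside the induction to be the only real obstacle; the observation that unlocks it is that maximal order of $x$ in $M$ forces $\bar p^mM=0$, which brings the self-injective ring $A_m(p)$ into play.
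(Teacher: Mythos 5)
Your proof is correct, but it takes a genuinely different route from the paper in the main step. The paper disposes of the FGC property by citing K\"othe's classical theorem that \emph{any} module over a principal ideal ring is a direct sum of cyclic modules, then quotes Brandal (Proposition 3.4, applicable since $\Lambda$ is a generalized valuation ring) for uniqueness and Leuschke--Wiegand for the identification of the indecomposables with $V_1,\ldots,V_n$. You instead prove the finitely-generated case of the decomposition directly, by induction on composition length: choosing $x\in M$ of maximal order $m$, noting that $\Ann_\Lambda(x)$ must be $\Lambda_{n-m}$ because the ideals form a chain, that maximality of $m$ gives $\bar p^m M=0$ so $M$ is an $A_m(p)$-module, and that $A_m(p)$ (Artinian local with simple socle, hence quasi-Frobenius) makes the free rank-one submodule $\Lambda x$ injective, so it splits off; the classification of indecomposables then falls out as an immediate corollary rather than being cited, and uniqueness is deduced from the same Brandal statement recalled just before the proposition. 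Both arguments are sound; the paper's is shorter and, via K\"othe, covers arbitrary (not just finitely-generated) modules, while yours is self-contained apart from the uniqueness citation and reuses only facts already available at that point in the section --- note in particular that the self-injectivity of $A_m(p)$ you invoke is proved (for $\Lambda=A_n(p)$, by the unique-minimal-ideal argument, which applies verbatim to $A_m(p)$) from Proposition \ref{prop:Artinian} alone, so there is no circularity even though the Frobenius property is formally stated only after this proposition in the paper.
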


\begin{proof}
It is well-known that {\em any} module over a principal ideal ring
decomposes as a direct sum of cyclic modules \cite{Kothe}. In
particular, $\Lambda$ is an FGC ring. Uniqueness of the decomposition
into non-zero cyclic modules up to permutation and isomorphism of
factors follows from \cite[Proposition 3.4]{Brandal} since $\Lambda$
is a generalized valuation ring. The indecomposable
finitely-generated $\Lambda$-modules coincide with the cyclic modules
$V_1,\ldots, V_n$. See \cite[Theorem 3.2]{Wiegand}. \qed
\end{proof}

\

\begin{Proposition}
\label{prop:indproj}
The only non-zero indecomposable $\Lambda$-module which is projective
is $V_n\!\simeq\! \Lambda_n\!\!=\!\Lambda$.
\end{Proposition}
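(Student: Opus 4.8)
The plan is to show that $\Lambda=A_n(p)$ is a local ring whose only projective indecomposable module (up to isomorphism) is $\Lambda$ itself, and then to identify which of the cyclic modules $V_1,\dots,V_n$ this is. First I would recall that $\Lambda$ is a commutative Artinian local ring with maximal ideal $\bm=\bm_n(p)$ and residue field $\bk=\bk_p$, by Lemma \ref{lemma:prime} and Proposition \ref{prop:Artinian}. Over any local ring, a finitely-generated projective module is free (this is the local case of Kaplansky's theorem, and for finitely-generated modules over a Noetherian local ring it is elementary via Nakayama). Hence every projective object of $\mod_\Lambda$ is isomorphic to $\Lambda^{\oplus k}$ for some $k\geq 0$.

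Next I would use the uniqueness of the decomposition into indecomposable cyclics (the FGC/Krull-Schmidt property just established) to pin down the indecomposable projectives. Since $\Lambda\simeq V_n$ is indecomposable (its endomorphism ring $\End_\Lambda(V_n)\simeq R/\langle p^n\rangle$ is local), the free module $\Lambda^{\oplus k}$ decomposes into $k$ copies of the indecomposable $V_n$. By uniqueness of such decompositions, a projective module is indecomposable if and only if $k=1$, i.e. if and only if it is isomorphic to $\Lambda=V_n$. It remains to check that none of $V_1,\dots,V_{n-1}$ is projective; but if $V_i$ were projective it would be free, hence isomorphic to $\Lambda^{\oplus k}$, and comparing composition lengths ($\mathrm{length}(V_i)=i<n=\mathrm{length}(\Lambda)$, using $n\geq 2$) forces $i=kn$, which is impossible for $1\leq i\leq n-1$. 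This establishes the claim.

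I do not expect a genuine obstacle here: every ingredient is standard or already recorded in the excerpt. The only point requiring a word of care is the passage "finitely-generated projective over a local ring implies free": since $\Lambda$ is Noetherian local and we work with finitely-generated modules, one may argue directly by choosing a minimal generating set (lifting a $\bk$-basis of $P/\bm P$) and applying Nakayama to see that the resulting surjection $\Lambda^{\oplus k}\twoheadrightarrow P$ has kernel killed by projectivity of $P$ and minimality, so $P\simeq\Lambda^{\oplus k}$. With that in hand, the rest is the length/Krull-Schmidt bookkeeping above.
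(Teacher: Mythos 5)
Your proof is correct and takes essentially the same route as the paper: both reduce to the fact that finitely generated projective modules over the local ring $\Lambda$ are free, so an indecomposable projective must be free of rank one, i.e. isomorphic to $\Lambda\simeq V_n$. (The paper additionally records an alternative argument showing that $p^i\Lambda$ is not generated by an idempotent for $1\leq i\leq n-1$, where you instead rule out $V_1,\ldots,V_{n-1}$ by a composition-length count; both are valid.)
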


\begin{proof}
Since any projective module over a local ring is free, it
follows that a finitely-generated $\Lambda$-module is projective iff
it is free of finite rank. Such a module is indecomposable iff it has rank one. 
Another way to see this is as follows. 
Since the non-zero indecomposable $\Lambda$-modules are $V_i$ with
$i\in \{1,\ldots, n\}$, it suffices to show that $V_i$ is projective iff
$i=n$.  The module $\Lambda_n=\Lambda$ is projective since it is
free. Thus it suffices to show that $V_1,\ldots, V_{n-1}$ are not
projective.  Recall that $\Lambda_{n-i}=p^{i}\Lambda$ is a principal
$\Lambda$-module. It is well-known that such a module is projective
iff there exists an idempotent $e\in \Lambda$ such that
$p^i\Lambda=e\Lambda$. Suppose that this is the case for some
$i\in \{1,\ldots, n-1\}$.  Then we must have:
\ben
\label{idemp}
p^{2i}\Lambda=e^2\Lambda=e\Lambda=p^i \Lambda~~.  
\een
If $2i\leq n$, this amounts to $\Lambda_{2i}=\Lambda_i$, which is
impossible since the inclusions in \eqref{LambdaSeq} are strict. If
$2i\geq n$, then we have $p^{2i}\Lambda=0$ and relation \eqref{idemp}
amounts to $p^i\Lambda=0$, which is impossible since $i$ belongs to
the set $\{1,\ldots, n-1\}$.
\qed
\end{proof}

\subsection{Uniseriality}

Notice that $\Lambda$ is a uniserial ring and that the indecomposable
cyclic modules $V_i\simeq_\Lambda \Lambda_i$ are uniserial modules of length
$i$. The unique composition series of $\Lambda_i$ is given by:
\be
0=\Lambda_0\subset \ldots \subset \Lambda_i~~. 
\ee
In particular, the only simple $\Lambda$-module is $\Lambda_1\simeq_R V_1\simeq_R \bk$. 
We have: 
\be
V_{i+1}/V_i\simeq \Lambda_{i+1}/\Lambda_i\simeq \bk
\ee 
and the only composition factor of $\Lambda_i\simeq_\Lambda V_i$ is $\bk$,
with multiplicity $i$.

\subsection{The Frobenius property} The following result shows that $\Lambda$ is 
a Frobenius ring. 

\

\begin{Proposition}
The ring $\Lambda$ is a commutative Frobenius ring. In particular, $\Lambda$ is
  self-injective and hence it is a Gorenstein ring of dimension zero. Thus:
\be
\Ext_\Lambda^i(\bk,\Lambda)\simeq_\Lambda \twopartdef{\bk~}{i=0}{0~}{i\neq 0}~~.
\ee
\end{Proposition}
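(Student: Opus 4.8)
The plan is to show that $\Lambda = A_n(p)$ is a commutative Frobenius ring by exhibiting the required duality and then to deduce the remaining assertions formally. Recall that a commutative Artinian ring $\Lambda$ is Frobenius precisely when its injective envelope of the residue field $\bk$ coincides with $\Lambda$ itself, equivalently when the socle $\operatorname{soc}(\Lambda)$ is simple (one-dimensional over $\bk$) and $\Lambda$ is self-injective; for a commutative Artinian local ring, it is well known that Frobenius is equivalent to being Gorenstein of Krull dimension zero, which in turn is equivalent to $\operatorname{soc}(\Lambda)$ being simple. So the core computation is: the socle of $\Lambda = R/\langle p^n\rangle$ is $\Lambda_1 = \langle p^{n-1}\rangle/\langle p^n\rangle$, and this is one-dimensional over $\bk = R/\langle p\rangle$.

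First I would identify $\operatorname{soc}(\Lambda) = \{x \in \Lambda : \bm x = 0\} = \operatorname{Ann}_\Lambda(\bm)$. Using the uniseriality established in the preceding subsection — the ideals of $\Lambda$ are exactly $\Lambda_0 \subset \Lambda_1 \subset \cdots \subset \Lambda_n$ with $\bm = \Lambda_{n-1}$ — one checks directly that $\bm \cdot \Lambda_1 = \Lambda_{n-1}\Lambda_1$. Since $\Lambda_1 \simeq_R R/\langle p\rangle$ is generated by the class of $p^{n-1}$ and $p \cdot p^{n-1} = p^n \equiv 0$, we get $\bm \Lambda_1 = 0$, so $\Lambda_1 \subseteq \operatorname{soc}(\Lambda)$. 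Conversely, if $I$ is any ideal with $\bm I = 0$, then by the total ordering $I = \Lambda_j$ for some $j$, and $\bm \Lambda_j = \Lambda_{n-1}\Lambda_j = \Lambda_{\min(n-1+j,\, \cdot)}$; a quick index computation (using $\Lambda_j \simeq_\Lambda V_j$ has composition length $j$, and multiplication by $p$ kills $V_1 = \bk$ but is injective on $V_j$ for the relevant range, or more simply: $p^{n-1}p^{n-j} = p^{2n-1-j} = 0$ iff $2n-1-j \ge n$ iff $j \ge n-1$) forces $j \le 1$. Hence $\operatorname{soc}(\Lambda) = \Lambda_1$, which is simple since $\Lambda_1 \simeq_R \bk$ is a one-dimensional $\bk$-vector space. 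Therefore $\Lambda$ has simple socle and is a commutative Gorenstein Artinian (hence Frobenius) ring; in particular it is self-injective.

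Finally, I would record the consequences for $\Ext$. Since $\Lambda$ is self-injective, $\Lambda$ is an injective $\Lambda$-module, so $\Ext_\Lambda^i(-,\Lambda) = 0$ for all $i \ge 1$ and all finitely-generated modules; in particular $\Ext_\Lambda^i(\bk,\Lambda) = 0$ for $i \neq 0$. For $i = 0$ we have $\Hom_\Lambda(\bk,\Lambda) = \operatorname{Ann}_\Lambda(\bm) = \operatorname{soc}(\Lambda) = \Lambda_1 \simeq_\Lambda \bk$, using that every $\Lambda$-homomorphism $\bk \to \Lambda$ has image in the socle and conversely. This yields the displayed formula
\be
\Ext_\Lambda^i(\bk,\Lambda) \simeq_\Lambda \twopartdef{\bk~}{i=0}{0~}{i\neq 0}~~.
\ee
The main obstacle — really the only nontrivial point — is the socle computation, i.e. verifying that $\operatorname{soc}(\Lambda)$ is exactly one-dimensional rather than larger; everything else (the equivalence Frobenius $\Leftrightarrow$ Gorenstein of dimension zero $\Leftrightarrow$ simple socle for commutative Artinian local rings, and the vanishing of higher $\Ext$ into an injective module) is standard and follows formally. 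The uniseriality of $\Lambda$ makes even this socle computation routine, so the proof is short.
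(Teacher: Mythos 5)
Your argument is correct and follows essentially the same route as the paper: both reduce to the observation that, by uniseriality, $\Lambda_1$ is the unique minimal ideal of $\Lambda$ (equivalently, $\mathrm{soc}(\Lambda)$ is simple), and then invoke the standard fact that a commutative Artinian local ring with this property is Frobenius, hence self-injective and Gorenstein of dimension zero, from which the $\Ext$ formula follows as you state. One small correction to your parenthetical socle computation: $\bm=\Lambda_{n-1}=p\Lambda$ is generated by $p$, not by $p^{n-1}$, so $\bm\Lambda_j$ is generated by $p\cdot p^{n-j}=p^{\,n-j+1}$, which vanishes iff $n-j+1\geq n$, i.e. iff $j\leq 1$ (as written, your exponent and the direction of your inequality are off, and multiplication by $p$ on $V_j$ for $j\geq 2$ is non-zero rather than injective).
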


\

\begin{proof}
It is clear that $\Lambda$ has a unique minimal ideal, namely
$\Lambda_1$. Since $\Lambda$ is a local Artinian ring, it follows that
$\Lambda$ is Frobenius. This implies that $R$ is self-injective and
hence Gorenstein of dimension zero. \qed
\end{proof}

\

\noindent Since $\Lambda$ is Noetherian and self-injective
(i.e. quasi-Frobenius, which for a commutative ring is the same as
being Frobenius), it follows that a $\Lambda$-module is injective iff
it is projective. In particular, $\mod_\Lambda$ is a Frobenius
category. Notice that $K_\Lambda=\Lambda$ is a canonical $\Lambda$-module. In
particular, all finitely-generated $\Lambda$-modules are reflexive.

\subsection{The Auslander-Reiten quiver of $\mod_\Lambda$}
\label{subsec:ARquiver}

The following result allows us to describe the morphisms between the modules $V_i$. 

\

\begin{Proposition}
Let $R$ be a B\'ezout domain and $a,b\in R^\times$. Then there
exists an isomorphism of $R$-modules:
\be
q_{ab}:\Hom_R(R/\langle a\rangle ,R/\langle b\rangle)\stackrel{\sim}{\rightarrow} R/\langle a,b\rangle~~
\ee
which is determined up to multiplication by a unit of $R$. 
If $a,b,c\in R^\times$ are three elements and $f\in
\Hom_R(R/\langle a \rangle,R/\langle b\rangle)$, $g\in \Hom_R(R/\langle b\rangle,R/\langle c\rangle)$, then we have:
\be
q_{ac}(g\circ f) \eqdef s_{abc} q_{bc}(g)q_{ab}(f)~~,
\ee
where $s_{abc}\in \frac{(a,c)(b)}{(b,c)(a,b)}$. 
\end{Proposition}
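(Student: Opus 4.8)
The plan is to make $\Hom_R(R/\langle a\rangle,R/\langle b\rangle)$ completely explicit and then transport the resulting identifications through composition. An $R$-linear map $f\colon R/\langle a\rangle\to R/\langle b\rangle$ is determined by $f(1+\langle a\rangle)$, which must be killed by $a$; hence evaluation at $1$ gives a canonical isomorphism $\Hom_R(R/\langle a\rangle,R/\langle b\rangle)\xrightarrow{\sim}\Ann_{R/\langle b\rangle}(a)=\{c+\langle b\rangle: b\mid ac\}$. Since $R$ is a GCD domain, writing $d$ for a chosen representative of the gcd class $(a,b)$ one has $b\mid ac\iff \frac{b}{d}\mid c$: indeed $b\mid ac$ forces $\frac{b}{d}\mid \frac{a}{d}c$ with $\gcd(\frac{b}{d},\frac{a}{d})=1$, and the distributive property $\gcd(ux,uy)=u\gcd(x,y)$ gives $\frac{b}{d}\mid c$; the converse is immediate. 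Thus $\Ann_{R/\langle b\rangle}(a)=\frac{b}{d}R/\langle b\rangle$, and the surjection $R\to \frac{b}{d}R/\langle b\rangle$, $r\mapsto r\frac{b}{d}+\langle b\rangle$, has kernel $\langle d\rangle=\langle a,b\rangle$ because $\frac{b}{d}$ is a nonzero element of the domain $R$ and may be cancelled. This induces an isomorphism $R/\langle a,b\rangle\xrightarrow{\sim}\Hom_R(R/\langle a\rangle,R/\langle b\rangle)$; its inverse is $q_{ab}$, characterized by $q_{ab}(f)=r+\langle a,b\rangle$ whenever $f(1+\langle a\rangle)=r\frac{b}{d}+\langle b\rangle$. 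The only choice made is that of the representative $d$ of $(a,b)$, unique up to a unit, so $q_{ab}$ is determined up to multiplication by a unit of $R$.

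For the composition law I would take $f,g$ with $q_{ab}(f)=\overline{r_f}$, $q_{bc}(g)=\overline{r_g}$, so that $f(1)=\overline{r_f\,b/(a,b)}$ in $R/\langle b\rangle$ and $g(1)=\overline{r_g\,c/(b,c)}$ in $R/\langle c\rangle$, and then compute, using $R$-linearity of $g$, that $(g\circ f)(1)=\overline{r_f r_g\,\frac{bc}{(a,b)(b,c)}}$ in $R/\langle c\rangle$. The key elementary input is the divisibility $(a,b)(b,c)\mid b\,(a,c)$ in the GCD domain $R$: since $(a,b)$ divides both $a$ and $b$ and $(b,c)$ divides both $b$ and $c$, the product $(a,b)(b,c)$ divides $ab$ and divides $bc$, hence divides $\gcd(ab,bc)=b\,(a,c)$. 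Consequently $s_{abc}\eqdef \frac{b\,(a,c)}{(a,b)(b,c)}\in R$ is a well-defined element (up to a unit) of the class $\frac{(b)(a,c)}{(a,b)(b,c)}$ and $\frac{bc}{(a,b)(b,c)}=s_{abc}\cdot\frac{c}{(a,c)}$. Substituting, $(g\circ f)(1)=\overline{s_{abc}r_f r_g\cdot c/(a,c)}$, and since $r\,\frac{c}{(a,c)}\equiv r'\,\frac{c}{(a,c)}\pmod{c}\iff r\equiv r'\pmod{(a,c)}$ (again by cancelling the nonzero $\frac{c}{(a,c)}$), we read off $q_{ac}(g\circ f)=\overline{s_{abc}r_f r_g}=s_{abc}\,q_{bc}(g)\,q_{ab}(f)$.

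The one point that needs genuine care — and the only place I expect any friction — is that the right-hand side of the composition formula is well posed, since $q_{bc}(g)$ and $q_{ab}(f)$ a priori live in the distinct rings $R/\langle b,c\rangle$ and $R/\langle a,b\rangle$: the expression must be read as multiplying representatives and then reducing $s_{abc}$ times that product modulo $\langle a,c\rangle$. Changing the representative of $q_{ab}(f)$ by a multiple of $(a,b)$ alters $s_{abc}r_f r_g$ by a multiple of $\frac{b\,(a,c)}{(b,c)}$, which lies in $\langle (a,c)\rangle$ because $(b,c)\mid b$; symmetrically, changing the representative of $q_{bc}(g)$ by a multiple of $(b,c)$ changes it by a multiple of $\frac{b\,(a,c)}{(a,b)}\in\langle (a,c)\rangle$ since $(a,b)\mid b$; and replacing the chosen representative of $s_{abc}$ merely rescales by a unit. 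Hence the formula is independent of all choices modulo the unit ambiguity already inherent in the $q$'s, and no Noetherian or other finiteness assumption on $R$ is used — only that $R$ is a B\'ezout (equivalently, for this argument, a GCD) domain. I regard the rest as routine bookkeeping with the isomorphisms constructed above.
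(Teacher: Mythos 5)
Your proof is correct and follows essentially the same route as the paper's: evaluation at the generator identifies $\Hom_R(R/\langle a\rangle,R/\langle b\rangle)$ with $\Ann_{R/\langle b\rangle}(a)=\tfrac{b}{d}\,R/\langle b\rangle$ where $d$ is a gcd of $a,b$, division by $b/d$ then gives the isomorphism onto $R/\langle a,b\rangle$, and the factor $s_{abc}$ comes out of the same substitution $\varphi_{ac}(g\circ f)=\varphi_{bc}(g)\varphi_{ab}(f)$ (your extra checks that $(a,b)(b,c)\mid b\,(a,c)$ and that the right-hand side is well posed are welcome but not a different method). The one caveat is your parenthetical that a GCD domain would suffice: identifying the kernel $\langle d\rangle$ with the ideal $\langle a,b\rangle$ appearing in the statement genuinely uses the B\'ezout property (in $k[x,y]$ with $a=x$, $b=y$ the Hom module vanishes while $R/\langle x,y\rangle\neq 0$), though this does not affect your proof under the stated hypothesis.
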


\begin{proof}
The cyclic module $R/\langle a\rangle$ is generated by the element
$\epsilon_a=1 \,\mod \, \langle a\rangle$, while $R/\langle b\rangle$ is
generated by $\epsilon_b=1\,\mod \, \langle b\rangle$.  Consider the
injective $R$-module morphism $\varphi_{ab}:\Hom_R(R/\langle
a\rangle,R/\langle b\rangle)\rightarrow R/\langle b\rangle$ which
associates to $f\in \Hom_R(R/\langle a\rangle,R/\langle b\rangle)$ the
unique element $\varphi_{ab}(f)\in R/\langle b\rangle$ such that
$f(\epsilon_a)=\varphi_{ab}(f)\epsilon_b$. Let $r\in R$ be a element
such that $\varphi_{ab}(f)= r\, \mod \langle b\rangle$.  Since
$a\varphi_{ab}(f) \epsilon_b=af(\epsilon_a)=f(a\epsilon_a)=f(0)=0$, we
have $a\varphi_{ab}(f)=0$ in the ring $R/\langle b\rangle$, which is
equivalent with the condition $b|ar$. Writing $a=a_1 d_{ab}$ and $b=b_1 d_{ab}$
with $d_{ab}\in (a,b)$ and $(a_1,b_1)=(1)$, this is equivalent with the
condition $b_1|r$, i.e. $r\in \langle b_1\rangle$. Hence the image of
$\varphi_{ab}$ equals $\langle b_1\rangle/\langle b\rangle$. The map
$\langle b_1\rangle \ni r\rightarrow r/b_1 \in R$ induces an
isomorphism of $R$-modules $\psi_{ab}:\langle b_1\rangle /\langle
b\rangle \stackrel{\sim}{\rightarrow} R/\langle a,b\rangle$. Then
$q_{ab}\eqdef \psi_{ab}\circ \varphi_{ab}: \Hom_R(R/\langle a\rangle
,R/\langle b\rangle)\rightarrow R/\langle a,b\rangle$ is the desired
isomorphism of $R$-modules, which acts as
$q_{ab}(f)=\frac{\varphi_{ab}(f)}{b_1}=\frac{d_{ab}\varphi_{ab}(f)}{b}$.
Since $d_{ab}$ is determined up to multiplication by a unit of $R$, the same 
holds for $q_{ab}(f)$. 

Given three non-vanishing elements $a,b,c$ of $R$ and morphisms $f,g$
as in the proposition, we have:
\be
(g\circ f)(\epsilon_a)=g(\varphi_{ab}(f)\epsilon_b)=\varphi_{bc}(g)\varphi_{ab}(f) \epsilon_c~~,
\ee
which gives $\varphi_{ac}(g\circ f)=\varphi_{bc}(g)\varphi_{ab}(f)$. Thus: 
\be
q_{ac}(g\circ f)=\frac{d_{ac}\varphi_{bc}(g)\varphi_{ab}(f)}{c}=\frac{d_{ac} c b}{cd_{bc}d_{ab}} q_{bc}(g)q_{ab}(f)=s_{abc} q_{bc}(g)q_{ab}(f)~,
\ee
where: 
\be
s_{abc}=\frac{d_{ac} b}{d_{bc}d_{ab}} q_{bc}(g)q_{ab}(f)\in \frac{(a,c)(b)}{(b,c)(a,b)}~~.
\ee
\end{proof}

\begin{Corollary}
\label{cor:choms}
Let $R$ be a B\'ezout domain and $a,b, c\in R^\times$ be three
elements such that $a|c$ and $b|c$.  Then there exists an
isomorphism of $R/\langle c\rangle$-modules:
\be
\Hom_{R/\langle c\rangle}(R/\langle a\rangle ,R/\langle b\rangle )\simeq R/\langle a,b\rangle~~.
\ee
\end{Corollary}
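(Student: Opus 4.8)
The plan is to deduce this from the $R$-linear isomorphism of the preceding Proposition by a change-of-rings argument. Write $\bar R\eqdef R/\langle c\rangle$. Since $a\mid c$, the element $c$ annihilates $R/\langle a\rangle$, and since $b\mid c$ it annihilates $R/\langle b\rangle$; thus both are naturally $\bar R$-modules (they are quotients of $\bar R$). The first step is the elementary observation that, for any two $\bar R$-modules $M$ and $N$, restriction of scalars along the projection $R\to \bar R$ identifies $\Hom_{\bar R}(M,N)$ with $\Hom_R(M,N)$ as abelian groups: an $R$-linear map between $\bar R$-modules is automatically $\bar R$-linear, because the $\bar R$-action factors through the $R$-action. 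Moreover, when $cN=0$ the $R$-module $\Hom_R(M,N)$ is annihilated by $c$, so its $R$-module structure descends to $\bar R$ and coincides, under the above identification, with the native $\bar R$-module structure on $\Hom_{\bar R}(M,N)$. Applying this with $M=R/\langle a\rangle$ and $N=R/\langle b\rangle$ yields an isomorphism of $\bar R$-modules $\Hom_{R/\langle c\rangle}(R/\langle a\rangle,R/\langle b\rangle)\simeq \Hom_R(R/\langle a\rangle,R/\langle b\rangle)$.

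Next I would invoke the preceding Proposition, which supplies an isomorphism of $R$-modules $q_{ab}\colon \Hom_R(R/\langle a\rangle,R/\langle b\rangle)\stackrel{\sim}{\longrightarrow} R/\langle a,b\rangle$. Because $a\mid c$ we have $c\in\langle a\rangle\subseteq\langle a,b\rangle$, so $R/\langle a,b\rangle$ is again a $\bar R$-module, and $q_{ab}$ — being an $R$-linear map between modules killed by $c$ — is $\bar R$-linear by the same observation used above. Composing the two isomorphisms produces the desired isomorphism of $R/\langle c\rangle$-modules $\Hom_{R/\langle c\rangle}(R/\langle a\rangle,R/\langle b\rangle)\simeq R/\langle a,b\rangle$.

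I do not expect a genuine obstacle here: the argument is just the standard restriction-of-scalars identity $\Hom_{R/I}(M,N)=\Hom_R(M,N)$ for $R/I$-modules $M,N$, together with routine verification that every $R/\langle c\rangle$-module structure appearing is the expected one. The only point deserving a line of care is the containment $\langle c\rangle\subseteq\langle a,b\rangle$ (needed so that the right-hand side carries an $R/\langle c\rangle$-structure at all), which follows at once from $a\mid c$. Equivalently, one could obtain the result by base-changing the Proposition's isomorphism along $R\to R/\langle c\rangle$, but the direct comparison of $\Hom$-groups above is the cleanest route.
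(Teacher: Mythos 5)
Your argument is correct and is essentially the paper's own proof: both identify $\Hom_{R/\langle c\rangle}(R/\langle a\rangle,R/\langle b\rangle)$ with $\Hom_R(R/\langle a\rangle,R/\langle b\rangle)$ via restriction of scalars along $R\to R/\langle c\rangle$ (the paper phrases this as full faithfulness of $\pi^\ast$) and then compose with the isomorphism $q_{ab}$ of the preceding Proposition. Your extra checks (that $c$ kills all modules involved and that $\langle c\rangle\subseteq\langle a,b\rangle$) are just a more explicit rendering of the same route.
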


\begin{proof}
Restriction of scalars along the epimorphism $\pi:R\rightarrow R/\langle c\rangle$
gives a full and faithful functor $\pi^\ast:\Mod_{R/\langle c\rangle}\rightarrow
\Mod_R$. The composition $q_{ab}\circ \pi_{R/\langle a\rangle
,R/\langle b\rangle }^\ast:\Hom_{R/\langle c\rangle}(R/\langle
a\rangle,R/\langle b\rangle)\rightarrow R/\langle a,b\rangle$ is the
desired isomorphism. \qed
\end{proof}

\

\begin{Proposition}
\label{prop:Hom_Lambda}
For any $i,j\in \{0,\ldots, n\}$, we have an isomorphism of modules: 
\be
\Hom_\Lambda(V_i,V_j)\simeq_\Lambda V_{\min(i,j)}~~.
\ee
For any $i\in \{1,\ldots, n\}$, we have an isomorphism of rings: 
\be
\End_\Lambda(V_i)\simeq R/\langle p^i\rangle~~.
\ee
\end{Proposition}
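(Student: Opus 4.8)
The plan is to reduce both claims to Corollary \ref{cor:choms}, together with a small amount of bookkeeping about the $\Lambda$-module structures involved. For the module isomorphism I would apply Corollary \ref{cor:choms} with $a=p^i$, $b=p^j$ and $c=p^n$; the hypotheses $a\mid c$ and $b\mid c$ hold because $i,j\leq n$. Since $V_i\simeq_R R/\langle p^i\rangle$, and this identification is in fact an isomorphism of $\Lambda$-modules once $R/\langle p^i\rangle$ is regarded as a $\Lambda$-module through the canonical surjection $\Lambda=R/\langle p^n\rangle\rightarrow R/\langle p^i\rangle$, the corollary yields an isomorphism of $\Lambda$-modules $\Hom_\Lambda(V_i,V_j)\simeq R/\langle p^i,p^j\rangle$. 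I would then identify the target: because $R$ is a GCD domain and $p$ is prime, unique factorization gives $(p^i,p^j)=(p^{\min(i,j)})$, hence $\langle p^i,p^j\rangle=\langle p^{\min(i,j)}\rangle$ and $\Hom_\Lambda(V_i,V_j)\simeq_\Lambda R/\langle p^{\min(i,j)}\rangle\simeq_\Lambda V_{\min(i,j)}$. The degenerate cases $i=0$ or $j=0$ are immediate, since then $V_0=0$ and $\min(i,j)=0$.

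For the ring isomorphism, fix $i\in\{1,\ldots,n\}$. The previous part already gives $\End_\Lambda(V_i)\simeq_\Lambda V_i$ as modules, so only the ring structure needs attention. I would first note that $\Ann_\Lambda(V_i)=\Lambda_{n-i}=\langle p^i\rangle/\langle p^n\rangle$, so that the $\Lambda$-action on $V_i$ factors through the quotient ring $\bar\Lambda\eqdef\Lambda/\Lambda_{n-i}$, which by the third isomorphism theorem is isomorphic to $R/\langle p^i\rangle$; in fact $\bar\Lambda$ is precisely $V_i$ equipped with the unital algebra structure mentioned above. Since a self-map of $V_i$ is $\Lambda$-linear if and only if it is $\bar\Lambda$-linear, and composition is unchanged, we get $\End_\Lambda(V_i)=\End_{\bar\Lambda}(V_i)$ as rings. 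Finally, $V_i$ is the free $\bar\Lambda$-module of rank one (the regular module), and for any commutative ring $S$ the evaluation-at-$1$ map $\End_S(S)\rightarrow S$ is an isomorphism of rings; taking $S=\bar\Lambda$ yields $\End_\Lambda(V_i)\simeq\bar\Lambda\simeq R/\langle p^i\rangle$ as rings.

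I do not expect a serious obstacle here. The only points requiring care are keeping track of the $\Lambda$-module (rather than merely $R$-module) structures when invoking Corollary \ref{cor:choms} --- which is automatic, since that corollary is already phrased in terms of $R/\langle c\rangle$-module isomorphisms --- and the elementary fact that module endomorphisms over $\Lambda$ coincide with those over the quotient ring through which the action factors. An alternative route to the ring statement would transport the multiplication through the isomorphism $q_{p^i p^i}$ of the preceding proposition, after checking that the corresponding twisting factor $s_{p^i p^i p^i}$ lies in $\frac{(p^i)(p^i)}{(p^i)(p^i)}=(1)$ and is therefore a unit; the route via $\bar\Lambda$ above avoids this computation, so it is the one I would follow.
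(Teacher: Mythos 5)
Your proof is correct and follows essentially the same route as the paper, which simply deduces the statement from Corollary \ref{cor:choms} with $a=p^i$, $b=p^j$, $c=p^n$ and the identification $\langle p^i,p^j\rangle=\langle p^{\min(i,j)}\rangle$. Your extra care with the ring structure of $\End_\Lambda(V_i)$ (factoring the action through $\Lambda/\Ann_\Lambda(V_i)\simeq R/\langle p^i\rangle$) just spells out what the paper leaves implicit in ``follows immediately.''
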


\begin{proof}
Follows immediately from Corollary \ref{cor:choms}. \qed
\end{proof}

\

\noindent In particular, $\End_\Lambda(V_i)$ is a commutative local
ring with maximal ideal $m_i\eqdef \langle p\rangle /\langle
p^i\rangle$ and residue field equal to $\bk_p$. Consider the field:
\be
T(V_i)\eqdef \End_\Lambda(V_i,V_i)/m_i\simeq \bk_p~~.
\ee

\begin{Proposition}
For any $0\leq j\leq i \leq n$, we have: 
\be
V_i/V_j\simeq_\Lambda V_{i-j}~~.
\ee
Moreover, the natural surjection $q_{n,i}:V_n\rightarrow V_i$ is a
projective cover for all $i\in \{1,\ldots, n\}$ and the first syzygy of $V_i$
is given by: \be \Omega(V_i)=\ker(q_{n,i})\simeq V_{n-i} ~~.  \ee
\end{Proposition}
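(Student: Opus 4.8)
The plan is to reduce all three assertions to the concrete descriptions $V_i\simeq_R R/\langle p^i\rangle$ and $\Lambda_i=p^{n-i}\Lambda$, together with the isomorphisms $\varphi_i:V_i\stackrel{\sim}{\rightarrow}\Lambda_i$ recorded earlier in this section. (The $\varphi_i$ were stated as $R$-module isomorphisms, but since $R\twoheadrightarrow\Lambda$ is surjective, restriction of scalars is full, so every $R$-linear map between $\Lambda$-modules is $\Lambda$-linear; I use this tacitly.) With these identifications, the first statement becomes the third isomorphism theorem, the projective-cover statement becomes Nakayama's lemma, and the syzygy statement is immediate from the definition of $\Omega$.

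For the first claim I must first make the symbol $V_i/V_j$ unambiguous. Since $\Lambda$ is uniserial, the module $V_i\simeq_\Lambda\Lambda_i$ is uniserial of length $i$, hence has a \emph{unique} submodule of length $j$ for each $j\leq i$; under $V_i\simeq_R R/\langle p^i\rangle$ this submodule is $\langle p^{i-j}\rangle/\langle p^i\rangle=p^{i-j}V_i$, and $x+\langle p^j\rangle\mapsto p^{i-j}x+\langle p^i\rangle$ is an isomorphism $V_j\stackrel{\sim}{\rightarrow}p^{i-j}V_i$. So "$V_j\subseteq V_i$" refers canonically to this submodule, and the third isomorphism theorem gives
\be
V_i/V_j\simeq (R/\langle p^i\rangle)/(\langle p^{i-j}\rangle/\langle p^i\rangle)\simeq R/\langle p^{i-j}\rangle\simeq V_{i-j}~~,
\ee
with all maps $R$-linear between $\Lambda$-modules, hence $\Lambda$-linear; the extreme cases $j=0$ and $j=i$ are degenerate and consistent.

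For $i\in\{1,\dots,n\}$ the module $V_n=\Lambda/\Lambda_0=\Lambda$ is free of rank one, hence projective, and $q_{n,i}$ is the quotient map $\Lambda\twoheadrightarrow\Lambda/\Lambda_{n-i}=V_i$, with kernel $\Lambda_{n-i}=p^{i}\Lambda\subseteq p\Lambda=\bm=J(\Lambda)$. Since $\Lambda$ is finitely generated over itself, Nakayama's lemma shows that $\bm$, and hence any submodule of it, is superfluous in $\Lambda$; therefore $\ker q_{n,i}$ is superfluous and $q_{n,i}:V_n\to V_i$ is a projective cover. (Equivalently: $\Lambda$ is local and $\mu(V_i)=\dim_{\bk}V_i/\bm V_i=1$, so the projective cover of $V_i$ is $\Lambda=V_n$.) Consequently the first syzygy $\Omega(V_i)$, being the kernel of a projective cover of $V_i$, equals $\ker q_{n,i}=\Lambda_{n-i}$, and $\varphi_{n-i}$ identifies this with $V_{n-i}$; explicitly, $p^{i}y+\langle p^n\rangle\mapsto y+\langle p^{n-i}\rangle$ is an isomorphism $\Lambda_{n-i}\stackrel{\sim}{\rightarrow}R/\langle p^{n-i}\rangle=V_{n-i}$.

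The computations here are routine. The only points needing a little care are (i) that $V_i/V_j$ be well defined, which is precisely where uniseriality of $\Lambda$ is used, and (ii) that $\ker q_{n,i}$ be superfluous, which reduces to the containment $\ker q_{n,i}\subseteq\bm$ and an application of Nakayama. I do not expect a genuine obstacle.
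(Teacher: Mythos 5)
Your proof is correct, and for the quotient and syzygy statements it is essentially the paper's argument: both reduce everything to the explicit identifications $V_i\simeq_R R/\langle p^i\rangle$, $\Lambda_i=p^{n-i}\Lambda$, and compute $V_i/V_j\simeq R/\langle p^{i-j}\rangle$ and $\ker(q_{n,i})=\Lambda_{n-i}\simeq V_{n-i}$ directly. The one place you take a different route is the projective-cover claim. The paper checks that $q_{n,i}$ induces an isomorphism $V_n/\rad(V_n)\rightarrow V_i/\rad(V_i)$ (using $\rad(V_k)=V_{k-1}$ and $V_k/V_{k-1}\simeq \bk$) and then cites the Auslander--Reiten criterion that a surjection from a projective which is an isomorphism on tops is a projective cover; you instead show the kernel $p^i\Lambda\subseteq \bm=J(\Lambda)$ is superfluous in $\Lambda$ by Nakayama (equivalently, that $\dim_{\bk}V_i/\bm V_i=1$ over the local ring forces the cover to be $\Lambda$). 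These are standard equivalent characterizations; yours is more self-contained and avoids the external citation, while the paper's phrasing in terms of radical quotients fits the Auslander--Reiten machinery it goes on to use for irreducible morphisms and almost split sequences. A small additional merit of your write-up is the explicit remark that uniseriality makes the symbol $V_i/V_j$ unambiguous (the paper handles this implicitly by working with the chain of ideals $\Lambda_j\subseteq\Lambda_i$), and the observation that $R$-linear maps between $\Lambda$-modules are automatically $\Lambda$-linear, which the paper leaves tacit in the phrase ``so similar isomorphisms hold over $\Lambda$''.
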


\begin{proof}
We have $V_i/V_j=\langle p^{n-i}\rangle/\langle p^{n-j}\rangle\simeq_R
R/\langle p^{i-j}\rangle=V_{i-j}$, so similar isomorphisms hold over
$\Lambda$. Recall that $V_n\simeq \Lambda$ is a projective module.
Since each $V_k$ has a single maximal submodule (namely $V_{k-1}$), we
have $\rad(V_k)=V_{k-1}$ for all $k\in \{1,\ldots, n\}$.  The induced
map $\bar{q}_{n,i}:V_n/\rad(V_n)\rightarrow V_i/\rad(V_i)$ is an
isomorphism since $V_n/V_{n-1}\simeq_\Lambda R/\langle p\rangle
\simeq_\Lambda V_i/V_{i-1}$.  This implies that $q_{n,i}$ is a
projective cover by \cite[Chap I.4, Proposition 4.3, page 13]{AR}. It
is clear that $\ker(q_{n,i})\simeq V_{n-i}$. \qed
\end{proof}

\

\begin{Proposition}
Let $f:V_i\rightarrow V_j$ be an irreducible morphism in
$\mod_\Lambda$. Then one of the following holds:
\begin{enumerate}
\itemsep 0.0em
\item $f$ is injective and $j=i+1$. In this case, $f$ fits into a
  short exact sequence:
\be
0\longrightarrow V_i\stackrel{f}{\longrightarrow} V_{i+1}\longrightarrow V_1\longrightarrow 0 ~~.
\ee 
\item $f$ is surjective and $j=i-1$. In this case, $f$ fits into a short exact sequence: 
\be
0\longrightarrow V_1\longrightarrow V_i\stackrel{f}{\longrightarrow} V_{i-1}\longrightarrow 0~~.
\ee
\end{enumerate}
\end{Proposition}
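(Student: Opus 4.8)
The plan is to combine the very rigid module structure established above (the only indecomposables are $V_1,\ldots,V_n$, all uniserial, with $\Hom_\Lambda(V_i,V_j)\simeq V_{\min(i,j)}$) with the two general facts about irreducible morphisms that hold in any Krull--Schmidt category: an irreducible morphism is neither a split mono nor a split epi, and if $f\colon X\to Y$ is irreducible with $X,Y$ indecomposable then $f$ is either injective or surjective. First I would record the list of all nonzero morphisms between the $V_i$. Using $\End_\Lambda(V_i)\simeq R/\langle p^i\rangle$ and the projective cover $q_{n,i}\colon V_n\to V_i$, every morphism $V_i\to V_j$ factors as multiplication by a power of $p$ followed by inclusions/projections along the uniserial chain; concretely, for $i\le j$ the nonzero maps $V_i\to V_j$ are (up to unit and up to the action of $\End$) the composition $V_i\hookrightarrow V_{i+1}\hookrightarrow\cdots\hookrightarrow V_j$ (an injection onto the submodule $\Lambda_i\subseteq\Lambda_j$, using $V_k\simeq_\Lambda\Lambda_k$), possibly premultiplied by $p$, and for $i\ge j$ they are the canonical surjections $V_i\twoheadrightarrow V_{i-1}\twoheadrightarrow\cdots\twoheadrightarrow V_j$ (quotients by submodules), again possibly premultiplied by $p$. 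The key observation is that any such map which is premultiplied by $p$, or for which the "distance" $|i-j|$ is at least $2$, factors \emph{nontrivially} through an intermediate $V_k$, and hence cannot be irreducible.

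Next I would carry out the case analysis. Suppose $f\colon V_i\to V_j$ is irreducible; then $f\neq 0$ and $f$ is not an isomorphism, so $i\neq j$ (an endomorphism of $V_i$ is either an iso or lies in the radical, hence factors through multiplication by $p$, i.e.\ through $V_{i-1}\hookrightarrow V_i$, so it is not irreducible). By the dichotomy for irreducible maps between indecomposables, $f$ is injective or surjective. If $f$ is injective, its image is a submodule of the uniserial module $V_j$, hence equals $V_i$ (matching lengths), so $i<j$ and $f$ is, up to an automorphism of $V_i$, the inclusion $V_i\hookrightarrow V_j$; if $j\ge i+2$ this inclusion factors as $V_i\hookrightarrow V_{i+1}\hookrightarrow V_j$ with neither map a split mono (no $V_k$ with $k<n$ is projective by Proposition \ref{prop:indproj}, and these inclusions are not split because their cokernels are nonzero) nor a split epi, contradicting irreducibility; hence $j=i+1$. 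Dually, if $f$ is surjective then $f$ is, up to automorphism of $V_j$, the canonical quotient $V_i\twoheadrightarrow V_{i-j}$-type map onto $V_j$ with kernel $\Lambda_{i-j}$; if $i\ge j+2$ it factors as $V_i\twoheadrightarrow V_{i-1}\twoheadrightarrow V_j$ through two non-split surjections, again a contradiction; hence $j=i-1$. Finally, in the injective case with $j=i+1$, the cokernel of $V_i\hookrightarrow V_{i+1}$ is $V_{i+1}/V_i\simeq_\Lambda V_1$ by the previous Proposition, giving the short exact sequence $0\to V_i\to V_{i+1}\to V_1\to 0$; in the surjective case with $j=i-1$, the kernel of $V_i\twoheadrightarrow V_{i-1}$ is $\Lambda_1\simeq_\Lambda V_1$ (the unique simple module), giving $0\to V_1\to V_i\to V_{i-1}\to 0$.

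I expect the main technical point to be justifying cleanly that the "length-$\ge 2$" inclusions and surjections genuinely fail to be irreducible, i.e.\ that in the factorization $V_i\hookrightarrow V_{i+1}\hookrightarrow V_{i+2}$ the first arrow is not a split monomorphism and the second is not a split epimorphism. For the monomorphism this is exactly the statement that $V_i$ is not a direct summand of $V_{i+1}$, which follows from the uniqueness of the Krull--Schmidt decomposition together with $V_{i+1}$ being indecomposable; for the epimorphism one argues dually (a split epi $V_{i+1}\twoheadrightarrow V_{i+2}$ would make $V_{i+2}$ a summand of $V_{i+1}$). The rest is bookkeeping with the explicit description of $\Hom_\Lambda(V_i,V_j)$ already available, so no new ideas are needed beyond assembling the uniseriality of $\Lambda$, Proposition \ref{prop:Hom_Lambda}, and the standard characterization of irreducible maps between indecomposables in a Krull--Schmidt category.
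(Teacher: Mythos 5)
Your proof is correct and takes essentially the same route as the paper: the mono/epi dichotomy for irreducible morphisms in $\mod_\Lambda$, followed by uniseriality and indecomposability of the $V_k$ to force $j=i\pm 1$, and finally the identifications $V_{i+1}/V_i\simeq_\Lambda V_1$ and $\ker(V_i\twoheadrightarrow V_{i-1})\simeq_\Lambda V_1$ for the short exact sequences. The only (cosmetic) difference is that where the paper invokes the fact that the image of an irreducible monomorphism is a direct summand of every proper submodule containing it, you exhibit the explicit two-step factorization through $V_{i\pm 1}$ and check that neither factor is split, which is the same argument; just note that the mono-or-epi dichotomy is a fact about irreducible maps in the abelian category $\mod_\Lambda$ (the cited Auslander--Reiten lemma), not a formal property of arbitrary Krull--Schmidt categories.
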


\begin{proof}
Recall that an irreducible morphism $f:V_i\rightarrow V_j$ in
$\mod_\Lambda$ must be either a monomorphism or an epimorphism
\cite[Chap. V.5, Lemma 5.1]{AR}.  Distinguish the cases:
\begin{enumerate}
\itemsep 0.0em
\item If $f$ is a monomorphism, then $\im f=V_k$ for some $k\leq
  j$. Since $V_i\simeq_\Lambda \im f$, we must have $k=i$ and $\im
  f=V_i$. Thus $i\leq j$. Moreover, $\im f$ must be a direct summand
  of any proper submodule of $V_j$ which contains $\im f$. Since no
  submodule of $V_j$ has a direct summand, we must have $\im
  f=V_{j-1}$ and hence $j=i+1$.
\item If $f$ is an epimorphism, then $\ker f=V_k$ for some $k\leq
  i$. Since $V_j\simeq_\Lambda V_i/\ker f=V_i/V_k\simeq_\Lambda V_{i-k}$, we must have
  $i\geq j$ and $k=i-j$. Moreover, $V_j$ must be a summand of $V_i/M$
  for any non-zero submodule $M$ of $V_i$ which is contained in $\ker
  f=V_k$, i.e. it must be a summand of $V_i/V_s=V_{i-s}$ for any
  $s\in \{1,\ldots, k\}$. Since none of the modules $V_1,\ldots, V_n$ has
  direct summands, this means that we must have $k=1$,
  i.e. $i=j+1$. 
\end{enumerate}
The short exact sequences follow immediately from the above. \qed
\end{proof} 

\

\noindent For any $i\in \{1,\ldots, n-1\}$, let $s_{i,i+1}:V_i\rightarrow
V_{i+1}$ be the inclusion. For any $i=2,\ldots, n$, let
$q_{i,i-1}:V_i\rightarrow V_{i-1}$ be the natural surjection. For any
$i\in \{1,\ldots, n-1\}$, we have an almost split sequence (see
\cite[p. 141]{AR}):
\be
0\longrightarrow V_i \stackrel{g_i}{\longrightarrow} V_{i-1}\oplus V_{i+1}\stackrel{f_i}{\longrightarrow} V_i\longrightarrow 0~~,
\ee
where $g_i=\left[\begin{array}{c} -q_{i,i-1}
    \\ s_{i,i+1}\end{array}\right]$ and $f_i=\left[\begin{array}{cc}
    s_{i-1,i} &,~q_{i+1,i}\end{array}\right]$.  In particular, the
morphisms $s_{i,i+1}$ and $q_{i,i-1}$ are irreducible by
\cite[Chap. V.5., Theorem 5.3, p. 167]{AR}. Moreover, the
Auslander-Reiten translation $\tau=\rD\Tr$ is given by:
\be
\tau(V_i)=V_i~,~~\forall i\in \{1,\ldots, n-1\}~,~~\tau(V_n)=0~~.
\ee
(recall that $\rD\Tr(P)=0$ iff $P$ is a projective module).  It follows
that $\rD\Tr$ acts trivially on $\Lambda$-modules which have no
projective direct summands. By \cite[page 229]{AR}, the class
$\bar{s}_{i-1,i}$ of $s_{i-1,i}$ generates the
$T(V_{i-1})^\opp$-vector space $\Irr(V_{i-1},V_i)$ while the class
${\bar q}_{i+1,i}$ of $q_{i+1,i}$ generates the
$T(V_{i+1})^\opp$-vector space $\Irr(V_{i+1},V_i)$. Similarly, the
class $\bar{s}_{i,i+1}$ of $s_{i,i+1}$ generates the
$T(V_{i+1})$-vector space $\Irr(V_i,V_{i+1})$ and the class
$\bar{q}_{i,i-1}$ of $q_{i,i-1}$ generates the $T(V_{i-1})$-vector
space $\Irr(V_i,V_{i-1})$. Thus:
\begin{itemize}
\itemsep 0.0em
\item $\Irr(V_i, V_{i+1})$ is generated by $\bar{s}_{i,i+1}$ over both
  $T(V_{i})^\opp$ and $T(V_{i+1})$~.
\item $\Irr(V_i,V_{i-1})$ is generated by $\bar{q}_{i,i-1}$ over both
  $T(V_{i})^\opp$ and $T(V_{i-1})$~.
\end{itemize}
It follows that the arrow $V_i\rightarrow V_{i-1}$ for $i=2,\ldots,
n-1$ and the arrows $V_{i-1}\rightarrow V_i$ have trivial valuation
$(1,1)$. The Auslander-Reiten quiver of $\mod_\Lambda$ is shown in
Figure \ref{fig:modquiver}.

\begin{figure}[H]
\centering \scalebox{0.7}{\input{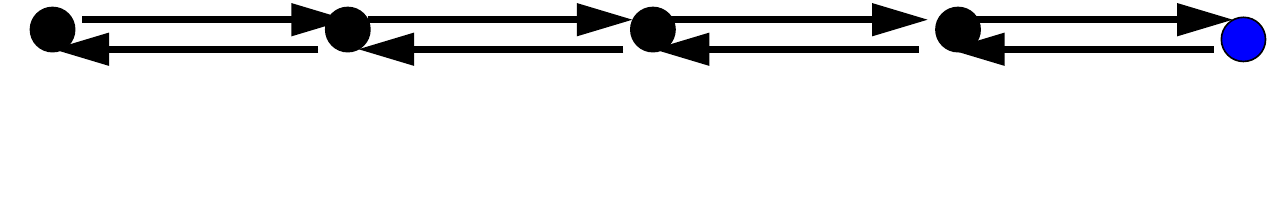_t}}
\caption{Auslander-Reiten quiver for $\mod_\Lambda$ when $n=5$. The
  single projective injective vertex is shown in blue.  The
  Auslander-Reiten translation fixes all non-projective vertices.}
\label{fig:modquiver}
\end{figure}

\subsection{The category $\umod_\Lambda$}

Let $\umod_\Lambda$ denote the projectively-stable category of
finitely-generated $\Lambda$-modules.  Since any projective
$\Lambda$-module is free, this category has the same objects as
$\mod_\Lambda$ and morphisms given by:
\be
\uHom_\Lambda(M,N)\eqdef \Hom_\Lambda(M,N)/\cP_\Lambda(M,N)~,~~\forall M,N\in \Ob(\mod_\Lambda)~~,
\ee
where $\cP_\Lambda(M,N)\subset \Hom_\Lambda(M,N)$ is the submodule
consisting of those morphisms from $M$ to $N$ which factor through a
free module of finite rank. Since $\mod_\Lambda$ is a Frobenius category, 
the stable category $\umod_\Lambda$ has a natural triangulated structure. 

The first syzygy induces a functor $\Omega:\umod_\Lambda\rightarrow
\umod_\Lambda$ which is an equivalence of categories since $\Lambda$
is self-injective (see \cite[Chap. IV.3]{AR}). Since $\Lambda$ is a
symmetric Artin algebra, we also have $\rD\simeq
\Hom_\Lambda(-,\Lambda)$ and $\Omega^2\simeq \rD\Tr=\tau$. Since $\rD\Tr$
acts as the identity functor on indecomposable non-projectives of
$\mod_\Lambda$, we have $\rD\Tr\simeq \id_{\umod_\Lambda}$ and hence
$\Omega^2\simeq \id_{\umod_\Lambda}$. The functor $\Omega$ is the
shift functor of the triangulated category $\umod_\Lambda$.

\

\noindent For $i,j\in \{0,\ldots,n\}$, define: 
\ben
\label{deltamu}
\delta_n(i)\!\eqdef\! \min(i,n-i)\in \{1,\ldots, n-1\}~,~~
\mu_n(i,j)\!\eqdef\!\min[\delta_n(i),\delta_n(j)]=\fourpartdef{i}{i+j\leq n 
~\&~ i\leq j}{j}{i+j\leq n ~\&~ i>j }{n-i}{i+j>n ~\&~ i>j}{n-j}{i+j>n ~\&~ i\leq j}.
\een
Notice the relations $\delta_n(i)=\delta_n(n-i)$ and $\delta_n(n)=0$ as well as:
\ben
\label{murels}
\mu_n(i,j)=\mu_n(j,i)=\mu_n(n-i,j)=\mu_n(i,n-j)~,~~\mu_n(i,n)=0~~.
\een

\begin{Proposition}
\label{prop:umod}
For any $1\leq i,j\leq n-1$, we have: 
\be
\uHom_\Lambda(V_i,V_j)\simeq_{\Lambda} V_{\mu_n(i,j)}~~.
\ee
\end{Proposition}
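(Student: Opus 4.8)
The plan is to reduce the stable Hom to an ordinary $\Ext^1$ group, and then to compute the latter from the two--periodic projective resolution of $V_i$ furnished by the syzygy formula $\Omega(V_i)\simeq V_{n-i}$ established above.

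Since $\mod_\Lambda$ is a Frobenius category, one has the standard natural isomorphism $\uHom_\Lambda(M,\Omega^{-1}N)\simeq \Ext^1_\Lambda(M,N)$, obtained by applying $\Hom_\Lambda(M,-)$ to a short exact sequence $0\to N\to I\to \Omega^{-1}N\to 0$ with $I$ injective (hence projective), using $\Ext^1_\Lambda(M,I)=0$, and identifying $\cP_\Lambda(M,\Omega^{-1}N)$ with the image of $\Hom_\Lambda(M,I)\to \Hom_\Lambda(M,\Omega^{-1}N)$ (a morphism $M\to\Omega^{-1}N$ lifts to $M\to I$ iff it factors through a projective). As $\Omega$ is an auto--equivalence of $\umod_\Lambda$ with $\Omega^2\simeq\id$, we have $\Omega^{-1}\simeq\Omega$ on the stable category, so replacing $N$ by $V_j$ gives
\[
\uHom_\Lambda(V_i,V_j)\simeq \Ext^1_\Lambda\bigl(V_i,\Omega V_j\bigr)\simeq \Ext^1_\Lambda\bigl(V_i,V_{n-j}\bigr)~~.
\]

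To compute this, recall that $V_i\simeq\Lambda/p^i\Lambda$ with projective cover $q_{n,i}\colon\Lambda\to V_i$, so that $\Omega(V_i)\simeq V_{n-i}$ and $\Omega(V_{n-i})\simeq V_i$; hence the minimal projective resolution of $V_i$ is the two--periodic complex
\[
\cdots \xrightarrow{\,\cdot p^{i}\,}\Lambda\xrightarrow{\,\cdot p^{\,n-i}\,}\Lambda\xrightarrow{\,\cdot p^{i}\,}\Lambda\longrightarrow V_i\longrightarrow 0~~,
\]
whose first differential is multiplication by $p^i$ (its image being $\ker q_{n,i}=p^i\Lambda$), the next multiplication by $p^{n-i}$, and so on. Applying $\Hom_\Lambda(-,V_{n-j})$ and using $\Hom_\Lambda(\Lambda,V_{n-j})\simeq V_{n-j}$ turns this into the cochain complex $V_{n-j}\xrightarrow{\,\cdot p^{i}\,}V_{n-j}\xrightarrow{\,\cdot p^{\,n-i}\,}V_{n-j}\xrightarrow{\,\cdot p^{i}\,}\cdots$, so that
\[
\Ext^1_\Lambda(V_i,V_{n-j})\simeq \ker\bigl(p^{\,n-i}\colon V_{n-j}\to V_{n-j}\bigr)\big/\im\bigl(p^{i}\colon V_{n-j}\to V_{n-j}\bigr)~~.
\]
Since $R$ is a domain, on $V_{n-j}\simeq_R R/\langle p^{\,n-j}\rangle$ one has $\ker(\cdot p^{m})=\langle p^{\max(n-j-m,\,0)}\rangle/\langle p^{\,n-j}\rangle$ and $\im(\cdot p^{m})=\langle p^{\min(m,\,n-j)}\rangle/\langle p^{\,n-j}\rangle$, whence the displayed quotient is isomorphic to $R/\langle p^{\ell}\rangle\simeq_R V_\ell$ with $\ell=\min(i,n-j)-\max(i-j,0)$.

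A short case analysis on the signs of $i-j$ and $i+j-n$ now gives $\ell=\min(i,j)$ when $i+j\le n$ and $\ell=n-\max(i,j)$ when $i+j>n$, so in all cases $\ell=\min(i,j,n-i,n-j)=\mu_n(i,j)$, which lies in $\{1,\dots,n-1\}$ because $1\le i,j\le n-1$. This yields $\uHom_\Lambda(V_i,V_j)\simeq_\Lambda V_{\mu_n(i,j)}$. The only delicate points are the bookkeeping in the first isomorphism (fixing the syzygy shift and the identification of $\cP_\Lambda$) and the final case analysis; a more hands--on alternative that bypasses the first step is to compute $\cP_\Lambda(V_i,V_j)$ directly as the submodule of compositions $V_i\to\Lambda\to V_j$ — any morphism into or out of a finite free module splitting into components through $\Lambda$ — and to evaluate it via the composition rule $q_{ac}(g\circ f)=s_{abc}\,q_{bc}(g)\,q_{ab}(f)$ with $a=p^i$, $b=p^n$, $c=p^j$: there $s_{abc}\in(p^{\,n-\max(i,j)})$ while $q_{ab}(f)$ and $q_{bc}(g)$ range over all of $R/\langle p^i\rangle$ and $R/\langle p^j\rangle$, so $\cP_\Lambda(V_i,V_j)$ corresponds to $\langle p^{\,n-\max(i,j)}\rangle/\langle p^{\min(i,j)}\rangle$ inside $\Hom_\Lambda(V_i,V_j)\simeq R/\langle p^{\min(i,j)}\rangle$, giving the same conclusion.
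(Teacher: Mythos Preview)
Your proof is correct. The reduction $\uHom_\Lambda(V_i,V_j)\simeq\Ext^1_\Lambda(V_i,V_{n-j})$ via the Frobenius--category identification $\uHom_\Lambda(M,\Omega^{-1}N)\simeq\Ext^1_\Lambda(M,N)$ and $\Omega^{-1}\simeq\Omega$ is legitimate, the two--periodic resolution is written down correctly (first differential $\cdot\,p^i$, then $\cdot\,p^{n-i}$), and the kernel/image computation on $V_{n-j}\simeq R/\langle p^{n-j}\rangle$ yields $\ell=\min(i,n-j)-\max(i-j,0)$, which indeed agrees with $\min(i,j,n-i,n-j)=\mu_n(i,j)$ after the four--case check.

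This is, however, a different route from the paper's. The paper works directly at the level of $\Hom$ rather than passing through $\Ext^1$: identifying $V_k$ with the ideal $p^{n-k}\Lambda\subset\Lambda$, one has $\Hom_\Lambda(V_i,V_j)\simeq p^{\max(n-i,n-j)}\Lambda$, and the submodule $\cP_\Lambda(V_i,V_j)$ of maps factoring through a free module is identified (via reflexivity $V_i\simeq V_i^\vee$) with the product ideal $p^{n-i}\Lambda\cdot p^{n-j}\Lambda=p^{2n-i-j}\Lambda$; the quotient then gives $V_{\mu_n(i,j)}$ by the same case split on $i+j\lessgtr n$. Your ``hands--on alternative'' at the end is essentially this argument, phrased through the composition law $q_{ac}(g\circ f)=s_{abc}\,q_{bc}(g)q_{ab}(f)$ rather than through reflexivity. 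The paper's approach is a bit more elementary (no Ext machinery, no periodic resolution), while yours makes visible the connection with the triangulated shift and would generalise more readily to higher syzygies; both land on the same case analysis.
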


\begin{proof}
A similar statement is proved in \cite[Lemma 2.3]{CSZ}.  For
completeness we sketch the proof. Proposition \ref{prop:Hom_Lambda}
gives an isomorphism of $\Lambda$-modules:
\be
\Hom_{\Lambda}(V_i, V_j) \simeq_\Lambda V_{\min(i,j)}\simeq_\Lambda p^{n-\min(i,j)}\Lambda =p^{\max(n-i,n-j)}\Lambda=(p^{n-i}\Lambda)\cap (p^{n-j}\Lambda)~~,
\ee
where we noticed that $n-\min(i,j)=\max(n-i,n-j)$.  The morphism $f\in
\Hom_{\Lambda}(p^{n-i}\Lambda, p^{n-j}\Lambda )$ factors through a
free module iff\footnote{As in \cite[Lemma 2.3]{CSZ}, this follows
from the fact that the natural morphism of modules from $V_i\simeq
p^{n-i} \Lambda$ to $V_i^\vee=\Hom_\Lambda(p^{n-i}\Lambda,
\Lambda)=\Hom_\Lambda(V_i,V_n)\simeq V_{\min(i,n)}=V_i$ is an
isomorphism by Proposition \ref{prop:Hom_Lambda}.} its image
through this isomorphism lies in the ideal $p^{n-i}\Lambda
p^{n-j}=p^{2n-i-j}\Lambda$. Thus:
\be
\uHom(V_i,V_j) \simeq_\Lambda \frac{p^{\max(n-i,n-j)}\Lambda}{p^{2n-i-j}\Lambda}\simeq_\Lambda \frac{p^{n-\min(i,j)}\Lambda}{p^{2n-i-j}\Lambda}~~.
\ee 
The denominator is isomorphic to $0$ when $i+j \leq n$. In this case we have:
\be
\uHom(V_i,V_j) \simeq_\Lambda R/\langle p^{\min(i,j)}\rangle=V_{\min(i,j)}~~.
\ee 
On the other hand, when $i+j>n$, we find:
\be
\uHom(V_i,V_j) \simeq_\Lambda \frac{\langle p^{\max(n-i,n-j)}\rangle }{\langle p^{2n-i-j}\rangle } \simeq_\Lambda R/\langle p^{\min(n-i,n-j)}\rangle=V_{\min(n-i,n-j)}~~,
\ee 
where we noticed that $2n-i-j=\min(n-i,n-j)+\max(n-i,n-j)$.  The 
conclusion follows upon noticing that:
\be
\mu_n(i,j)=\twopartdef{\min(i,j)}{i+j\leq n}{\min(n-i,n-j)}{i+j>n}~~.~~\qed
\ee
\end{proof}

\subsection{The Auslander-Reiten quiver of $\umod_\Lambda$}

For any $\Lambda$-module $M$, there exists an injective resolution: 
\be
M\longrightarrow M_0\longrightarrow M_1 \longrightarrow \ldots 
\ee
whose cohomology in degree one equals $\Omega(M)$. Hence we
have natural isomorphisms of $\Lambda$-modules
$\Ext^1(N,M)\simeq_\Lambda \uHom_\Lambda(N,\Omega(M))$ and any
Auslander-Reiten sequence:
\ben
\label{AR}
0\longrightarrow X \stackrel{f}{\longrightarrow} Y \stackrel{g}{\longrightarrow} Z\longrightarrow 0
\een
induces an Auslander-Reiten triangle: 
\be
X\stackrel{\underline{f}}{\longrightarrow} Y \stackrel{\underline{g}}{\longrightarrow} Z\stackrel{\psi}{\longrightarrow} \Omega(X)~~,
\ee
where $\psi\in \uHom_\Lambda(Z,X[1])=\Ext^1(Z,X)$ is the extension class
defined by the AR sequence \eqref{AR}. As a consequence, the category $\umod_\Lambda$ has
Auslander-Reiten triangles which are given by:
\ben
\label{ARtriangles}
V_i \stackrel{\underline{g_i}}{\longrightarrow} V_{i-1}\oplus V_{i+1}\stackrel{\underline{f_i}}{\longrightarrow} V_i\longrightarrow \Omega(V_i)~,~~\forall i\in \{1,\ldots, n-1\}~~.
\een 
In particular, $V_1,\ldots, V_{n-1}$ are indecomposable objects of
$\umod_\Lambda$ which have local endomorphism rings. It follows that
$\umod_\Lambda$ is Krull-Schmidt with indecomposables $V_1,\ldots,
V_{n-1}$. Moreover, $\underline{g_i}$ are source morphisms and
$\underline{f_i}$ are sink morphisms, which implies $\dim_{T(V_i)}
\Irr(V_i,V_{i+1})=\dim_{T(V_{i+1})}\Irr(V_i,V_{i+1})=1$ and
$\dim_{T(V_i)}
\Irr(V_i,V_{i-1})=\dim_{T(V_{i-1})}\Irr(V_i,V_{i-1})=1$ (see
\cite{Liu}). Hence all arrows of the AR quiver of $\umod_\Lambda$ have
trivial valuation $(1,1)$. The AR translation is given by $\tau(V_i)=V_i$
for all $i\in \{1,\ldots, n-1\}$. The AR quiver of $\umod_\Lambda$ is
obtained from that of $\mod_\Lambda$ by deleting the projective vertex;
an example is shown in Figure \ref{fig:squiver}.

\

\begin{figure}[H]
\centering \scalebox{0.7}{\input{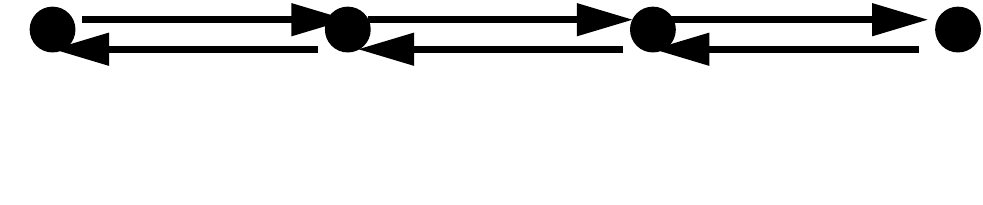_t}}
\caption{Auslander-Reiten quiver for $\umod_\Lambda$ when $n=5$.  The 
translation fixes all vertices. }
\label{fig:squiver}
\end{figure}

\subsection{The Calabi-Yau property of $\umod_\Lambda$}

Recall that $\Lambda$ is a self-injective (a.k.a. quasi-Frobenius)
commutative ring. This implies that all finitely-generated
$\Lambda$-modules are reflexive and that the dual
$\rD(M)=\Hom_\Lambda(M,\Lambda)$ of any finitely-generated module is
finitely-generated \cite[Theorem 4.12.21]{Hazewinkel}. Thus $\rD$ is an
involutive auto-equivalence of $\umod_\Lambda$. Since $\Lambda$ is
self-injective, we have $\umod_\Lambda\simeq \overline{\mod}_\Lambda$
and hence $\rD$ induces a well-defined involutive autoequivalence of
$\umod_\Lambda$ by \cite[Chap. IV.1, Proposition 1.9, page 106]{AR},
which we denote by the same letter.

\

\begin{Lemma}
\label{lemma:duals}
We have: 
\be
\rD(V_i)\simeq_\Lambda V_i~,~~\forall i\in \{1,\ldots, n\}~~.
\ee
\end{Lemma}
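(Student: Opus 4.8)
The plan is to deduce this directly from Proposition \ref{prop:Hom_Lambda}. Recall that $\rD(M)\eqdef\Hom_\Lambda(M,\Lambda)$ and that $V_n=\Lambda/\Lambda_0\simeq_R R/\langle p^n\rangle$, so that $V_n$ is isomorphic to $\Lambda$ as a $\Lambda$-module (indeed $V_n$ carries the unital $\Lambda$-algebra structure discussed above, whose unit corresponds to $1_\Lambda$). Hence the whole computation reduces to evaluating $\Hom_\Lambda(V_i,V_n)$.

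First I would write $\rD(V_i)=\Hom_\Lambda(V_i,\Lambda)\simeq_\Lambda\Hom_\Lambda(V_i,V_n)$, using the isomorphism $V_n\simeq_\Lambda\Lambda$. Then I would apply Proposition \ref{prop:Hom_Lambda} with $j=n$, which gives $\Hom_\Lambda(V_i,V_n)\simeq_\Lambda V_{\min(i,n)}$. Since $1\leq i\leq n$, we have $\min(i,n)=i$, so $\rD(V_i)\simeq_\Lambda V_i$, as claimed. One could equivalently invoke Corollary \ref{cor:choms} with $a=p^i$, $b=p^n$, $c=p^n$ (so that $a\mid c$ and $b\mid c$), which yields $\Hom_{R/\langle p^n\rangle}(R/\langle p^i\rangle,R/\langle p^n\rangle)\simeq R/\langle p^i,p^n\rangle=R/\langle p^i\rangle=V_i$.

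There is essentially no obstacle here: the statement is an immediate specialization of the already-established $\Hom$-computation, and the only point worth spelling out is the identification $V_n\simeq_\Lambda\Lambda$, which was recorded when the modules $V_i$ were introduced. It may nonetheless be worth remarking, for use in the Calabi-Yau discussion that follows, that this shows each $V_i$ is reflexive with $\rD^2(V_i)\simeq_\Lambda V_i$, consistent with the general reflexivity of finitely-generated modules over the self-injective ring $\Lambda$; but this is not needed for the proof of the Lemma itself.
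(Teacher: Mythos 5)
Your proof is correct and is essentially identical to the paper's: both identify $\rD(V_i)=\Hom_\Lambda(V_i,\Lambda)\simeq_\Lambda \Hom_\Lambda(V_i,V_n)$ and apply Proposition \ref{prop:Hom_Lambda} to obtain $V_{\min(i,n)}=V_i$. The extra remarks (the explicit identification $V_n\simeq_\Lambda\Lambda$ and the alternative route via Corollary \ref{cor:choms}) are harmless elaborations of the same argument.
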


\begin{proof}
For any $i\in \{1,\ldots, n\}$, we have: 
\be
\rD(V_i)=\Hom_{\Lambda}(V_i, V_n)\simeq_\Lambda V_{\min(i,n)}=V_i~~,
\ee
where we used Proposition \ref{prop:Hom_Lambda}. \qed 
\end{proof}

\

Recall that an additive autoequivalence $S$ of the $R$-linear category
$\umod_\Lambda$ is called a {\em Serre functor} if we have natural
isomorphisms of $\Lambda$-modules:
\be
\uHom_\Lambda(M,N)\simeq_\Lambda \rD(\uHom_\Lambda(N,S(M)))~,~~\forall M,N\in \Ob[\umod_\Lambda]~~.
\ee
This implies that $S$ is a triangulated
auto-equivalence. The following proposition shows
that the $R$-linear triangulated category $\umod_\Lambda$ is
``1-Calabi-Yau'':
 
\

\begin{Proposition}
The functor $S=\Omega$ is a Serre functor for $\umod_\Lambda$. 
\end{Proposition}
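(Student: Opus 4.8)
The plan is to verify the Serre functor identity
\be
\uHom_\Lambda(M,N)\simeq_\Lambda \rD(\uHom_\Lambda(N,\Omega(M)))
\ee
on the indecomposable non-projective objects $V_i,V_j$ with $1\le i,j\le n-1$, since $\umod_\Lambda$ is Krull-Schmidt with these as indecomposables and all functors involved are additive, so both sides commute with finite direct sums. First I would compute the left-hand side using Proposition \ref{prop:umod}: $\uHom_\Lambda(V_i,V_j)\simeq_\Lambda V_{\mu_n(i,j)}$. For the right-hand side, recall $\Omega(V_i)\simeq_\Lambda V_{n-i}$ (proved earlier via the projective cover $q_{n,i}:V_n\to V_i$), so $\uHom_\Lambda(V_j,\Omega(V_i))\simeq_\Lambda \uHom_\Lambda(V_j,V_{n-i})\simeq_\Lambda V_{\mu_n(j,n-i)}$ by Proposition \ref{prop:umod} again. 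Then applying Lemma \ref{lemma:duals}, which gives $\rD(V_k)\simeq_\Lambda V_k$ for all $k$, the right-hand side becomes $V_{\mu_n(j,n-i)}$.

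So the whole identity reduces to the combinatorial equality $\mu_n(i,j)=\mu_n(j,n-i)$ for $1\le i,j\le n-1$, which is precisely one of the symmetry relations recorded in \eqref{murels} (namely $\mu_n(i,j)=\mu_n(n-i,j)=\mu_n(j,n-i)$, using also the obvious symmetry $\mu_n(i,j)=\mu_n(j,i)$). Thus the isomorphisms of $\Lambda$-modules match on objects. I would then note that $\Omega$ is an autoequivalence of $\umod_\Lambda$ (established earlier from self-injectivity of $\Lambda$), which is what a Serre functor must be, and that $\rD$ is an involutive autoequivalence; the existence of the object-level isomorphisms together with naturality (which can be traced through the explicit isomorphisms $q_{ab}$ of the earlier proposition, or invoked abstractly since $\umod_\Lambda$ is a Krull-Schmidt Hom-finite category over the field $\bk_p$ and a functorial isomorphism on indecomposables extends) yields the Serre functor property.

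The main obstacle I anticipate is not the object-level computation — that is a one-line chase through the already-established $\uHom$ formula, $\Omega(V_i)=V_{n-i}$, and $\rD(V_i)=V_i$ — but rather checking \emph{naturality} of the isomorphism in $M$ and $N$. The cleanest route is probably to use that $\umod_\Lambda$ is a $\bk_p$-linear Hom-finite Krull-Schmidt triangulated category admitting Auslander-Reiten triangles (shown in the previous subsection, with $\tau=\id$), and to appeal to the general fact that such a category has a Serre functor $S$ with $S\simeq \tau\circ\Omega$ (equivalently $S\simeq\Omega$ here since $\tau\simeq\id_{\umod_\Lambda}$), the AR triangle $V_i\to V_{i-1}\oplus V_{i+1}\to V_i\to\Omega(V_i)$ from \eqref{ARtriangles} exhibiting exactly the defining isomorphism on the relevant pieces. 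Alternatively, if one wants a hands-on argument, one traces the composition pairing $\uHom_\Lambda(M,N)\times\uHom_\Lambda(N,\Omega(M))\to\uHom_\Lambda(M,\Omega(M))$ and identifies the target's socle with $\bk_p$ to get a perfect pairing; the bookkeeping of units via the factors $s_{abc}\in\frac{(a,c)(b)}{(b,c)(a,b)}$ from the earlier proposition is the only place where care is genuinely needed, but since everything is determined up to units of $R$ and we only claim isomorphism (not a canonical identification), this does not obstruct the argument.
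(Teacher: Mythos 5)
Your proposal is correct and follows essentially the same route as the paper: reduce to the indecomposables $V_i$ via the Krull-Schmidt property, compute both sides with Proposition \ref{prop:umod}, $\Omega(V_i)\simeq_\Lambda V_{n-i}$ and Lemma \ref{lemma:duals}, and match them using the symmetries \eqref{murels} of $\mu_n$. Your extra discussion of naturality is a reasonable supplement (the paper simply asserts that the isomorphisms involved are natural), but it does not change the substance of the argument.
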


\begin{proof}
Since $\umod_\Lambda$ is Krull-Schmidt with indecomposable objects
$V_1,\ldots, V_{n-1}$, it suffices to show that we have natural
isomorphisms in $\umod_\Lambda$:
\ben
\label{sgen}
\uHom_\Lambda(V_i,V_j)\simeq_{\umod_\Lambda} \rD(\uHom_\Lambda(V_j,\Omega(V_i)))~,~~\forall i,j\in \{1,\ldots, n-1\}~~.
\een
Since $\Omega V_i\simeq_\Lambda V_{n-i}$, Proposition \ref{prop:umod} shows
that the right hand side of \eqref{sgen} is given by:
\be
\rD(\uHom_{\Lambda}(V_j,\Omega(V_i)))\simeq_{\Lambda} \rD(\uHom_{\Lambda}(V_j, V_{n-i}))\simeq_\Lambda \rD(V_{\mu_n(j,n-i)})= \rD(V_{\mu_n(i,j)})\simeq_\Lambda V_{\mu_n(i,j)}~~,
\ee
where we used relations \eqref{murels} and Lemma \ref{lemma:duals}. On the
other hand, the left hand side of \eqref{sgen} is given by:
\be
\uHom_\Lambda(V_i,V_j)\simeq_\Lambda V_{\mu_n(i,j)}~~.
\ee
Since all isomorphisms above are natural, we conclude that
\eqref{sgen} holds since any isomorphism in $\mod_\Lambda$ induces an
isomorphism in $\umod_\Lambda$. \qed
\end{proof}

\subsection{A triangle generator for $\umod_\Lambda$}
We say that a full subcategory $\cC$ of $\umod_\Lambda$ is {\em closed
  under extensions} (also known as {\em thick} or {\em \'epaisse}) if,
given any distinguished triangle:
\be
X\longrightarrow Y\longrightarrow Z\longrightarrow \Omega(X)
\ee
of $\umod_\Lambda$, we have $Y\in \Ob \cC$ provided that $X$ and $Z$
are objects of $\cC$. We say that a full subcategory $\cC$ of
$\umod_\Lambda$ is {\em isomorphism-closed} (or {\em strictly full})
if any object of $\umod_\Lambda$ which is isomorphic with an object of
$\cC$ is an object of $\cC$. A full subcategory $\cC$ of
$\umod_\Lambda$ is called {\em saturated} if it is closed under direct
summands. Given an object $X$ of $\umod_\Lambda$, let $\langle
X\rangle $ denote the smallest triangulated subcategory of
$\umod_\Lambda$ which contains the object $X$ and is strictly full and
saturated. This coincides with the smallest full subcategory of
$\umod_\Lambda$ which is closed under isomorphisms, direct sums,
shifts and direct summands.

\

\begin{Proposition}
The smallest full subcategory of $\umod_\Lambda$ which contains the
object $V_1=\bk_p$ and is closed under isomorphisms, direct sums,
direct summands and extensions coincides with $\umod_\Lambda$. Hence: 
\be
\langle V_1\rangle=\umod_\Lambda~~.
\ee
\end{Proposition}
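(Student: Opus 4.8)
The plan is to show that the indecomposable modules $V_1,\ldots,V_{n-1}$ all lie in $\langle V_1\rangle$, since these (together with the projective $V_n$, which is zero in $\umod_\Lambda$) generate $\umod_\Lambda$ as a Krull--Schmidt category. As $\langle V_1\rangle$ is closed under isomorphisms, direct sums and direct summands, it then automatically contains every finitely-generated $\Lambda$-module, giving $\langle V_1\rangle=\umod_\Lambda$.

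First I would argue by induction on $i$ that $V_i\in\langle V_1\rangle$ for $i=1,\ldots,n-1$. The base case $i=1$ is the hypothesis. For the inductive step, recall from the previous section the short exact sequence of $\Lambda$-modules
\be
0\longrightarrow V_1 \longrightarrow V_{i+1}\longrightarrow V_i\longrightarrow 0~~,
\ee
which arises (up to the natural identifications) as the irreducible monomorphism $V_i\hookrightarrow V_{i+1}$ with cokernel $V_1$, or equivalently from $V_{i+1}/V_1\simeq_\Lambda V_i$. Since $\mod_\Lambda$ is a Frobenius category and $\umod_\Lambda$ carries the induced triangulated structure, every short exact sequence of $\Lambda$-modules induces a distinguished triangle in $\umod_\Lambda$; hence we obtain a distinguished triangle
\be
V_1\longrightarrow V_{i+1}\longrightarrow V_i\longrightarrow \Omega(V_1)
\ee
in $\umod_\Lambda$. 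By the inductive hypothesis $V_i\in\langle V_1\rangle$, and $V_1\in\langle V_1\rangle$ trivially; since $\langle V_1\rangle$ is closed under extensions (being triangulated), the middle term $V_{i+1}$ lies in $\langle V_1\rangle$ as well. This completes the induction, so $V_1,\ldots,V_{n-1}\in\langle V_1\rangle$.

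Finally, $V_n\simeq\Lambda$ is projective, hence a zero object of $\umod_\Lambda$, so it lies in $\langle V_1\rangle$ vacuously. Since every object of $\mod_\Lambda$ is a finite direct sum of copies of $V_1,\ldots,V_n$ (by the FGC property established above) and $\langle V_1\rangle$ is closed under direct sums and isomorphisms, every object of $\umod_\Lambda$ lies in $\langle V_1\rangle$. Therefore $\langle V_1\rangle=\umod_\Lambda$, and in particular the smallest full subcategory closed under isomorphisms, direct sums, direct summands and extensions containing $V_1$ already exhausts $\umod_\Lambda$. I do not expect any serious obstacle here; the only point requiring a little care is the passage from short exact sequences in the Frobenius category $\mod_\Lambda$ to distinguished triangles in $\umod_\Lambda$, and the observation that a triangulated subcategory closed under isomorphisms and direct summands is automatically closed under extensions. \qed
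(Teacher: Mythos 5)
Your proof is correct, but it takes a different route from the paper. The paper runs the induction along the almost split (Auslander--Reiten) triangles $V_i \to V_{i-1}\oplus V_{i+1}\to V_i \to \Omega(V_i)$ established in Section \ref{subsec:ARquiver}: from $V_i\in\langle V_1\rangle$ it deduces that the middle term $V_{i-1}\oplus V_{i+1}$ lies in $\langle V_1\rangle$ and then extracts $V_{i+1}$ using closure under direct summands. You instead use the ordinary (non-almost-split) extensions coming from $V_{i+1}/V_1\simeq_\Lambda V_i$, i.e. the conflations $0\to V_1\to V_{i+1}\to V_i\to 0$, pass to the induced distinguished triangles in the stable category of the Frobenius category $\mod_\Lambda$ (the same mechanism the paper invokes to turn AR sequences into AR triangles, and legitimate here since $\Omega\simeq\Omega^{-1}$), and conclude directly that $V_{i+1}\in\langle V_1\rangle$. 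This is slightly more economical: your inductive step needs only $V_i$ (not $V_{i-1}$ and $V_i$) and never invokes closure under direct summands or any AR theory, whereas the paper's argument leans on the machinery it has already built for the AR quiver. One small imprecision: the irreducible monomorphism $V_i\hookrightarrow V_{i+1}$ has cokernel $V_1$ and hence yields $0\to V_i\to V_{i+1}\to V_1\to 0$, which is a different short exact sequence from the one you display (obtained from $V_{i+1}/V_1\simeq V_i$); they are not "equivalent", but either one serves your purpose, so this does not affect the argument. The final assembly step (additive generation of $\umod_\Lambda$ by $V_1,\ldots,V_{n-1}$, with the projective $V_n$ vanishing in the stable category) coincides with the paper's.
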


\begin{proof}
Let $\cT=\langle V_1\rangle$ be the smallest subcategory of
$\umod_\Lambda$ which is closed under isomorphisms, direct sums,
direct summands and shifts and such that any distinguished triangle
of $\umod_\Lambda$ for which two objects belong to $\cT$ lies in
$\cT$. 

We first show by induction that the modules $V_i$ with $i=2,\ldots, n-1$ 
belong to $\cT$. Consider the AR triangle \eqref{ARtriangles} for $i=1$:
\be
V_1 \stackrel{\underline{g_0}}{\longrightarrow} V_{2}\stackrel{\underline{f_0}}{\longrightarrow} 
V_1\longrightarrow \Omega(V_0)
\ee
where we used the fact that $V_0=0$. Since $V_1\in \Ob\cT$, we have
$V_2\in \Ob\cT$.  Suppose now that $V_{i-1}$ and $V_i$ are objects of
$\cT$ for some $i\in \{2,\ldots, n-1\}$.  Considering the sequence
\eqref{ARtriangles} for $i$, and using the fact that $V_i$ us an
object of $\cT$, we conclude similarly that $V_{i-1}\oplus V_{i+1}$ is
an object of $\cT$. Thus $V_{i+1}$ is also an object of $\cT$ since
$\cT$ is closed under direct summands.  We conclude by induction that
$V_1,\ldots, V_{n-1}$ belong to $\cT$. This gives the conclusion since
$\cT$ is closed under direct sums and $\umod_\Lambda$ is additively
generated by the objects $V_1,\ldots, V_{n-1}$. \qed
\end{proof}

\subsection{Equivalence between $\umod_\Lambda$ and the category of singularities of $\Lambda$}

Recall that the {\em category of singularities} of $\Lambda$ is the
Verdier quotient:
\be
\rD^b_{\sing} (\Lambda)\eqdef \rD^b(\Lambda)/\Perf(\Lambda)~~,
\ee
where $\Perf(\Lambda)$ is the triangulated subcategory of perfect
complexes. In our case, this category is triangle-equivalent with 
$\umod_\Lambda$, as we explain next. 

Recall that the {\em depth} of a Noetherian $\Lambda$-module $M$ is
defined through:
\be
\depth_\Lambda(M)\eqdef\inf_{i\geq 0} \big\{\dim \Ext^i_\Lambda(\bk,M)>0\big\}~~.
\ee
This quantity satisfies the inequality: 
\be
\depth_\Lambda(M)\leq \kdim(A/\Ann(M))\leq \kdim A~~.
\ee
There is another way to formulate this for local rings.  Let $(R,m)$
be a local ring.  Recall that a sequence $x_1, \dots, x_r \in m$ is
called an $M$-sequence if $x_i$ is a non zero divisor in the quotient
$M/\langle x_1, \dots, x_{i-1}\rangle$ for all $1 \leq i \leq r$.  The
depth of a module over a local ring $(R,m)$ is equal to the length of
a maximal $M$-sequence.  A Noetherian $\Lambda$-module is called {\em
maximal Cohen-Macaulay} (MCM) if
$\depth_\Lambda(M)=\kdim(\Lambda)$. Let $\MCM(\Lambda)$ be the full
subcategory of $\mod_\Lambda$ whose objects are the MCM modules.

\

\begin{Lemma}
\label{lemma:MCM}
Any finitely-generated $\Lambda$-module $M$ is maximal Cohen-Macaulay. 
Thus $\MCM(\Lambda)=\mod_\Lambda$.
\end{Lemma}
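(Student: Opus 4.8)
The plan is to prove that every finitely-generated $\Lambda$-module has depth equal to $\kdim(\Lambda)$, and then observe that both sides of this equality vanish. First I would note that $\Lambda = A_n(p)$ is an Artinian local ring by Proposition \ref{prop:Artinian}, so its Krull dimension is zero: $\kdim(\Lambda) = 0$. Hence to show that an arbitrary finitely-generated $M \in \Ob(\mod_\Lambda)$ is maximal Cohen-Macaulay, it suffices to show $\depth_\Lambda(M) = 0$ for every such $M$.

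The key observation is that, for a \emph{nonzero} finitely-generated module $M$ over an Artinian local ring $(\Lambda, \bm)$, there are no $M$-regular elements in $\bm$ at all. Indeed, by the remark following Lemma \ref{lemma:prime}, every element of $\bm = \bm_n(p)$ is a zero divisor in $\Lambda$ (in fact $\bm$ is the nilradical), and since $M$ is a nonzero finitely-generated module over the Artinian ring $\Lambda$, multiplication by any $x \in \bm$ cannot be injective on $M$: one standard way to see this is that $x$ is nilpotent, say $x^k = 0$ with $k$ minimal and $k \geq 1$, so if $x^{k-1}M = 0$ then $k=1$ forces $xM = 0$ and $x$ is trivially a zero divisor on $M$ (as $M \neq 0$), while if $x^{k-1}M \neq 0$ then any nonzero element of $x^{k-1}M$ is killed by $x$. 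Either way $x$ is a zero divisor on $M$, so the maximal $M$-sequence has length zero, giving $\depth_\Lambda(M) = 0$. Equivalently, in the cohomological formulation, $\Hom_\Lambda(\bk, M) \neq 0$ because $\bk$ embeds into $M$: the socle of $M$ (the annihilator of $\bm$ in $M$) is nonzero by Nakayama-type reasoning on the finite composition series of $M$, and the socle is a nonzero $\bk$-vector space, so $\Ext^0_\Lambda(\bk, M) = \Hom_\Lambda(\bk, M) \neq 0$, whence $\depth_\Lambda(M) = 0$.

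Combining $\depth_\Lambda(M) = 0 = \kdim(\Lambda)$, every nonzero finitely-generated $\Lambda$-module is MCM; the zero module is MCM by convention. Therefore $\MCM(\Lambda) = \mod_\Lambda$. The main subtlety to handle carefully is simply the degenerate nature of the depth-zero case — the definition of $M$-sequence and the convention that an empty sequence is allowed, so that a module with no regular elements in $\bm$ has depth $0$ rather than being undefined — together with making sure the argument does not secretly require $M$ to be faithful or $\Lambda$ to be a domain. Since $\Lambda$ has nontrivial zero divisors (indeed is non-reduced for $n \geq 2$), the classical intuition ``depth $\leq$ dimension with equality generic'' collapses entirely, and the statement is really just the tautology that over a zero-dimensional local ring every module is Cohen-Macaulay; the only work is to spell out why the socle of a nonzero finitely-generated module is nonzero, which follows from the existence of a finite composition series (Akizuki-Hopkins-Lewitzki, already invoked above) whose bottom term is a copy of the unique simple module $\bk$.
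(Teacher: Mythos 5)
Your argument is correct and follows essentially the same route as the paper: both reduce the claim to $\kdim(\Lambda)=0$ (Artinian local) together with $\depth_\Lambda(M)=0$ because every element of $\bm$ is nilpotent. Your write-up merely fills in the small step the paper leaves implicit, namely that a nilpotent element is a zero divisor on the module $M$ itself (equivalently that the socle of a nonzero finitely-generated module is nonzero), which is a fine elaboration rather than a different proof.
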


\begin{proof}
This is well-known, but we sketch the proof for completeness. 
Since $\Lambda$ is an Artinian local ring, it has Krull dimension zero.
On the other hand, the depth of any finitely-generated
$\Lambda$-module is zero since any element of $m$ is nilpotent and hence a divisor of zero.\qed
\end{proof}

\

\begin{Proposition}
\label{prop:sing}
There exists an equivalence of triangulated categories: 
\be
\rD^b_{\sing}(\Lambda)\simeq\umod_\Lambda~~.
\ee
\end{Proposition}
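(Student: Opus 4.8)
The plan is to reduce the statement to Buchweitz's theorem \cite{Buchweitz}, which yields a triangle equivalence $\uMCM(\Lambda)\simeq\rD^b_{\sing}(\Lambda)$ for any Gorenstein ring $\Lambda$, and then to identify the left-hand side with $\umod_\Lambda$. This identification is immediate from the structure results established above: $\Lambda=A_n(p)$ is a commutative Artinian local ring which is Frobenius, hence self-injective and Gorenstein of Krull dimension zero, so by Lemma \ref{lemma:MCM} every finitely-generated $\Lambda$-module is maximal Cohen--Macaulay and $\MCM(\Lambda)=\mod_\Lambda$. Moreover a quasi-Frobenius ring has projective and injective modules coinciding, so the Frobenius exact structure on $\mod_\Lambda$ defining $\uMCM(\Lambda)$ is the same one that defines the projectively-stable category $\umod_\Lambda$. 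Hence it suffices to produce a triangle equivalence $\umod_\Lambda\simeq\rD^b_{\sing}(\Lambda)$, and I would do this by building the comparison functor explicitly and checking its properties.

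Sending a finitely-generated module $M$ to the stalk complex concentrated in degree zero gives an exact functor $\mod_\Lambda\to\rD^b(\Lambda)$; post-composing with the Verdier localization $\rD^b(\Lambda)\to\rD^b_{\sing}(\Lambda)$ gives a functor $F$. A finite-rank free module is sent to a perfect complex, hence to a zero object of $\rD^b_{\sing}(\Lambda)$; since projective $\Lambda$-modules are free, $F$ annihilates every map factoring through a projective and therefore descends to a functor $\bar F\colon\umod_\Lambda\to\rD^b_{\sing}(\Lambda)$. That $\bar F$ is triangulated is essentially formal: the distinguished triangles of the Frobenius stable category $\umod_\Lambda$ are, up to isomorphism, those induced by short exact sequences $0\to X\to Y\to Z\to 0$ in $\mod_\Lambda$, and each such sequence gives a distinguished triangle $X\to Y\to Z\to X[1]$ in $\rD^b(\Lambda)$, hence in $\rD^b_{\sing}(\Lambda)$; compatibility with the shifts comes from the sequence $0\to\Omega M\to P\to M\to 0$ with $P$ free (whose middle term dies in $\rD^b_{\sing}(\Lambda)$), together with $\Omega^2\simeq\id_{\umod_\Lambda}$, which makes the shift $\Omega$ of $\umod_\Lambda$ correspond to $[1]$.

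It remains to prove that $\bar F$ is fully faithful and essentially surjective. For full faithfulness one uses that over the quasi-Frobenius ring $\Lambda$ every finitely-generated module has a complete resolution, so that the natural map $\uHom_\Lambda(M,N)\to\Hom_{\rD^b_{\sing}(\Lambda)}(\bar F M,\bar F N)$ identifies with the isomorphism $\uHom_\Lambda(M,N)\simeq\widehat{\Ext}^0_\Lambda(M,N)$ onto stable cohomology. For essential surjectivity one represents an arbitrary object of $\rD^b_{\sing}(\Lambda)$ by a bounded complex of finitely-generated modules and uses brutal truncation together with projective resolutions of the outermost nonzero cohomology to shorten the complex one term at a time, discarding perfect summands (which vanish in the quotient), until only a stalk complex of a single module remains up to shift. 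In practice, once the comparison functor $\bar F$ has been built and shown to be triangulated, it is cleanest simply to invoke \cite{Buchweitz} (whose proof is exactly the two arguments just sketched) for the Gorenstein ring $\Lambda$.

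The only genuinely non-formal content is therefore full faithfulness and essential surjectivity, i.e. Buchweitz's theorem itself; the rest is bookkeeping. If a self-contained argument is wanted, the main obstacle is the dimension-shifting step in essential surjectivity --- one must ensure that the perfect complexes being split off are really acyclic after localization and that the reduction terminates --- but this is unproblematic here because $\Lambda$ is Noetherian and Gorenstein, so all syzygies are automatically maximal Cohen--Macaulay. Since every hypothesis needed has already been checked for $\Lambda=A_n(p)$, the shortest honest proof just quotes \cite{Buchweitz} and records the identification $\MCM(\Lambda)=\mod_\Lambda$ from Lemma \ref{lemma:MCM}. \qed
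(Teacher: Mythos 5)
Your proof is correct and follows essentially the same route as the paper: invoke Buchweitz's equivalence $\rD^b_{\sing}(\Lambda)\simeq\uMCM(\Lambda)$ for the Gorenstein ring $\Lambda$ and then identify $\MCM(\Lambda)=\mod_\Lambda$ via Lemma \ref{lemma:MCM}. The additional sketch of the comparison functor and of Buchweitz's argument is fine but not needed beyond the citation, as you yourself note.
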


\begin{proof}
Since $\Lambda$ is Gorenstein, there exists \cite{Buchweitz} a natural
equivalence of triangulated categories $\rD^b_{\sing}(\Lambda)\simeq
\uMCM(\Lambda)$, where $\uMCM(\Lambda)$ is the projective
stabilization of $\MCM(\Lambda)$. The conclusion now follows from
Lemma \ref{lemma:MCM}.  \qed
\end{proof}

\subsection{Localization at $U(\Lambda)$}

\noindent Since $\Lambda$ is a local ring with maximal ideal $\langle p\rangle$,
the multiplicative set $\Lambda\setminus \langle p\rangle$ coincides
with the group of units $U(\Lambda)$.

\

\begin{Proposition}
\label{prop:loc_Lambda}
Localization at the multiplicative set $U(\Lambda)=\Lambda\setminus \langle p\rangle$
of units of $\Lambda$ induces an equivalence of triangulated categories:
\be
\uloc_p:\umod_{\Lambda}\stackrel{\sim}{\rightarrow} \umod_{\Lambda_{(p)}}
\ee
\end{Proposition}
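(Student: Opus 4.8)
The plan is to reduce the statement to the elementary observation that localizing $\Lambda$ at $U(\Lambda)$ inverts only units, so that the localization map $\lambda\colon\Lambda\to\Lambda_{(p)}$ is an isomorphism of rings; the triangulated equivalence will then follow from general facts about Frobenius categories.

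First I would recall, using Lemma \ref{lemma:prime}, that $\Lambda=A_n(p)$ is a local ring with maximal ideal $\bm=\langle p\rangle/\langle p^n\rangle$, so that $U(\Lambda)=\Lambda\setminus\langle p\rangle$ and $\Lambda_{(p)}=U(\Lambda)^{-1}\Lambda$ is obtained by inverting units only. I would then check directly that $\lambda\colon\Lambda\to\Lambda_{(p)}$ is bijective: it is surjective since $x/s=\lambda(xs^{-1})$ for $s\in U(\Lambda)$, and injective since $\lambda(x)=0$ forces $sx=0$ for some unit $s$, whence $x=0$. Thus $\lambda$ is a ring isomorphism, and in particular $\Lambda_{(p)}\cong\Lambda$ is again a commutative Artinian Frobenius ring. (If one reads $\Lambda_{(p)}$ instead as $R_{(p)}/\langle p^n\rangle R_{(p)}$, the same argument applies verbatim, since every element of $R\setminus\langle p\rangle$ maps to a unit of $A_n(p)$.)

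Next I would identify the functor $\uloc_p$ with the one induced on stable categories by extension of scalars along $\lambda$. By definition of module localization, $\loc_p(M)=M\otimes_\Lambda\Lambda_{(p)}$, so $\loc_p\colon\mod_\Lambda\to\mod_{\Lambda_{(p)}}$ is extension of scalars along $\lambda$; since $\lambda$ is an isomorphism this is an equivalence of abelian categories, with quasi-inverse the restriction of scalars along $\lambda$. In particular $\loc_p$ is exact, $R$-linear, preserves finite direct sums, and carries $\Lambda^{\oplus k}$ to $\Lambda_{(p)}^{\oplus k}$; hence it restricts to an equivalence between the full subcategories of finite-rank free modules, which by the Frobenius property of $\Lambda$ (and of $\Lambda_{(p)}$) are exactly the projective-injective objects of $\mod_\Lambda$ and $\mod_{\Lambda_{(p)}}$.

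Finally I would invoke the standard fact that an exact equivalence between Frobenius categories which preserves projective-injective objects descends to a triangulated equivalence of the associated stable categories — here both triangulated structures are the ones recalled above, with loop functor the syzygy and distinguished triangles coming from short exact sequences. Applied to $\loc_p$, this yields $\uloc_p\colon\umod_\Lambda\stackrel{\sim}{\rightarrow}\umod_{\Lambda_{(p)}}$, as claimed. I do not expect any genuine obstacle: the entire content is the immediate fact that $\lambda$ is an isomorphism, and the only point deserving a line of care is matching the definition of $\loc_p$ with extension of scalars along $\lambda$.
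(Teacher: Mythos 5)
Your proof is correct and follows essentially the same route as the paper: both arguments establish that localization at $U(\Lambda)$ is an equivalence on the level of finitely-generated module categories (because the elements being inverted already act invertibly) and then descend to a triangulated equivalence of the stable categories via the Frobenius structure of $\mod_\Lambda$ and $\mod_{\Lambda_{(p)}}$. The only cosmetic difference is that you verify directly that $\lambda:\Lambda\to\Lambda_{(p)}$ is a ring isomorphism (covering also the reading $\Lambda_{(p)}\simeq R_{(p)}/\langle p^n\rangle$), whereas the paper phrases the same observation as the statement that every $s\in U(\Lambda)$ acts as an isomorphism on the additive generators $V_1,\ldots,V_n$ of $\mod_\Lambda$.
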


\begin{proof}
Multiplication by any $s\in U(\Lambda)$ gives an
isomorphism of the $\Lambda$-modules $V_i\simeq_R \Lambda_i$ for each
$i\in \{1,\ldots, n\}$. Since $\mod_\Lambda$ is additively generated by
$V_1,\ldots, V_n$, it follows that $s$ acts as an isomorphism on any
finitely-generated $\Lambda$-module.  In particular, the localization
functor $\loc_p$ at the multiplicative set $U(\Lambda)$ is an equivalence of categories between
$\mod_\Lambda$ and $\mod_{\Lambda_{(p)}}$. Since this functor is exact,
  it is an equivalence of exact categories. Since $\mod_\Lambda$ is a
  Frobenius category, it follows that the same is true for
  $\mod_{\Lambda_{(p)}}$ and that $\loc_p$ induces a triangulated
  equivalence $\uloc_p$ between the stable categories $\umod_\Lambda$
  and $\umod_{\Lambda_{(p)}}$. \qed
\end{proof}

\

\begin{remark}
We have a natural isomorphism of rings:
\be
\Lambda_{(p)}\simeq R_{(p)}/\langle p^n\rangle~~.
\ee
\end{remark}

\section{Matrix factorizations over an elementary divisor domain}
\label{sec:edd}

\noindent Let $R$ be an elementary divisor domain and $W$ be a non-zero element of $R$. 

\subsection{Isomorphism classes in $\zmf(R,W)$}

The Smith normal form theorem over an elementary divisor domain (see
Appendix \ref{app:edd}) allows us to characterize isomorphism classes
of objects in the category $\zmf(R,W)$.

\

\begin{Proposition}
\label{prop:if}
Let $a=(R^{\rho|\rho},D)$ and $a'=(R^{\rho'|\rho'},D')$ be two finite
rank matrix factorizations of the non-zero element $W\in R^\times$, where
$D=\left[\begin{array}{cc} 0 & v\\ u & 0\end{array}\right]$ and
$D'=\left[\begin{array}{cc} 0 & v'\\ u' & 0\end{array}\right]$. Let
$\bd_1(v),\ldots, \bd_\rho(v)$ and $\bd_1(v'),\ldots, \bd_{\rho'}(v')$
be respectively the invariant factors of the matrices $v\in
\Mat(\rho,\rho,R)$ and $v'\in \Mat(\rho',\rho',R)$. Then the following
statements are equivalent:
\begin{enumerate}[(a)] 
\itemsep 0.0em
\item $a$ and $a'$ are isomorphic in the category $\zmf(R,W)$.
\item We have $\rho=\rho'$ and the invariant factors of $v$ and $v'$
are equal:
\be
\bd_i(v)=\bd_i(v')~,~~\forall i\in \{1,\ldots, \rho\}~~.
\ee
\end{enumerate}
\end{Proposition}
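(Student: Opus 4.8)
The plan is to translate an isomorphism in $\zmf(R,W)$ into a matrix-theoretic equivalence of the off-diagonal blocks $v$ and $v'$, and then invoke the uniqueness part of the Smith normal form theorem over an elementary divisor domain. More precisely, recall that a morphism $f\in\Hom_{\zmf(R,W)}(a,a')$ has diagonal matrix form $f=\left[\begin{smallmatrix}f_{\0\0}&0\\0&f_{\1\1}\end{smallmatrix}\right]$ with $f_{\0\0},f_{\1\1}\in\Mat(\rho',\rho,R)$, and the cocycle condition $\fd_{a,a'}(f)=0$ amounts to the two relations $v'\circ f_{\1\1}=f_{\0\0}\circ v$ and $u'\circ f_{\0\0}=f_{\1\1}\circ u$. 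If $f$ is an isomorphism in $\zmf(R,W)$ with inverse $g=\left[\begin{smallmatrix}g_{\0\0}&0\\0&g_{\1\1}\end{smallmatrix}\right]$, then in particular $f_{\0\0}g_{\0\0}=g_{\0\0}f_{\0\0}=I$ and $f_{\1\1}g_{\1\1}=g_{\1\1}f_{\1\1}=I$, so $\rho=\rho'$ and $f_{\0\0},f_{\1\1}\in\GL(\rho,R)$. The first cocycle relation then reads $v'=f_{\0\0}\,v\,f_{\1\1}^{-1}$, i.e.\ $v$ and $v'$ differ by left and right multiplication by invertible matrices over $R$. By the uniqueness statement in the Smith normal form theorem (Appendix \ref{app:edd}), two square matrices over an elementary divisor domain that are related by such a two-sided invertible transformation have the same invariant factors up to association, which gives $\bd_i(v)=\bd_i(v')$ for all $i$; this proves (a)$\Rightarrow$(b).

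For the converse (b)$\Rightarrow$(a), suppose $\rho=\rho'$ and $\bd_i(v)=\bd_i(v')$ for all $i$. Applying the Smith normal form theorem to each of $v$ and $v'$, we obtain invertible matrices $P,Q,P',Q'\in\GL(\rho,R)$ with $PvQ=\Smith(v)$ and $P'v'Q'=\Smith(v')$, where $\Smith(v)=\diag(\bd_1(v),\ldots,\bd_\rho(v))=\Smith(v')$ by hypothesis. Hence $v'=(P')^{-1}P\,v\,Q(Q')^{-1}$; set $f_{\0\0}\eqdef (P')^{-1}P$ and $f_{\1\1}\eqdef \big(Q(Q')^{-1}\big)^{-1}=Q'Q^{-1}$, both in $\GL(\rho,R)$. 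By construction the first cocycle relation $v'f_{\1\1}=f_{\0\0}v$ holds. The one point that needs a short argument is the second relation $u'f_{\0\0}=f_{\1\1}u$: using $uv=vu=WI_\rho$ and $u'v'=v'u'=WI_\rho$ together with the fact that $v,v'$ have maximal rank (so are injective as maps, equivalently non-zero-divisors in the matrix ring over the domain $R$), one derives it formally. Indeed, from $v'f_{\1\1}=f_{\0\0}v$ one gets $v'f_{\1\1}u=f_{\0\0}vu=Wf_{\0\0}=f_{\0\0}uv$, and also $v'u'f_{\0\0}v$ — better: multiply $v'f_{\1\1}=f_{\0\0}v$ on the right by $u$ and use $f_{\0\0}vu=Wf_{\0\0}$; separately $v'u'f_{\0\0}=Wf_{\0\0}$, so $v'f_{\1\1}u=v'u'f_{\0\0}$, and since $v'$ is a non-zero-divisor we cancel it to obtain $f_{\1\1}u=u'f_{\0\0}$. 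This shows $f=\left[\begin{smallmatrix}f_{\0\0}&0\\0&f_{\1\1}\end{smallmatrix}\right]$ is a morphism $a\to a'$ in $\zmf(R,W)$; it is an isomorphism with inverse $\left[\begin{smallmatrix}f_{\0\0}^{-1}&0\\0&f_{\1\1}^{-1}\end{smallmatrix}\right]$ because $f_{\0\0},f_{\1\1}$ are invertible over $R$.

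I expect the main (modest) obstacle to be the bookkeeping around the second cocycle relation and the rigorous use of ``maximal rank'' to justify cancellation: one must observe that over the integral domain $R$, a square matrix of maximal rank is a non-zero-divisor in $\Mat(\rho,\rho,R)$ (equivalently, its determinant is non-zero, here equal to a unit times a power of $W$), so left or right cancellation by $v$ or $v'$ is legitimate. Everything else is a direct translation between the categorical language of $\zmf(R,W)$ and two-sided $\GL(\rho,R)$-equivalence of matrices, for which the existence and uniqueness clauses of the Smith normal form theorem over an elementary divisor domain do all the real work.
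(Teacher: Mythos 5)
Your proof is correct and follows essentially the same route as the paper: reduce a $\zmf(R,W)$-isomorphism to two-sided $\GL(\rho,R)$-equivalence of the blocks $v$ and $v'$, then apply the Smith normal form / invariant factor uniqueness over an elementary divisor domain (Proposition \ref{prop:equivEDD}). The only difference is that the paper delegates the first reduction to a citation (\cite[Proposition 1.4]{elbezout}), whereas you prove it inline --- including the cancellation argument with $v'$ a non-zero-divisor for the second cocycle relation and the (easily fixed) choice of unit representatives so that the two Smith forms literally coincide --- which is a legitimate self-contained substitute.
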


\begin{proof}
By \cite[Proposition 1.4]{elbezout}, the matrix factorizations $a$
and $a'$ are strongly isomorphic iff $\rho=\rho'$ and the matrices $v$
and $v'$ are equivalent. Recall that $u,v,u',v'$ have maximal rank
since $W\neq 0$.  Since $R$ is an EDD, Proposition \ref{prop:equivEDD}
shows that $v$ and $v'$ are equivalent iff $\rho=\rho'$ and their
invariant factors satisfy $\bd_i(v)=\bd_i(v')$ for all $i\in \{1,\ldots,
\rho\}$. \qed
\end{proof}

\noindent The following result shows that any matrix factorization of
$W$ is naturally isomorphic in $\zmf(R,W)$ to a direct sum of
elementary factorizations.

\

\begin{Theorem}
\label{thm:strong_iso}
There exists an autoequivalence $F$ of the category $\zmf(R,W)$
such that: 
\begin{enumerate}
\itemsep 0.0em
\item  $F$ is isomorphic with the identity functor $\id_{\zmf(R,W)}$.
\item For any matrix factorization $a=(R^{\rho|\rho},D)$ of $W$ with 
$D=\left[\begin{array}{cc} 0 & v\\ u & 0\end{array}\right]$, we have: 
\be
F(a)=e_{d_1(v)}\oplus \ldots \oplus e_{d_\rho(v)}~~,
\ee
where $d_1(v), \ldots, d_\rho(v)\in R$ are representatives for the invariant factors of $v$, 
i.e. $d_i(v)\in \bd_i(v)$ for all $i\in \{1,\ldots, \rho\}$.  
\end{enumerate}
\end{Theorem}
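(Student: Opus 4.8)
The plan is to construct the autoequivalence $F$ explicitly by using the Smith normal form of the matrix $v$, which exists because $R$ is an elementary divisor domain (Appendix \ref{app:edd}). First I would recall that for any matrix factorization $a=(R^{\rho|\rho},D)$ of $W$ with $D=\left[\begin{array}{cc} 0 & v\\ u & 0\end{array}\right]$, the matrices $u,v$ have maximal rank since $W\neq 0$, so the Smith normal form of $v$ is a diagonal matrix $\mathrm{diag}(d_1(v),\ldots,d_\rho(v))$ with $d_i(v)\in\bd_i(v)$ and $d_i(v)\mid d_{i+1}(v)$. Concretely there exist invertible matrices $P_a,Q_a\in\GL(\rho,R)$ with $P_a v Q_a=\mathrm{diag}(d_1(v),\ldots,d_\rho(v))$. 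Since $uv=vu=W I_\rho$, conjugating gives $Q_a^{-1}u P_a^{-1}\cdot P_a v Q_a=W I_\rho$, so $(Q_a^{-1}u P_a^{-1})$ is the corresponding diagonal matrix $\mathrm{diag}(W/d_1(v),\ldots,W/d_\rho(v))$ (each $d_i(v)$ divides $W$ because $v$ has maximal rank and $uv=WI$). This means $a$ is strongly isomorphic — i.e. isomorphic in $\zmf(R,W)$ — to $e_{d_1(v)}\oplus\cdots\oplus e_{d_\rho(v)}$, via the even isomorphism $\left[\begin{array}{cc} Q_a^{-1} & 0\\ 0 & P_a\end{array}\right]$ (after checking this intertwines the differentials, which is exactly the relation above).

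Next I would define $F$ on objects by $F(a)=e_{d_1(v)}\oplus\cdots\oplus e_{d_\rho(v)}$ and on morphisms by transport of structure: for each object $a$ fix once and for all the matrices $P_a,Q_a$ (hence an isomorphism $\eta_a:a\stackrel{\sim}{\rightarrow}F(a)$ in $\zmf(R,W)$), and for $f\in\Hom_{\zmf(R,W)}(a,a')$ set $F(f)\eqdef\eta_{a'}\circ f\circ\eta_a^{-1}$. It is then immediate that $F$ is a functor, that $F$ preserves identities and composition, and that $\{\eta_a\}_{a\in\Ob\zmf(R,W)}$ is a natural isomorphism $\id_{\zmf(R,W)}\stackrel{\sim}{\Rightarrow}F$; this simultaneously gives statement (1) and shows $F$ is an autoequivalence (a functor naturally isomorphic to the identity is automatically an equivalence, indeed an automorphism up to isomorphism). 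Statement (2) holds by construction.

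The only genuine content is the first step — verifying that the block-diagonal change of basis really does land us in $\zmf(R,W)$, i.e. that it is a \emph{cocycle} morphism — and here I would simply invoke Proposition \ref{prop:if} together with \cite[Proposition 1.4]{elbezout}: two matrix factorizations are strongly isomorphic iff they have equal reduced rank and the $v$-matrices are equivalent, and the Smith normal form provides exactly the required equivalence $v\sim\mathrm{diag}(d_1(v),\ldots,d_\rho(v))$. So the main (very mild) obstacle is bookkeeping: one must make consistent global choices of the Smith-form data $(P_a,Q_a)$ for every object so that $F$ is well-defined on the nose as a functor rather than merely on isomorphism classes; this requires the axiom of choice but is otherwise routine, and the resulting $F$ depends on these choices only up to natural isomorphism. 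With the functor in hand, the two asserted properties follow with no further computation.
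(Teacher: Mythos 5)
Your proposal is correct and follows essentially the same route as the paper: fix Smith-normal-form data for each $v$, observe that the paired transform of $u$ is automatically the complementary diagonal matrix, conjugate by the resulting block-diagonal invertible matrix to get a strong isomorphism onto $e_{d_1(v)}\oplus\cdots\oplus e_{d_\rho(v)}$, define $F$ on morphisms by transport of structure $F(f)=\eta_{a'}\circ f\circ \eta_a^{-1}$, and note that the chosen isomorphisms assemble into a natural isomorphism $\id_{\zmf(R,W)}\Rightarrow F$, which is exactly the paper's $U_D$ and $F_1(f)=U_{D'}fU_D^{-1}$. The only blemish is a block-ordering slip: with your convention $P_a v Q_a=\diag(d_1(v),\ldots,d_\rho(v))$ and the paper's cocycle condition $v_0\eta_{\1\1}=\eta_{\0\0}v$, $u_0\eta_{\0\0}=\eta_{\1\1}u$, the intertwining isomorphism is $\left[\begin{array}{cc} P_a & 0\\ 0 & Q_a^{-1}\end{array}\right]$ rather than $\left[\begin{array}{cc} Q_a^{-1} & 0\\ 0 & P_a\end{array}\right]$, a point the verification you defer would immediately correct.
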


\begin{proof}
For any $v\in \Mat(\rho,\rho,R)$, choose invertible matrices $A_v,B_v\in
\GL(\rho,R)$ such that $v_0\eqdef A_v v B_v^{-1}$ is in Smith normal
form:
\be
v_0=\diag\big(d_1(v),\ldots, d_k(v),0,\ldots, 0\big)~~,
\ee  
where $d_k(v)\in \bd_k(v)$. For any matrix
factorization $a=(R^{\rho|\rho},D)$ of $W$ with
$D=\left[\begin{array}{cc} 0 & v\\ u & 0\end{array}\right]$, let
$U_D=\left[\begin{array}{cc} A_v & 0\\ 0 & B_v \end{array}\right]$
and:
\ben
\label{D0def}
D_0\eqdef U_D D U_D^{-1}=\left[\begin{array}{cc} 0 & A_v vB_v^{-1}\\ B_v uA_v ^{-1} & 0\end{array}\right]=\left[\begin{array}{cc} 0 & v_0\\ u_0 & 0\end{array}\right]~~,
\een
where $u_0=B_v u_v A^{-1}$. Since $uv=vu=W$, we have
$u_0v_0=v_0u_0=W$. This requires that $u_0$ is diagonal (we have
$u_0=W v_0^{-1}$ in the field of fractions of $R$), namely we have
$u_0=\diag(u_1,\ldots, u_\rho)$, where $u_i=\frac{W}{d_i(v)}$. Since
$d_i(v)|d_{i+1}(v)$, we have $u_i|u_{i-1}$ and hence $u_0$ is the
reverse Smith normal form of $u$:
\be
u_0=\diag(d_\rho(u_0),\ldots, d_1(u_0))~~.
\ee  
Define $F_0:\Ob\MF(R,W)\rightarrow \Ob\MF(R,W)$ through:
\ben
\label{F0def}
F_0(R^{\rho|\rho},D):=a_0\eqdef (R^{\rho|\rho},D_0)~~.
\een
Notice that $a_0$ coincides with the following direct sum of elementary matrix factorizations: 
\be
a_0=e_{d_1(v)}\oplus \ldots \oplus e_{d_k(v)}~~.
\ee
Moreover, relation \eqref{D0def} implies $D_0 U_D=U_D D$, showing that
$U_D$ is an isomorphism from $a$ to $a_0$ in $\zmf(R,W)$:
\ben
\label{UDiso}
U_D:a\stackrel{\sim}{\rightarrow} a_0~~.
\een
For any morphism $f:a\rightarrow a'$ in $\zmf(R,W)$ with
$a=(R^{\rho|\rho},D)$, $a'=(R^{\rho'|\rho'},D')$ and
$D=\left[\begin{array}{cc} 0 & v\\ u & 0\end{array}\right]$,
$D'=\left[\begin{array}{cc} 0 & v'\\ u' & 0\end{array}\right]$, define
a morphism $F_1(f):a_0\rightarrow a'_0$ in $\zmf(R,W)$ as follows.
Since $f$ is a morphism in $\zmf(R,W)$ it satisfies the condition
$D'f=fD$.  Define:
\ben
\label{F1def}
F_1(f):=f_0\eqdef U_{D'} f U_D^{-1}~~.
\een
where $U_{D}=\left[\begin{array}{cc} A & 0\\ 0 & B \end{array}\right]$
and $U_{D'}\eqdef U_{v'}=\left[\begin{array}{cc} A' & 0\\ 0 &
B' \end{array}\right]$, with $A=A_v$, $B=B_v$, $A'=A_{v'}$ and
$B'=B_{v'}$. Since $D_0=U_D D U_D^{-1}$ and $D'_0=U_{D'}DU_{D'}^{-1}$,
the relation $D'f=fD$ implies $D'_0f_0=f_0D_0$, showing that $f_0$ is
a morphism from $a_0$ to $a'_0$ in $\zmf(R,W)$. If $f$ is the identity
endomorphism, then $f_0$ is the identity endomorphism. If
$g:a'\rightarrow a''$ is another morphism in $\zmf(R,W)$, then we have
$(gf)_0=U_{D''} gf U_{D}^{-1}=U_{D''}gU_{D'}^{-1}
U_{D'}fU_{D}^{-1}=g_0f_0$. This shows that $F=(F_0,F_1)$ is an
endofunctor of $\zmf(R,W)$. Relation \eqref{F1def} shows that the
isomorphisms \eqref{UDiso} satisfy $U_D' f =F_1(f) U_D$ and hence give
an isomorphism of functors:
\be
U:\id_{\zmf(R,W)}\stackrel{\sim}{\rightarrow} F~~.
\ee
In particular, $F$ is an autoequivalence of $\zmf(R,W)$. 
\qed
\end{proof}

\

\noindent The decomposition of a matrix factorization into elementary factorizations is generally non-unique. 
The ambiguity in this decomposition can be characterized as follows.

\

\begin{Corollary}
\label{cor:can}
The following statements hold: 
\begin{enumerate}
\itemsep 0.0em
\item If $e_{v_1},\ldots, e_{v_n}$ are elementary factorizations of $W$, then we have: 
\ben
\label{sum}
e_{v_1}\oplus \ldots \oplus e_{v_n}\simeq_{\zmf(R,W)} e_{d_1}\oplus \ldots \oplus e_{d_n}~~,
\een
where: 
\be
d_k\in \frac{\bdelta_k}{\bdelta_{k-1}}~~\forall k\in \{1,\ldots, n\}~~,
\ee 
with: 
\ben
\label{bdeltak}
\bdelta_k\eqdef \left(\{v_{i_1}\ldots v_{i_k}\,|\,1\leq i_1<\ldots < i_k\leq n\}\right)~~.
\een
Moreover, if $v_1|\ldots |v_n$ then we can take $d_k=v_k$ for all
$k\in \{1,\ldots, n\}$ while if $v_1,\ldots, v_n$ are mutually coprime then we
have $e_{v_1}\oplus \ldots \oplus e_{v_n}\simeq_{\zmf(R,W)}e_1^{\oplus
  n-1}\oplus e_{v_1\ldots v_n}$.
\item If a matrix factorization $a=(R^{\rho|\rho},D)$ of $W$ with
  $D=\left[\begin{array}{cc} 0 & v\\ u & 0\end{array}\right]$
  satisfies:
\be
a\simeq_{\zmf(R,W)}e_{v_1}\oplus \ldots \oplus e_{v_n}
\ee
for some elementary factorizations $e_{v_i}$ such that $v_1|\ldots
|v_n$, then we have $n=\rho$ and $v_i\in \bd_i(v)$ for all $i\in \{1,\ldots,
n\}$. In particular, the strong isomorphism classes of matrix factorizations 
of $W$ are in bijection with finite ascending sequences of principal 
ideals $I_n\subset \ldots \subset I_1$ such that $W\in I_n$. 
\end{enumerate}
\end{Corollary}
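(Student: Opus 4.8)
The plan is to reduce both assertions to Theorem~\ref{thm:strong_iso} and Proposition~\ref{prop:if} by computing the invariant factors of a diagonal matrix. For the first assertion, recall that $e_{v_1}\oplus\cdots\oplus e_{v_n}$ is by definition the factorization $(R^{n|n},D)$ whose $D$ has block off-diagonal form with blocks $v=\diag(v_1,\ldots,v_n)$ and $u=\diag(W/v_1,\ldots,W/v_n)$, so Theorem~\ref{thm:strong_iso} immediately gives $e_{v_1}\oplus\cdots\oplus e_{v_n}\simeq_{\zmf(R,W)}e_{d_1(v)}\oplus\cdots\oplus e_{d_n(v)}$ for any representatives $d_1(v)\mid\cdots\mid d_n(v)$ of the invariant factors of $v$. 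It then remains to show $\bd_k(v)=\bdelta_k/\bdelta_{k-1}$ with $\bdelta_k$ as in~\eqref{bdeltak}. I would deduce this from the standard theory of determinantal divisors over an EDD (Appendix~\ref{app:edd}, Proposition~\ref{prop:equivEDD}): the product $\bd_1(v)\cdots\bd_k(v)$ equals the $k$-th determinantal divisor of $v$, i.e. the association class of the gcd of all $k\times k$ minors of $v$, because determinantal divisors are invariant under equivalence and are read off directly from the diagonal Smith form. For $v=\diag(v_1,\ldots,v_n)$ the nonzero $k\times k$ minors are precisely the products $v_{i_1}\cdots v_{i_k}$ with $1\le i_1<\cdots<i_k\le n$, so the $k$-th determinantal divisor is $\bdelta_k$; hence $\bdelta_{k-1}\mid\bdelta_k$ (also clear from Laplace expansion of minors) and $\bd_k(v)=\bdelta_k/\bdelta_{k-1}$, which gives~\eqref{sum} after choosing $d_k\in\bdelta_k/\bdelta_{k-1}$.

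For the two ``moreover'' cases I would compute $\bdelta_k$ directly in the GCD domain $R$. If $v_1\mid\cdots\mid v_n$, then $v_1\cdots v_k$ divides every size-$k$ product $v_{i_1}\cdots v_{i_k}$ (since $v_j\mid v_{i_j}$ for $j\le k$) and is itself one of them, so $\bdelta_k=(v_1\cdots v_k)$ and $\bd_k(v)=(v_k)$; equivalently, $\diag(v_1,\ldots,v_n)$ is already in Smith normal form, so $v_k$ is a legitimate representative. If instead $v_1,\ldots,v_n$ are mutually coprime, then $\bdelta_n=(v_1\cdots v_n)$ trivially, while for $1\le k\le n-1$ one obtains $\bdelta_k=(1)$ by a short bookkeeping argument using the identities $\gcd(ab,ac)=a\gcd(b,c)$ and ``$\gcd(a,b)=\gcd(a,c)=(1)$ implies $\gcd(a,bc)=(1)$'' (for instance, the gcd of the $k+1$ size-$k$ products formed from $v_1,\ldots,v_{k+1}$ by omitting one variable collapses to $(1)$). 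Hence $d_1=\cdots=d_{n-1}=1$ and $d_n\in(v_1\cdots v_n)$, giving $e_1^{\oplus n-1}\oplus e_{v_1\cdots v_n}$; alternatively one may iterate the case $n=2$ of~\eqref{sum}.

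For the second assertion, suppose $a=(R^{\rho|\rho},D)$ with blocks $v,u$ is strongly isomorphic to $e_{v_1}\oplus\cdots\oplus e_{v_n}$ with $v_1\mid\cdots\mid v_n$. The latter equals $(R^{n|n},D')$ with $v'=\diag(v_1,\ldots,v_n)$ already in Smith normal form, so its invariant factors are exactly $(v_1),\ldots,(v_n)$; Proposition~\ref{prop:if} then forces $\rho=n$ and $\bd_i(v)=(v_i)$, i.e. $v_i\in\bd_i(v)$. For the bijection statement, to each $a=(R^{\rho|\rho},D)$ --- where $v$ has maximal rank $\rho$ since $W\neq 0$, and all of its invariant factors divide $W$, as in the proof of Theorem~\ref{thm:strong_iso} --- I assign the chain of principal ideals $I_i\eqdef\langle d_i(v)\rangle$; it satisfies $I_\rho\subset\cdots\subset I_1$ because $d_i(v)\mid d_{i+1}(v)$, and $W\in I_\rho$ because $d_\rho(v)\mid W$. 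By Proposition~\ref{prop:if} this chain depends only on the strong isomorphism class of $a$, and the assignment is injective by Proposition~\ref{prop:if} together with what was just shown; conversely, given $I_n\subset\cdots\subset I_1$ with $W\in I_n$, picking generators $d_i$ with $d_i\mid d_{i+1}$ and $d_n\mid W$ and forming $e_{d_1}\oplus\cdots\oplus e_{d_n}$ produces a factorization whose associated chain is the given one, so the assignment is onto.

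The only step that is not essentially formal is the identification $\bd_k(v)=\bdelta_k/\bdelta_{k-1}$, i.e. the behaviour of determinantal divisors over an EDD --- their invariance under equivalence and their relation to the Smith form; everything else is a direct application of Proposition~\ref{prop:if} and Theorem~\ref{thm:strong_iso} or a finite computation with gcds in $R$.
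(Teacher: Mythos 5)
Your proposal is correct and follows essentially the same route as the paper: both reduce everything to the Smith-form machinery (Theorem \ref{thm:strong_iso}, Proposition \ref{prop:if}, Proposition \ref{prop:equivEDD}) and to the observation that the $k\times k$ minors of $\diag(v_1,\ldots,v_n)$ are exactly the products $v_{i_1}\cdots v_{i_k}$, so the determinantal invariants are the $\bdelta_k$ and the invariant factors are $\bdelta_k/\bdelta_{k-1}$, with the two special cases handled by the same direct gcd computations. Your treatment of part 2 and of the bijection with chains of principal ideals is just a slightly more explicit spelling-out of what the paper dismisses as an immediate consequence of Theorem \ref{thm:strong_iso} and part 1.
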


\begin{proof}

\

\begin{enumerate} 
\itemsep 0.0em
\item Let $a\eqdef e_{v_1}\oplus \ldots \oplus e_{v_n}$. Then
$a=(R^{\rho|\rho},D)$ with $D=\left[\begin{array}{cc} 0 & v\\ u &
0\end{array}\right]$, where $v=\diag(v_1,\ldots, v_n)$. Since all
non-principal minors of a diagonal matrix vanish, the determinantal
invariants of $v$ coincide with $\bdelta_k$, while the invariant
factors coincide with $\bd_k$. The first statement now follows from
Proposition \ref{prop:equivEDD}.  If $v_1|\ldots |v_n$, then we have
$\bdelta_n=(v_1\ldots v_n)$ and $\bd_n=(v_n)$. If $v_1,\ldots, v_n$
are coprime then we have $\bdelta_1=\ldots = \bdelta_{n-1}=(1)$ and
$\bdelta_n=(v_1\ldots v_n)$, thus $\bd_1=\ldots=\bd_{n-1}=(1)$ and
$\bd_n=(v_1\ldots v_n)$.
\item Follows immediately from Theorem \ref{thm:strong_iso} and point
1. above. \qed
\end{enumerate}
\end{proof}

\

\begin{remark} The critical ideal $\fI_W$ defined in \eqref{fIW}
annihilates the module $\Hom_{\hmf(R,W)}(e_1,e_2)$ for any two
elementary matrix factorizations $e_1$ and $e_2$ of $W$ (see
\cite[Remark 2.2.]{elbezout}).  Using this fact, Corollary
\ref{cor:can} implies $\fI_W\Hom_{\hmf(R,W)}(a,b)=0$ for any two
finite rank matrix factorizations $a,b$ of $W$ (notice that an
isomorphism in $\zmf(R,W)$ induces an isomorphism in $\hmf(R,W)$). In
particular, $\hmf(R,W)$ can be viewed as an $R/\fI_W$-linear
category. Since $W\in \fI_W$, we have a natural epimorphism $R/\langle
W\rangle \rightarrow R/\fI_W$. Thus $\hmf(R,W)$ is in particular an
$R/\langle W\rangle$-linear category.
\end{remark}

\

\begin{remark}
Let $v_1$ and $v_2$ be two divisors of $W$. Then
$\bdelta_1(v)=(v_1,v_2)$ and $\bdelta_2(v)=(v_1v_2)$ and the quantities \eqref{bdeltak} are
$\bd_1=(v_1,v_2)$ and $\bd_2=[v_1,v_2]$. Thus \eqref{sum} takes the form:
\ben
\label{2sum}
e_{v_1}\oplus e_{v_2}\simeq_{\zmf(R,W)} e_{d_1}\oplus e_{d_2}~~
\een
with $d_1\in (v_1,v_2)$ and $d_2\in [v_1,v_2]$. 
If $v_1|v_2$ and $u_i=W/v_i$, then we have $u_2|u_1$ and: 
\be
\Sigma(e_{v_1}\oplus e_{v_2})\simeq_{\zmf(R,W)} e_{u_2}\oplus e_{u_1}~~,
\ee
since $\Sigma e_{v_i}=e_{-u_i}\simeq_{\zmf(R,W)} e_{u_i}$ (see \cite[Section 1.7]{elbezout}).
Corollary \ref{cor:can} shows that the subcategory $\zef(R,W)$ generates
$\zmf(R,W)$ under direct sums with the relations \eqref{2sum}.
At the level of isomorphism classes, these relations
correspond to the operation $(I_1,I_2)\rightarrow (I_1\cap I_2, I_1+I_2)$
on principal ideals $I_1,I_2$ which contain $W$, where the RHS is a chain 
$I_1\cap I_2\subset I_1+I_2$ of principal ideals containing $W$.
\end{remark}

\subsection{Direct sum decompositions in $\hmf(R,W)$}

The results of the previous subsection imply that elementary matrix factorizations 
generate the category $\hmf(R,W)$ under direct sums.

\ 

\begin{Proposition}
\label{prop:strong_iso}
There exists an autoequivalence $\Psi$ of $\hmf(R,W)$ such that:
\begin{enumerate}
\itemsep 0.0em
\item  $\Psi$ is isomorphic with the identity functor $\id_{\hmf(R,W)}$.
\item For any matrix factorization $a=(R^{\rho|\rho},D)$ of $W$ with 
$D=\left[\begin{array}{cc} 0 & v\\ u & 0\end{array}\right]$, we have: 
\be
\Psi(a)=e_{d_1(v)}\oplus \ldots \oplus e_{d_\rho(v)}~~,
\ee
where $d_1(v),\ldots, d_\rho(v)$ are representatives for the invariant factors of $v$. 
\end{enumerate}
In particular, the subcategory $\hef(R,W)$ generates $\hmf(R,W)$ under direct
sum. Thus any matrix factorization $a\in \Ob(\MF(R,W))$ is isomorphic
in $\hmf(R,W)$ with a direct sum of a finite collection of elementary
factorizations.
\end{Proposition}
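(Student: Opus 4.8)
The plan is to deduce Proposition \ref{prop:strong_iso} from Theorem \ref{thm:strong_iso} by transporting the autoequivalence $F$ of $\zmf(R,W)$ along the canonical functor $\pi:\zmf(R,W)\to\hmf(R,W)$. Recall that $\zmf(R,W)$ and $\hmf(R,W)$ have the same objects, that $\pi$ is the identity on objects, and that $\pi$ is full with $\pi(f)=\pi(g)$ if and only if $f-g\in\bmf(R,W)(a,a')$, i.e. $f-g=\fd_{a,a'}(h)$ for some $h\in\Hom_R^\1(M,M')$ (the element $h$ is forced to be odd since $\fd$ is an odd operator and $f-g$ is even). Thus $\pi$ is a quotient functor, and to obtain $\Psi$ it suffices to show that $F=(F_0,F_1)$ sends coboundaries to coboundaries, so that $\pi\circ F$ factors through $\pi$.

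First I would make the descent explicit. From the construction in Theorem \ref{thm:strong_iso}, $F_0(R^{\rho|\rho},D)=(R^{\rho|\rho},D_0)$ with $D_0=U_DDU_D^{-1}$, where $U_D$ is the block-diagonal (hence \emph{even}) matrix built from the Smith transformations of $v$, and $F_1(f)=U_{D'}fU_D^{-1}$; moreover $U_D$ is a cocycle in $\zmf(R,W)$, since $D_0U_D=U_DD$, defining a strong isomorphism $a\xrightarrow{\ \sim\ }F(a)$. The key point is that conjugation by the pair of even cocycles $(U_D,U_{D'})$ is a morphism of $\Hom$-complexes: for every $h\in\Hom_R^\kappa(M,M')$ one has
\[
\fd_{F(a),F(a')}\!\big(U_{D'}\,h\,U_D^{-1}\big)=D_0'\,U_{D'}h U_D^{-1}-(-1)^\kappa U_{D'}hU_D^{-1}\,D_0=U_{D'}\big(D'h-(-1)^\kappa hD\big)U_D^{-1}=U_{D'}\,\fd_{a,a'}(h)\,U_D^{-1},
\]
where we used $D_0'U_{D'}=U_{D'}D'$, $U_D^{-1}D_0=DU_D^{-1}$, and the fact that $U_D,U_{D'}$ are even so that no extra sign intervenes. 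Hence $F_1$ maps $\bmf(R,W)(a,a')$ into $\bmf(R,W)(F(a),F(a'))$, and therefore descends to a well-defined functor $\Psi:\hmf(R,W)\to\hmf(R,W)$ with $\Psi(a)=F(a)=e_{d_1(v)}\oplus\cdots\oplus e_{d_\rho(v)}$ on objects and $\Psi(\pi(f))=\pi(F_1(f))$ on morphisms.

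Next I would transport the natural isomorphism. Theorem \ref{thm:strong_iso} gives $U:\id_{\zmf(R,W)}\xrightarrow{\ \sim\ }F$; applying $\pi$ and using that $\pi$ is the identity on objects produces a natural transformation $\id_{\hmf(R,W)}\Rightarrow\Psi$ whose components are $\pi(U_D)$. Since the image under $\pi$ of an isomorphism in $\zmf(R,W)$ is an isomorphism in $\hmf(R,W)$, every component is invertible, so $\Psi\simeq\id_{\hmf(R,W)}$ and in particular $\Psi$ is an autoequivalence; this proves items 1 and 2. For the remaining assertions: each $e_{d_i(v)}$ is an object of $\hef(R,W)$ (the cohomology categories share the objects of $\EF(R,W)$), and the isomorphism $a\simeq_{\hmf(R,W)}\Psi(a)=\bigoplus_i e_{d_i(v)}$ exhibits every object of $\hmf(R,W)$ as a finite direct sum of elementary factorizations up to isomorphism; equivalently, $\hef(R,W)$ generates $\hmf(R,W)$ under direct sums.

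I expect the only genuine content to verify is the displayed compatibility of conjugation by the even cocycles $U_D$ with the differential $\fd$ on $\Hom$-complexes, i.e. that $F_1$ respects coboundaries; once this is in hand, everything else is a formal consequence of Theorem \ref{thm:strong_iso} together with the universal property of the quotient functor $\pi$. The only place where care is needed is the $\Z_2$-grading bookkeeping in that identity, but since $U_D$ and $U_{D'}$ are even it follows at once from the Leibniz rule defining $\fd$.
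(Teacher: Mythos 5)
Your argument is correct and is essentially the paper's own: the paper simply takes $\Psi$ to be the autoequivalence of $\hmf(R,W)$ induced by the autoequivalence $F$ of $\zmf(R,W)$ from Theorem \ref{thm:strong_iso}, and you spell out the (routine but genuine) verification that $F_1$, being conjugation by the even cocycles $U_D$, $U_{D'}$, sends coboundaries to coboundaries and hence descends along the quotient functor, with the natural isomorphism $U$ descending likewise.
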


\begin{proof} Follows immediately from Theorem \ref{thm:strong_iso} upon taking 
$\Psi$ to be the autoequivalence of $\hmf(R,W)$ induced
by the autoequivalence $F$ of $\zmf(R,W)$. \qed
\end{proof}

\

\noindent Notice that the decomposition of an object of $\hmf(R,W)$ as
a finite direct sum of elementary factorizations need not be unique up
to permutation and isomorphisms in $\hmf(R,W)$. Moreover, an
elementary factorization need not be an indecomposable object of
$\hmf(R,W)$.

\

\begin{remark}
For any B\'ezout domain $R$, let $\mhef(R,W)$ be the subcategory of
$\hmf(R,W)$ which is additively generated by elementary
factorizations. In \cite[Conjecture 3.4]{elbezout} it was conjectured
that the inclusion functor:
\be
\iota: \mhef(R,W) \to \hmf(R,W)
\ee
is an equivalence of $R$-linear categories when $W$ is a
critically-finite element. Proposition \ref{prop:strong_iso} proves
this conjecture when $R$ is an elementary divisor domain, under the
weaker hypothesis that $W$ is any non-zero element of $R$. It is an
open question whether all B\'ezout domains are elementary divisor
domains.
\end{remark}

\subsection{Cones over morphisms between elementary factorizations}

Let $e_{v_1}$ and $e_{v_2}$ be elementary matrix factorizations of $W$
and set $u_i\eqdef W/v_i$. By \cite[Proposition 2.2]{elbezout},
morphisms $f:e_{v_1}\rightarrow e_{v_2}$ in $\hmf(R,W)$ have the form
$f=r \cdot \left[\begin{array}{cc} \frac{v_2}{d} & 0 \\ 0 &
\frac{v_1}{d}
\end{array}\right]$, where $r$ is an arbitrary element of $R$ and
$d\in (v_1,v_2)$ is a gcd of $v_1$ and $v_2$.

\

\begin{Proposition}
\label{prop:conedec}
Let $f:e_{v_1}\rightarrow e_{v_2}$ be a morphism in $\hmf(R,W)$ corresponding to the element $r\in R$. Let:
\ben
\label{xizeta}
\xi \in \frac{(v_1,v_2,u_1,u_2, r)(v_1)}{(v_1,v_2)} ~~\mathrm{and}~~\zeta \eqdef -\frac{v_1u_2}{\xi}~~.
\een
Then there exists an isomorphism in $\zmf(R,W)$:
\ben
\label{conedec}
C(f)\simeq_{\zmf(R,W)} e_\xi\oplus e_\zeta~~.
\een
\end{Proposition}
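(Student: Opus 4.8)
The plan is to compute the mapping cone $C(f)$ explicitly from Definition \ref{def:cone} and then apply the ambiguity-of-decomposition results of the previous subsections to rewrite it as a sum of two elementary factorizations. First I would write down $C(f)$ for the morphism $f:e_{v_1}\to e_{v_2}$ given by $f=r\cdot\left[\begin{smallmatrix}v_2/d & 0\\ 0 & v_1/d\end{smallmatrix}\right]$ with $d\in(v_1,v_2)$ (using the form of morphisms between elementary factorizations recalled just before the statement). By Definition \ref{def:cone}, $C(f)=(R^{2|2},D)$ where, with $u_i=W/v_i$,
\be
D=\left[\begin{array}{cc} 0 & v\\ u & 0\end{array}\right]~,~~
u=\left[\begin{array}{cc} -v_1 & 0\\ r\frac{v_1}{d} & u_2\end{array}\right]~,~~
v=\left[\begin{array}{cc} -u_1 & 0\\ r\frac{v_2}{d} & v_2\end{array}\right]~~.
\ee
So $C(f)$ is the matrix factorization attached to the $2\times 2$ matrix $v$ above (with partner $u$). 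The strong isomorphism class of $C(f)$ is therefore governed, by Proposition \ref{prop:if} and Proposition \ref{prop:equivEDD}, entirely by the invariant factors of $v$, i.e. by the determinantal invariants $\bdelta_1(v)=\left(\text{entries of }v\right)$ and $\bdelta_2(v)=(\det v)$.

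Next I would compute these two determinantal ideals. We have $\det v = -u_1v_2$ (the off-diagonal contribution $r\frac{v_2}{d}\cdot 0$ drops out since the $(1,2)$-entry is $0$), so $\bdelta_2(v)=(u_1v_2)=(v_1u_2)$ (both equal $(W)$ times... wait, rather: $u_1v_2 = \frac{W}{v_1}v_2$, and also $v_1u_2=v_1\frac{W}{v_2}$; these agree up to association precisely because $\frac{u_1v_2}{v_1u_2}=\frac{v_2^2}{v_1^2}$... I should instead just note $\det v=-u_1v_2$ and keep that). The entries of $v$ are $u_1$, $v_2$, and $r\frac{v_2}{d}$, so
\be
\bdelta_1(v)=\left(u_1,\,v_2,\,r\tfrac{v_2}{d}\right)
=\left(u_1,\,v_2\cdot\left(1,\tfrac r d\right)\right)~~.
\ee
The key arithmetic identity to establish is that this equals $\frac{(v_1,v_2,u_1,u_2,r)(v_1)}{(v_1,v_2)}$ — i.e. that $\xi$ as defined in \eqref{xizeta} is exactly a generator of $\bdelta_1(v)$. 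One clears denominators using $d\in(v_1,v_2)$, writes $v_1=v_1'd$, $v_2=v_2'd$ with $(v_1',v_2')=(1)$, $u_1=\frac{W}{v_1}$, $u_2=\frac{W}{v_2}$, and simplifies the gcd using the GCD-domain identities from Appendix \ref{app:GCD} (distributivity of gcd over multiplication, etc.). I expect this gcd manipulation to be the main obstacle: it is a several-step computation in $R/U(R)$ and one must be careful that every step is valid in a GCD domain rather than a UFD.

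Once $\bdelta_1(v)=(\xi)$ and $\bdelta_2(v)=(u_1v_2)=(\xi\zeta)$ (using $\zeta=-v_1u_2/\xi$, so that $\xi\zeta=-v_1u_2$ and one checks $(v_1u_2)=(u_1v_2)$ — indeed $\frac{v_1u_2}{u_1v_2}=\frac{v_1}{v_2}\cdot\frac{v_1}{v_2}$... hmm, that is not generally a unit) — let me instead say: one checks directly that $-v_1u_2 = -v_1\frac{W}{v_2}$ and $u_1v_2=\frac{W}{v_1}v_2$, and that $\xi\mid v_1u_2$ with quotient $\zeta\in R$ (this needs $\xi\mid v_1u_2$, which follows from the explicit form of $\xi$), so that $\zeta$ is a well-defined element of $R$ and $(\xi\zeta)=(v_1u_2)$. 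Then by Corollary \ref{cor:can}, part 1, applied to $e_\xi\oplus e_\zeta$, its determinantal invariants are $\bdelta_1=(\xi,\zeta)$ and $\bdelta_2=(\xi\zeta)$; and by Proposition \ref{prop:if} it suffices to match $\bdelta_1$ and $\bdelta_2$ for $C(f)$ and for $e_\xi\oplus e_\zeta$. The $\bdelta_2$'s agree by the previous sentence. For $\bdelta_1$, one must check $(\xi,\zeta)=(\xi)$, i.e. $\xi\mid\zeta$, equivalently $\xi^2\mid v_1u_2$; this follows from the explicit form of $\xi$ as a generator of $\frac{(v_1,v_2,u_1,u_2,r)(v_1)}{(v_1,v_2)}$ together with the fact that $v_1u_2=W\cdot\frac{v_1}{v_2}$ is divisible by $v_1$ and by $u_2$, hence by enough factors. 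Assembling: $C(f)$ and $e_\xi\oplus e_\zeta$ have the same reduced rank $2$ and the same invariant factors, so Proposition \ref{prop:if} gives $C(f)\simeq_{\zmf(R,W)}e_\xi\oplus e_\zeta$. \qed
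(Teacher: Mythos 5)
Your opening coincides with the paper's: you write the cone with lower-left block $A=\left[\begin{smallmatrix} -v_1 & 0\\ r v_1/d & u_2\end{smallmatrix}\right]$ and upper-right block $B=\left[\begin{smallmatrix} -u_1 & 0\\ r v_2/d & v_2\end{smallmatrix}\right]$ and plan to conclude by an invariant-factor comparison. The gap is that you then match the determinantal invariants of $B$ against those of $\diag(\xi,\zeta)$, and the two identities this requires are not only left unproved (your ``key arithmetic identity'' is announced as ``the main obstacle'' but never carried out) but are false in general: the element $\xi$ of \eqref{xizeta} is a gcd of the entries of $A$, not of $B$. Indeed, by the gcd manipulation the paper quotes from \cite[eqs.~(2.4)]{elbezout}, $(v_1,u_2,rv_1/d)=(s,r)\tfrac{(v_1)}{(v_1,v_2)}$ with $s\in(v_1,v_2,u_1,u_2)$, whereas $\bdelta_1(B)=(u_1,v_2,rv_2/d)=(s,r)\tfrac{(v_2)}{(v_1,v_2)}$; similarly $\bdelta_2(B)=(u_1v_2)$ while $(\xi\zeta)=(v_1u_2)$. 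You noticed the second discrepancy yourself, but your patch only re-derives $(\xi\zeta)=(v_1u_2)$, which holds by the very definition of $\zeta$, so the sentence ``The $\bdelta_2$'s agree by the previous sentence'' is a non sequitur. Concretely, take $W=p^3$, $v_1=p$, $v_2=p^2$, $r=0$: then $(\xi)=(p)$ and $(\xi\zeta)=(p^2)$, but $B=\diag(-p^2,p^2)$ has $\bdelta_1(B)=(p^2)$ and $\bdelta_2(B)=(p^4)$, so neither of your matching steps can go through.

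The paper's proof works with $A$ instead. Since $R$ is an EDD one chooses $P,Q\in\GL(2,R)$ with $PAQ=\diag(\alpha_1,\alpha_2)$; then $\alpha_1$ is a gcd of the entries of $A$, which the one-line computation above identifies with $\xi$, and $\alpha_2=\det A/\xi=-v_1u_2/\xi=\zeta$. The idea you are missing is that the relation $AB=BA=WI_2$ transports this reduction to the other block: the same invertible matrices bring $B$ to $\diag(W/\xi,W/\zeta)$, so a single strong isomorphism diagonalizes both blocks of the cone simultaneously and exhibits it as a direct sum of two elementary factorizations, with $\diag(\xi,\zeta)$ appearing in the lower-left block of the normal form. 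If you want to argue purely by invariant factors, the comparison must therefore be made for the block whose invariant factors are $(\xi),(\zeta)$, namely $A$; comparing $B$ against $\diag(\xi,\zeta)$ cannot work, and the gcd identity you set for yourself would in any case come out with $(v_2)$ where \eqref{xizeta} has $(v_1)$.
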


\begin{proof}
Let $d\in (v_1,v_2)$ be a gcd of $v_1$ and $v_2$. Using Definition
\ref{def:cone}, we find that the mapping cone of $f$ is given by:
\be
C(f)=\left[\begin{array}{cccc}
0 & 0 & -u_1 & 0 \\
0 & 0 & r \cdot \frac{v_2}{d} & v_2 \\
-v_1 & 0 & 0 & 0 \\
r \cdot \frac{v_1}{d} & u_2 &  0 & 0 \\
\end{array}\right]~~.
\ee
Since $R$ is an elementary divisor domain, the matrices
$A\eqdef\left[\begin{array}{cc} -v_1 & 0 \\ r \cdot \frac{v_1}{d} & u_2
\end{array}\right]$ and $B\eqdef\left[\begin{array}{cc} -u_1 & 0 \\ r \cdot
\frac{v_2}{d} & v_2
\end{array}\right]$ can be reduced to Smith normal form (see Appendix
\ref{app:edd}). Furthermore, since $AB=W$ we can find invertible
matrices $P$ and $Q$ such that $PAQ$ and $QBP$ have normal forms.
Let $\xi\in \left( v_1, u_2, r \cdot \frac{v_1}{d}\right)$.
Then $PAQ=\left[\begin{array}{cc} \alpha_1 & 0 \\ 0 & \alpha_2
\end{array}\right]$, where $\alpha_i$ are invariant factors of $A$. By
definition, $\alpha_1$ is a greatest common divisor of all entries of
$A$, which we can take to equal $\xi$. On the other hand, we have
$\alpha_2=\frac{\det A}{\xi}=-\frac{v_1 u_2}{\xi}=\zeta$.  Hence the
Smith normal form of $B$ equals $\left[\begin{array}{cc} \frac{W}{\xi}
& 0 \\ 0 & \frac{W}{\zeta}
\end{array}\right]$. We conclude that $C(f)$ is isomorphic in
$\zmf(R,W)$ with the matrix:
\be
C_0(f)=\left[\begin{array}{cccc}
0 & 0 & \frac{W}{\xi} & 0 \\
0 & 0 & 0 & \frac{W}{\zeta} \\
\xi & 0 & 0 & 0 \\
0 & \zeta &  0 & 0 \\
\end{array}\right]=e_\xi\oplus e_\zeta~~.
\ee 
Let $s\eqdef (v_1,v_2,u_1,u_2)\in R/U(R)$ and $b\eqdef \frac{(v_1)}{(v_1,v_2)}\in R/U(R)$. 
By \cite[eqs. (2.4)]{elbezout}, we have $(v_1,u_2)=(s)(b)$. Thus:
\ben
\left(v_1, u_2, r \cdot \frac{v_1}{(v_1,v_2)}\right)=(s b, r b)=(s,r)b~~,
\een
which shows that \eqref{xizeta} holds. \qed
\end{proof}

\

\begin{Corollary}
Let $f: e_{v_1} \to e_{v_2}$ be a morphism in $\hmf(R,W)$ which
corresponds to an element $r\in R$ and let $\xi$ and $\zeta$ be as in
Proposition \ref{prop:conedec}.  Then $f$ is an isomorphism in
$\hmf(R,W)$ if and only if the following relations hold in $R/U(R)$:
\be
(\xi,W/\xi)=(\zeta,W/\zeta)=(1).
\ee
\end{Corollary}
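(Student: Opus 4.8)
The plan is to use the triangulated structure of $\hmf(R,W)$ to rephrase the statement in terms of the mapping cone of $f$, and then feed in Proposition \ref{prop:conedec}. Since $\hmf(R,W)$ is triangulated (Theorem \ref{distinguished}) and $f$ fits into the distinguished triangle $e_{v_1}\stackrel{f}{\lto}e_{v_2}\stackrel{\varphi_f}{\lto}C(f)\stackrel{\psi_f}{\lto}\Sigma e_{v_1}$, the first step is the standard observation that $f$ is an isomorphism in $\hmf(R,W)$ if and only if $C(f)$ is a zero object of $\hmf(R,W)$ (the third vertex of a distinguished triangle vanishes exactly when the first morphism is invertible). By Proposition \ref{prop:conedec} we have $C(f)\simeq_{\zmf(R,W)}e_\xi\oplus e_\zeta$, and a strong isomorphism induces an isomorphism in $\hmf(R,W)$, so $C(f)\simeq_{\hmf(R,W)}e_\xi\oplus e_\zeta$. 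Because in any additive category a biproduct is a zero object precisely when each of its summands is (a summand is a retract of the biproduct, and a retract of a zero object is zero), the second step reduces the claim to the assertion that an elementary factorization $e_v$ of $W$ is a zero object of $\hmf(R,W)$ if and only if $(v,W/v)=(1)$ in $R/U(R)$.

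For this last point I would argue directly. An object of $\hmf(R,W)$ is zero exactly when its identity endomorphism is null-homotopic, i.e.\ lies in the image of the differential $\fd$. For $e_v=(R^{1|1},D_v)$ with $u\eqdef W/v$, an odd endomorphism has the form $h=\left[\begin{array}{cc}0 & c\\ d & 0\end{array}\right]$ with $c,d\in R$, and a one-line computation gives $\fd_{e_v,e_v}(h)=D_v\circ h+h\circ D_v=(cu+vd)\,\id_{e_v}$. Hence $\id_{e_v}$ is a coboundary if and only if the equation $cu+vd=1$ is solvable in $R$, i.e.\ if and only if $u$ and $v$ are coprime, i.e.\ $(v,W/v)=(1)$. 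Applying this with $v=\xi$ and with $v=\zeta$ and combining with the first two steps yields that $f$ is an isomorphism in $\hmf(R,W)$ if and only if $(\xi,W/\xi)=(\zeta,W/\zeta)=(1)$.

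I do not expect a genuine obstacle: once Proposition \ref{prop:conedec} is in hand the argument is essentially formal, the only computation being the rank-one homotopy above, which is immediate. The single point worth stating with care is that $e_\xi$ and $e_\zeta$ are honestly matrix factorizations of $W$, so that $W/\xi,W/\zeta\in R$ and the coprimality conditions make sense; but this is already built into Proposition \ref{prop:conedec}, whose Smith normal form exhibits the partner matrix $u_0=\diag(W/\xi,W/\zeta)$ with entries in $R$. An alternative route would avoid the cone criterion and instead produce an explicit two-sided inverse of $f$ in $\hmf(R,W)$ using the description of morphisms between elementary factorizations from \cite[Proposition 2.2]{elbezout}; I would nonetheless prefer the cone approach, which is shorter and conceptually cleaner.
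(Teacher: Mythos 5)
Your argument is correct and follows the paper's proof essentially step for step: the paper likewise reduces the statement to the vanishing of $C(f)$ in the triangulated category, applies Proposition \ref{prop:conedec} to replace $C(f)$ by $e_\xi\oplus e_\zeta$, and then uses the criterion that an elementary factorization $e_v$ is a zero object of $\hmf(R,W)$ iff $(v,W/v)=(1)$, which it cites from \cite[Corollary 2.11]{elbezout} rather than reproving. Your explicit homotopy computation $\fd_{e_v,e_v}(h)=(cu+vd)\,\id$ is a valid self-contained substitute for that citation; the only point to make explicit is that passing from solvability of $cu+vd=1$ to the gcd condition $(u,v)=(1)$ uses that $R$ is B\'ezout, which holds since $R$ is an elementary divisor domain.
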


\begin{proof}
The morphism $f$ is an isomorphism in the additive triangulated category
  $\hmf(R,W)$ iff $C(f)$ is a zero object. By Proposition
  \ref{prop:conedec}, this happens iff both $e_\xi$ and $e_\zeta$ are
  zero objects. By \cite[Corollary 2.11]{elbezout}, this is the case iff
  $(\xi,W/\xi)=(\zeta,W/\zeta)=(1)$. \qed
\end{proof}

\subsection{Primary matrix factorizations}

Recall that an element of $R$ is called primary if it is a power of a prime element. 

\

\begin{Definition}
An elementary factorization $e_v$ of $W$ is called {\em primary} if $v$ is a primary divisor of $W$. 
\end{Definition}

\

\noindent Let $e_v$ be a primary matrix factorization of $W$. Then $v=p^i$ for
some prime divisor $p$ of $W$ and some integer $i\in \{0,\ldots, n\}$,
where $n$ is the order of $p$ as a divisor of $W$.  We have $W=p^n
W_1$ for some element $W_1\in R$ such that $p$ does not divide $W_1$
and $u=p^{n-i}W_1$. Thus $(u,v)=(p^{\min(i,n-i)})$.

\

\begin{Definition}
The prime divisor $p$ of $W$ is called the {\em prime support} of
$e_v$. The order $n$ of $p$ is called the {\em order} of $e_v$ while
the integer $i\in \{0,\ldots, n\}$ is called the {\em size} of $e_v$.
\end{Definition}

\subsection{A Krull-Schmidt theorem for $\hmf(R,W)$ when $W$ is critically-finite}

Recall that an object of an additive category is called {\em
  indecomposable} if it is not isomorphic with a direct sum of two
non-zero objects. A {\em Krull-Schmidt category} is an additive
category for which every object decomposes into a finite direct
sum of objects having quasi-local endomorphism rings.

\

\begin{Theorem}
\label{thm:Krull}
Let $W$ be a critically-finite element of $R$. Then $\hmf(R,W)$ is a
Krull-Schmidt category whose non-zero indecomposables are the nontrivial
primary matrix factorizations of $W$. In particular, $\hmf(R,W)$ is
additively generated by $\hef_0(R,W)$.
\end{Theorem}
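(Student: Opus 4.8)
The plan is to combine the structural results already established: (i) by Proposition \ref{prop:strong_iso}, every object of $\hmf(R,W)$ is isomorphic to a finite direct sum of elementary factorizations $e_v$ with $v\mid W$; (ii) by Corollary \ref{cor:can}, two such direct sums are isomorphic in $\zmf(R,W)$ (hence in $\hmf(R,W)$) exactly when their invariant-factor chains of principal ideals agree; (iii) the Hom-module formula for morphisms between elementary factorizations recalled before Proposition \ref{prop:conedec} together with the cone decomposition $C(f)\simeq_{\zmf(R,W)} e_\xi\oplus e_\zeta$ of Proposition \ref{prop:conedec}. The first step is to use the critical-finite decomposition $W=W_0 p_1^{n_1}\cdots p_N^{n_N}$ to refine any elementary factorization into a direct sum of \emph{primary} ones: given a divisor $v=v_0 p_1^{m_1}\cdots p_N^{m_N}$ of $W$ (with $v_0\mid W_0$), the factors $v_0, p_1^{m_1},\ldots, p_N^{m_N}$ are pairwise coprime, so Corollary \ref{cor:can}(1) gives $e_v\simeq_{\zmf(R,W)} e_1^{\oplus(k-1)}\oplus e_{v_0 p_1^{m_1}\cdots p_N^{m_N}}$ — not quite what we want; instead one applies the coprime-splitting relation \eqref{sum} repeatedly to the whole direct sum. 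The cleaner route is: start from $a\simeq \bigoplus_j e_{v_j}$, and for each prime $p_i$ show that the $p_i$-primary part of the invariant-factor data can be separated. Concretely, by Corollary \ref{cor:can}(2) the strong-isomorphism class of $a$ is the chain of principal ideals $I_\rho\subset\cdots\subset I_1$ with $I_k=\langle d_k(v)\rangle$; since each $d_k(v)$ factors uniquely (up to units) as $v_0^{(k)} p_1^{a_{k,1}}\cdots p_N^{a_{k,N}}$, and since $v_0^{(k)}$ is square-free and coprime with the $p_i$, one checks that $e_{d_k(v)}\simeq_{\zmf(R,W)} e_{v_0^{(k)}}\oplus e_{p_1^{a_{k,1}}}\oplus\cdots\oplus e_{p_N^{a_{k,N}}}$ via iterated application of \eqref{2sum} to pairwise-coprime divisors. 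Thus $a$ is strongly isomorphic to a direct sum of primary elementary factorizations and copies of $e_1$.

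Next I would dispose of the non-critical and trivial pieces. Any $e_{v_0}$ with $v_0\mid W_0$ square-free is a zero object of $\hmf(R,W)$: indeed $v_0$ and $u_0=W/v_0$ satisfy $(v_0,u_0)=(1)$ since $W_0$ is square-free and coprime with $W_c$, so by \cite[Corollary 2.11]{elbezout} (invoked in the corollary after Proposition \ref{prop:conedec}) $e_{v_0}\simeq 0$ in $\hmf(R,W)$. Similarly $e_1$ and $e_W$ are zero objects. Hence in $\hmf(R,W)$ we may discard all non-primary summands and all primary summands of size $0$ or $n_i$, and conclude that $a$ is isomorphic in $\hmf(R,W)$ to a finite direct sum of \emph{nontrivial} primary elementary factorizations $e_{p_i^m}$ with $1\le m\le n_i-1$. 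This already shows $\hmf(R,W)$ is additively generated by $\hef_0(R,W)$ — the last sentence of the theorem — and that the candidate indecomposables are these nontrivial primary factorizations.

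The heart of the proof is then to show (a) that each nontrivial primary $e_{p_i^m}$ has a quasi-local endomorphism ring in $\hmf(R,W)$, so that it is indecomposable and the Krull–Schmidt property of the decomposition follows from the classical theorem, and (b) that these are the only indecomposables. For (a), I would compute $\End_{\hmf(R,W)}(e_{p_i^m})$: using the Hom-formula before Proposition \ref{prop:conedec} with $v_1=v_2=p_i^m$, $d=p_i^m$, every endomorphism is $r\cdot\mathrm{id}$ for $r\in R$, modulo those $r$ for which the morphism is null-homotopic; the critical ideal $\fI_W$ acts by zero (see the Remark after Corollary \ref{cor:can}), and one identifies $\End_{\hmf(R,W)}(e_{p_i^m})$ with a quotient of $R/\langle p_i^{?}\rangle$ that is a local ring with maximal ideal generated by the image of $p_i$ — this mirrors the computation $\End_\Lambda(V_i)\simeq R/\langle p^i\rangle$ of Proposition \ref{prop:Hom_Lambda}. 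For (b), suppose $e_{p_i^m}\simeq a\oplus b$ in $\hmf(R,W)$; by the reduction above $a$ and $b$ are again direct sums of nontrivial primary factorizations, and one uses the cone decomposition of Proposition \ref{prop:conedec} together with the invariant-factor bijection of Corollary \ref{cor:can}(2) to see that a nontrivial primary factorization cannot split (its invariant-factor chain $\langle 1\rangle\supset\cdots$ has a single nontrivial step, which cannot be the "merge" of two such chains). I expect the main obstacle to be part (a): pinning down precisely which scalar endomorphisms $r\cdot\mathrm{id}_{e_{p_i^m}}$ become zero in $\hmf(R,W)$, i.e. computing $\End_{\hmf(R,W)}(e_{p_i^m})$ exactly rather than just showing it is local. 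The cleanest way around this is to invoke Theorem \ref{thm:main} (or its local-model ingredient): the equivalence $\hmf(R,W)\simeq\bigvee_i\umod_{A_{n_i}(p_i)}$ matches $e_{p_i^m}$ with the indecomposable $V_m$ over $\Lambda=A_{n_i}(p_i)$, whose endomorphism ring $\End_\Lambda(V_m)\simeq R/\langle p_i^{\mu_{n_i}(m,m)}\rangle$ is local by Proposition \ref{prop:Hom_Lambda}; since $\umod_\Lambda$ is Krull–Schmidt with indecomposables $V_1,\ldots,V_{n_i-1}$ (established in Section \ref{subsec:ARquiver}), the orthogonal sum over $i$ is Krull–Schmidt with exactly the claimed indecomposables, and pulling this back along the equivalence finishes the proof.
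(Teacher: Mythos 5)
Your overall skeleton is the same as the paper's: decompose any object into elementary factorizations via the Smith form (Proposition \ref{prop:strong_iso}), refine each elementary factorization into primary ones using the coprimality built into the critically-finite decomposition, discard the zero objects $e_{v_0}$, $e_1$, $e_W$, and then get Krull--Schmidt from quasi-locality of the endomorphism rings of the nontrivial primary factorizations. The paper does exactly this, but compresses the middle and last steps into citations of the companion paper: \cite[Proposition 3.1, Theorem 3.2]{elbezout} for the elementary-to-primary refinement and \cite[Proposition 2.24]{elbezout} for quasi-locality; your re-derivation of the refinement from Corollary \ref{cor:can} and the relation \eqref{2sum} is a legitimate substitute in the EDD setting. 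Note also that with the paper's definition of a Krull--Schmidt category (every object is a finite direct sum of objects with quasi-local endomorphism rings), your part (b) is not needed once (a) is in place.

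The genuine gap is precisely at your step (a). Your direct computation of $\End_{\hmf(R,W)}(e_{p_i^m})$ is left unfinished (``pinning down which scalars become null-homotopic''), and the fallback you call cleanest --- invoking Theorem \ref{thm:main} --- is circular in the paper's logical order: the proof of Theorem \ref{thm:main} rests on Proposition \ref{prop:hmfdec}, whose proof cites Theorem \ref{thm:Krull}. There are non-circular repairs, and you gesture at one of them: (i) simply cite \cite[Proposition 2.24]{elbezout}, as the paper does; (ii) finish the direct computation using \cite[Proposition 2.2]{elbezout} (the Hom formula recalled before Proposition \ref{prop:conedec}), which identifies $\End_{\hmf(R,W)}(e_{p_i^m})$ with $R/\langle p_i^{\min(m,n_i-m)}\rangle$, a local ring; or (iii) use only the ``local-model ingredient'', i.e.\ the equivalence $\hmf_{p_i}(R,W)\simeq \umod_{A_{n_i}(p_i)}$ obtained from Propositions \ref{prop:hmfp}, \ref{prop:locp} and \ref{prop:eis}, none of which depends on Theorem \ref{thm:Krull}; since $\hmf_{p_i}(R,W)$ is a full subcategory of $\hmf(R,W)$ containing $e_{p_i^m}$, this identifies its endomorphism ring with the local ring $\End$ of $V_m$ in $\umod_{A_{n_i}(p_i)}$. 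As written, however, you appeal to the global orthogonal-sum statement of Theorem \ref{thm:main}, which you may not use here; you would also need the mutual orthogonality of the blocks with distinct prime supports (from \cite[Lemma 2.25]{elbezout} or Lemma \ref{lemma:HomCriterion}) if you wanted to reconstruct that decomposition independently.
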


\begin{proof} By \cite[Proposition 3.1]{elbezout} and \cite[Theorem
3.2]{elbezout}, any elementary matrix factorization decomposes into a
finite direct sum of primary matrix factorizations. On the other hand,
any matrix factorization of $W$ decomposes as a finite direct sum of
elementary factorizations and hence also as a finite direct sum of
primary factorizations whose prime supports are the prime divisors of
$W$. By \cite[Proposition 2.24]{elbezout}, every primary matrix
factorization has a quasi-local endomorphism ring.  \qed
\end{proof}

\

\begin{Corollary}
\label{cor:krull} Let $W\in R$ be an element of $R$ which has a finite
prime decomposition. Then $\hmf(R,W)$ is a Krull-Schmidt category
whose indecomposables are the nontrivial primary matrix factorizations
of $W$.
\end{Corollary}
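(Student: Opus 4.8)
The plan is to reduce the statement to Theorem~\ref{thm:Krull} by separating the case in which $W$ has a critical prime divisor from the case in which it does not. Since $W$ has a finite prime decomposition, write $W\simeq p_1^{n_1}\cdots p_k^{n_k}$ up to a unit, with $p_1,\ldots,p_k$ pairwise non-associated primes and $n_i\geq 1$ (with $k=0$ when $W$ is a unit). Group the exponents: let $W_c$ be the product of those $p_i^{n_i}$ for which $n_i\geq 2$, and $W_0$ the product of those $p_i$ for which $n_i=1$, so that $W=W_0W_c$ with $W_0$ square-free, $W_c$ a finite product of critical prime powers, and $(W_0,W_c)=(1)$.

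First I would handle the case in which $W_c$ is a non-empty product, i.e.\ some $n_i\geq 2$. Then $W$ is a critically-finite element of $R$ in the sense of Definition~\ref{def:crit-fin} (with $W_0$ allowed to be a unit when every $n_i\geq 2$), so Theorem~\ref{thm:Krull} applies directly and yields exactly the assertion: $\hmf(R,W)$ is a Krull-Schmidt category whose non-zero indecomposables are the nontrivial primary matrix factorizations of $W$.

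It remains to treat the case in which every $n_i=1$, i.e.\ $W$ is square-free (or a unit); here I would show that $\hmf(R,W)$ is the zero category. By Proposition~\ref{prop:strong_iso} every object of $\hmf(R,W)$ is isomorphic to a finite direct sum of elementary factorizations $e_v$ with $v|W$, and for each such $v$ the divisors $v$ and $W/v$ have no common prime factor (because $W$ is square-free), so $(v,W/v)=(1)$; hence $e_v$ is a zero object of $\hmf(R,W)$ by \cite[Corollary 2.11]{elbezout}. Therefore every object of $\hmf(R,W)$ is a zero object, so $\hmf(R,W)$ is trivial and in particular vacuously a Krull-Schmidt category with no non-zero indecomposable. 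This is consistent with the statement, since in this case $W$ has no nontrivial primary matrix factorization either: a primary divisor of $W$ is of the form $p^i$ with $p$ of order $1$, so $i\in\{0,1\}$, and again $(p^i,W/p^i)=(1)$, making $e_{p^i}$ trivial. Combining the two cases proves the corollary. The one point requiring care is this last case: one must check that a purely square-free $W$ contributes nothing to $\hmf(R,W)$ and that the two descriptions of the indecomposables then coincide (both being empty), which is exactly what the zero-object criterion $(v,W/v)=(1)$ provides; beyond that I expect no real obstacle.
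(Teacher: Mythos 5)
Your argument is correct and follows essentially the paper's own route: group the critical prime powers of $W$ into $W_c$, the simple primes into a square-free $W_0$, and reduce to Theorem \ref{thm:Krull}. Your separate treatment of the case with no critical prime divisor (where Theorem \ref{thm:Krull} does not literally apply, since critical-finiteness requires $N\geq 1$, and where you show $\hmf(R,W)$ vanishes via the zero-object criterion $(v,W/v)=(1)$ applied to the elementary summands) is a refinement that the paper's one-line proof leaves implicit.
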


\begin{proof} Write $W=W_0p_1^{n_1}\ldots p_N^{n_N}$, where $p_j$ are
the critical prime divisors of $W$, $n_j\geq 2$ and $W_0$ is the
product of the non-critical prime divisors of $W$. Then $W_0$ is
non-critical and we can apply Theorem \ref{thm:Krull}. \qed
\end{proof}

\

\begin{remark} Theorem \ref{thm:Krull} proves \cite[Conjecture
3.5]{elbezout} when $R$ is an elementary divisor domain.
\end{remark}

\subsection{The category $\hmf_p(R,W)$ and its equivalent descriptions}

Let $p$ be a prime divisor of $W$ of order $n$. Let $\hmf_p(R,W)$
denote the smallest strictly full\footnote{I.e., full and closed under
  isomorphisms.}  subcategory of $\hmf(R,W)$ which is closed under
direct sums and contains all those primary factorizations of $W$ which
have prime support $p$. Propositions \ref{prop:strong_iso} and \cite[Proposition 3.1]{elbezout}
imply that $\hef(R,W)$ is additively generated by its strictly
full subcategory $\hef_0(R,W)$ whose objects are the primary
factorizations of $W$.

\

\begin{Lemma}
\label{lemma:HomCriterion}
A matrix factorization $a$ of $W$ is an object of $\hmf_p(R,W)$ iff
$\Hom(e_q, a)=0$ for any prime divisor $q$ of $W$ such that $(q)\neq (p)$.
\end{Lemma}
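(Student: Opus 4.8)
The plan is to prove both implications by reducing an arbitrary matrix factorization to a direct sum of primary factorizations and then isolating the $p$-primary summands via the $\Hom$ criterion. For the forward direction, suppose $a\in\Ob\hmf_p(R,W)$. By Proposition \ref{prop:strong_iso}, $a$ is isomorphic in $\hmf(R,W)$ to a finite direct sum of elementary factorizations, and by \cite[Proposition 3.1]{elbezout} each elementary summand decomposes into primary factorizations; since $a$ lies in $\hmf_p(R,W)$ all these primary summands have prime support $p$ (this follows from Theorem \ref{thm:Krull}, which tells us the primary factorizations are indecomposable and that decompositions are essentially unique, so no primary factorization with support $q\neq p$ can appear). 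Thus it suffices to show $\Hom_{\hmf(R,W)}(e_q, e_{p^i})=0$ whenever $(q)\neq(p)$ and $e_{p^i}$ is $p$-primary. Writing $q$ as a product of prime powers and using that $\Hom$ is additive, we may further assume $q=q_0^k$ is itself primary with $(q_0)\neq(p)$. Then by the description of morphisms between elementary factorizations recalled before Proposition \ref{prop:conedec}, a morphism $e_q\to e_{p^i}$ is given by an element $r\in R$ modulo the image of the differential; the key computation is that the critical ideal $\fI_W$ annihilates $\Hom_{\hmf(R,W)}(e_q, e_{p^i})$ (as noted in the remark after Corollary \ref{cor:can}, citing \cite[Remark 2.2]{elbezout}), and more sharply one shows this $\Hom$ module is annihilated by a power of $q_0$ as well as by a power of $p$; since $(q_0)\neq(p)$ are distinct primes these are coprime, forcing the module to vanish.

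For the converse, suppose $\Hom_{\hmf(R,W)}(e_\ell, a)=0$ for every prime divisor $\ell$ of $W$ with $(\ell)\neq(p)$. Again decompose $a$ in $\hmf(R,W)$ as a finite direct sum of primary factorizations $\bigoplus_j e_{q_j^{m_j}}$ using Proposition \ref{prop:strong_iso} and \cite[Proposition 3.1]{elbezout}. We must show every summand with $(q_j)\neq(p)$ is a zero object of $\hmf(R,W)$; equivalently, that $a$ is isomorphic to a direct sum of $p$-primary factorizations, hence lies in $\hmf_p(R,W)$ by definition. Suppose toward a contradiction that some summand $e_{q^m}$ with $(q)\neq(p)$ is nonzero in $\hmf(R,W)$; by \cite[Corollary 2.11]{elbezout} a primary factorization $e_{q^m}$ (with $q$ a critical prime divisor of $W$, i.e. $2\le m\le n_q$ where $n_q$ is the order of $q$) is nonzero iff $1\le m\le n_q-1$ after accounting for the identification $\Sigma e_v\simeq e_{W/v}$. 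It then remains to exhibit a nonzero morphism $e_q\to e_{q^m}$ (or more generally $e_q\to a$) in $\hmf(R,W)$, which contradicts the hypothesis. Such a morphism is produced from the explicit formula $f=r\cdot\left[\begin{smallmatrix}v_2/d & 0\\ 0 & v_1/d\end{smallmatrix}\right]$ with $v_1=q$, $v_2=q^m$, $d\in(q,q^m)=(q)$: taking $r=1$ gives a cocycle, and one checks via the description of $\Hom$ between primary factorizations of the same prime support (which reduces to the computation of $\Hom_\Lambda$-type modules over $A_n(q)$, cf. Proposition \ref{prop:Hom_Lambda}) that its class is nonzero precisely because $e_{q^m}$ is a nonzero object.

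The main obstacle is the converse direction: one needs to guarantee that a nonzero $q$-primary summand of $a$ genuinely forces a nonzero $\Hom(e_q,a)$. This requires knowing that $\Hom(e_q,-)$ ``detects'' nonzero $q$-primary factorizations, which in turn rests on the explicit computation of morphism spaces between primary factorizations with common prime support — essentially the statement that $\Hom_{\hmf(R,W)}(e_q,e_{q^m})$ corresponds to a nonzero module over the Artinian ring $A_{n_q}(q)$ when $e_{q^m}$ is a nonzero object — together with additivity of $\Hom$ to rule out cancellation against the other summands (which is where one uses that $\Hom(e_q, e_{q'^{m'}})=0$ for $(q')\neq(q)$, already established in the forward direction). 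I expect these ingredients are all available from \cite{elbezout} and from Section \ref{sec:artinian}, so the argument is mainly one of careful bookkeeping rather than a new idea.
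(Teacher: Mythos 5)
Your argument is correct and follows essentially the same route as the paper: reduce to a direct sum of primary factorizations and then use the explicit computation of morphism modules between elementary factorizations from \cite{elbezout}. The paper compresses both directions into one step, observing that $\hmf(R,W)$ is additively generated by $\hef_0(R,W)$ so it suffices to treat $a=e_{s^k}$ primary, where $\Hom_{\hmf(R,W)}(e_q,e_{s^k})\simeq R/\langle q,s^k\rangle$ vanishes iff $(q)\neq(s)$ --- this single formula yields both the vanishing you obtain via coprime annihilators and the ``detection'' statement you single out as the main obstacle in your converse direction.
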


\begin{proof}
Since $\hmf(R,W)$ is additively generated by $\hef_0(R,W)$, it
suffices to prove the statement when $a=e_v$ is a primary matrix
factorization. In this case, we have $v=s^k$ for some prime divisor
$s$ of $W$ and:
\be
\Hom_{\hmf(R,W)}(e_q, a)=\Hom_\hmf(R,W)(e_q, e_{s^k})\simeq R/\langle q,s^k\rangle\simeq_R \twopartdef{R/(s)}{(q)=(s)}{0}{(q)\neq (s)}~~.
\ee 
Hence $\Hom(e_q, a)$ vanishes for any prime divisor $q$ of $W$ such that
$(q)\neq (p)$ iff $(s)=(p)$, which is equivalent with the condition
that $e_v$ is an object of $\hmf_p(R,W)$. \qed
\end{proof}

\

\begin{Proposition}
$\hmf_p(R,W)$ is a triangulated subcategory of $\hmf(R,W)$.
\end{Proposition}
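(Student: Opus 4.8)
The plan is to show that $\hmf_p(R,W)$ is closed under the suspension functor $\Sigma$ and under mapping cones, since a full, isomorphism-closed subcategory of a triangulated category which is closed under these two operations (and contains the zero object) inherits the triangulated structure. I would phrase the verification using the $\Hom$-criterion of Lemma \ref{lemma:HomCriterion}: an object $a$ of $\hmf(R,W)$ lies in $\hmf_p(R,W)$ if and only if $\Hom_{\hmf(R,W)}(e_q,a)=0$ for every prime divisor $q$ of $W$ with $(q)\neq (p)$. This reduces everything to a cohomological computation, which is exactly the form that plays well with distinguished triangles.

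First I would check closure under $\Sigma$. If $a$ is primary with prime support $p$, say $a=e_{p^k}$ with $u=W/p^k=p^{n-k}W_1$ and $p\nmid W_1$, then by \cite[Section 1.7]{elbezout} we have $\Sigma e_{p^k}=e_{-u}\simeq_{\zmf(R,W)} e_{p^{n-k}W_1}$; applying Corollary \ref{cor:can} (the coprime case, since $(p^{n-k})$ and $(W_1)$ are coprime) gives $\Sigma e_{p^k}\simeq_{\hmf(R,W)} e_1^{\oplus}\oplus e_{u}$, and then a further decomposition of $e_u$ into primaries via \cite[Proposition 3.1]{elbezout} shows that in fact $\Sigma e_{p^k}$ is a direct sum of the primary factorization $e_{p^{n-k}}$ (of prime support $p$) with copies of the zero object $e_1$ and the trivial factorization $e_W$. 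Hence $\Sigma e_{p^k}\in \Ob\hmf_p(R,W)$. Since every object of $\hmf_p(R,W)$ is a finite direct sum of such $e_{p^k}$ (by definition of $\hmf_p(R,W)$ together with Proposition \ref{prop:strong_iso}) and $\Sigma$ commutes with direct sums, closure under $\Sigma$ follows. Closure under $\Sigma^{-1}=\Sigma$ is automatic because $\Sigma^2=\id$.

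Next I would verify closure under cones. Given a distinguished triangle $a_1\xrightarrow{f} a_2\to C(f)\to \Sigma a_1$ with $a_1,a_2\in \Ob\hmf_p(R,W)$, I want $C(f)\in \Ob\hmf_p(R,W)$. Using Proposition \ref{prop:strong_iso} I may assume $a_1$ and $a_2$ are finite direct sums of primary factorizations with prime support $p$. Since cones are compatible with direct sums in the source and target (a cone of a matrix of morphisms decomposes accordingly), it suffices to treat $a_1=e_{p^i}$, $a_2=e_{p^j}$ and $f$ corresponding to an element $r\in R$. By Proposition \ref{prop:conedec}, $C(f)\simeq_{\zmf(R,W)} e_\xi\oplus e_\zeta$ with $\xi\in \frac{(p^i,p^j,u_1,u_2,r)(p^i)}{(p^i,p^j)}$ and $\zeta=-\frac{p^i u_2}{\xi}$, where $u_1=W/p^i$, $u_2=W/p^j$. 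Here $u_1=p^{n-i}W_1$ and $u_2=p^{n-j}W_1$ with $p\nmid W_1$, so every element entering the gcd defining $\xi$ is, up to the unit-free class, a power of $p$ times something coprime to $p$; consequently $\xi$ is associated to $p^a$ times a divisor of $W_1$, and likewise $\zeta$. Applying Corollary \ref{cor:can} once more to split off the $W_1$-part as copies of $e_1$ and of trivial factorizations, and \cite[Proposition 3.1]{elbezout} to decompose what remains into primaries, I conclude that $C(f)$ is, up to isomorphism in $\hmf(R,W)$, a direct sum of primary factorizations all of whose prime supports are $p$. Thus $C(f)\in \Ob\hmf_p(R,W)$, and since $\hmf_p(R,W)$ is strictly full it contains every object isomorphic to $C(f)$.

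Finally, $\hmf_p(R,W)$ contains the zero object (namely any $e_1$), is closed under finite direct sums by construction, and by the two paragraphs above is closed under suspension, desuspension, and cones; therefore the distinguished triangles of $\hmf(R,W)$ with all three vertices in $\hmf_p(R,W)$ endow it with the structure of a triangulated subcategory. The main obstacle I anticipate is the bookkeeping in the cone computation: one must argue carefully that the gcd-theoretic expressions for $\xi$ and $\zeta$ in Proposition \ref{prop:conedec}, when all divisors involved are primary at $p$, collapse to primary-at-$p$ factors plus zero and trivial summands — this is where the coprimality of $p$ with $W_1$ and the decomposition results of \cite{elbezout} do the real work. \qed
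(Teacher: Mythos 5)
There is a genuine gap in your closure-under-cones step. The reduction ``since cones are compatible with direct sums in the source and target, it suffices to treat $a_1=e_{p^i}$, $a_2=e_{p^j}$'' is not valid: a morphism between direct sums is a full matrix $(f_{kl})$ of morphisms, and $C(f)$ only splits as a direct sum of cones when $f$ is block-diagonal (i.e.\ an actual direct sum of morphisms). For a general matrix, $C(f)$ is some matrix factorization whose elementary summands $e_d$ (read off from the Smith normal form as in Theorem \ref{thm:strong_iso}) have divisors $d$ of $W$ that are a priori not powers of $p$; if $d$ is divisible by a critical prime $q$ of $W$ with $(q)\neq (p)$, the corresponding summand contains a nonzero $q$-primary piece, and nothing in your computation excludes this. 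So Proposition \ref{prop:conedec} settles only the reduced-rank-one case, whereas the whole content of the statement is to rule out such foreign summands for \emph{arbitrary} morphisms between objects of $\hmf_p(R,W)$.

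The fix is exactly the cohomological strategy you announced in your opening paragraph and then abandoned: given a distinguished triangle $a\to b\to c\to \Sigma a$ with $a,b\in \Ob\,\hmf_p(R,W)$, apply the homological functor $\Hom_{\hmf(R,W)}(e_q,-)$ for each prime divisor $q$ of $W$ with $(q)\neq(p)$; by Lemma \ref{lemma:HomCriterion} the terms at $b$ and at $\Sigma a$ vanish, so the long exact sequence gives $\Hom_{\hmf(R,W)}(e_q,c)=0$, and Lemma \ref{lemma:HomCriterion} again yields $c\in \Ob\,\hmf_p(R,W)$; rotating triangles finishes the argument. This is the paper's proof, and it never needs to decompose the cone. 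Your suspension argument and your rank-one cone computation are essentially correct (the coprime splitting $e_{p^aW_1}\simeq_{\hmf(R,W)} e_{p^a}\oplus e_{W_1}$ with $e_{W_1}$ a zero object because $(W_1,p^n)=(1)$ is fine, and the paper handles closure under $\Sigma$ by citing \cite[Proposition 2.26]{elbezout}), but they do not suffice for the cone step; alternatively you would have to show directly that every invariant factor of the cone's matrix contributes only $p$-primary or contractible elementary summands, which is considerably harder than the $\Hom$-vanishing argument.
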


\begin{proof}
The subcategory $\hmf_p(R,W)$ of $\hmf(R,W)$ is strictly full by
definition. Since $\hmf_p(R,W)$ is additively generated by primary
factorizations of prime support $p$, \cite[Proposition 2.26]{elbezout} implies that $\hmf_p(R,W)$ is closed under
suspension.  Let $a\rightarrow b\rightarrow c\rightarrow \Sigma a$ be
a distinguished triangle of $\hmf(R,W)$ such that $a$ and $b$ are
objects of $\hmf_p(R,W)$. For any prime divisor $q$ of $W$ such that
$q\not \simeq p$, the homological functor $\Hom_{\hmf(R,W)}(e_q,-)$
takes this triangle into a long exact sequence:
\ben
\label{lex}
\ldots \longrightarrow \Hom_{\hmf(R,W)}(e_q,b)\longrightarrow \Hom_{\hmf(R,W)}(e_q,c) \longrightarrow \Hom_{\hmf(R,W)}(e_q,\Sigma a) \longrightarrow \ldots 
\een
Since $b$ and $\Sigma a$ are objects of $\hmf_p(R,W)$, we have
$\Hom_{\hmf(R,W)}(e_q,b)=\Hom_{\hmf(R,W)}(e_q,\Sigma a)=0$ by Lemma
\ref{lemma:HomCriterion} and the sequence \eqref{lex} implies
$\Hom_{\hmf(R,W)}(e_q,c)=0$. Applying Lemma \ref{lemma:HomCriterion}
once again, we conclude that $c$ is an object of $\hmf_p(R,W)$.
Since triangles can be rotated, it follows that any triangle in 
$\hmf(R,W)$ for which two objects are in $\hmf_p(R,W)$ has all its 
objects in $\hmf_p(R,W)$. \qed
\end{proof}

\

\begin{Proposition}
For any prime element $p\in R$, the ring $R_{(p)}$ is discrete valuation ring. 
In particular, we have $\kdim R_{(p)}=1$. 
\end{Proposition}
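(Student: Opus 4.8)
The plan is to present $R_{(p)}$ first as a valuation domain with a principal maximal ideal, and then to upgrade this to a discrete valuation ring by computing its Krull dimension.

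First I would note that $R_{(p)}$ is again a B\'ezout domain: every finitely generated ideal of $R_{(p)}$ is the extension of a finitely generated ideal of $R$, which is principal since $R$ is B\'ezout, and extensions of principal ideals remain principal. By Lemma \ref{lemma:prime}(1) the ideal $\langle p\rangle$ is maximal in $R$, so $R_{(p)}$ is a local ring with maximal ideal $\mathfrak{m}_p=\langle p\rangle R_{(p)}=\pi R_{(p)}$, where $\pi\eqdef p/1$; this ideal is principal and nonzero, the latter because $p$ is a non-unit of $R$, so that $\pi\in\mathfrak{m}_p$ and $\pi\neq 0$. A local B\'ezout domain is a valuation domain: given nonzero $a,b\in R_{(p)}$, write $\langle a,b\rangle=\langle d\rangle$ and $a=da'$, $b=db'$; cancelling $d$ in an equation $d=\alpha a+\beta b$ yields $\langle a',b'\rangle=R_{(p)}$, and since $R_{(p)}$ is local one of $a',b'$ is a unit, whence $a\mid b$ or $b\mid a$. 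Thus $R_{(p)}$ is a valuation domain whose maximal ideal is generated by the prime element $\pi$.

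Next I would show $\kdim R_{(p)}=1$, i.e. that $\langle p\rangle$ has height one in $R$, since the primes of $R_{(p)}$ correspond to the primes of $R$ contained in $\langle p\rangle$. Suppose $\mathfrak{q}$ is a prime of $R$ with $0\subsetneq\mathfrak{q}\subsetneq\langle p\rangle$ and pick $0\neq x\in\mathfrak{q}$. Then $x\in\langle p\rangle$ forces $p\mid x$, and $p\notin\mathfrak{q}$ (otherwise $\langle p\rangle\subseteq\mathfrak{q}$), so primeness of $\mathfrak{q}$ gives $x/p\in\mathfrak{q}$; iterating, $x\in\langle p^n\rangle$ for every $n\geq 1$, hence $x\in\bigcap_{n\geq 1}\langle p^n\rangle$. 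Granting that this intersection vanishes — equivalently, that the $p$-adic filtration of $R$ is separated, which is the case for the elementary divisor domains relevant here (for instance for rings of holomorphic functions, by the identity theorem) — we get $\mathfrak{q}=0$, so $\kdim R_{(p)}=1$.

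Finally I would combine these observations: a valuation domain of Krull dimension one has a value group of rank one, hence archimedean by H\"older's theorem, and an archimedean ordered abelian group possessing a least positive element is isomorphic to $\Z$. Such a least positive element exists here, namely the image of $\pi$ under the valuation, because $\mathfrak{m}_p=\pi R_{(p)}$ consists precisely of the elements of positive value. Therefore the value group of $R_{(p)}$ is $\Z$, so $R_{(p)}$ is a discrete valuation ring, and in particular $\kdim R_{(p)}=1$. The delicate input is the separatedness of the $p$-adic filtration used in the dimension count — a valuation domain with a principal maximal ideal need not be discrete in general, so this is exactly the step where the hypotheses on $R$ must be used; once it is in hand, the remaining steps are routine.
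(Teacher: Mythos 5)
Your argument is sound up to its final input, but that input is never established, and you say so yourself: the separatedness $\bigcap_{n\geq 1}\langle p^n\rangle=0$ (equivalently, that $\langle p\rangle$ has height one in $R$) is only ``granted'', with an appeal to the motivating examples such as rings of holomorphic functions. Since the proposition is asserted for an arbitrary elementary divisor domain $R$ with a prime element $p$, this is a genuine gap, not a routine omission. Moreover it cannot be closed from the stated hypotheses alone: take $R$ to be the valuation domain attached to the lexicographically ordered group $\Z^n$ with $n\geq 2$, as in Example \ref{ex:nonNoetheriandvr}(1). This is an elementary divisor domain whose maximal ideal is principal and prime, so it has a prime element $p$; since $R$ is local with maximal ideal $\langle p\rangle$, one has $R_{(p)}=R$, which is non-Noetherian of Krull dimension $n\geq 2$, and $\bigcap_{n\geq 1}\langle p^n\rangle$ is the nonzero prime ideal corresponding to the proper convex subgroup of the value group. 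So exactly the phenomenon you warn against (a valuation domain with principal prime maximal ideal that is not discrete) occurs within the hypotheses as stated, and your height-one step, hence the conclusion, breaks down there. Some additional hypothesis of the type you isolate (separatedness of the $p$-adic filtration, or height one of $\langle p\rangle$) is indispensable.

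For comparison, the paper proceeds differently: it transplants the argument of Proposition \ref{prop:Artinian} to $R_{(p)}$, claims that the ideals of $R_{(p)}$ are exactly the powers $\langle p^i\rangle$ together with the zero ideal, and deduces that $R_{(p)}$ is Noetherian, a PID, and one-dimensional, with $\bigcap_i\langle p^i\rangle=0$ obtained afterwards in a footnote. Note, however, that the transplanted argument only controls ideals that lie between two consecutive powers of $\langle p\rangle$; an ideal contained in $\bigcap_i\langle p^i\rangle$ is not reached by it, and the footnote's argument presupposes the classification it is meant to justify. In other words, the delicate point you flagged is precisely the point at which the paper's own proof needs the extra input as well. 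If the separatedness $\bigcap_{n\geq 1}\langle p^n\rangle=0$ is added as a hypothesis (it does hold, e.g., for $\O(\Sigma)$ by the identity theorem), then your route --- local B\'ezout, hence valuation domain with principal maximal ideal, then rank one plus a least positive value forces value group $\Z$ --- is correct and complete, and is a genuinely different (value-group) derivation from the paper's ideal-classification argument.
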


\begin{proof}
The maximal ideal of  $R_{(p)}$ is the principal ideal $(p)$. 
The powers of this ideal form the strictly descending sequence: 
\be
R_{(p)}\supsetneq \langle p\rangle \supsetneq \langle p^2\rangle \supsetneq \ldots ~~.
\ee
The same argument as in the proof of Proposition \ref{prop:Artinian}
(but with $R$ replaced by $R_{(p)}$) shows that these and the zero
ideal are all the ideals of $R_{(p)}$. In particular, any strictly
ascending sequence of ideals terminates and hence $R_{(p)}$ is
Noetherian and thus a PID.  Moreover, we have\footnote{If $x\in
\cap_{i=1}^\infty (p^i)$, then $(x)\subset (p^i)$ for all $i$, which
requires $x=0$ since otherwise $(x)$ would equal some $(p^j)$.}
$\cap_{i=1}^\infty (p^i)=0$. The zero ideal is prime since $R_{(p)}$
is an integral domain and we have $\langle 0\rangle \neq \langle
p\rangle$. Hence $\kdim R_{(p)}=1$, which implies that $R_{(p)}$ is
not a field. \qed
\end{proof}

\

\begin{remark}
Since any discrete valuation ring is a regular local ring, it follows
that $R_{(p)}$ is a regular local ring.
\end{remark}

\

\begin{Proposition}
\label{prop:locp}
Let $p$ be a prime element of $R$ and $n>0$ be a positive integer.
Then the localization functor $\loc_p:\hmf(R,p^n)\rightarrow
\hmf(R_{(p)},p^n)$ at the multiplicative set $S_p\eqdef R\setminus
\langle p\rangle$ is a triangulated equivalence.
\end{Proposition}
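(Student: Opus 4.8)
\emph{Proof proposal.} The plan is to verify that $\loc_p$ is a triangulated functor which is essentially surjective and fully faithful; these three properties together give the asserted triangulated equivalence. That $\loc_p$ is triangulated is immediate: localization along the (flat) ring map $R\to R_{(p)}$ preserves direct sums and commutes with the block formulas defining the suspension $\Sigma$, the mapping cone $C(f)$ and the comparison morphisms $\varphi_f,\psi_f$ of Definition \ref{def:cone}, so the generating distinguished triangles of $\hmf(R,p^n)$ are carried to distinguished triangles of $\hmf(R_{(p)},p^n)$. The case $n=1$ is trivial: then $p$ is non-critical, every elementary factorization of $p$ (over $R$ or over $R_{(p)}$) is a zero object by \cite[Corollary 2.11]{elbezout}, and by Proposition \ref{prop:strong_iso} both categories reduce to $0$; so I assume $n\geq 2$ below.

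\emph{Essential surjectivity.} Since $R$ is a GCD domain and $p$ is prime, the divisors of $p^n$ in $R$ are, up to association, exactly $1,p,\dots,p^n$, and the same holds in the discrete valuation ring $R_{(p)}$ (which is a principal ideal domain, hence an elementary divisor domain; see Appendix \ref{app:edd}). Applying Proposition \ref{prop:strong_iso} over $R_{(p)}$, every object of $\hmf(R_{(p)},p^n)$ is isomorphic to a finite direct sum of elementary factorizations $e_{p^{i}}$ of $p^n$ over $R_{(p)}$; and a one-line check shows that $\loc_p$ carries the elementary factorization $e_{p^i}$ over $R$ to the elementary factorization $e_{p^i}$ over $R_{(p)}$. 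As $\loc_p$ preserves direct sums, it is essentially surjective.

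\emph{Fullness and faithfulness.} Here I would avoid explicit $\Hom$-computations and use flat base change instead. For matrix factorizations $a=(M_1,D_1)$ and $b=(M_2,D_2)$ of $p^n$ over $R$ with $M_1,M_2$ finite free, the $\Z_2$-graded morphism complex in $\MF(R,p^n)$ is $\uHom_R(M_1,M_2)$ with differential $\fd_{a,b}$, a complex of finite free $R$-modules; because $M_1$ is finite free, $\loc_p$ acts on this complex as the base change $(-)\otimes_R R_{(p)}$, compatibly with $\fd_{a,b}$. Since $R_{(p)}$ is $R$-flat, cohomology commutes with this base change, so for all $a,b$ the map induced by $\loc_p$ on morphism modules factors as
\be
\Hom_{\hmf(R,p^n)}(a,b)\;\longrightarrow\;\Hom_{\hmf(R,p^n)}(a,b)\otimes_R R_{(p)}\;\xrightarrow{\ \sim\ }\;\Hom_{\hmf(R_{(p)},p^n)}(\loc_p a,\loc_p b)~~,
\ee
with the first arrow the canonical base-change map and the second an isomorphism. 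It therefore suffices to show the base-change map is an isomorphism. By the remark following Corollary \ref{cor:can} (see also \cite[Remark 2.2]{elbezout}), the critical ideal $\fI_{p^n}$ annihilates $\Hom_{\hmf(R,p^n)}(a,b)$; since $p^n\in\fI_{p^n}$, this module is in particular a module over the Artinian local ring $A_n(p)=R/\langle p^n\rangle$, whose maximal ideal is $\langle p\rangle$ by Lemma \ref{lemma:prime}. Every element of $S_p=R\setminus\langle p\rangle$ maps to a unit of $A_n(p)$ and hence acts invertibly on any $A_n(p)$-module, so localization at $S_p$ leaves $\Hom_{\hmf(R,p^n)}(a,b)$ unchanged and the base-change map is an isomorphism. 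This proves full faithfulness, and hence the desired triangulated equivalence.

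\emph{Main obstacle.} The only delicate point is the base-change bookkeeping in the third step: one must check that passing from $\MF$ to $\hmf$ (i.e.\ taking cohomology of $\uHom_R(M_1,M_2)$) commutes with $(-)\otimes_R R_{(p)}$, and that $\loc_p$ on morphism modules is genuinely the base-change map — this relies on the morphism complexes being built from finite free modules together with flatness of $R_{(p)}$ over $R$. A more hands-on alternative, which sidesteps this, is to restrict to the additive generators $e_{p^i}$ ($0\leq i\leq n$) of both categories and compare $\Hom_{\hmf(R,p^n)}(e_{p^i},e_{p^j})$ with $\Hom_{\hmf(R_{(p)},p^n)}(e_{p^i},e_{p^j})$ directly via \cite[Proposition 2.2]{elbezout}: each is a cyclic module, of the form $R/\langle p^{m}\rangle$ respectively $R_{(p)}/\langle p^{m}\rangle$ for the same exponent $m$ (which depends only on the divisibility lattice of $p^n$), and $\loc_p$ induces the natural map between them, which is an isomorphism for the same reason as above. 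I would take the first, cleaner route.
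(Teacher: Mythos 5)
Your proposal is correct, and it differs from the paper's proof mainly in how full faithfulness is handled. The paper's argument is a reduction to elementary factorizations on both sides: it observes that $S_{p^n}=S_p$, quotes \cite[Proposition 2.15]{elbezout} to get that $\loc_p$ restricts to an $R$-linear equivalence $\hef(R,p^n)\to\hef(R_{(p)},p^n)$, notes that $R_{(p)}$ is a local B\'ezout domain and hence an EDD by \cite[Corollary 2.3]{LLS} (deliberately avoiding the Noetherian/DVR statement), so that both $\hmf(R,p^n)$ and $\hmf(R_{(p)},p^n)$ are additively generated by their $\hef$ subcategories, and concludes using \cite[Proposition 2.12]{elbezout} for the triangulated structure. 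You instead prove full faithfulness for \emph{arbitrary} objects by flat base change of the morphism complexes (legitimate, since the underlying modules are finite free and $R_{(p)}$ is $R$-flat) combined with the observation that every $\hmf(R,p^n)$-Hom module is annihilated by $p^n\in\fI_{p^n}$, hence is an $A_n(p)$-module on which $S_p$ acts invertibly, so $S_p$-localization is the identity on it; essential surjectivity then comes from the Smith normal form (Proposition \ref{prop:strong_iso}) over $R_{(p)}$, for which you use the preceding proposition that $R_{(p)}$ is a discrete valuation domain. The trade-off: the paper's route is shorter because the Hom comparison is black-boxed into \cite[Proposition 2.15]{elbezout}, whereas your route re-derives that comparison from \cite[Remark 2.2]{elbezout} and flatness, needs additive generation only on the target side, and avoids reducing the fully-faithful step to generators; your "hands-on alternative" at the end is essentially the paper's argument. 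Both uses of the structure of $R_{(p)}$ (DVR versus local B\'ezout $\Rightarrow$ EDD) are justified by results stated in the paper, so no gap remains.
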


\begin{proof} 
Let $W=p^n$. We have:
\be
S_{p^n}\eqdef\big\{r\in R\, |\, (r,p^n)=(1)\big\}=\big\{r\in R \, | \, (r,p)=(1)\big\}=\big\{r\in R \, |\, p\!\!\not | r\big\}=R\setminus \langle p\rangle=S_p~~.
\ee
Hence \cite[Proposition 2.15]{elbezout} implies that $\loc_p$ is an
$R$-linear equivalence between $\hef(R,p^n)$ and $\hef(R_{(p)},p^n)$.
Since $R$ is B\'ezout (and hence Pr\"ufer), the localization $R_{(p)}$ is a
(possibly non-Noetherian) valuation domain and hence a B\'ezout domain.
Since any local B\'ezout domain is an EDD \cite[Corollary 2.3]{LLS}, it
follows that $R_{(p)}$ is an EDD. Since both $R$ and $R_{(p)}$ are
EDDs, the categories $\hmf(R,p^n)$ and $\hmf(R_{(p)},p^n)$ are
additively generated by $\hef(R,p^n)$ and $\hef(R_{(p)},p^n)$.  Thus
$\loc_p$ is an $R$-linear equivalence between
$\hmf(R,p^n)$ and $\hmf(R_{(p)},p^n)$. This implies the conclusion
since $\loc_p$ is a triangulated functor by \cite[Proposition
2.12]{elbezout}. \qed
\end{proof}

\

\begin{Proposition}
\label{prop:hmfp}
Let $p$ be a prime divisor of $W$ of order $n$. Then the categories
$\hmf_p(R,W)$ and $\hmf_p(R_{(p)},p^n)$ are triangle-equivalent.
\end{Proposition}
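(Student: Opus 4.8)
The plan is to exhibit the equivalence as the restriction of a localization functor composed with a unit-rescaling equivalence. Write $W=p^nW_1$ with $p\nmid W_1$ and set $S_p\eqdef R\setminus\langle p\rangle$. Then the image of $W_1$ in $R_{(p)}$ is a unit, so $W_{(p)}=s\,p^n$ for some unit $s$ of $R_{(p)}$; by Proposition \ref{prop:uniteq} there is a triangulated equivalence $\Phi\colon\hmf(R_{(p)},W_{(p)})\stackrel{\sim}{\rightarrow}\hmf(R_{(p)},p^n)$, and by \cite[Proposition 2.12]{elbezout} the localization functor $\loc_p\colon\hmf(R,W)\to\hmf(R_{(p)},W_{(p)})$ is triangulated. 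I would set $G\eqdef\Phi\circ\loc_p$ and show that $G$ restricts to a triangulated equivalence $\hmf_p(R,W)\stackrel{\sim}{\rightarrow}\hmf_p(R_{(p)},p^n)$.

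First I would describe the target. Since $R_{(p)}$ is a discrete valuation ring with uniformizer $p$, every prime of $R_{(p)}$ is associated to $p$, so every primary factorization of $p^n$ over $R_{(p)}$ has prime support $p$; hence $\hmf_p(R_{(p)},p^n)=\hmf(R_{(p)},p^n)$, and by Proposition \ref{prop:strong_iso} applied to the EDD $R_{(p)}$ this category is additively generated by the elementary factorizations $e_{p^i}$, $i=0,\dots,n$. A direct computation with the block matrices shows that for a primary factorization $e_{p^i}$ of $W$ of prime support $p$ the object $G(e_{p^i})$ is strongly isomorphic to $e_{p^i}$ over $(R_{(p)},p^n)$, while for a primary factorization $e_{q^j}$ of prime support $q$ with $(q)\neq(p)$ the element $q$ is a unit in $R_{(p)}$, so $G(e_{q^j})$ is a zero object. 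Since $\hmf_p(R,W)$ is additively generated by the primary factorizations of prime support $p$ and $G$ is additive and preserves direct sums, it follows that $G$ restricts to a triangulated functor $G_p\colon\hmf_p(R,W)\to\hmf_p(R_{(p)},p^n)$ whose essential image is a strictly full subcategory closed under direct sums and containing every $e_{p^i}$; hence $G_p$ is essentially surjective.

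The substantive step is full faithfulness of $G_p$. Because both categories are additively generated by the $e_{p^i}$ and $G_p$ preserves finite direct sums, it suffices to check that $G_p$ induces isomorphisms on the modules $\Hom_{\hmf(R,W)}(e_{p^i},e_{p^j})$ for $i,j\in\{0,\dots,n\}$. The key observation is that these modules are already local at $S_p$: for any $s\in S_p$, applying Proposition \ref{prop:conedec} to the endomorphism $s\cdot\id_{e_{p^i}}$ (so $v_1=v_2=p^i$, $u_1=u_2=p^{n-i}W_1$, $r=s$) one has $\left(v_1,v_2,u_1,u_2,r\right)=(1)$ since $s$ is coprime to $p^i$; hence $\xi$ is a unit and $\zeta$ is associated to $W$, so $C(s\cdot\id_{e_{p^i}})\simeq_{\zmf(R,W)}e_\xi\oplus e_\zeta$ is a zero object by \cite[Corollary 2.11]{elbezout}, i.e. $s\cdot\id_{e_{p^i}}$ is an isomorphism in $\hmf(R,W)$. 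Thus every $s\in S_p$ acts invertibly on each $e_{p^i}$, hence on $\Hom_{\hmf(R,W)}(e_{p^i},e_{p^j})$, which is therefore naturally an $R_{(p)}$-module. Since localization at $S_p$ is exact it commutes with the formation of the morphism modules of $\hef$ (cf. the proof of Proposition \ref{prop:locp} and \cite[\S2]{elbezout}), so $\loc_p$ carries $\Hom_{\hmf(R,W)}(e_{p^i},e_{p^j})$ isomorphically onto $\Hom_{\hmf(R_{(p)},W_{(p)})}(\loc_p e_{p^i},\loc_p e_{p^j})$, and composing with $\Phi$ yields the desired isomorphism onto $\Hom_{\hmf(R_{(p)},p^n)}(e_{p^i},e_{p^j})$.

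The hard part will be the compatibility of $\loc_p$ with the morphism modules between elementary factorizations — that is, the assertion that $\loc_p$ is fully faithful up to localization on $\hef(R,W)$ — which relies on the explicit form of these $\Hom$-modules established in \cite{elbezout}; once that and the local structure of $R_{(p)}$ are in place, Propositions \ref{prop:uniteq} and \ref{prop:strong_iso} make the remaining verifications routine.
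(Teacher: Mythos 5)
Your proposal is correct and follows essentially the same route as the paper's proof: restrict the localization functor $\loc_p$ to $\hmf_p(R,W)$, verify essential surjectivity and full faithfulness on the additively generating primary factorizations $e_{p^i}$, and compose with the unit-rescaling equivalence of Proposition \ref{prop:uniteq}. The only minor difference is that you obtain invertibility of the action of $s\in S_p$ on $\Hom_{\hmf(R,W)}(e_{p^i},e_{p^j})$ from the cone computation of Proposition \ref{prop:conedec}, whereas the paper simply invokes \cite[Lemma 2.14]{elbezout} together with the explicit description $\Hom_{\hmf(R,W)}(e_{p^i},e_{p^j})\simeq R/\langle p^{\min(i,j)}\rangle$.
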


\begin{proof}
By \cite[Proposition 2.12]{elbezout}, localization at the
multiplicative set $S_p=R\setminus \langle p\rangle$ gives a
triangulated functor $\loc_p:\hmf(R,W)\rightarrow \hmf(R_{(p)}, W_p)$,
which restricts to a triangulated functor:
\be
\label{locres}
\loc_p:\hmf_p(R,W)\rightarrow \hmf(R_{(p)},
W_p)~~.  
\ee
This restricts to a functor $\Phi:\hef_p(R,W)\rightarrow
\hef(R_{(p)},W_p)$ which maps the elementary factorization $e_{p^i}$
of $W$ to the elementary factorization $e'_{p^i}$ of $W_p$. It is
clear that the functor $\Phi$ is essentially surjective. It is also
fully faithful, since any element $s\in S_p=R\setminus \langle
p\rangle$ acts as an automorphism of each module
$\Hom_{\hmf(R,W)}(e_{p^i}, e_{p^j})\simeq R/\langle
p^{\min(i,j)}\rangle$ by \cite[Lemma 2.14]{elbezout}. Since
$\hef(R,W)$ and $\hef(R_{(p)}, W_p)$ additively generate $\hmf(R,W)$
and $\hmf(R_{(p)}, W_p)$, we conclude that \eqref{locres} is a
triangulated equivalence. On the other hand, the localization $W_p$ of
$W$ at is associated in the ring $R_{(p)}$ with the element $p^n\in
R_{(p)}$. This gives a triangulated equivalence $\hmf(R_{(p)},
W_p)\simeq \hmf(R_{(p)}, p^n)$ by Proposition
\ref{prop:uniteq}. Composing this with \eqref{locres} gives the
conclusion. \qed
\end{proof}

\

\noindent Composing the triangulated equivalences of Propositions
\ref{prop:locp} and \ref{prop:hmfp} gives a triangulated equivalence
$\hmf_p(R,W)\simeq \hmf(R,p^n)$. We have a commutative diagram of
triangulated categories and triangulated equivalences:
\be
\xymatrix{
~~~~\ar[dd]_{\loc_p} \hmf_p(R,W) \ar[rr] & \!  & \hmf(R,p^n) \ar[dd]^{\loc_p}~~~~\\
& & \\
\hmf(R_{(p)},W_p)  \ar[rr] & & \hmf(R_{(p)},p^n) &
}
\ee

\

\begin{Proposition}
\label{prop:eis}
The restriction to $\hmf_p(R,p^n)$ of the cokernel functor of $\hmf(R,p^n)$:
\ben
\label{Cok}
\Cok:\hmf_p(R,p^n)\rightarrow \umod(R/\langle p^n\rangle)=\umod_{A_n(p)}
\een
is a triangulated equivalence.
\end{Proposition}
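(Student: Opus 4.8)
The plan is to realize the cokernel functor as a composite of triangulated equivalences that have already been constructed in the previous subsections, rather than verifying directly that it is an equivalence. Since $p$ is the only prime divisor of $p^n$ and $p^n$ is critically-finite (with $n\geq 2$), Theorem \ref{thm:Krull} shows that every object of $\hmf(R,p^n)$ is isomorphic to a direct sum of primary factorizations of prime support $p$; hence $\hmf_p(R,p^n)=\hmf(R,p^n)$ and the restriction in the statement is the full cokernel functor $\Cok\colon\hmf(R,p^n)\to\umod_{A_n(p)}$. Recall that $\Cok$ sends a factorization $a=(R^{\rho|\rho},D)$, with $v$ the upper-right block of $D$, to $\coker(v\colon R^\rho\to R^\rho)$; this is a finitely generated module over $A_n(p)=R/\langle p^n\rangle$ because $uv=vu=p^n I_\rho$, and an even morphism is sent to the induced map on cokernels. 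A standard computation (as in the Buchweitz--Eisenbud theory) shows that a null-homotopic morphism induces a map factoring through a free $A_n(p)$-module, so $\Cok$ descends to $\hmf(R,p^n)$; it is $R$-linear and additive, and it intertwines $\Sigma$ with the shift $\Omega$ of $\umod_{A_n(p)}$, the key point being that $u,v$ have maximal rank over the domain $R$, so that $\cdots\to A_n(p)^\rho\overset{u}{\to}A_n(p)^\rho\overset{v}{\to}A_n(p)^\rho\to\coker v\to 0$ is a $2$-periodic $A_n(p)$-free resolution and $\coker u\simeq\Omega(\coker v)$.

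I would then feed in the localization results. By Proposition \ref{prop:locp}, $\loc_p\colon\hmf(R,p^n)\to\hmf(R_{(p)},p^n)$ is a triangulated equivalence; by Proposition \ref{prop:loc_Lambda} (together with the remark that $A_n(p)_{(p)}\simeq R_{(p)}/\langle p^n\rangle$), $\uloc_p\colon\umod_{A_n(p)}\to\umod_{R_{(p)}/\langle p^n\rangle}$ is a triangulated equivalence. Because localization is exact, applying $-\otimes_R R_{(p)}$ to the presentation $R^\rho\overset{v}{\to}R^\rho\to\coker v\to 0$ gives a natural isomorphism $(\coker v)\otimes_R R_{(p)}\simeq\coker(v_{(p)})$; this says precisely that the square with top and bottom arrows the two cokernel functors and vertical arrows $\loc_p$ and $\uloc_p$ commutes up to natural isomorphism, compatibly with the suspensions. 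Finally, $R_{(p)}$ is a DVR, hence regular local Noetherian, so the classical Buchweitz--Eisenbud matrix-factorization correspondence (cf. \cite{Buchweitz}) applies over $R_{(p)}$: the cokernel functor $\Cok\colon\hmf(R_{(p)},p^n)\to\uMCM(R_{(p)}/\langle p^n\rangle)$ is a triangulated equivalence, and $\uMCM(R_{(p)}/\langle p^n\rangle)=\umod_{R_{(p)}/\langle p^n\rangle}$ by Lemma \ref{lemma:MCM} since $R_{(p)}/\langle p^n\rangle$ is Artinian. Hence $\Cok$ over $R$ is naturally isomorphic to the composite $\uloc_p^{-1}\circ\Cok_{R_{(p)}}\circ\loc_p$ of triangulated equivalences, and is therefore itself a triangulated equivalence.

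The main obstacle I anticipate is twofold: citing the classical correspondence over the DVR $R_{(p)}$ in exactly the needed form — the triangulated equivalence $\hmf(S,f)\simeq\umod(S/f)$ for $S$ regular local Noetherian and $0\neq f\in\mathfrak{m}_S$, via $\Cok$, which for a DVR is essentially the model hypersurface case — and checking that the commuting square is compatible with the triangulated structures, i.e. that the natural isomorphism $\uloc_p\circ\Cok\simeq\Cok_{R_{(p)}}\circ\loc_p$ respects the shift-compatibility data of all four triangulated functors. Everything else is routine exactness bookkeeping. A self-contained alternative, avoiding the import from the DVR case, would be to check directly that $\Cok$ is triangulated and then invoke that $\hmf(R,p^n)$ is Krull--Schmidt with indecomposables $e_{p^1},\dots,e_{p^{n-1}}$ (Theorem \ref{thm:Krull}) while $\umod_{A_n(p)}$ is Krull--Schmidt with indecomposables $V_1,\dots,V_{n-1}$: since $\Cok(e_{p^i})\simeq R/\langle p^i\rangle=V_i$ this gives essential surjectivity, and comparing $\Hom_{\hmf(R,p^n)}(e_{p^i},e_{p^j})$ with $\uHom_{A_n(p)}(V_i,V_j)\simeq V_{\mu_n(i,j)}$ (Proposition \ref{prop:umod}) gives full faithfulness.
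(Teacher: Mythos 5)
Your proposal is correct and takes essentially the same route as the paper: the paper's proof also factors $\Cok$ as $\uloc_p^{-1}\circ\cok\circ\loc_p$, invoking Proposition \ref{prop:locp}, Proposition \ref{prop:loc_Lambda} and the Eisenbud correspondence over the local ring $R_{(p)}$. The only differences are cosmetic — you spell out the commutativity of the localization square (which the paper dismisses as ``easy to see'') and note explicitly that $\hmf_p(R,p^n)=\hmf(R,p^n)$.
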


\begin{proof}
Since $R_{(p)}$ is a local ring, the Eisenbud correspondence
\cite{Eisenbud} gives a triangulated equivalence:
\be
\cok:\hmf(R_{(p)},p^n)\stackrel{\sim}{\rightarrow} \umod_{R_{(p)}/\langle p^n\rangle}~~,
\ee
where $\cok$ is the cokernel functor of $\hmf(R_{(p)},p^n)$.  
By Proposition \ref{prop:locp}, localization at the multiplicative set 
$R\setminus \langle p\rangle$ gives a triangulated equivalence: 
\be
\loc_p:\hmf(R,p^n)\stackrel{\sim}{\rightarrow} \hmf(R_{(p)},p^n)~~.
\ee
By Proposition \ref{prop:loc_Lambda}, localization at the
multiplicative set $U(R/\langle p^n\rangle )$ gives a triangulated equivalence:
\be
\uloc_p: \umod_{R_{(p)}/\langle p^n\rangle }\stackrel{\sim}{\rightarrow} \umod_{R/\langle p^n\rangle}~~.
\ee
It is easy to see that the we have the relation: 
\be
\uloc_p\circ \Cok=\cok\circ \loc_p~~,
\ee
which implies that $\Cok=\uloc_p^{-1}\circ \cok\circ \loc_p$ is a triangulated 
equivalence. \qed 
\end{proof}

\paragraph{Explicit description of $\hmf(R,p^n)$}
Let $p\in R$ be a prime element and $n\geq 2$.  By Theorem
\ref{thm:Krull}, the indecomposable objects of the Krull-Schmidt
category $\hmf(R,p^n)$ are the non-zero primary factorizations of the
critically-finite element $W=p^n$. For any $1\leq i\leq k-1$, let:
\ben
\label{ei}
e_i:=e_{v_i}=\left[\begin{array}{cc}0 & p^i\\ p^{n-i} &
  0\end{array}\right]
\een
be the non-zero primary matrix factorization of $W=p^n$ corresponding
to the primary divisor $v_i=p^i$. For this factorization, we have
$u_i=p^{n-i}$ and $(u_i,v_i)=(p^{\min(i,n-i)})\neq (1)$. Notice
that $e_i$ has order $\delta_n(i)$, where $\delta_n(i)$ was defined in
\eqref{deltamu}. For any $i,j\in \{1,\ldots, n-1\}$, we have
$(v_1,v_2,u_1,u_2)=(p^{\mu(i,j)})$, where $\mu(i,j)$ was defined in
\eqref{deltamu}. Thus \cite[Proposition 2.2]{elbezout} shows that:
\be
\Hom_{\hmf(R,p^n)}(e_i,e_j)\simeq_R R/\langle p^{\mu(i,j)}\rangle ~~
\ee
is a cyclically-presented cyclic module generated by the morphism:
\be
\epsilon_\0(v_i,v_j)\eqdef \left[\begin{array}{cc} p^{j-\min(i,j)} & 0\\ 0 & p^{i-\min(i,j)}\end{array}\right]~~.
\ee
On the other hand, \cite[Proposition 2.8]{elbezout} shows that the composition of morphisms is given by: 
\be
f\circ g=p^{\rho(i,j,k)}rs\epsilon_\0(v_i,v_k) ~~~~~,
\ee
for all $f=r\epsilon_\0(v_j,v_k) \in \Hom_{\hmf(R,p^n)}(e_j,e_k)$ and $g=s\epsilon_\0(v_i,v_j)\in \Hom_{\hmf(R,p^n)}(e_i,e_j)$, where $r,s\in R$ and: 
\be
\rho(i,j,n)=\max(i,j,n)-\min(i,j,n)+\min(i,n)-\max(i,n)=\max(i,j,n)-\min(i,j,n)-|i-n|~~.
\ee
Since $p^n\in \Ann(\Hom_{\hmf(R,p^n)}(e_i,e_j))$, we can view $\hmf(R,p^n)$ as an
$A_n(p)$-linear category. The triangulated equivalence \eqref{Cok}
sends the primary matrix factorization $e_{v_i}$ to the cyclic
$A_n(p)$-module $\Cok(v_i)=V_i$. For any $i,j\in \{1,\ldots, n-1\}$, we have:
\be
\Hom_{\hmf(R,p^n)}(e_i,e_j)\simeq R/\langle p^{\mu(i,j)}\rangle \simeq \uHom_{A_n(p)}(V_i,V_j)~~,
\ee
where the last isomorphism follows from Proposition \ref{prop:umod}.

\subsection{Proof of the main theorem}

\

\

\begin{Proposition}
\label{prop:hmfdec}
Let $W$ be a critically-finite element of $R$ with decomposition
\eqref{Wcritform}.  Then we have an orthogonal decomposition:
\be
\hmf(R,W)=\vee_{i=1}^N \hmf_{p_i}(R,W)~~,
\ee
where $\vee$ denotes the orthogonal sum of triangulated categories.
\end{Proposition}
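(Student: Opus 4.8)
The plan is to read off the orthogonal decomposition from the Krull--Schmidt property of $\hmf(R,W)$ established in Theorem \ref{thm:Krull}, together with the vanishing of $\Hom$-modules between primary matrix factorizations of non-associated prime supports. The preceding proposition already shows that each $\hmf_{p_i}(R,W)$ is a strictly full triangulated subcategory of $\hmf(R,W)$, so all that remains is to check (i) that these subcategories jointly generate $\hmf(R,W)$ under direct sums and (ii) that they are pairwise $\Hom$-orthogonal; by definition of $\vee$ these two facts are exactly the assertion.

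First I would settle generation. By Proposition \ref{prop:strong_iso} every object of $\hmf(R,W)$ is isomorphic to a finite direct sum of elementary factorizations, and by \cite[Proposition 3.1 and Theorem 3.2]{elbezout} each elementary factorization decomposes in $\hmf(R,W)$ into a finite direct sum of primary ones; alternatively one may invoke Theorem \ref{thm:Krull} directly, whose non-zero indecomposables are precisely the non-zero primary factorizations of $W$. Now if $e_{q^k}$ is a primary factorization whose prime support $q$ divides $W_0$, then $q$ is coprime to $W_c$ and appears in the square-free element $W_0$ with multiplicity one, hence has order one as a divisor of $W$; thus $k\in\{0,1\}$ and $(q^k,W/q^k)=(1)$, so $e_{q^k}$ is a zero object by \cite[Corollary 2.11]{elbezout}. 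Similarly $e_{p_i^{n_i}}$ is a zero object, since $p_i$ is coprime to $W/p_i^{n_i}$. Therefore every non-zero primary factorization of $W$ is isomorphic to some $e_{p_i^k}$ with $i\in\{1,\dots,N\}$ and $k\in\{1,\dots,n_i-1\}$, and such an object lies in $\hmf_{p_i}(R,W)$ by definition. Consequently every object of $\hmf(R,W)$, being a finite direct sum of such factorizations (up to isomorphism and zero summands), belongs to $\sum_{i=1}^N\hmf_{p_i}(R,W)$.

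Next I would prove orthogonality. Since $\hmf_{p_i}(R,W)$ is additively generated by the primary factorizations of prime support $p_i$, and the $\Hom$ bifunctor of an additive category carries finite direct sums in either argument to finite direct sums, it suffices to show $\Hom_{\hmf(R,W)}(e_{p_i^k},e_{p_j^l})=0$ whenever $i\neq j$. By \cite[Proposition 2.2]{elbezout} this $\Hom$-module is cyclic and isomorphic to $R/\langle (v_1,v_2,u_1,u_2)\rangle$ with $v_1=p_i^k$, $v_2=p_j^l$ and $u_m=W/v_m$; since $p_i$ and $p_j$ are non-associated primes we have $(v_1,v_2)=(1)$, hence $(v_1,v_2,u_1,u_2)=(1)$ and the module vanishes. (Equivalently, one may apply Lemma \ref{lemma:HomCriterion}: $e_{p_j^l}\in\hmf_{p_j}(R,W)$ forces $\Hom(e_q,e_{p_j^l})=0$ for every prime divisor $q$ with $(q)\neq(p_j)$.) Combining this with the generation statement and with the fact that each $\hmf_{p_i}(R,W)$ is a triangulated subcategory yields $\hmf(R,W)=\vee_{i=1}^N\hmf_{p_i}(R,W)$.

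The only delicate point is bookkeeping rather than conceptual: one must be careful that the critical-finiteness hypothesis is genuinely used to conclude that the only non-zero primary factorizations of $W$ have prime support among $p_1,\dots,p_N$ (this is where square-freeness of $W_0$ and coprimality with $W_c$ enter, via \cite[Corollary 2.11]{elbezout}), and that the reduction of the $\Hom$-orthogonality to the generating primary factorizations is legitimate --- which it is, because $\hmf(R,W)$ is additive and every object is a finite direct sum of the $e_{p_m^k}$, so both arguments of $\Hom$ may be replaced by such direct sums.
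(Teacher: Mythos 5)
Your proof is correct and follows essentially the same route as the paper: generation of $\hmf(R,W)$ by nontrivial primary factorizations (Theorem \ref{thm:Krull} together with the decomposition results of \cite{elbezout}) plus pairwise $\Hom$-orthogonality of the subcategories $\hmf_{p_i}(R,W)$. The only difference is cosmetic: the paper cites \cite[Lemma 2.25]{elbezout} for orthogonality, whereas you rederive it directly from the $\Hom$ formula of \cite[Proposition 2.2]{elbezout} (equivalently Lemma \ref{lemma:HomCriterion}), and you spell out explicitly why no primes other than $p_1,\dots,p_N$ contribute nonzero primary factorizations.
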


\begin{proof} Theorem \ref{thm:Krull} and \cite[Proposition
3.1]{elbezout} imply that $\hmf(R,W)$ is additively generated (and
hence also triangle-generated) by the triangulated subcategories
$\hmf_{p_i}(R,W)$. These categories are mutually orthogonal by
\cite[Lemma 2.25]{elbezout}.  \qed
\end{proof}

\

\noindent We are now ready to prove Theorem \ref{thm:main}. 

\

\begin{proof}[of Theorem \ref{thm:main}]
The first equivalence in \eqref{tequiv} follows from Propositions \ref{prop:hmfdec}
and \ref{prop:eis}. The second equivalence follows from Proposition \ref{prop:sing}. 
The fact that $A_n(p)$ is Artinian follows from Proposition \ref{prop:Artinian}. \qed
\end{proof}

\section{Some examples}

\noindent In this section, we discuss a few classes of examples to which the results of the previous sections apply. 

\subsection{Holomorphic matrix factorizations over a non-compact Riemann surface}

Let $\Sigma$ be any connected, smooth and borderless non-compact
Riemann surface\footnote{Notice that such a Riemann surface $\Sigma$
need not be algebraic. In particular, $\Sigma$ may have infinite genus
as well as an infinite number of Freudenthal ends.}. Then $\Sigma$ is
Stein by a result of \cite{BS}.  Moreover, any holomorphic vector
bundle defined on $\Sigma$ is holomorphically trivial (see
\cite[Theorem 30.3]{FO}), so in particular $\Sigma$ has trivial
canonical line bundle. The critical set $Z_W$ of any non-constant
holomorphic function $W:\Sigma\rightarrow \C$ consists of isolated
points, so the total cohomology category $\HF(\Sigma,W)$ of
holomorphic factorizations of $W$ defined in \cite{lg2} can be
identified with the total cohomology category $\HMF(\O(\Sigma),W)$ of
finite rank matrix factorizations of $W$ over the ring $\O(\Sigma)$ of
holomorphic complex-valued functions defined on $\Sigma$ (see
\cite[Proposition 7.1]{lg2}). In particular, the even subcategory
$\HF^\0(\Sigma,W)$ can be identified with the homotopy category of
matrix factorizations $\hmf(\O(\Sigma),W)$. When the set $Z_W$ is
finite, the category $\HF(\Sigma,W)\simeq \HMF(\O(\Sigma),W)$ coincides
with the category of D-branes of a B-type open-closed topological
Landau-Ginzburg model with finite-dimensional on-shell state spaces
(see \cite{LG1,LG2,lg1}).

The non-Noetherian ring $\O(\Sigma)$ is an elementary divisor domain
\cite{Helmer2,Henriksen3,Alling1,Alling2} whose prime elements are
those holomorphic functions having a single simple zero and no other
zeros. For each point $z\in \Sigma$, we thus have a prime element
$p_z\in \O(\Sigma)$ (a holomorphic function which has a simple zero at
$z$ and no other zeroes) which is determined by $z$ up to
multiplication with a non-zero complex constant.  A critically-finite
superpotential is a holomorphic function $W\in \O(\Sigma)$ of the form
$W=W_0W_c$, where $W_0\in \O(\Sigma)$ has only simple zeros (the
number of which may be countably infinite) while $W_c\in \O(\Sigma)$
has a finite number of zeros $z_1,\ldots, z_N\in \Sigma$, each of
which has multiplicity $n_i\geq 2$ and differs from all zeros of
$W_0$. The critical set $Z_W$ of such a holomorphic function contains
the set $\{z_1,\ldots, z_N\}$. In this case, Theorem \ref{thm:main}
shows that the triangulated category $\HF^\0(\Sigma,W)\simeq
\hmf(\O(\Sigma),W)$ is the orthogonal direct sum of the Krull-Schmidt
triangulated categories $\umod_{\O(\Sigma)/(p_{z_i}^{n_i})}$
associated with the points $z_i$, whose Auslander-Reiten quivers are
entirely determined by the multiplicities $n_i$. The Auslander-Reiten
quiver of $\umod_{\O(\Sigma)/(p_{z_i}^{n_i})}$ has $n_i-1$ nodes and
is of the type shown in Figure \ref{fig:squiver}. Notice that only the
critical points $z_1,\ldots, z_N$ ``contribute'' to the orthogonal
decomposition of the category $\hmf(\O(\Sigma),W)$.

\subsection{Valuation domains}

Recall that a unital commutative ring is called a {\em generalized
  valuation ring} \cite{Warfield} if its elements are linearly
preordered by divisibility, i.e. if any two elements $x,y\in R$
satisfy one of the conditions $x|y$ or $y|x$. The following
characterizations are well-known \cite{Brandal,Warfield}:

\

\begin{Proposition}
\label{prop:gvr}
Let $R$ be a unital commutative ring. Then the following statements 
are equivalent:
\begin{enumerate}[(a)]
\itemsep 0.0em
\item $R$ is a generalized valuation ring.
\item The principal ideals of $R$ are linearly ordered by inclusion.
\item The ideals of $R$ are linearly ordered by inclusion.
\item $R$ is quasilocal and any finitely-generated ideal of $R$ is
  principal.
\item If $x_1,\ldots, x_n$ are elements of $R$, then there exists
  $j\in \{1,\ldots, n\}$ such that $\langle x_1,\ldots, x_n\rangle=\langle x_j\rangle$.
\end{enumerate}
In particular $R$, is a generalized valuation ring iff $R$ is a
quasilocal B\'ezout ring.
\end{Proposition}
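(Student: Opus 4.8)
The plan is to prove the equivalence of (a)--(e) by running around the cycle
\[
\text{(a)} \Longleftrightarrow \text{(b)} \Longrightarrow \text{(e)} \Longrightarrow \text{(d)} \Longrightarrow \text{(c)} \Longrightarrow \text{(b)},
\]
and then to observe that the concluding sentence is nothing but the equivalence (a) $\Leftrightarrow$ (d) rephrased, since by definition a quasilocal B\'ezout ring is exactly a quasilocal ring in which every finitely-generated ideal is principal.

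Most of these implications are routine and I would dispatch them quickly. For (a) $\Leftrightarrow$ (b) the only input is the dictionary $x\mid y\iff \langle y\rangle\subseteq\langle x\rangle$, under which the divisibility preorder on $R$ corresponds to the reverse-inclusion order on principal ideals, so one is a linear preorder precisely when the other is a linear order. The implication (c) $\Rightarrow$ (b) is immediate, principal ideals being ideals. For (b) $\Rightarrow$ (e) I would induct on $n$: when $n=2$, comparability of $\langle x_1\rangle$ and $\langle x_2\rangle$ gives $\langle x_1,x_2\rangle=\langle x_1\rangle+\langle x_2\rangle=\langle x_i\rangle$ for whichever $i$ makes $\langle x_i\rangle$ the larger ideal; for general $n$ one applies the inductive hypothesis to $x_1,\ldots,x_{n-1}$ to get $\langle x_1,\ldots,x_{n-1}\rangle=\langle x_j\rangle$ and then reduces to the two-element case for $x_j$ and $x_n$. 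For (e) $\Rightarrow$ (d), note that (e) already contains the statement that every finitely-generated ideal is principal; to see that $R$ is quasilocal, suppose $\mathfrak m_1\neq\mathfrak m_2$ were two distinct maximal ideals, pick $x\in\mathfrak m_1\setminus\mathfrak m_2$ and $y\in\mathfrak m_2\setminus\mathfrak m_1$, and observe that $\langle x,y\rangle$, being equal to $\langle x\rangle$ or to $\langle y\rangle$ by (e), forces $y\in\mathfrak m_1$ or $x\in\mathfrak m_2$ --- a contradiction in either case.

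The step I expect to be the main obstacle is (d) $\Rightarrow$ (c), as it is the only place where quasilocality is genuinely used. Given ideals $I,J$ with $I\not\subseteq J$, I would fix $x\in I\setminus J$ and aim to show $J\subseteq I$; so let $y\in J$ be arbitrary and write the (principal, by (d)) ideal $\langle x,y\rangle=\langle d\rangle$, so that $x=rd$ and $d=ax+by$ for suitable $a,b,r\in R$. If $r$ is a unit, then $\langle x\rangle=\langle d\rangle\ni y$, hence $y\in I$. If $r$ is a non-unit, it lies in the unique maximal ideal, so $1-ar$ is a unit; from $d=(ar)d+by$ we obtain $d=(1-ar)^{-1}by\in\langle y\rangle\subseteq J$, whence $x=rd\in J$, contradicting the choice of $x$. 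Therefore the unit case always occurs, $y\in I$, and $J\subseteq I$, so any two ideals are comparable. This closes the cycle, and the final sentence of the proposition then follows at once from the equivalence (a) $\Leftrightarrow$ (d).
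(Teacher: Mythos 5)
Your proof is correct. Note that the paper itself does not prove this proposition at all: it is stated as a well-known characterization with a citation to Brandal and Warfield, so there is no internal argument to compare yours against. Your cycle (a) $\Leftrightarrow$ (b) $\Rightarrow$ (e) $\Rightarrow$ (d) $\Rightarrow$ (c) $\Rightarrow$ (b) is the standard one and each step checks out: the dictionary $x\mid y \iff \langle y\rangle\subseteq\langle x\rangle$ handles (a) $\Leftrightarrow$ (b), the two-maximal-ideals argument for quasilocality in (e) $\Rightarrow$ (d) is fine, and in the only nontrivial step (d) $\Rightarrow$ (c) you correctly use that in a quasilocal ring every non-unit lies in the unique maximal ideal, so $1-ar$ is a unit and the identity $d(1-ar)=by$ forces the contradiction $x\in J$; hence the unit case holds for every $y\in J$ and $J\subseteq I$. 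The closing sentence is indeed just (a) $\Leftrightarrow$ (d) restated, since a B\'ezout ring is by definition one whose finitely-generated ideals are principal. Your write-up thus supplies a complete, self-contained proof of a statement the paper only quotes from the literature.
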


\

A {\em valuation domain}\footnote{In some references, generalized
valuation domains are called ``valuation rings'', while discrete
valuation domains are called ``discrete valuation rings''.}  is a
generalized valuation ring which is an integral domain. Denote by $K$
the field of fractions of an integral domain $R$. Then $R$ is a
valuation domain iff any $x\in K^\times$ satisfies $x\in R$ or $1/x\in
R$.  An integral domain $R$ is a valuation domain iff there exists a
totally-ordered Abelian group $(G,+,\leq)$ (called the {\em value
group} of $R$) and a surjective valuation $v:K^\times\rightarrow G$
such that $R=\{x\in K^\times|v(x)\geq 0\}\cup \{0\}$. In this case,
$(G,+,\leq)$ is torsion-free \cite{Levi} and order-isomorphic with the
group of divisibility of $R$ (see Subsection \ref{sec:divgroup}). In fact,
a classical result of Krull \cite{Krull} states that any
totally-ordered Abelian group arises as the value group of a valuation
domain.  By Proposition \ref{prop:gvr}, a valuation domain is the same
as a quasilocal B\'ezout domain. Moreover, \cite[Corollary 2.3]{LLS}
shows that a valuation domain is an elementary divisor domain and that
any finitely-presented module over a valuation domain is a direct sum
of cyclic modules. 

\

\begin{Proposition} 
\label{prop:gvrprime}
Let $R$ be a valuation domain. Then $R$ has prime elements iff the
(unique) maximal ideal of $R$ is principal and different from zero. In
this case, any two prime elements of $R$ are associated in
divisibility.
\end{Proposition}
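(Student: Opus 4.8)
The plan is to exploit the characterization from Proposition~\ref{prop:gvr}: a valuation domain $R$ is a quasilocal B\'ezout domain, so its ideals --- in particular its principal ideals --- are totally ordered by inclusion, and its unique maximal ideal $\mathfrak{m}$ is exactly the set of non-units of $R$. I will also use that in a B\'ezout domain (being a GCD domain) the irreducible elements coincide with the prime elements, so that a factorization $p=xy$ of a prime $p$ with $x$ a non-unit forces $y$ to be a unit.

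For the forward implication, suppose $p\in R$ is a prime element. Then $p$ is a nonzero non-unit, so $\langle p\rangle$ is a nonzero proper ideal, hence $\langle p\rangle\subseteq\mathfrak{m}$. I claim this inclusion is an equality. Take any $x\in\mathfrak{m}$; since principal ideals are linearly ordered, either $\langle x\rangle\subseteq\langle p\rangle$, in which case $x\in\langle p\rangle$, or $\langle p\rangle\subseteq\langle x\rangle$, i.e. $p=xy$ for some $y\in R$. In the latter case, $x$ being a non-unit forces $y$ to be a unit by irreducibility of $p$, so $\langle x\rangle=\langle p\rangle$ and again $x\in\langle p\rangle$. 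Hence $\mathfrak{m}\subseteq\langle p\rangle$, so $\mathfrak{m}=\langle p\rangle$ is principal and nonzero.

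For the converse, suppose $\mathfrak{m}=\langle p\rangle$ with $p\neq 0$. Then $p$ is a nonzero non-unit, and the non-units of $R$ are precisely the multiples of $p$. To see that $p$ is prime, suppose $p\mid ab$; if $p\nmid a$ and $p\nmid b$, then $a$ and $b$ lie outside $\langle p\rangle=\mathfrak{m}$, hence are units, so $ab$ is a unit, contradicting $p\mid ab$. Thus $p\mid a$ or $p\mid b$, so $p$ is prime. Finally, if $p$ and $q$ are any two prime elements of $R$, the forward implication gives $\langle p\rangle=\mathfrak{m}=\langle q\rangle$, so $p$ and $q$ are associated in divisibility.

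The argument is essentially elementary once Proposition~\ref{prop:gvr} is in hand; the only step requiring a little care is the forward implication, where one must rule out $\langle p\rangle\subsetneq\mathfrak{m}$, and the one thing not to forget is that irreducible equals prime in a GCD domain, so that the proper factorization $p=xy$ cannot occur with both factors non-units.
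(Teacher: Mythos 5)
Your proof is correct, but it takes a somewhat different route from the paper's. The paper handles the forward implication by invoking Lemma \ref{lemma:prime}, which asserts for an arbitrary B\'ezout domain that a prime element generates a \emph{maximal} ideal; quasilocality then forces $\langle p\rangle$ to coincide with the unique maximal ideal, and the converse is dispatched in one line by observing that a generator of a principal maximal ideal is prime because maximal ideals are prime. You instead argue directly from the linear ordering of principal ideals in a valuation domain (Proposition \ref{prop:gvr}): comparing $\langle x\rangle$ with $\langle p\rangle$ for an arbitrary non-unit $x$ and using irreducibility of $p$ to rule out $\langle p\rangle\subsetneq\langle x\rangle$, and in the converse verifying primality of a generator of $\mathfrak{m}$ by hand. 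Your argument is self-contained and elementary but specific to valuation domains, whereas the paper's citation of Lemma \ref{lemma:prime} uses a fact valid in every B\'ezout domain and makes the proof shorter. One small remark: your appeal to ``irreducible equals prime in a GCD domain'' is unnecessary (and stated in the direction you do not use); all you need is that a prime element is irreducible, which holds in any integral domain by cancellation, so the factorization $p=xy$ with $x$ a non-unit already forces $y$ to be a unit without any GCD hypothesis.
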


\begin{proof} By Proposition \ref{prop:gvr}, $R$ is a quasilocal
B\'ezout domain. Thus Lemma \ref{lemma:prime} applies, showing that
any prime element $p\in R$ generates a maximal ideal. Since $R$ is
quasilocal, this ideal must coincide with the unique maximal ideal of
$R$, which therefore must be principal and different from zero. By the
same token, any two prime elements of $R$ must generate the same ideal
(namely the maximal ideal of $R$) and hence they must be associated in
divisibility. Conversely, if the maximal ideal of $R$ is principal and
different from zero, then any generator of this ideal is a prime
element of $R$ since maximal ideals are prime ideals. \qed
\end{proof}

\

\begin{Proposition} 
\label{prop:cfgvr}
Let $R$ be a valuation domain with a prime element $p$ and $W \in R$
be a non-zero non-unit of $R$. Then the following statements are
equivalent:
\begin{enumerate}[(a)]
\itemsep 0.0em
\item $W$ is critically-finite.
\item We have $W=u p^n$ for some $n\geq 2$ and some unit $u$ of $R$.
\item We have $W \in \langle p^n\rangle \setminus \langle p^{n+1}\rangle$ for
some $n \geq 2$.
\end{enumerate}
In this case, the category $\hmf(R,W)$ is triangle-equivalent to
$\umod_{R/\langle p^n \rangle}$.
\end{Proposition}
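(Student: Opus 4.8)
The plan is to prove $(a)\Rightarrow(b)$, the equivalence $(b)\Leftrightarrow(c)$, and $(b)\Rightarrow(a)$, and then to read off the last assertion from Theorem~\ref{thm:main}. The real content sits in the implication $(a)\Rightarrow(b)$, where I would use Proposition~\ref{prop:gvrprime} to collapse the number $N$ of critical primes to $1$: if $W$ is critically-finite with decomposition \eqref{Wcritform}, $W=W_0W_c$ and $W_c=p_1^{n_1}\cdots p_N^{n_N}$, then since every prime of a valuation domain with a prime element is associated with $p$ we get $(p_i)=(p)$ for all $i$; but Definition~\ref{def:crit-fin} forces $(p_i)\neq(p_j)$ for $i\neq j$, so $N=1$ and $W_c$ is associated with $p^{n_1}$ with $n_1\geq 2$. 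Coprimality of $W_0$ with $W_c$ gives $p\nmid W_0$, hence $W_0\notin\langle p\rangle$; since $\langle p\rangle$ is the unique maximal ideal of $R$ (Proposition~\ref{prop:gvr}), $W_0$ is a unit, and therefore $W=W_0\,p_1^{n_1}$ is a unit times $p^{n}$ with $n\eqdef n_1\geq 2$.

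For the remaining implications I expect only routine bookkeeping. $(b)\Rightarrow(c)$: from $W=up^n$ we have $W\in\langle p^n\rangle$, and $W\in\langle p^{n+1}\rangle$ would give $u=rp$ after cancelling $p^n$ in the domain $R$, contradicting that $u$ is a unit. $(c)\Rightarrow(b)$: writing $W=rp^n$ with $r\in R$, if $p\mid r$ then $W\in\langle p^{n+1}\rangle$ contrary to $(c)$, so $r\notin\langle p\rangle$ and hence $r$ is a unit. $(b)\Rightarrow(a)$: taking $W_0\eqdef u$ and $W_c\eqdef p^n$, the unit $W_0$ is square-free (hence has no critical divisors) and coprime with $W_c$, while $p$ is a critical prime divisor of $W$ with exponent $n\geq 2$, so $W$ is critically-finite with a single critical prime.

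Once the equivalences are in place, the three conditions imply that $W$ is critically-finite with critical part associated to $p^n$ and with exactly one critical prime ($N=1$), so Theorem~\ref{thm:main} yields the triangulated equivalence $\hmf(R,W)\simeq\umod_{R/\langle p^n\rangle}$ directly. Equivalently, one can note $\hmf(R,W)\simeq\hmf(R,p^n)$ by Proposition~\ref{prop:uniteq}, identify $\hmf(R,p^n)=\hmf_p(R,p^n)$ using Proposition~\ref{prop:hmfdec}, and apply Proposition~\ref{prop:eis}. I do not anticipate a genuine obstacle: the only subtle point is the unit bookkeeping around $W_0$---that a unit counts as a non-critical factor coprime to everything, and that all primes of $R$ fall into the single class $(p)$---and both are immediate from Propositions~\ref{prop:gvrprime} and~\ref{prop:gvr}.
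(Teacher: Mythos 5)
Your proof is correct and follows essentially the same route as the paper: reduce to a single critical prime via Proposition~\ref{prop:gvrprime} (so $W_c$ is associated to $p^m$), verify the cycle of implications by unit bookkeeping in the quasilocal ring with maximal ideal $\langle p\rangle$, and invoke Theorem~\ref{thm:main} for the triangulated equivalence. The only (harmless) difference is in $(a)\Rightarrow(b)$: you use the coprimality of $W_0$ and $W_c$ built into Definition~\ref{def:crit-fin} to conclude at once that $W_0$ is a unit, whereas the paper's proof does not lean on that hypothesis and also treats the case of a non-unit $W_0$ (then $p\mid W_0$, square-freeness gives $W_0=p\cdot(\mathrm{unit})$, and one takes $n=m+1$), so it remains valid even for the weaker variant of critical finiteness without the coprimality requirement.
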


\begin{proof} 
By Proposition \ref{prop:gvrprime}, the ideal $m=\langle
p\rangle$ coincides with the maximal ideal of $R$. Since $R$ is
quasi-local, we have $U(R)=R\setminus m$.
\begin{enumerate}
\itemsep 0.0em
\item $(a)\Rightarrow (b)$. If $W$ is critically-finite, then
$W=W_0W_c$ with $W_c=p^m$ for some $m\geq 2$ and some square-free
element $W_0\in R^\times$. If $W_0$ is a unit, then we can take
$u=W_0$ and $n=m$. If $W_0$ is not a unit, then $W_0\in R\setminus
U(R)=m$ and hence $p$ divides $W_0$. Since $W_0$ is square-free, it
follows that $p$ does not divide $u\eqdef W_0/p$, thus $u$ belongs to
the complement of $m$ and hence is a unit.  In this case, we have $W=u
p^{m+1}$ and we can take $n=m+1$.
\item $(b)\Rightarrow (c)$. If $W=u p^n$ with $u\in U(R)$ and $n\geq
2$, then $W\in \langle p^n\rangle$. Since $U(R)=R\setminus m$, the
prime $p$ cannot divide $u$, hence $W\not\in \langle p^{n+1}
\rangle$. Thus $W\in \langle p^n\rangle \setminus \langle
p^{n+1}\rangle$.
\item $(c)\Rightarrow (a)$. Suppose that $W\in \langle p^n\rangle
\setminus \langle p^{n+1}\rangle$ for some $n\geq 2$. Then $W=u p^n$
for some $u\in R\setminus \{0\}$. Since $W \not\in \langle
p^{n+1}\rangle $, the prime $p$ does not divide $u$ and hence $u\in
R\setminus m=U(R)$ is a unit. In particular, $u$ is square-free and
hence $W$ is critically-finite.
\end{enumerate} The remaining statement follows immediately from
Theorem \ref{thm:main}.  \qed
\end{proof}

\begin{example}
\label{ex:nonNoetheriandvr}
We give several examples of non-Noetherian valuation domains. 
\begin{enumerate}
\item Let $G=\Z^n$ for some $n \geq 2$, totally ordered using the
lexicographic order $\leq_{\mathrm{lex}}$.  Since $G$ is not cyclic,
it is not isomorphic to $\Z$. Hence the valuation domain associated to
$(\Z^n,\leq_{\mathrm{lex}})$ is not Noetherian (see Subsection
\ref{sec:dvr}).  It has exactly one principal prime ideal which is
also maximal. 
Let $e_i$ for $1 \leq i \leq n$ be the canonical basis elements of the
free $\Z$-module $\Z^n$. The inequality $e_i<_{\mathrm{lex}} e_j$ for
$i<j$ implies that the principal filter $\uparrow e_1$ is
prime. However, the filters $\uparrow e_i$ for $i>1$ are
not prime. For details on prime filters see Subsection \ref{sec:divgroup}.

\item Let $K$ be a field and $x$ be an element which is transcendental
over $K$.  For any prime number $p$, consider the tower of integral
domains: \be K[x] \subset K[x^{1/p}] \subset \dots \subset
K[x^{1/p^k}] \subset \dots ~.  \ee For any $k\geq 0$, let $m_k$ be the
maximal ideal of $K[x^{1/p^k}]$ which is generated by the element
$x^{1/p^k}$.  The localization $R_k=K[x^{1/p^k}]_{m_k}$ at the
multiplicative system given by the complement of $m_k$ is a Noetherian
discrete valuation domain. The ring $R\eqdef \cup_{k\geq 0} R_k$ is a
non-Noetherian valuation domain of Krull dimension 1 whose value group
is given by $G=\{\frac{m}{p^k} \, |\, m \in \Z, k \in \N\} \subset
\QQ$ (endowed with the order induced by the natural order of $\QQ$).
The maximal ideal of this valuation domain is the ideal generated by
the elements $x^{1/p^k}$ with $k\in \N^\ast$, which is not principal.

\item Another example of the same type can be obtained by considering
the direct limit of all rings of the form $K[x^{1/n}]$ over all
non-zero natural numbers $n \in \N^\ast$.  The resulting valuation
domain has value group $\QQ$.  Therefore, it is not Noetherian. This
valuation domain has no non-zero prime element.
\end{enumerate}
\end{example}

\subsection{Discrete valuation domains}
\label{sec:dvr}

A {\em discrete valuation domain} is a Noetherian valuation domain
which is not a field, i.e. whose maximal ideal is non-zero. By
\cite[Chap. II.1, Exercise 1.4]{FS}, a valuation domain is Noetherian
iff its unique maximal ideal $m$ satisfies $\cap_{n\geq 1} m^n=0$.
Notice that a valuation domain with non-zero principal maximal ideal
need not be a discrete valuation domain (see the Example
\ref{ex:nonNoetheriandvr}).  The following characterizations are
well-known (see, for example, \cite[Proposition 6.3.4]{HS}):

\

\begin{Proposition}
Let $R$ be an integral domain which is not a field and let $K
\neq R$ be its field of fractions. Then the following statements are
equivalent:
\begin{enumerate}[(a)] \itemsep 0.0em
\item $R$ is a discrete valuation domain.
\item $R$ is a valuation domain with value group isomorphic to $\Z$
with its natural order.
\item Every prime ideal of $R$ is principal {\rm \cite[Chap. II.1,
Exercise 1.3]{FS}}.
\item $R$ is a principal ideal domain which has a unique non-zero
prime ideal.
\item $R$ is a principal ideal domain which has a unique prime element
$p$ up to association in divisibility.
\item $R$ is Noetherian and local and there is no ring $S$ such that
$R\subsetneq S \subsetneq K$.
\item $R$ is Noetherian of Krull dimension one and its maximal ideal
is principal.
\item $R$ is Noetherian of Krull dimension one and integrally closed.
\item $R$ is local with principal maximal ideal $m$ and we have
$\cap_{n\geq 1} m^n=0$.
\end{enumerate} In this case, the unique prime ideal of $R$ coincides
with the unique maximal ideal $m$ and we have $m=(p)$, where $p$ is
the essentially unique prime element (called {\em uniformizer}) of
$R$. Moreover, the discrete valuation $v:R\rightarrow \Z$ satisfies
$v(p)=1$ and any non-zero ideal of $R$ has the form $(p^n)$ for some
$n\geq 0$.
\end{Proposition}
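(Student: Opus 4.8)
The plan is to prove the cycle of implications $(a)\Rightarrow(b)\Rightarrow(e)\Rightarrow(d)\Rightarrow(c)\Rightarrow(a)$, together with the equivalences $(a)\Leftrightarrow(i)$, $(a)\Leftrightarrow(g)\Leftrightarrow(h)$ and $(a)\Leftrightarrow(f)$; all of this is classical (cf.\ \cite{HS}), and several steps can be shortened using structure already recorded in the text. The backbone runs through the value group. For $(a)\Rightarrow(i)$: by Proposition \ref{prop:gvr} a valuation domain is a quasilocal B\'ezout domain, so, $R$ being Noetherian, the maximal ideal $m$ is finitely generated and therefore principal, $m=\langle p\rangle$; Krull's intersection theorem (equivalently \cite[Chap.\ II.1, Exercise 1.4]{FS}) then gives $\bigcap_{n\geq 1}m^n=0$, and $m\neq 0$ since $R$ is not a field. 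For $(i)\Rightarrow(b)$: from $m=\langle p\rangle$ and $\bigcap_n\langle p^n\rangle=0$, each nonzero $x\in R$ lies in $\langle p^n\rangle\setminus\langle p^{n+1}\rangle$ for a unique $n\geq 0$, so $x=up^n$ with $u\notin\langle p\rangle$ a unit; defining $v(x)\eqdef n$ and extending multiplicatively to $K^\times$ produces a discrete valuation with value group $\Z$ and $R=\{x\in K^\times:v(x)\geq 0\}\cup\{0\}$.

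For the core cycle: $(b)\Rightarrow(e)$ because, fixing $p$ with $v(p)=1$, every nonzero element of $R$ is $up^n$ with $u$ a unit, and every nonzero ideal is generated by an element of least valuation, hence equals $\langle p^n\rangle$; thus $R$ is a PID, and any prime element $q$ satisfies $\langle q\rangle=\langle p^n\rangle$ for some $n\geq 1$, forcing $n=1$ and $(q)=(p)$. This step simultaneously delivers the concluding assertions of the proposition: $m=\langle p\rangle$, $v(p)=1$, the nonzero ideals are precisely the $\langle p^n\rangle$, and $m$ is the unique nonzero prime. The implications $(e)\Rightarrow(d)\Rightarrow(c)$ are immediate, since in a PID every nonzero prime ideal has the form $\langle q\rangle$ with $q$ prime and is maximal, while $(c)\Rightarrow(a)$ is \cite[Chap.\ II.1, Exercise 1.3]{FS}; for completeness one notes that if every prime ideal of a valuation domain is principal then $m=\langle p\rangle$ and the ideal $\bigcap_n\langle p^n\rangle$ is prime (using that ideals of a valuation domain are totally ordered), hence principal, hence zero by primality, returning to case $(i)$.

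The ``dimension one'' and ``maximality'' clauses attach to the backbone by standard arguments. $(a)\Rightarrow(g)$ follows via $(a)\Rightarrow(b)$ (value group $\Z$ forces $\kdim R=1$ and $m=\langle p\rangle$), and conversely a Noetherian local domain with principal maximal ideal has $\bigcap_n m^n=0$ by Krull's theorem, so $(g)\Rightarrow(i)\Rightarrow(a)$; the equivalence $(g)\Leftrightarrow(h)$ is the classical statement that a one-dimensional Noetherian local domain is integrally closed precisely when its maximal ideal is principal (see \cite{HS}). Finally $(a)\Leftrightarrow(f)$: given $(f)$, for any non-unit $0\neq x\in m$ the ring $R[1/x]$ strictly contains $R$ and hence equals $K$, from which one extracts $m=\langle x\rangle$ and $\bigcap_n m^n=0$, landing in $(i)$; conversely a DVR visibly satisfies $K=R[1/p]$ and admits no ring strictly between $R$ and $K$. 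The main genuine obstacle is the integrally-closed direction of $(h)\Rightarrow(g)$ — producing a principal maximal ideal out of normality — but since the excerpt permits us to cite \cite{HS}, the rest of the proof is bookkeeping.
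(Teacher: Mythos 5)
The paper itself offers no argument for this proposition --- it is recorded as ``well-known'' with a pointer to \cite[Proposition 6.3.4]{HS} and the exercises in \cite{FS} --- so your sketch is not competing with an in-text proof, and most of it is sound: the chain (a)$\Rightarrow$(i)$\Rightarrow$(b)$\Rightarrow$(e)$\Rightarrow$(d), including the extraction of the closing assertions about $m$, $v(p)$ and the ideals $\langle p^n\rangle$ from (b)$\Rightarrow$(e), is correct, and citing \cite{HS} for the normality step (h)$\Rightarrow$(g) matches the paper's own level of rigor. The first genuine problem is the closing link (c)$\Rightarrow$(a) of your cycle. For a bare integral domain, ``every prime ideal is principal'' does not imply DVR: $\Z$, or any PID with two non-associated primes, satisfies (c) but not (a). Your ``for completeness'' argument silently assumes that $R$ is a valuation domain, a hypothesis you do not have at that point of the cycle, and the cited \cite[Chap.~II.1, Exercise 1.3]{FS} is likewise a statement about valuation domains. (This is partly an imprecision of the proposition as printed: items (c) and (h) are only correct with the valuation-domain, respectively local, hypothesis carried along, as in \cite[Proposition 6.3.4]{HS}; any Dedekind domain with several primes satisfies the literal (h).) With that reading your mechanism is fine --- $m=\langle p\rangle$, $\bigcap_n\langle p^n\rangle$ is prime because divisibility is total, and it must vanish --- but you must say so explicitly, or reorganize so that (c) is used only together with (b); as written the cycle never returns to (a), and since you also route (i)$\Rightarrow$(a) and (g)$\Rightarrow$(a) through it, those reductions dangle. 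They are easily rescued: your own (b)$\Rightarrow$(e) already gives ``Noetherian valuation domain, not a field'', i.e.\ (b)$\Rightarrow$(a) directly.

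The second gap is the step (f)$\Rightarrow$(i). From $K=R[1/x]$ one cannot ``extract $m=\langle x\rangle$'': take an honest DVR and $x=p^2$; then $R[1/x]=K$ while $m\neq\langle x\rangle$, so the asserted extraction is false for the element you fixed, and no argument is offered that $m$ is principal, which is exactly the nontrivial content of this direction. The standard repair stays within the tools you already permit yourself: given (f), any $\alpha\in K$ integral over $R$ yields a module-finite intermediate ring $R[\alpha]$, which cannot equal $K$ (a finitely generated $R$-module equal to $K$ dies by Nakayama, since $mK=K$), hence $R[\alpha]=R$ and $R$ is integrally closed; likewise $R_P$ is an intermediate ring for every nonzero prime $P$, forcing $R_P=R$, hence $P=m$ and $\kdim R=1$. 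This lands you in the local form of (h), after which the same \cite{HS} result you already invoke for (h)$\Rightarrow$(g) finishes. With these two repairs the proposal is complete; as written, the last link of the cycle and the (f) equivalence are gaps.
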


\

\noindent In particular, any valuation domain which is not a field and
whose value group is not order-isomorphic to $\Z$ is
non-Noetherian. The following result (which follows immediately from
Proposition \ref{prop:cfgvr}) recovers a statement which, in this
Noetherian situation, also follows from the Buchweitz correspondence
\cite{Buchweitz}:

\

\begin{Proposition}
\label{prop:dvr} Let $R$ be a discrete valuation domain. Fix a
$\Z$-valuation $v:K\rightarrow \Z$ and a uniformizer $p$ of $R$. Then
any critically-finite element of $R$ has the form $W =up^n$, where
$n=v(W)\geq 2$ and $u$ is a unit of $R$. Given such an element of
$R$, the category $\hmf(R,W)$ is triangle-equivalent to
$\umod_{R/\langle p^n\rangle}$.
\end{Proposition}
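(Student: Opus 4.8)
The plan is to deduce this immediately from Proposition \ref{prop:cfgvr}, after checking that the hypotheses of that proposition are satisfied. First I would recall that, by the characterization of discrete valuation domains quoted just above, a discrete valuation domain $R$ is in particular a valuation domain (with value group order-isomorphic to $\Z$), and that a uniformizer $p$ of $R$ is a prime element of $R$, being a generator of the maximal ideal $m=(p)$, which is a prime ideal. (Alternatively, this follows from Proposition \ref{prop:gvrprime}, since the maximal ideal of $R$ is principal and non-zero.) Hence Proposition \ref{prop:cfgvr} applies to $R$ with this prime $p$.

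Next I would invoke the equivalence $(a)\Leftrightarrow(b)$ of Proposition \ref{prop:cfgvr}: a non-zero non-unit $W\in R$ is critically-finite if and only if $W=up^n$ for some unit $u$ of $R$ and some integer $n\geq 2$. To see that this $n$ coincides with $v(W)$, I would simply apply the valuation: since $v$ is additive on products, $v(p)=1$, and $v(u)=0$ because $u$ is a unit, one gets $v(W)=v(u)+n\,v(p)=n$. A critically-finite element is by definition a non-zero non-unit, so there is no edge case to treat here.

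Finally, the asserted triangulated equivalence $\hmf(R,W)\simeq\umod_{R/\langle p^n\rangle}$ is precisely the concluding statement of Proposition \ref{prop:cfgvr} specialized to the present situation. The only point requiring any thought is the verification in the first paragraph that a discrete valuation domain is a valuation domain possessing a prime element, so that Proposition \ref{prop:cfgvr} is indeed applicable; once that is in place, the remaining assertions are direct citations, and I expect no genuine obstacle.
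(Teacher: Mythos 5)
Your proposal is correct and follows exactly the route the paper takes: the text preceding the proposition states that it ``follows immediately from Proposition \ref{prop:cfgvr}'', and your argument simply makes explicit the two routine checks involved (that a uniformizer is a prime element of the valuation domain $R$, and that $n=v(W)$ by additivity of the valuation), with the triangulated equivalence then being the concluding statement of Proposition \ref{prop:cfgvr}. Nothing further is needed.
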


\

\subsection{Constructions through the group of divisibility}
\label{sec:divgroup}

Recall that the {\em group of divisibility} $G(R)$ of an integral
domain $R$ is the quotient $K^\times/U(R)$, where $K$ is the
quotient field of $R$ and $U(R)$ is the group of units. It is an
ordered Abelian group when endowed with the order induced by the
divisibility relation. The group of divisibility of a B\'ezout domain
is lattice-ordered. In fact, any lattice-ordered
Abelian group $G$ is the group of divisibility of some B\'ezout domain
$R$ which can be obtained explicitly from $G$ by a construction due
to Jaffard and Ohm (see \cite{Jaffard,Ohm}). There exists a dictionary
between ideals of the B\'ezout domain $R$ associated to $G$ through
the construction given in op. cit. and the set of \emph{positive
filters} of $G$. Given a lattice-ordered Abelian group
$(G,\leq)$ and an element $x \in G$, the up and down sets determined
by $x$ are defined via $\uparrow x\eqdef \{y\in G\,|\,x\leq y\}$ and
$\downarrow x\eqdef \{y\in G\,|\,y\leq x\}$. A positive filter of
$(G,\leq)$ is defined to be a proper subset $F \subset G_+$ such that:
\begin{enumerate}[1.]  \itemsep 0.0em
\item $F$ is upward-closed, i.e. $x\in F$ implies $\uparrow x\subset
F$.
\item $F$ is closed under finite meets, i.e. $x,y\in F$ implies
$\inf(x,y)\in F$.
\end{enumerate}
\noindent A positive filter $F$ is called \emph{prime} if $G_+
\setminus F$ is a semigroup; it is called \emph{principal} if it has
the form $\uparrow x$ for some $x \in F$. The natural projection $\pi:
K^\times \to G$ induces a one to one correspondence between proper
ideals of $R$ and positive filters of $(G,\leq)$. Thus prime
ideals correspond to prime positive filters and non-zero principal
ideals correspond to principal positive filters. For more details and
precise statements we refer the reader to \cite[Section
5.2]{elbezout}.

It is an open question whether every B\'ezout domain is an elementary
divisor domain. Here we consider a class of lattice-ordered Abelian
groups which correspond to adequate B\'ezout domains (see Definition
\ref{def:adequate}), which are special cases of elementary divisor
domains (see \cite{Helmer2,Henriksen3} and Appendix \ref{app:edd}).

\

\begin{Definition} 
Let $(G,\leq)$ be a lattice-ordered Abelian group
and let $G^{+}=\{x \in G \,|\, x \geq 0\}$ denote its positive cone. We
say that $(G,\leq)$ is \emph{adequate} or \emph{projectable} 
if for every $a,b \in G^{+}$ there exist $r,s \in G^{+}$
satisfying the following conditions:
\begin{enumerate} \itemsep 0.0em
\item $a=r+s$.
\item $\inf(r, b)=0$.
\item If $t\in G$ satisfies $0 < t \leq s$, then we have $\inf(t, b)
\neq 0$.
\end{enumerate}
\end{Definition}

\

\noindent There exists a simple criterion for detecting adequate
groups.  Let $G$ be a lattice-ordered group. For any $b\in G^+$,
define $G_b^+=\{a \in G^+| \inf(a,b)=0\}$ and $G_b=\{a_1-a_2\, |\, a_1,a_2
\in G_b^+\}$. It is easy to see that $G_b$ is a lattice subgroup of
$G$. Then \cite[Theorem 4.7]{LLS} states that $(G,\leq)$ is adequate
iff $G_b$ is a summand of $G$ for every element $b\in G^+$.

\

\begin{Proposition}{\rm \cite{LLS}}
\label{prop:lls} 
Let $(G,\leq)$ be an adequate lattice-ordered Abelian group. Then:
\begin{enumerate}
\item The B\'ezout domain $R$ associated to $(G,\leq)$ by the
Jaffard-Ohm construction is an adequate B\'ezout domain (and hence
also an elementary divisor domain).
\item The prime elements of $R$ correspond to the principal prime
positive filters of $(G,\leq)$.
\end{enumerate}
\end{Proposition}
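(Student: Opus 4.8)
The plan is to exhibit the two required correspondences explicitly using the Jaffard--Ohm construction, which identifies ideals of $R$ with positive filters of $(G,\leq)$, and then to transport the characterization of adequacy from the level of the group to the level of the ring.

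For statement (1), I would first recall (from the Jaffard--Ohm construction, cf. \cite{elbezout, Jaffard, Ohm}) that the B\'ezout domain $R$ built from a lattice-ordered Abelian group $(G,\leq)$ has group of divisibility $G(R)\simeq (G,\leq)$ as an ordered group, with finitely generated ideals corresponding to principal positive filters. The key step is then to translate the group-theoretic definition of adequacy into the ring-theoretic one: given $a,b\in R^\times$ with images $\alpha=\pi(a),\beta=\pi(b)\in G^+$, apply the adequacy hypothesis to produce $r,s\in G^+$ with $\alpha=r+s$, $\inf(r,\beta)=0$, and the primitivity condition on $s$ relative to $\beta$. Choosing preimages $a_1,a_2\in R$ of $r,s$ under $\pi$ (well-defined up to units, which is harmless), the equation $\alpha=r+s$ translates to $a=a_1a_2$ (up to a unit), the condition $\inf(r,\beta)=0$ translates to $(a_1,b)=(1)$ in $R/U(R)$, i.e. $a_1$ and $b$ are coprime, and the third condition translates precisely into the statement that every non-unit divisor $t$ of $a_2$ fails to be coprime with $b$. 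This is exactly the definition of an \emph{adequate domain} (Definition~\ref{def:adequate}). Since adequate B\'ezout domains are elementary divisor domains \cite{Helmer2, Henriksen3} (see Appendix~\ref{app:edd}), this gives the conclusion.

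For statement (2), I would use the correspondence between non-zero principal ideals of $R$ and principal positive filters of $(G,\leq)$, under which prime ideals correspond to prime positive filters. A prime element $p\in R$ is, by definition, a non-zero non-unit generating a prime ideal $\langle p\rangle$; under the Jaffard--Ohm dictionary this corresponds to a \emph{principal} positive filter $F$ which is also \emph{prime} in the sense of the excerpt (i.e. $G^+\setminus F$ is a semigroup). Conversely, a principal prime positive filter $F=\uparrow x$ with $x\in G^+$, $x\neq 0$, pulls back to a non-zero principal prime ideal of $R$, any generator of which is then a prime element. One must check that the ``proper'' requirement on filters corresponds to the ``non-unit'' requirement on elements, and that $x\neq 0$ corresponds to $p$ being a non-unit; both are immediate from the construction. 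Thus the assignment $p\mapsto \pi(p)$ (equivalently $\langle p\rangle \mapsto$ its associated filter) gives the claimed bijection.

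The main obstacle is the first step: making the translation between the group-theoretic adequacy conditions and the ring-theoretic ones fully rigorous, in particular verifying that the third condition (``$0<t\leq s\Rightarrow \inf(t,b)\neq 0$'') corresponds exactly to the divisor condition in the definition of an adequate domain. This requires care because divisors of $a_2$ in $R$ correspond to elements $t\in G^+$ with $t\leq s$, and one must confirm that the strict inequality $0<t$ on the group side matches the non-unit condition on the ring side, and that coprimality $(t,b)=(1)$ is equivalent to $\inf(\pi(t),\pi(b))=0$ --- all of which hinge on the precise form of the Jaffard--Ohm dictionary recalled in \cite[Section 5.2]{elbezout}. The remaining verifications are routine dictionary-chasing.
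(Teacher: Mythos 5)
Your proposal is correct, and for statement (2) it is exactly the paper's argument: the paper's proof just says the claim ``follows immediately from the discussion above,'' meaning the Jaffard--Ohm dictionary under which prime ideals correspond to prime positive filters and non-zero principal ideals to principal positive filters, which is precisely the dictionary-chasing you describe. For statement (1), however, the paper does not carry out the translation at all: it simply cites \cite{LLS} for the fact that the domain associated to an adequate lattice-ordered group is an adequate B\'ezout domain, and then invokes \cite{Helmer2,Henriksen3} for ``adequate implies EDD,'' exactly as you do. So where the paper defers to the literature, you reconstruct the content of the cited result: under the isomorphism of the group of divisibility of $R$ with $(G,\leq)$, a factorization $\pi(a)=r+s$ lifts (after absorbing a unit) to $a=a_1a_2$, $\inf(r,\pi(b))=0$ becomes $(a_1,b)=(1)$, and the condition on $0<t\leq s$ becomes the non-unit-divisor condition of Definition \ref{def:adequate}, since non-units correspond to strictly positive classes and divisibility to the order. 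This translation is sound (the only unremarked cases, $b=0$ or $b$ a unit in Helmer's definition, are trivial), so your route buys a self-contained proof of part (1) at the cost of redoing \cite{LLS}, while the paper's version is shorter but relies entirely on the reference.
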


\begin{proof} The fact that $G$ is adequate was shown in
\cite{LLS}. On the other hand, any adequate B\'ezout domain is an
elementary divisor domain (see \cite{Helmer2,Henriksen3}). The second
statement follows immediately from the discussion above.  \qed
\end{proof}

If $R$ is a B\'ezout domain with prime elements which is constructed
from an adequate lattice-ordered group as in Proposition
\ref{prop:lls} and $W \in R$ is a critically-finite element, then
Theorem \ref{thm:main} applies to the homotopy category of finite rank
matrix factorizations of $W$ over $R$.

\begin{example} Let $I$ be a non-empty set and let $G$ be either the
direct sum or the direct product of a family of totally ordered groups
$(G_i)_{i\in I}$ indexed by $I$. Then the B\'ezout domain $R$
associated to $G$ is adequate (see \cite[Corollary 4.8]{LLS}). 
The prime elements of the corresponding
elementary divisor domain $R$ were described in \cite[Section
5.2]{elbezout}. Let $W$ be a critically finite element of $R$. By
Proposition \ref{prop:hmfdec} the category $\hmf(R,W)$ has an
orthogonal decomposition indexed by the critical prime divisors of
$W$.
\end{example}

\subsection{Constructions through spectral posets}
\label{subsec:sposet}

The {\em spectral poset} of a unital commutative ring $R$ is the prime
spectrum $\Spec(R)$ endowed with the order relation $\leq$ given by
inclusion.  For two elements $x,y$ of a poset $(X,\leq)$, we write
$x\ll y$ if $x<y$ and $x$ is an immediate neighbor of $y$. The
spectral poset of any unital commutative ring satisfies {\em
Kaplansky's conditions} (see \cite{KaplanskyBook}):
\begin{enumerate}[I.]  
\itemsep 0.0em
\item Every non-empty totally-ordered subset of $(\Spec(R),\leq)$ has
a supremum and an infimum (in particular, $\leq$ is a lattice order).
\item Given any elements $x,y\in \Spec(R)$ such that $x<y$, there
exist distinct elements $x_1,y_1$ of $\Spec(R)$ such that $x\leq
x_1<y_1\leq y$ and such that $x_1\ll y_1$.
\end{enumerate}
A poset $(X,\leq)$ is called a {\em tree} if for
every $x\in X$, the lower set $\downarrow x=\{y\in X|y \leq x\}$ is
totally ordered. One has the following result due to Lewis:

\

\begin{Theorem}{\rm \cite{Lewis}}
\label{thm:Lewis}
Let $(X,\leq)$ be a partially-ordered set. Then the following
statements are equivalent:
\begin{enumerate}[(a)]
\itemsep 0.0em
\item $(X,\leq)$ is a tree which has a unique minimal element $\theta\in X$ and
  satisfies Kaplansky's conditions I. and II.
\item $(X,\leq)$ is isomorphic with the spectral poset of a B\'ezout
  domain.
\end{enumerate}
Moreover, $R$ is a valuation domain iff $(X,\leq)$ is a totally-ordered set. 
\end{Theorem}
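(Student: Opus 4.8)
The plan is to establish the two implications separately, the real content lying in (a)$\Rightarrow$(b). For \emph{(b)$\Rightarrow$(a)}, let $R$ be a B\'ezout domain and $X=\Spec(R)$ with the inclusion order. Since $R$ is a domain, $\langle 0\rangle$ is a prime contained in every prime, so $X$ has the unique minimal element $\theta=\langle 0\rangle$. For every prime $\mathfrak{p}$ the localization $R_{\mathfrak{p}}$ is a local B\'ezout domain, hence a valuation domain, whose ideals — in particular whose primes — are totally ordered; thus the set of primes of $R$ contained in $\mathfrak{p}$ is a chain, every down-set $\downarrow\mathfrak{p}$ is totally ordered, and $(X,\subseteq)$ is a tree. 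Kaplansky's condition~I holds for the prime spectrum of any commutative ring: the union and the intersection of a chain of primes are prime and supply the required supremum and infimum. For condition~II, given $\mathfrak{p}\subsetneq\mathfrak{q}$ pick $a\in\mathfrak{q}\setminus\mathfrak{p}$, use Zorn's lemma to choose $\mathfrak{p}_1$ maximal among primes $\mathfrak{r}$ with $\mathfrak{p}\subseteq\mathfrak{r}\subseteq\mathfrak{q}$ and $a\notin\mathfrak{r}$, and then $\mathfrak{q}_1$ minimal among primes $\mathfrak{r}$ with $\mathfrak{p}_1+\langle a\rangle\subseteq\mathfrak{r}\subseteq\mathfrak{q}$; any prime strictly between $\mathfrak{p}_1$ and $\mathfrak{q}$ must contain $a$ by maximality of $\mathfrak{p}_1$, hence contain $\mathfrak{p}_1+\langle a\rangle$, so minimality of $\mathfrak{q}_1$ forces $\mathfrak{p}_1\ll\mathfrak{q}_1$ with $\mathfrak{p}\subseteq\mathfrak{p}_1\subsetneq\mathfrak{q}_1\subseteq\mathfrak{q}$. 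Finally, if $R$ is a valuation domain its primes are totally ordered, so $X$ is a chain; conversely, if $\Spec(R)$ is a chain then $R$ has a single maximal ideal, hence is local and therefore a valuation domain.

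For \emph{(a)$\Rightarrow$(b)} we build a B\'ezout domain realizing a given tree $X$ with least element $\theta$ satisfying conditions~I and~II, using the Jaffard--Ohm correspondence recalled in Subsection~\ref{sec:divgroup}: every lattice-ordered abelian group $G$ is the group of divisibility of a B\'ezout domain $R$, and under this correspondence the primes of $R$, ordered by inclusion, form a poset isomorphic to the poset of prime positive filters of $G^{+}$, the zero ideal corresponding to the empty filter; moreover, when $G$ is totally ordered the resulting $R$ has totally ordered group of divisibility, hence is a valuation domain. It thus suffices to construct a lattice-ordered abelian group whose prime positive filters, with inclusion, form the poset $X$ — and the hypothesis that $X$ be a tree is precisely what is compatible with this, since the prime subgroups of any lattice-ordered group form a root system. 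The construction proceeds in two stages. First, a \emph{chain} $C$ satisfying~I and~II is realized as the chain of convex subgroups of a \emph{totally ordered} abelian group: condition~I makes $C$ complete, condition~II makes the covering pairs (the archimedean ``jumps'') of $C$ dense, each such jump is assigned a rank-one factor such as a copy of $\Z$, and these are assembled into a Hahn-type product over $C$ with support condition tuned to $C$ — this is Hahn's embedding theorem used in reverse. Second, for a general tree $X$ one writes $X=\bigcup_{\mathfrak{m}}\downarrow\mathfrak{m}$ over its maximal elements (which exist, by condition~I and Zorn), the chains $\downarrow\mathfrak{m}$ pairwise coinciding on an initial segment; the associated totally ordered groups — equivalently the associated valuation overrings $V_{\mathfrak{m}}$ inside a common fraction field — are amalgamated by iterated pullbacks along the branch points of $X$ into a B\'ezout domain $R=\bigcap_{\mathfrak{m}}V_{\mathfrak{m}}$ with $\Spec(R)\cong X$. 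If $X$ is itself a chain only the first stage is used, $G$ is totally ordered, and $R$ is a valuation domain, which gives the final assertion of the theorem.

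The main obstacle is the construction in (a)$\Rightarrow$(b): realizing an arbitrary chain satisfying Kaplansky's conditions as the convex-subgroup chain of a totally ordered abelian group — this is precisely where condition~II enters, guaranteeing enough archimedean jumps and the right support class for the Hahn product — and then carrying out the tree amalgamation so that no spurious primes are created and the resulting order is exactly that of $X$. The remainder, including the translation between lattice-ordered groups, their prime positive filters, and the spectra of the B\'ezout domains produced by Jaffard and Ohm, is routine given the material of Subsection~\ref{sec:divgroup}.
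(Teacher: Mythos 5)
The paper does not prove this statement at all: it is quoted verbatim from Lewis \cite{Lewis}, and the only indication given of the method is the remark following the theorem, namely that one associates to the poset $(X,\leq)$ a single lattice-ordered abelian group $G$ (a group of functions on $X$ with controlled supports, as is visible in the notation of Proposition \ref{prop:spec}) and applies the Jaffard--Ohm construction \cite{Jaffard,Ohm} once, identifying $\Spec(R)\setminus\{0\}$ with the prime positive filters of $G$. So the comparison to be made is with Lewis's argument, and against that standard your proposal has a genuine gap. Your direction (b)$\Rightarrow$(a) is fine: the zero ideal is the root, the tree property follows because $R_{\mathfrak p}$ is a local B\'ezout, hence valuation, domain, conditions I and II hold for the spectrum of any commutative ring (the Zorn argument you sketch is the standard one), and the ``moreover'' clause follows since a quasilocal B\'ezout domain is a valuation domain. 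But this is the easy half.

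The substantive direction (a)$\Rightarrow$(b) is only announced, not proved, and the two steps you defer are exactly the content of the theorem. First, realizing an arbitrary chain satisfying Kaplansky's conditions I and II as the chain of convex subgroups of a totally ordered abelian group is not a routine inversion of Hahn's embedding theorem: the convex subgroups of a full Hahn product over an index chain $\Delta$ do not reproduce an arbitrary prescribed chain, and the choice of the support class (this is where conditions I and II must actually be used) is the delicate point you leave as ``tuned to $C$''. Second, and more seriously, your amalgamation step for a general tree --- taking the valuation domains $V_{\mathfrak m}$ attached to the branches $\downarrow\!\mathfrak m$ inside a common fraction field and setting $R=\bigcap_{\mathfrak m}V_{\mathfrak m}$, or iterating pullbacks along branch points --- is not justified and is doubtful as stated: a tree satisfying I and II may have infinitely (even uncountably) many maximal elements and branch points, an intersection of infinitely many valuation overrings of a field need not be B\'ezout (or even Pr\"ufer), and even when it is, its spectrum can contain primes not visible in the family $\{V_{\mathfrak m}\}$, so the claim $\Spec(R)\cong X$ would need a real argument. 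Your closing paragraph concedes that both points are ``the main obstacle''; until they are carried out there is no proof of (a)$\Rightarrow$(b). Lewis avoids this amalgamation problem entirely by constructing one lattice-ordered group of functions directly from the whole tree and computing its prime filters, which is the route the present paper implicitly relies on.
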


\

\noindent The B\'ezout domain in Theorem \ref{thm:Lewis} is obtained by
associating a lattice-ordered group $G$ to the poset $(X,\leq)$ and
applying the Jaffard-Ohm construction to $G$. The following
result was proved in \cite{elbezout}:

\

\begin{Proposition}
\label{prop:spec}
Let $(X,\leq)$ be a tree which has a unique minimal element and
satisfies Kaplansky's conditions I. and II. and let $R$ be the
B\'ezout domain determined by $(X,\leq)$ as explained above. Then
for each maximal element $x$ of $X$ which belongs to the set 
\be
X^\ast\eqdef \big\{x\in X\,|\,\exists y\in X: y\ll x\big\}~~,
\ee
the principal positive filter $\uparrow 1_x$ is prime and hence
corresponds to a principal prime ideal of $R$. Moreover, we have:
\ben
\label{up1x}
\uparrow 1_x=\big\{f\in G_+ \, | \, \supp(f)\cap \downarrow x\neq \emptyset\big\}
\een
and:
\ben
\label{Fx1x}
F_x=\big\{f\in \, \uparrow 1_x \, | \, \inf S_f(x) \in S_f(x)\big\}=\big\{f\in \, \uparrow 1_x \, | \, \exists \min S_f(x)\big\}~~,
\een
where: 
\be
S_f(x)\eqdef \supp(f)\cap \downarrow x~~.
\ee
\end{Proposition}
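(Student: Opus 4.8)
The plan is to transfer the statement to the associated lattice-ordered abelian group and argue entirely with positive filters. By the Jaffard--Ohm construction \cite{Jaffard,Ohm} and the dictionary recalled in \cite[Section 5.2]{elbezout}, the proper ideals of $R$ are in inclusion-reversing bijection with the positive filters of the lattice-ordered group $G$ attached to $(X,\leq)$, with prime ideals corresponding to prime positive filters and non-zero principal ideals to principal positive filters. Since $1_x\in G_+$, the filter $\uparrow 1_x$ is principal by construction, so the assertion ``$\uparrow 1_x$ is prime and corresponds to a principal prime ideal of $R$'' reduces to showing that $\uparrow 1_x$ is a prime positive filter of $G$. Thus the whole proposition becomes a combinatorial statement about $G$, and there are three things to prove: the explicit formula \eqref{up1x} for $\uparrow 1_x$, the primality of $\uparrow 1_x$, and the formula \eqref{Fx1x} for $F_x$.

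I would establish \eqref{up1x} first and deduce primality from it. One inclusion is immediate: if $f\geq 1_x$ in $G_+$, then $f$ cannot vanish on the chain $\downarrow x$, because $1_x$ does not and $f-1_x\geq 0$; hence $\supp(f)\cap\downarrow x\neq\emptyset$. For the reverse inclusion, take $f\in G_+$ with $S_f(x)=\supp(f)\cap\downarrow x\neq\emptyset$. Since $X$ is a tree and $x$ is maximal, $\downarrow x$ is a maximal chain (a full branch) of $X$ with largest element $x$; positivity of $f$ then forces the ``leading'' coefficient of $f$ along $\downarrow x$ to be strictly positive, which is precisely the relation $f\geq 1_x$ in the tree order on $G$ --- here the hypothesis $x\in X^\ast$, i.e. the existence of $y$ with $y\ll x$, is what makes $1_x$ available as the distinguished positive generator attached to the top of this branch. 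Granting \eqref{up1x}, primality is short: if $f,g\in G_+$ both lie outside $\uparrow 1_x$, then $\supp(f)\cap\downarrow x=\supp(g)\cap\downarrow x=\emptyset$, and since $f,g\geq 0$ there is no cancellation along $\downarrow x$, so $\supp(f+g)\cap\downarrow x=\emptyset$ and $f+g\notin\uparrow 1_x$; hence $G_+\setminus\uparrow 1_x$ is a subsemigroup and $\uparrow 1_x$ is prime. That $\uparrow 1_x$ is upward-closed, closed under finite meets, and proper is routine.

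For \eqref{Fx1x}, the second equality is a tautology: $\inf S_f(x)\in S_f(x)$ says precisely that $S_f(x)$ has a least element. The content is the first equality, and here $F_x$ denotes the positive filter attached in \cite[Section 5.2]{elbezout} (ultimately via \cite{Lewis}) to the maximal element $x$ of $X\simeq\Spec R$, i.e. to the maximal ideal indexed by $x$. Unwinding that definition, $f$ lies in $F_x$ exactly when $f$ has a genuine germ along the branch terminating at $x$, which on one hand forces $f\in\uparrow 1_x$ and on the other forces $S_f(x)$ to attain its infimum; conversely these two conditions together place $f$ in $F_x$. I expect the only real obstacle to be making the phrase ``leading coefficient along a branch'' precise, that is, pinning down the exact description of the order on $G$, of the elements $1_x$, and of the filters $F_x$ as set up in \cite[Section 5.2]{elbezout}; once those definitions are fixed, each verification above is mechanical and uses only that $X$ is a tree and that $x$ is a maximal element admitting an immediate predecessor.
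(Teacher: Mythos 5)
The paper itself contains no proof of this proposition: it is imported verbatim from the companion paper \cite{elbezout} (``The following result was proved in \cite{elbezout}''), so your argument has to stand on its own, and as written it does not. Its skeleton is sensible --- reduce, via the Jaffard--Ohm/Lewis dictionary, to a statement about positive filters of $G$; prove \eqref{up1x} first; then primality is immediate, since the complement condition ``$\supp(f)\cap\downarrow x=\emptyset$'' is clearly stable under sums of positive elements; and the second equality in \eqref{Fx1x} is indeed a tautology. But the two substantive claims are exactly the ones you do not prove. The inclusion $\{f\in G_+\,|\,\supp(f)\cap\downarrow x\neq\emptyset\}\subseteq\ \uparrow\! 1_x$ is carried entirely by the phrase about the ``leading coefficient of $f$ along $\downarrow x$ being strictly positive, which is precisely the relation $f\geq 1_x$'', and this is precisely where the maximality of $x$, the hypothesis $x\in X^\ast$, the support conditions defining $G$, and the exact branchwise description of the positive cone must enter --- definitions you explicitly say you have not pinned down. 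The statement is sharp here: for non-maximal points the analogue of \eqref{up1x} fails and the principal filter is not prime (the paper's own $(\Z^n,\leq_{\mathrm{lex}})$ example records that $\uparrow e_i$ is not prime for $i>1$, these being the filters attached to non-maximal points of the corresponding chain spectrum), so a proof must exhibit concretely how maximality rescues the case analysis for $f-1_x$; gesturing at it is not enough.

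The treatment of \eqref{Fx1x} is weaker still, because $F_x$ is a specific filter defined in \cite[Section 5.2]{elbezout} and you only guess what it is. Your reading --- $F_x$ is the filter $\pi(\mathfrak{m}_x\setminus\{0\})$ of the maximal ideal indexed by $x$, and ``having a genuine germ along the branch'' is equivalent to the two stated conditions --- is asserted with no verification in either direction, and it sits in tension with the rest of the proposition: the correspondence $I\mapsto \pi(I\setminus\{0\})$ between proper ideals and positive filters preserves and reflects inclusions, so if the condition ``$\exists\min S_f(x)$'' really cut $F_x$ out as a \emph{proper} subset of $\uparrow 1_x$ while $F_x$ corresponded to a maximal ideal, then the proper principal prime corresponding to $\uparrow 1_x$ would strictly contain a maximal ideal, which is impossible. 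Hence either the condition is automatic for every $f\in\ \uparrow\! 1_x$ (and the genuine content, which you never address, is to deduce this from the support conditions built into $G$), or $F_x$ is not the filter you assume it is (and your ``unwinding'' establishes nothing). Until you write out the construction of $G$, its positive cone, $1_x$ and $F_x$ from \cite{elbezout} and run the verification branch by branch, what you have is a plausible plan rather than a proof.
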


\

\noindent A particularly simple example of elementary divisor domains
is provided by those B\'ezout domains $R$ which are {\em $PM^\ast$
rings}, i.e. which have the property that any non-zero prime ideal of
$R$ is contained in a unique maximal ideal (see Theorem
\ref{thm:PMast} in Appendix \ref{app:edd}).

\

\begin{Definition}
\label{def:pm} A tree $(X,\leq)$ is called a {\em $PM^\ast$ tree} if
the following three conditions hold:
\begin{enumerate} \itemsep 0.0em
\item $X$ has a unique minimal vertex $\theta$ (called the {\em root}).
\item $X$ satisfies Kaplansky's conditions I. and II.
\item $X$ is branched only at the root, i.e. for every $x\in
X\setminus \{\theta\}$, there exists at most one element $y\in X$ such
that $x\ll y$.
\end{enumerate}
\end{Definition}

\

\begin{Proposition}
\label{prop:tree} Let $X$ be a $PM^\ast$ tree and $R$ be the B\'ezout
domain associated to $X$ as explained above. Then $R$ is a $PM^\ast$
ring and hence an elementary divisor domain.
\end{Proposition}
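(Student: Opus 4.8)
The plan is to verify directly that the B\'ezout domain $R$ attached to the $PM^\ast$ tree $X$ satisfies the defining property of a $PM^\ast$ ring — that every non-zero prime ideal of $R$ is contained in a unique maximal ideal — and then to invoke Theorem \ref{thm:PMast} of Appendix \ref{app:edd}, which asserts that a B\'ezout domain with this property is an elementary divisor domain. The bridge between the two worlds is the Lewis correspondence of Theorem \ref{thm:Lewis} together with the Jaffard--Ohm construction: $R$ is built from $X$ so that its spectral poset $(\Spec R,\subseteq)$ is order-isomorphic to $(X,\leq)$, with the zero ideal (the unique minimal prime of the domain $R$) corresponding to the root $\theta$. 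Since an order isomorphism carries maximal elements to maximal elements, the maximal ideals of $R$ correspond exactly to the maximal elements of $X$; and every element of $X$ lies below a maximal one, because every proper ideal of $R$ is contained in a maximal ideal (this existence can also be read off from Kaplansky's condition~I via Zorn's lemma applied to $\uparrow x$). Consequently, the $PM^\ast$-ring property for $R$ is \emph{equivalent} to the purely order-theoretic statement that every $x\in X\setminus\{\theta\}$ lies below a unique maximal element of $X$.

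The first and principal step is therefore to deduce this order-theoretic statement from the axioms of Definition \ref{def:pm}. Existence of a maximal element above a given $x\neq\theta$ was just noted, so only uniqueness is at stake. The natural attack is by contradiction: suppose $m_1\neq m_2$ are distinct maximal elements with $m_1,m_2\geq x$; being distinct maximal vertices they are incomparable. As $X$ is a tree, $\downarrow m_1$ and $\downarrow m_2$ are chains, so $C\eqdef\downarrow m_1\cap\downarrow m_2$ is a chain containing $x$ and $\theta$; let $z\eqdef\sup C$, which exists by Kaplansky's condition~I. Since $m_1,m_2$ are upper bounds of $C$, one gets $z\leq m_1$ and $z\leq m_2$, hence $z\in C$ and $z=\max C$; moreover $z\geq x>\theta$, so $z\neq\theta$, while $z<m_i$ for $i=1,2$ (equality would force $m_1,m_2$ comparable) and the intervals $(z,m_1]$ and $(z,m_2]$ are disjoint by maximality of $z$ in $C$. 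The crux is now to show, using the tree structure and Kaplansky's conditions~I and~II, that such a non-root ``branch point'' $z$ must in fact possess two distinct immediate successors (one lying below $m_1$, the other below $m_2$) — which contradicts condition~3 of Definition \ref{def:pm}. This then yields uniqueness, hence that $R$ is a $PM^\ast$ ring, and the proof concludes by quoting Theorem \ref{thm:PMast}.

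\textbf{Main obstacle.} The delicate point is precisely the order-theoretic lemma just described: passing from a branching of $X$ above a non-root vertex $z$ (witnessed only by two incomparable elements over $z$) to a genuine pair of distinct covers of $z$. This is where Kaplansky's conditions must be exploited with care: condition~I to form the relevant suprema and infima (for instance, the infimum $\zeta$ of $(z,m_i]$ — if $\zeta>z$ it is the desired immediate successor of $z$ towards $m_i$), and condition~II to rule out the problematic case $\zeta=z$ by extracting cover pairs in ever shorter intervals and descending, noting that $X$, being a set, admits no strictly decreasing family indexed by all ordinals. Everything else — the identification $\Spec R\cong X$, the translation of ``$PM^\ast$ ring'' into the statement about maximal vertices of $X$, and the final appeal to the appendix — is a routine transfer across the Lewis/Jaffard--Ohm dictionary.
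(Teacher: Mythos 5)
Your global strategy is the one the paper follows: transport the problem through the Lewis/Jaffard--Ohm dictionary so that the $PM^\ast$ property of $R$ becomes the purely order-theoretic statement that every vertex $x\in X\setminus\{\theta\}$ lies below a unique maximal vertex of $X$, and then quote Theorem \ref{thm:PMast}; that translation (including the identification of maximal ideals with maximal vertices and the existence of a maximal vertex above every $x$) is correct and is exactly what the paper does. The divergence is in how the order-theoretic statement is obtained. The paper's proof takes condition 3 of Definition \ref{def:pm} in the strong sense that the tree simply does not branch above any non-root vertex, so uniqueness is immediate; you instead try to derive it from the literal covering formulation (``each $x\neq\theta$ admits at most one $y$ with $x\ll y$'') by showing that a non-root branch point $z$ must carry two distinct covers. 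This crux lemma is false under the stated hypotheses, and your proposed treatment of the problematic case $\inf(z,m_1]=z$ (descending through cover pairs and invoking the impossibility of an ordinal-indexed strictly decreasing family) cannot close the gap: Kaplansky's condition II only supplies a cover pair \emph{somewhere} in each interval, never at its bottom, and the descent it produces may simply have order type $\omega^\ast$, which creates no set-theoretic obstruction.

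Concretely, let $X=\{\theta,z,v\}\cup\{w_n\}_{n\geq 1}$ with $\theta\ll z$, $z\ll v$, $z<w_n$ and $w_{n+1}\ll w_n$ for all $n$, and $v$ incomparable with every $w_n$. This is a tree with unique minimal vertex $\theta$; condition I holds (every chain has a greatest element, and a chain with no least element is an infinite set of $w_n$'s with infimum $z$); condition II holds because every interval $[x,y]$ with $x<y$ contains one of the cover pairs $\theta\ll z$, $z\ll v$, $w_{n+1}\ll w_n$; and every non-root vertex has at most one cover --- in particular the only cover of $z$ is $v$, since $w_{n+1}$ lies strictly between $z$ and $w_n$. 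Nevertheless $z\neq\theta$ lies below the two distinct maximal vertices $v$ and $w_1$, and by Theorem \ref{thm:Lewis} this poset is the spectral poset of some B\'ezout domain, which is therefore not a $PM^\ast$ ring. So the covering formulation of ``branched only at the root'' is strictly weaker than what the proposition requires, and no refinement of your descent argument can bridge that difference; the proof has to use, as the paper's does, the stronger reading that no two incomparable vertices lie above a non-root vertex (equivalently, that the set of maximal vertices above each $x\neq\theta$ is a singleton). With that reading, existence of a maximal vertex above $x$ (Zorn's lemma via condition I, or the fact that every nonzero prime of $R$ lies in some maximal ideal) plus the absence of branching gives uniqueness at once, and the rest of your translation argument goes through verbatim.
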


\

\begin{proof} Condition 3. in definition \ref{def:pm} implies that
every element $x\in X\setminus \{\theta\}$ is bounded from above by a
unique maximal element of $X$. Since the elements of $X\setminus
\{\theta\}$ correspond to the non-zero prime ideals of $R$, this
implies that any non-zero prime ideal of $R$ is contained in a unique
maximal ideal. Thus $R$ is a $PM^\ast$ ring. Since $R$ is also a
B\'ezout domain by Theorem \ref{thm:Lewis}, we conclude by Theorem
\ref{thm:PMast} that $R$ is an elementary divisor domain.\qed
\end{proof}

\

\begin{example}

\

\begin{enumerate}
\item Let $X$ be a tree with a unique minimal element which satisfies
Kaplansky's conditions I. and II.  Assume that the set of maximal
vertices of $X$ is countable. Then it was shown in \cite{K2} that the
associated B\'ezout domain $R$ is an elementary divisor domain.  As a
simple example, consider a countable corolla $T$ as in \cite[Example
5.8]{elbezout}. The vertices of $T$ are the elements of the set
$\N=\Z_{\geq 0}$, with the partial order given by $0<x$ for every $x
\in \N^\ast=\Z_{>0}$ and no further strict inequality. The root of $T$
is the element $0\in \N$ while every maximal vertex $x \in \N^\ast$
corresponds to a principal prime ideal of the associated B\'ezout
domain.
\item If we replace each edge of the countable corolla $T$ discussed
above with some finite tree, then the collection of maximal vertices
of the resulting tree $T'$ is still countable and the associated
B\'ezout domain $R'$ is an elementary divisor domain which need not be
a $PM^\ast$ ring.
\end{enumerate}
\end{example}

\appendix

\section{Matrices over a GCD domain}
\label{app:GCD}

Recall that an integral domain $R$ is called a {\em GCD domain} if any two elements
$f,g\in R$ admit a greatest common divisor (gcd). In this case, 
any non-empty finite collection of elements $f_1,\ldots, f_n\in R$
admits a gcd and and lcm, both of which are determined up to
association and whose classes we denote by:
\be
(f_1,\ldots, f_n)\in R/U(R)~~\mathrm{and}~~[f_1,\ldots, f_n]\in R/U(R)~~.
\ee
The gcd class $(f)$ of a single element $f\in R$ coincides with the
equivalence class of $f$ under association in divisibility.

\

\begin{Definition}
Let $A\in \Mat(m,n,R)$ be an $m$ by $n$ matrix with coefficients from a GCD domain
$R$. For any $k\in \{1,\ldots, r\}$, the {\em $k$-th determinantal invariant}
$\bdelta_k(A)\in R/U(R)$ of $A$ is defined to be the gcd class of all $k\times k$
minors of $A$. We also define $\bdelta_0(A)=(1)$.
\end{Definition}

\

\begin{Proposition} {\rm \cite{Friedland}}
Let $R$ be a GCD domain. For any $A\in \Mat(m,n,R)$, we have: 
\be
\bdelta_{k-1}(A)|\bdelta_{k}(A)~,~~\forall k\in \{1,\ldots, \rk A\}~~.
\ee
Defining the {\em invariant factors} $d_k(A)\in R/U(R)$ by:
\be
\bd_k(A)\eqdef \twopartdef{\frac{\bdelta_k(A)}
{\bdelta_{k-1}(A)}}{\bdelta_{k-1}(A)\neq 0}{\;\;\;(1)}{\bdelta_{k-1}(A)=0}~,~~\forall k\in \{1,\ldots, \rk A\}~~,
\ee
we have: 
\be
\bd_{k-1}(A) | \bd_k(A)~,~~\forall k \in \{2,\ldots, \rk A\}~~.
\ee
\end{Proposition}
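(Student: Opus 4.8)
The statement comprises two separate divisibility chains: $\bdelta_{k-1}(A)\mid\bdelta_k(A)$ for $1\le k\le\rk A$, and $\bd_{k-1}(A)\mid\bd_k(A)$ for $2\le k\le\rk A$. The plan is to prove the first by cofactor expansion and to reduce the second to the single divisibility $\bdelta_{k-1}(A)^2\mid\bdelta_{k-2}(A)\bdelta_k(A)$, which I would then extract from Sylvester's determinant identity together with elementary gcd arithmetic in a GCD domain.

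For the first chain, I would take an arbitrary $k\times k$ submatrix $B$ of $A$ and expand $\det B$ by cofactors along one of its rows, writing $\det B=\sum_j(\pm a_{ij})\det B_j$ with each $B_j$ a $(k-1)\times(k-1)$ submatrix of $A$ and $a_{ij}$ an entry of $A$. When $\bdelta_{k-1}(A)\neq 0$ it divides every $\det B_j$ by definition, hence $\det B$, and hence --- $B$ being arbitrary --- the gcd $\bdelta_k(A)$; when $\bdelta_{k-1}(A)=0$ the same expansion forces all $k$-minors to vanish, so that $\bdelta_k(A)=0$. This also shows $\bdelta_j(A)\neq 0$ exactly for $0\le j\le\rk A$, which will be used below.

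For the second chain I would fix $k\in\{2,\dots,\rk A\}$ and observe that $\bdelta_{k-2}(A),\bdelta_{k-1}(A),\bdelta_k(A)$ are then all nonzero, so that $\bd_{k-1}(A)=\bdelta_{k-1}(A)/\bdelta_{k-2}(A)$ and $\bd_k(A)=\bdelta_k(A)/\bdelta_{k-1}(A)$; hence $\bd_{k-1}(A)\mid\bd_k(A)$ is equivalent to $\bdelta_{k-1}(A)^2\mid\bdelta_{k-2}(A)\bdelta_k(A)$. To obtain this I would invoke Sylvester's determinant identity: for a $(k-2)\times(k-2)$ submatrix of $A$ on rows $R_0$ and columns $C_0$ with minor $b_0$, and for rows $i_1,i_2\notin R_0$ and columns $j_1,j_2\notin C_0$, one has $b_{i_1j_1}b_{i_2j_2}-b_{i_1j_2}b_{i_2j_1}=b_0\mu$, where $b_{ij}$ is the $(k-1)$-minor on rows $R_0\cup\{i\}$ and columns $C_0\cup\{j\}$, and $\mu$ is the $k$-minor on rows $R_0\cup\{i_1,i_2\}$ and columns $C_0\cup\{j_1,j_2\}$. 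Since $\bdelta_{k-1}(A)$ divides each $b_{ij}$, this yields $\bdelta_{k-1}(A)^2\mid b_0\mu$ for every such pair, and combining it over all index choices with the identity $\gcd_{i,j}(x_iy_j)=\gcd_i(x_i)\,\gcd_j(y_j)$ (valid in any GCD domain) would give the claim.

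The step I expect to be the main obstacle is this last combination. Sylvester's identity delivers $\bdelta_{k-1}(A)^2\mid b_0\mu$ only for \emph{nested} pairs --- those in which the row and column index sets of $b_0$ are contained in those of $\mu$ --- whereas multiplicativity of the gcd requires it for all pairs; one cannot simply restrict the gcd to nested products, since the gcd of the $k$-minors that extend a fixed $(k-2)$-minor is in general strictly larger than $\bdelta_k(A)$. Bridging this --- reducing an arbitrary product $b_0\mu$ to nested ones by means of Pl\"ucker-type exchange relations among the maximal minors of $A$ along a fixed column set and along a fixed row set, and tracking the resulting gcds --- is the technical heart, and is carried out in \cite{Friedland}; the remaining cases $k>\rk A$ fall outside the range of the asserted divisibilities.
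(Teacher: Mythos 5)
You should first note that the paper contains no proof of this Proposition at all: it is quoted directly from \cite{Friedland}, so there is no in-paper argument to compare against, and your attempt has to stand on its own. Its first half does: the cofactor expansion gives $\bdelta_{k-1}(A)\mid\bdelta_k(A)$ for $k\leq \rk A$ and shows $\bdelta_j(A)\neq 0$ exactly for $j\leq \rk A$, and the reduction of $\bd_{k-1}(A)\mid\bd_k(A)$ to $\bdelta_{k-1}(A)^2\mid\bdelta_{k-2}(A)\,\bdelta_k(A)$, together with the identity $\gcd_{i,j}(x_iy_j)=\gcd_i(x_i)\gcd_j(y_j)$ in a GCD domain, is correct.

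The second half, however, stops exactly at the point where the Proposition becomes nontrivial. The Desnanot--Jacobi/Sylvester identity gives $\bdelta_{k-1}(A)^2\mid b_0\mu$ only for \emph{nested} pairs, i.e.\ when the row and column sets of the $(k-2)$-minor $b_0$ are contained in those of the $k$-minor $\mu$; since $\bdelta_{k-2}(A)\bdelta_k(A)$ is the gcd of \emph{all} products $b_0\mu$, the divisibility is needed for every pair. What the nested case actually yields, fixing $\mu$ on rows $I$ and columns $J$ and taking the gcd over the nested $b_0$, is $\bdelta_{k-1}(A)^2\mid \mu\cdot\bdelta_{k-2}(A[I,J])$, where $\bdelta_{k-2}(A[I,J])$ is in general a proper multiple of $\bdelta_{k-2}(A)$ --- so the divisibility comes out in the wrong direction, precisely the obstacle you yourself identify. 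Deferring this bridge to \cite{Friedland} ("is carried out in \cite{Friedland}") concedes the entire content of the second divisibility chain, and the suggested remedy via Pl\"ucker-type exchange relations is not developed and is not obviously adequate: the Pl\"ucker relations connect products of maximal minors of equal size, whereas here one must relate a $(k-2)$-minor and a $k$-minor with unrelated supports while controlling gcds. As written, the assertion $\bd_{k-1}(A)\mid\bd_k(A)$ is therefore asserted rather than proven; to make the argument self-contained you would need to supply the non-nested case (for instance by the machinery of \cite{Friedland}, e.g.\ compound matrices or a Gauss-lemma/content argument), not merely cite it.
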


\

\begin{Proposition} {\rm \cite{Friedland}}
Let $R$ be a GCD domain and $A,B\in \Mat(m,n,R)$. If $A$ and $B$ are equivalent, then 
$\rk(A)=\rk(B)=r$ and $\bd_k(A)=\bd_k(B)$ for all $k\in \{1,\ldots,r\}$. 
\end{Proposition}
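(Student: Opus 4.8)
The plan is to show that the determinantal invariants $\bdelta_k$ are unchanged as association classes under left or right multiplication by an invertible matrix, and then to read off both the rank and the invariant factors from them. Recall that $A$ and $B$ being equivalent means $B=PAQ$ for some $P\in\GL(m,R)$ and $Q\in\GL(n,R)$.

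First I would treat right multiplication. Writing $C=AQ$, the Cauchy--Binet formula expresses any $k\times k$ minor of $C$ indexed by a row set $I$ and a column set $J$ with $|I|=|J|=k$ as $\det C[I,J]=\sum_{|L|=k}\det A[I,L]\,\det Q[L,J]$, an $R$-linear combination of the $k\times k$ minors of $A$. Hence the gcd $\bdelta_k(A)$ of all $k\times k$ minors of $A$ divides every $k\times k$ minor of $C$, and therefore divides their gcd, so $\bdelta_k(A)\mid\bdelta_k(AQ)$. Applying the same argument to left multiplication (for instance by transposing) gives $\bdelta_k(AQ)\mid\bdelta_k(PAQ)=\bdelta_k(B)$, whence $\bdelta_k(A)\mid\bdelta_k(B)$ for every $k$. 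Since $P$ and $Q$ are invertible we also have $A=P^{-1}BQ^{-1}$ with $P^{-1}\in\GL(m,R)$ and $Q^{-1}\in\GL(n,R)$, so the same reasoning yields $\bdelta_k(B)\mid\bdelta_k(A)$. As $R$ is an integral domain, mutual divisibility forces equality of association classes, and thus $\bdelta_k(A)=\bdelta_k(B)$ in $R/U(R)$ for all $k\in\{0,1,\dots,\min(m,n)\}$.

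Next I would deduce the remaining claims. Over the field of fractions $K$ of $R$, the rank of a matrix equals the largest integer $k$ admitting a nonzero $k\times k$ minor; equivalently $\rk(A)$ is the largest $k$ with $\bdelta_k(A)\neq 0$, and $\bdelta_j(A)\neq 0$ for every $j\leq\rk(A)$. Since $\bdelta_k(A)$ and $\bdelta_k(B)$ generate the same principal ideal for each $k$, one of them vanishes exactly when the other does, so $\rk(A)=\rk(B)=:r$. Finally, for $k\in\{1,\dots,r\}$ we have $\bdelta_{k-1}(A)\neq 0$ and $\bdelta_{k-1}(B)\neq 0$, so the defining formula for the invariant factors yields $\bd_k(A)=\bdelta_k(A)/\bdelta_{k-1}(A)=\bdelta_k(B)/\bdelta_{k-1}(B)=\bd_k(B)$ in $R/U(R)$, as asserted.

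The only step with real content is the Cauchy--Binet computation controlling how minors behave under a matrix product; everything afterwards is bookkeeping with association classes, which is harmless because $R$ is a domain. The mild subtlety I would be careful to state is the characterization of the rank in terms of vanishing of the $\bdelta_k$ --- justified by passing to $K$ --- which guarantees that the degenerate case $\bdelta_{k-1}=0$ in the definition of $\bd_k$ never occurs for indices $k\in\{1,\dots,r\}$.
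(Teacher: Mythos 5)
Your proof is correct: the Cauchy--Binet argument showing that each determinantal invariant $\bdelta_k$ is preserved (up to association) under multiplication by invertible matrices, followed by reading off the rank as the largest $k$ with $\bdelta_k\neq 0$ and taking quotients to get the invariant factors, is the standard proof of this fact. The paper itself does not prove the statement but cites Friedland, whose argument is essentially the one you give, so there is nothing to add beyond noting that your handling of the degenerate case $\bdelta_{k-1}=0$ (showing it cannot occur for $k\leq r$) is the right point to be careful about.
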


\section{Elementary divisor domains}
\label{app:edd}

\noindent In this appendix, we collect some facts about elementary divisor domains. 

\

\begin{Definition}
An integral domain $R$ is called an \emph{elementary divisor domain}
(EDD) if for any three elements $a,b,c\in R$, there exist $p,q,x,y\in
R$ such that $(a,b,c)=pxa+pyb+qyc$ is a GCD of $a$, $b$ and $c$.
\end{Definition}

\subsection{Examples of elementary divisor domains}

The following are examples of elementary divisor domains:
\begin{itemize} \itemsep 0.0em
\item Any B\'ezout domain which is an $F$-domain (i.e. for which any
non-zero element is contained in at most a finite number of maximal
ideals) is an EDD \cite[Sec. 4]{ButtsDulin}.  In particular, any PID
is an EDD.
\item The ring $\A$ of algebraic integers is an EDD \cite[Theorem
5]{NT} which has no prime elements.
\item The ring of entire functions defined on the complex plane is an
EDD \cite{Helmer2,Helmer}. The prime elements of this ring are the
entire functions which have a single simple zero in the complex plane.
\item If $R$ is an EDD with quotient field $K$ and $J$ is any integral
domain such that $R\subset J\subset K$, then $J$ is an EDD
\cite[Sec. 4]{ButtsDulin}.  When $R$ is a PID, it is known that any
domain $J$ of this type is a PID and hence Noetherian.
\item Any Kronecker function ring is an EDD \cite{EO}.
\item Any generalized valuation domains is an EDD. If $V_1,\ldots,
V_n$ are generalized valuation domains with the same quotient field
$K$, then $R\eqdef \cap_{i=1}^n V_i$ is an EDD \cite[Sec. 4]
{ButtsDulin}.
\item The domains formed by Jaffard's pull-back theorems are EDDs
\cite[Sec. 4]{ButtsDulin}.
\item Let $B$ be an EDD with quotient field $K$ and let $m$ be the
maximal ideal of the power series ring $K[[x]]$ in one variable. Then
$R:=B+m$ is an EDD \cite[Sec. 4]{ButtsDulin}.
\item Let $B$ be an EDD with quotient field $K$ and $X$ be an
indeterminate. Then $R:=B+XK[X]$ is an EDD \cite{CMZ}.
\item Let $K$ be an algebraically closed field of characteristic
different from two and let $x_1$ be an indeterminate over $K$. Let
$x_2$ be a square root of $x_1$, $x_3$ be a square root of $x_2$ and
so on.  Then the ring $R:=\cup_{n=1}^\infty K[x_n,1/x_n]$ is an EDD
\cite[Sec. 4]{ButtsDulin}.
\end{itemize}

\subsection{Kaplansky's characterization of EDDs}

\

\

\begin{Definition}
Let $R$ be a commutative ring. We say that $R$ satisfies
\emph{Kaplansky's condition} if for any three elements $a,b,c$ in $R$
such that $(a,b,c)=(1)$, there exist elements $p,q \in R$ such
that $(pa, pb+qc)=(1)$.
\end{Definition}

\

\begin{Proposition} {\rm \cite{K}}
\label{K}
An integral domain $R$ is an EDD iff it satisfies the following two conditions:
\begin{enumerate}
\itemsep 0.0em
\item $R$ is a B\'ezout domain.
\item $R$ satisfies Kaplansky's condition.
\end{enumerate}
\end{Proposition}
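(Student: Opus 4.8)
The plan is to prove both implications directly from the elementwise characterization of an elementary divisor domain adopted in this appendix — that for all $a,b,c\in R$ there exist $p,q,x,y\in R$ with $pxa+pyb+qyc$ equal to a greatest common divisor of $a,b,c$ — so that the argument reduces to two short regroupings of linear combinations. Throughout I would use freely the facts, both recalled earlier in the paper, that a B\'ezout domain is a GCD domain and that in a B\'ezout domain a gcd of finitely many elements generates the ideal they span.

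For the forward direction, assume $R$ is an elementary divisor domain in this sense. To get the B\'ezout property, apply the defining condition to the triple $(a,b,0)$: the resulting identity $pxa+pyb=g$, with $g$ a gcd of $a,b,0$ and hence of $a,b$, shows that $g\in\langle a,b\rangle$ while $g\mid a$ and $g\mid b$, so $\langle a,b\rangle=\langle g\rangle$; an induction on the number of generators then shows every finitely generated ideal is principal. For Kaplansky's condition, take $a,b,c$ with $(a,b,c)=(1)$ and apply the defining condition to obtain $pxa+pyb+qyc=u$ with $u$ a unit; regrouping the left-hand side as $x\cdot(pa)+y\cdot(pb+qc)$ shows $u\in\langle pa,\,pb+qc\rangle$, so $\langle pa,\,pb+qc\rangle=R$ and therefore $(pa,\,pb+qc)=(1)$, which is exactly Kaplansky's condition.

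For the converse, suppose $R$ is a B\'ezout domain satisfying Kaplansky's condition and fix $a,b,c\in R$. The case $a=b=c=0$ is immediate, so assume otherwise and let $d$ be a gcd of $a,b,c$, so that $d\neq 0$ and $\langle a,b,c\rangle=\langle d\rangle$; write $a=da_1$, $b=db_1$, $c=dc_1$. Cancelling $d$ in a B\'ezout identity expressing $d$ in terms of $a,b,c$ gives $(a_1,b_1,c_1)=(1)$, so Kaplansky's condition yields $p,q$ with $(pa_1,\,pb_1+qc_1)=(1)$, hence (again using that $R$ is B\'ezout) elements $x,y$ with $x(pa_1)+y(pb_1+qc_1)=1$, i.e.\ $pxa_1+pyb_1+qyc_1=1$. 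Multiplying through by $d$ gives $pxa+pyb+qyc=d$, which is the required defining identity for an elementary divisor domain.

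I do not expect a genuine obstacle in this argument: once the elementwise characterization is taken as the definition of an EDD, each direction is a two- or three-line manipulation together with the standard gcd-generates-the-ideal fact in B\'ezout domains. The hard input — which is the real substance of Kaplansky's original theorem and which I would not reprove here — is the equivalence of that elementwise characterization with diagonal reducibility of arbitrary rectangular matrices (reducing the general case to $2\times 2$ matrices and then diagonalizing those); in the present treatment that equivalence has been folded into the definition, so the stated equivalence is elementary to establish.
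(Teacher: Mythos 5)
Your argument is correct, and it is worth noting that the paper itself offers no proof of this statement at all: it is quoted verbatim from Kaplansky's 1949 paper, and the appendix simply records it as a citation. Relative to the paper's chosen definition of an elementary divisor domain (the elementwise condition that some combination $pxa+pyb+qyc$ is a gcd of $a,b,c$), your two directions are exactly right: specializing to the triple $(a,b,0)$ gives that every two-generated ideal is principal, hence B\'ezout by induction; regrouping $pxa+pyb+qyc$ as $x(pa)+y(pb+qc)$ gives Kaplansky's condition when $(a,b,c)=(1)$; and conversely, factoring out a gcd $d$, cancelling it (legitimate since $R$ is a domain and $d\neq 0$), applying Kaplansky's condition to the coprime quotients and using the B\'ezout property to write the resulting unit gcd as an explicit combination, then multiplying back by $d$, recovers the defining identity, with the degenerate case $a=b=c=0$ handled separately. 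You are also right to flag, as you do, that this is not a proof of Kaplansky's theorem in its substantive form: the genuine content of \cite{K} is the equivalence of this elementwise condition with the diagonal reducibility of arbitrary matrices (the Smith normal form statement, which the paper cites separately from \cite{Friedland}), and your argument deliberately does not touch that. So what you have is a clean, self-contained verification of the equivalence as the paper states it, filling in a proof the paper delegates entirely to the literature, while correctly delimiting what remains cited.
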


\subsection{The Smith normal form theorem over an EDD}

\

\

\begin{Theorem} {\rm \cite{Friedland}}
Let $R$ be an EDD. For any matrix $A\in \Mat(m,n,R)$, there exist
matrices $U\in \GL(m,R)$ and $V\in \GL(n,R)$ such that:
\be
UAV^{-1}=D~,
\ee
where $D_{ij}=0$ for all $i\neq j$ and the diagonal entries $d_i\eqdef
D_{ii}$ (with $i\in \{1,\ldots, r\}$, where $r\eqdef \rk A \leq \min(m,n)$) 
are non-zero elements which satisfy the condition:
\be
d_1|d_2|\ldots |d_r~~.
\ee
In this case, the matrix $D$ is called the {\em Smith normal form} of
$A$. Moreover, the association classes of $d_k$ coincide with the invariant factors of $A$:
\be
(d_k)=\bd_k(A)~~,~~\forall k\in \{1,\ldots, r\}~~.
\ee
\end{Theorem}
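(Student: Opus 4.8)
The plan is to split the statement into two independent pieces: the \emph{existence} of some diagonal form $D=\diag(d_1,\dots,d_r,0,\dots,0)$ with $d_1\mid d_2\mid\cdots\mid d_r$, and the \emph{identification} $(d_k)=\bd_k(A)$ together with $r=\rk A$. The identification I would do last, as it is purely formal: for such a $D$ every non-vanishing $k\times k$ minor equals a product $\prod_{i\in S}d_i$ over a $k$-element subset $S\subseteq\{1,\dots,r\}$, and since $d_1\mid\cdots\mid d_r$ the element $d_1\cdots d_k$ divides every such product (match the $j$-th smallest index in $S$ with $d_j$) and is itself one of them, whence $\bdelta_k(D)=(d_1\cdots d_k)$ and therefore $\bd_k(D)=(d_k)$. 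Because equivalent matrices have the same rank and the same invariant factors (the Propositions of Appendix~\ref{app:GCD}), this yields $\bd_k(A)=(d_k)$ and $r=\rk A$. So everything comes down to producing the diagonal form.

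For existence I would use Kaplansky's characterization (Proposition~\ref{K}): $R$ is a B\'ezout domain satisfying Kaplansky's condition. B\'ezout-ness alone gives the \emph{vector reduction}: if $a,b\in R$ have gcd $g$, write $a=a_1g$, $b=b_1g$ and pick $x,y$ with $xa+yb=g$ (so $xa_1+yb_1=1$); then $\left(\begin{smallmatrix} x & y\\ -b_1 & a_1\end{smallmatrix}\right)\in\GL(2,R)$ sends $(a,b)^{\mathrm t}$ to $(g,0)^{\mathrm t}$, and iterating shows any column (resp.\ row) over $R$ is carried by left (resp.\ right) multiplication by an invertible matrix to $(g,0,\dots,0)^{\mathrm t}$ (resp.\ $(g,0,\dots,0)$), $g$ being its gcd. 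The essential extra input is the $2\times2$ case: given $M$, first use vector reduction on its first column to make $M$ upper triangular, then factor out the gcd $\delta$ of all entries, $M=\delta M_0$ with $M_0=\left(\begin{smallmatrix} a_0 & b_0\\ 0 & c_0\end{smallmatrix}\right)$ and $a_0,b_0,c_0$ coprime. Kaplansky's condition provides $p,q$ with $pa_0$ and $pb_0+qc_0$ coprime; then $p,q$ are coprime, so $(p,q)$ completes to a matrix $\left(\begin{smallmatrix} s & t\\ p & q\end{smallmatrix}\right)\in\GL(2,R)$, and left-multiplying $M_0$ by it makes the second row have unit gcd. A vector reduction on that row followed by two elementary clean-up moves then diagonalizes $M_0$ to $\diag(1,h_0)$, so $M\sim\diag(\delta,\delta h_0)$ with $\delta\mid\delta h_0$.

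Finally, for general $m\times n$ matrices I would induct on $\min(m,n)$, the goal at each stage being a \emph{separation} $A\sim\delta\oplus A_1$ with $\delta=\bdelta_1(A)$ dividing every entry of $A_1$; the inductive hypothesis then handles $A_1$, and the first invariant factor of $A_1$ is automatically divisible by $\delta$ since it is an $R$-combination of entries of $A_1$. The separation is assembled from the vector reduction and the triangular $2\times2$ diagonalization above, as in \cite{K,Friedland}. I expect this assembly to be the main obstacle: no single move is hard, but $R$ is allowed to be non-Noetherian, so one cannot invoke a descending-chain or minimal-counterexample argument to terminate the naive "alternately reduce the first row and the first column" loop; the reduction must be organized around the $2\times2$ step (which already extracts the gcd of a triangular block into the corner in one stroke) so that it halts after finitely many steps for structural reasons rather than by the ascending chain condition.
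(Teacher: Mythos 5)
Your proposal gets the easy parts right and stops exactly at the hard part. The identification step is complete and correct: for $D=\diag(d_1,\dots,d_r,0,\dots,0)$ with $d_1\mid\cdots\mid d_r$ the nonzero $k\times k$ minors are the products $\prod_{i\in S}d_i$, so $\bdelta_k(D)=(d_1\cdots d_k)$ and $\bd_k(D)=(d_k)$, and the Appendix~\ref{app:GCD} propositions transport this to $A$ and give $r=\rk A$. The B\'ezout vector reduction and the $2\times 2$ step via Kaplansky's condition (Proposition \ref{K}) are also correct, including the observation that $(p,q)=(1)$ and hence completes to an element of $\GL(2,R)$. (For the record, the paper itself offers no proof of this theorem --- it is quoted from \cite{Friedland} --- so there is no internal argument to compare against; your outline is the standard Kaplansky-style route that those sources follow.)

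The genuine gap is the ``separation'' step $A\sim \delta\oplus A_1$ with $\delta\in\bdelta_1(A)$ dividing every entry of $A_1$, equivalently Kaplansky's theorem that triangular (Hermite) reduction together with $2\times 2$ diagonal reduction forces diagonal reduction in every size $m\times n$. You assert it can be ``assembled from the vector reduction and the triangular $2\times 2$ diagonalization \dots as in \cite{K,Friedland}'', but you never give the assembly, and your own closing paragraph explains precisely why it is not automatic: over a non-Noetherian B\'ezout domain the naive loop that alternately reduces the first row and first column produces a divisibility chain of corner entries with no termination guarantee, and Proposition \ref{K} as stated only translates the definition of an EDD into ``B\'ezout $+$ Kaplansky's condition'' --- it says nothing about matrices beyond the three-element data in the definition, so it cannot be cited for the general reduction. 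What is needed is the single-pass organization of the induction (reduce the first column, reduce the relevant complementary row below/next to it, then apply the $2\times 2$ step to the resulting triangular corner so that the gcd of all entries lands in position $(1,1)$ after a \emph{bounded} number of passes), which is exactly the content of Kaplansky's Theorem~5.1 and of the proof in \cite{Friedland}; deferring it to those references amounts to citing the theorem being proved. Naming the obstacle is not overcoming it, so as written the existence half of the Smith normal form --- the substance of the statement --- is not established; the rest of your argument would go through verbatim once this lemma is supplied.
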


\

\begin{Proposition} {\rm \cite{Friedland}}
\label{prop:equivEDD}
Let $R$ be an EDD and $A,B\in \Mat(m,n,R)$. Then $A$ and $B$ are
equivalent iff they have the same rank $r$ and their invariant factors coincide:
\be
\bd_k(A)= \bd_k(B)~,~~\forall k\in \{1,\ldots, r\}~~.
\ee
\end{Proposition}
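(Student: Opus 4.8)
The plan is to split the asserted equivalence into its two implications. The forward direction is immediate: if $A$ and $B$ are equivalent then, by the preceding Proposition (which already holds over any GCD domain, hence over the EDD $R$), we have $\rk(A)=\rk(B)$ and $\bd_k(A)=\bd_k(B)$ for all $k$. So all the work lies in the converse, and there the Smith normal form theorem over an EDD does essentially everything.

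For the converse, assume $\rk(A)=\rk(B)=r$ and $\bd_k(A)=\bd_k(B)$ for every $k\in\{1,\ldots,r\}$. First I would apply the Smith normal form theorem to $A$, choosing $U_A\in\GL(m,R)$ and $V_A\in\GL(n,R)$ with $U_AAV_A^{-1}=D_A$, where $D_A$ is the diagonal matrix with nonzero entries $d_1(A)\mid d_2(A)\mid\cdots\mid d_r(A)$ and $(d_k(A))=\bd_k(A)$; and similarly pick $U_B\in\GL(m,R)$, $V_B\in\GL(n,R)$ with $U_BBV_B^{-1}=D_B$ and $(d_k(B))=\bd_k(B)$. Since $\bd_k(A)=\bd_k(B)$ as classes in $R/U(R)$, each $d_k(B)$ is associated to $d_k(A)$, so there is a unit $\epsilon_k\in U(R)$ with $d_k(B)=\epsilon_k\,d_k(A)$. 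Because $\rk(A)=\rk(B)=r$, the two Smith forms occupy the same diagonal positions, so one may write $D_B=E\,D_A$ with $E=\diag(\epsilon_1,\ldots,\epsilon_r,1,\ldots,1)\in\GL(m,R)$. Then $U_BBV_B^{-1}=E\,U_AAV_A^{-1}$, hence
\be
B=\big(U_B^{-1}E\,U_A\big)\,A\,\big(V_B^{-1}V_A\big)^{-1}~,
\ee
and since $U_B^{-1}E\,U_A\in\GL(m,R)$ and $V_B^{-1}V_A\in\GL(n,R)$, the matrices $A$ and $B$ are equivalent.

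I do not expect any serious obstacle: the content is entirely carried by the Smith normal form theorem recalled above. The only points needing care are bookkeeping ones — that the common rank $r$ guarantees the two normal forms share the same shape (so the padding by $1$'s in $E$ is legitimate), that equality of the invariant-factor classes in $R/U(R)$ lifts to honest unit multipliers $\epsilon_k\in R$, and that "equivalent" is genuinely an equivalence relation, so composing $A\sim D_A$, $D_A\sim D_B$, $D_B\sim B$ is permitted.
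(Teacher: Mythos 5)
Your argument is correct. Note that the paper does not prove this proposition at all — it is quoted from \cite{Friedland} alongside the Smith normal form theorem — and your proof is exactly the standard argument that reference relies on: the forward implication is the GCD-domain invariance of rank and invariant factors under equivalence, and the converse reduces both matrices to their Smith normal forms, absorbs the unit discrepancies $d_k(B)=\epsilon_k d_k(A)$ into an invertible diagonal matrix $E$, and concludes by transitivity of equivalence, with the bookkeeping (common rank $r\leq\min(m,n)$ giving the same shape, classes in $R/U(R)$ lifting to units) handled correctly.
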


\subsection{Some special classes of EDDs}

It is an unsolved problem (going back at least to \cite{Helmer2})
whether any B\'ezout domain is an EDD. Here we mention a few special
classes of B\'ezout domains which are known to be elementary divisor
domains. One special class is provided by those B\'ezout domains which
are $PM^\ast$-rings.

\

\begin{Definition}{\rm \cite{McGovern}}
\label{def:PMast}
A {\em $PM^\ast$}-ring is a unital commutative ring $R$ which has the
property that any non-zero prime ideal of $R$ is contained in a {\em
  unique} maximal ideal of $R$.
\end{Definition}

\

\begin{Theorem}{\rm \cite{Zabavsky}}
\label{thm:PMast}
Let $R$ be a B\'ezout domain which is a $PM^\ast$ ring. Then $R$ is an
EDD.
\end{Theorem}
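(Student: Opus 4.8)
The strategy is to verify Kaplansky's condition and then invoke Proposition \ref{K}. Since $R$ is assumed to be a B\'ezout domain, Proposition \ref{K} reduces the claim that $R$ is an elementary divisor domain to the following: for all $a,b,c\in R$ with $(a,b,c)=(1)$ there exist $p,q\in R$ with $(pa,\,pb+qc)=(1)$. When $a=0$ this is immediate, since then $(b,c)=(1)$ and $pb+qc=1$ for suitable $p,q$; so assume $a\neq 0$. The key observation is that it is enough to take $p=1$: it then suffices to produce $q\in R$ such that $(a,\,b+qc)=(1)$, i.e. such that the residue class of $b+qc$ is a unit of the quotient ring $\Lambda\eqdef R/\langle a\rangle$. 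As $(a,b,c)=(1)$, the residue classes $\bar b,\bar c$ generate $\Lambda$, so the existence of such a $q$ is precisely the assertion that the unimodular pair $(\bar b,\bar c)$ can be shortened to a unit, that is, that $\Lambda$ has stable range one. Thus the theorem reduces to showing that $R/\langle a\rangle$ has stable range one for every non-zero $a\in R$.

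This is the point at which the $PM^\ast$ hypothesis enters. Every prime ideal of $\Lambda=R/\langle a\rangle$ has the form $\mathfrak{p}/\langle a\rangle$ with $\mathfrak{p}$ a prime of $R$ containing $a$; since $R$ is a domain and $a\neq 0$, each such $\mathfrak{p}$ is non-zero and hence is contained in a unique maximal ideal of $R$ by the $PM^\ast$ property. Therefore $\Lambda$ is a Gelfand ring: each of its prime ideals lies in a unique maximal ideal. Moreover $\Lambda$ is a commutative B\'ezout ring, being a homomorphic image of the B\'ezout domain $R$; consequently, for each maximal ideal $\mathfrak{m}$ of $\Lambda$ the localization $\Lambda_{\mathfrak{m}}\simeq R_{\mathfrak m}/\langle a\rangle$ is a quasilocal B\'ezout ring, hence a generalized valuation ring by Proposition \ref{prop:gvr}, and in particular has stable range one.

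It remains to pass from ``every localization at a maximal ideal has stable range one'' to ``$\Lambda$ itself has stable range one''. This gluing step is the heart of the matter, and it is here that the Gelfand structure of $\Lambda$ (rather than merely the B\'ezout property) is decisive: one uses the compact Hausdorff topology on the maximal spectrum of $\Lambda$ together with the canonical retraction $\Spec\Lambda\to\mathrm{Max}\,\Lambda$ sending a prime to the unique maximal ideal lying over it, in order to patch the local solutions of the shortening problem for $(\bar b,\bar c)$ into a global one; this is precisely the content that one extracts from \cite{Zabavsky}, namely that a commutative B\'ezout Gelfand ring has stable range one. Feeding this back through the reduction of the first paragraph gives Kaplansky's condition for $R$, and hence, by Proposition \ref{K}, that $R$ is an elementary divisor domain. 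An alternative, stable-range-free route argues directly with greatest common divisors: writing $g=(a,b)$, $a=g\alpha$, $b=g\beta$ with $(\alpha,\beta)=(1)$, one deduces $(g,c)=(1)$ and then tries to build $p$ and $q$ by prescribing their behaviour separately on the maximal ideals containing $g$ and those containing $\alpha$; the obstacle in this version is the possible failure of $(g,\alpha)=(1)$, which must be circumvented by using the $PM^\ast$ hypothesis to replace $a=g\alpha$ by a finer, ``coprime'' factorization. Either way, the $PM^\ast$ property feeds in exactly through this decomposition/gluing step, which is the main difficulty; the B\'ezout hypothesis and Kaplansky's criterion serve only to package the problem into the form described above.
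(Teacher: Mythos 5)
The paper does not prove this statement at all---it is quoted from \cite{Zabavsky}---so your argument has to stand on its own, and it does not: there is a genuine gap at precisely the point you yourself call ``the heart of the matter''. Your reductions are fine. By Proposition \ref{K} it suffices to verify Kaplansky's condition; taking $p=1$ reduces this to showing that $\Lambda=R/\langle a\rangle$ has stable range one for every nonzero $a$; and $\Lambda$ is indeed a B\'ezout ring in which every prime ideal lies in a unique maximal ideal (a Gelfand ring), with all localizations at maximal ideals quasilocal B\'ezout, hence of stable range one. But the step that is supposed to finish the proof---``a commutative B\'ezout Gelfand ring has stable range one''---is not proved; you simply attribute it to \cite{Zabavsky}, i.e.\ you defer the entire content of the theorem to the very reference the theorem is quoted from. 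Worse, that statement is false in the generality in which you invoke it. Take $X=\beta H\setminus H$ with $H=[0,\infty)$: this is a compact connected $F$-space, so $C(X)$ is a B\'ezout ring by a classical theorem of Gillman and Henriksen, and, like every $C(X)$ with $X$ compact Hausdorff, it is a Gelfand ring; yet its stable range is at least $2$. Indeed, let $f,g\in C(X)$ be the restrictions to $X$ of the Stone--\v{C}ech extensions of $\sin t$ and $\cos^2 t$. Since $\sin^2 t+\cos^4 t\geq 3/4$, the pair $(f,g)$ is unimodular; but $g$ vanishes at cluster points of $\{\pi/2+2\pi n\}$ and of $\{3\pi/2+2\pi n\}$, where $f$ takes the values $1$ and $-1$ respectively, so for every $h\in C(X)$ the function $f+hg$ takes both signs on the connected space $X$ and hence vanishes somewhere; thus $(f,g)$ cannot be shortened to a unit.

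Consequently ``B\'ezout $+$ Gelfand'' is not enough information about the quotient for your patching step, and any correct proof must exploit the $PM^\ast$ domain hypothesis more finely than that (this is indeed what the literature does: Zabavsky's characterization cited as \cite{Zabavsky2} is in terms of ``Gelfand range one'' of the domain, not stable range one of its Gelfand quotients). It may still be true that the particular rings $R/\langle a\rangle$ arising from a B\'ezout $PM^\ast$ domain have stable range one, but you would have to prove it using more than the two properties you isolate, and nothing in your sketch (the retraction $\Spec\Lambda\to\mathrm{Max}\,\Lambda$, compactness, ``patching'') does so; the alternative gcd route in your last sentences is, by your own admission, also left incomplete. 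As it stands, the proposal is a correct and useful reduction (essentially McGovern's ``almost stable range one'' criterion for B\'ezout domains) combined with an unproved---and, as stated, incorrect---key lemma.
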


\

\noindent It was shown in \cite{Zabavsky2} that a B\'ezout domain is an
EDD iff it has Gelfand range one.

\

\noindent Another special class is that of adequate B\'ezout domains \cite{LLS,Helmer2,GH}. 

\

\begin{Definition}{\rm \cite{Helmer2}}
\label{def:adequate}
A B{\'e}zout domain $R$ is called {\em adequate} if for all $a,b\in R$
with $a\neq 0$, there exist $r,s\in R$ such that $a=rs$, $(r,b)=R$ and
such that any non-unit $s'$ which divides $s$ satisfies $(s',b)\neq
R$.
\end{Definition}

\

\begin{Proposition}{\rm \cite{Henriksen3}}
Any adequate B{\'e}zout domain is a $PM^\ast$ ring.
\end{Proposition}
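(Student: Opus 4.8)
The plan is to argue by contradiction, exploiting the only real input adequacy gives: the ability to split off from a nonzero element a factor that is comaximal with a prescribed element. Suppose $R$ is an adequate B\'ezout domain and that some nonzero prime ideal $P$ is contained in two \emph{distinct} maximal ideals $M_1\neq M_2$. Fixing $0\neq a\in P$, I would try to factor $a$ as a product of elements, each of which lies outside one of the $M_i$, so that primeness of $P$ forces some such element to lie in $P\subseteq M_1\cap M_2$ -- a contradiction. If this works, it shows every nonzero prime is contained in a unique maximal ideal, i.e. that $R$ is a $PM^\ast$ ring.

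The first step is to produce two \emph{coprime} ``separators''. Since distinct maximal ideals are incomparable, pick $b_0\in M_1\setminus M_2$ and $c_0\in M_2\setminus M_1$. As $R$ is B\'ezout (hence GCD), $\langle b_0,c_0\rangle=\langle\gamma\rangle$ for a gcd $\gamma$ of $b_0,c_0$; writing $b_0=\gamma b$, $c_0=\gamma c$ one gets $\langle b,c\rangle=R$. Since $\gamma\mid c_0\notin M_1$ we have $\gamma\notin M_1$, so $b_0=\gamma b\in M_1$ with $M_1$ prime forces $b\in M_1$; and $\gamma b=b_0\notin M_2$ with $M_2$ an ideal forces $b\notin M_2$. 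Symmetrically $c\in M_2\setminus M_1$. Thus $b\in M_1\setminus M_2$, $c\in M_2\setminus M_1$, and $b,c$ are coprime.

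Next I would apply adequacy twice. Applied to $(a,b)$ it yields $a=rs$ with $\langle r,b\rangle=R$ -- hence $r\notin M_1$ -- and every non-unit divisor of $s$ non-coprime to $b$. Applied to $(s,c)$ (note $s\mid a$, so $s\neq 0$) it yields $s=r's'$ with $\langle r',c\rangle=R$ -- hence $r'\notin M_2$ -- and every non-unit divisor of $s'$ non-coprime to $c$; since $s'\mid s$, every non-unit divisor of $s'$ is also non-coprime to $b$. Now $a=rr's'\in P$ with $P$ prime, so $r\in P$, $r'\in P$, or $s'\in P$; the first two contradict $r\notin M_1\supseteq P$ and $r'\notin M_2\supseteq P$, so $s'\in P$. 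In particular $s'$ is a non-unit (it lies in the proper ideal $M_1$), hence a non-unit divisor of itself, so $d:=\gcd(s',b)$ is a non-unit by the property of $s$. But then $d$ is a non-unit divisor of $s'$, so it must be non-coprime to $c$; this is impossible since $d\mid b$ gives $\gcd(d,c)\mid\gcd(b,c)=1$. This contradiction finishes the argument.

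The step I expect to be the genuine obstacle -- and the one I would settle first -- is the reduction to coprime separators. With \emph{arbitrary} $b_0\in M_1\setminus M_2$, $c_0\in M_2\setminus M_1$, the naive attempt to show that the adequate factor $s$ of $a$ avoids $M_2$ runs into an infinite regress: one keeps peeling off a common factor of $s$ with $b_0$ that happens to avoid $M_2$, and in a non-Noetherian B\'ezout domain there is no ACC on principal ideals to stop this. Replacing $b_0,c_0$ by coprime $b,c$ removes the need to prove ``$s\notin M_2$'' altogether and collapses everything to the single clean gcd incompatibility above; the remaining steps are only the gcd calculus available in any B\'ezout domain together with primeness of $P$.
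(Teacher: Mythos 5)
Your argument is correct. Note that the paper itself offers no proof of this Proposition (it is quoted from Henriksen's paper \cite{Henriksen3}), so what you have written is a self-contained reconstruction, and it holds up: the two nested applications of adequacy (first to the pair $(a,b)$, then to $(s,c)$, which is legitimate since $s\mid a\neq 0$) produce $a=rr's'$ with $r$ coprime to $b$ and $r'$ coprime to $c$; primeness of $P$ then forces $s'\in P$, so $s'$ is a non-unit divisor of $s$, hence $d=\gcd(s',b)$ is a non-unit, and $d$ is in turn a non-unit divisor of $s'$, hence must fail to be coprime to $c$ --- impossible because $d\mid b$ and $\gcd(b,c)=(1)$. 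The only place where you work harder than necessary is the construction of the coprime separators, which you flag as the delicate step: since $M_1$ and $M_2$ are distinct maximal ideals they are comaximal, so one can simply write $1=b+c$ with $b\in M_1$, $c\in M_2$; then $b\notin M_2$ and $c\notin M_1$ (otherwise $1$ would lie in one of them), and any common divisor of $b$ and $c$ divides $1$, so $\gcd(b,c)=(1)$ with no gcd-peeling and no appeal to primeness of the $M_i$. With that shortcut your proof is exactly the classical two-step adequacy argument, essentially Henriksen's original route, and it is valid without any Noetherian or ACC hypothesis, as required here.
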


\

\begin{Corollary}{\rm \cite{Helmer2}}
Any adequate B{\'e}zout domain is an EDD.
\end{Corollary}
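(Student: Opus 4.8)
The plan is to read the corollary off the two results that immediately precede it, so that essentially no new argument is needed. First I would invoke the Proposition of \cite{Henriksen3} stated just above: an adequate B\'ezout domain is a $PM^\ast$ ring. The content of that step is that the factorization $a=rs$ furnished by adequacy forces every non-zero prime ideal $\mathfrak{p}$ to lie beneath a unique maximal ideal — picking $b$ with $b\in\mathfrak{p}$, $b\neq 0$, the factorization of $b$ itself isolates, among the maximal ideals above $\mathfrak p$, a distinguished one, and adequacy rules out the rest. Then I would apply Theorem \ref{thm:PMast} (Zabavsky): a B\'ezout domain which is a $PM^\ast$ ring is an elementary divisor domain. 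Since an adequate B\'ezout domain is in particular a B\'ezout domain, these two facts chain together to give the claim, and the only thing to check is that the hypotheses match up, which they do verbatim.

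For completeness I would also record the direct route, which is closer to Helmer's original argument \cite{Helmer2} and bypasses the $PM^\ast$ detour: verify Kaplansky's condition and then quote Proposition \ref{K}. Concretely, given $a,b,c\in R$ with $(a,b,c)=(1)$, the case $a=0$ is immediate — one takes $p,q$ with $pb+qc=1$, which exist because $R$ is B\'ezout and $(b,c)=(1)$ — so one may assume $a\neq 0$. Applying adequacy to $a$ against a suitable element built from $b$ (I expect the pair $(a,b)$ to work directly) yields a factorization $a=rs$ with $(r,b)=R$ and with every non-unit divisor of $s$ failing to be comaximal with $b$; the remaining task is to assemble $p$ and $q$ out of $r$, $s$ and a B\'ezout relation for $(b,c)$ so that $(pa,\,pb+qc)=(1)$.

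The main obstacle lies entirely in this second, direct route: one must argue, maximal ideal by maximal ideal, that the ``bad'' factor $s$ of $a$ (the part whose prime divisors all meet $b$) can be absorbed into the second generator $pb+qc$ — every maximal ideal either misses $r$, hence fails to contain $pa$ for the right $p$, or contains $b$, hence is controlled by the coprimality of $b$ and $c$ — and making this dichotomy yield a single pair $(p,q)$ rather than a local statement takes a short but slightly delicate computation. Because the structural route via $PM^\ast$ rings is already available in the paper and is essentially obstruction-free, I would present that as the actual proof of the corollary and, if desired, relegate the Kaplansky-condition verification to a remark.
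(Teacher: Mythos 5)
Your primary argument — citing the preceding Proposition (an adequate B\'ezout domain is a $PM^\ast$ ring) and then Theorem \ref{thm:PMast} (a B\'ezout $PM^\ast$ domain is an EDD) — is exactly the route the paper intends for this Corollary, and it is complete as stated. The sketched ``direct'' verification of Kaplansky's condition is admittedly unfinished, but since you correctly relegate it to a remark and rest the proof on the $PM^\ast$ chain, nothing is missing.
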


\

\begin{remark} It is known that the inclusions: \be
\{\mathrm{adequate~rings}\}\subset \{PM^\ast~\mathrm{rings}\}\subset
\{\mathrm{elementary~divisor~domains}\} \ee are strict (see
\cite{Zabavsky,BCM}).
\end{remark}

\

\begin{Theorem} The ring $\O(\Sigma)$ of entire functions on any
connected and non-compact borderless Riemann surface is an adequate
B\'ezout domain.
\end{Theorem}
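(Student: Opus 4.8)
The plan is to show that $\O(\Sigma)$ is a B\'ezout domain and then to verify the adequacy property of Definition~\ref{def:adequate}. The single substantive input is the classical fact, already recalled above, that every holomorphic line bundle on the open Riemann surface $\Sigma$ is trivial, i.e. $\Pic(\Sigma)=0$; concretely, this is the Weierstrass theorem on open Riemann surfaces, which states that for any locally finite family of points $z_\alpha\in\Sigma$ with assigned multiplicities $n_\alpha\in\N^\ast$ there exists $g\in\O(\Sigma)$ with $\mathrm{ord}_{z_\alpha}(g)=n_\alpha$ for every $\alpha$ and $g(z)\neq 0$ otherwise. Two consequences will be used throughout: $\O(\Sigma)$ is an integral domain (since $\Sigma$ is connected), and the gcd of two non-zero elements $f,h\in\O(\Sigma)$ is the essentially unique element whose order at each point $z\in\Sigma$ equals $\min(\mathrm{ord}_z f,\mathrm{ord}_z h)$, so that $\gcd(f,h)$ is a unit precisely when $f$ and $h$ have no common zero (units of $\O(\Sigma)$ being exactly the nowhere-vanishing functions).

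For the B\'ezout property I would argue as follows. Given $f_1,\ldots,f_n\in\O(\Sigma)$ not all zero, the effective divisor $D\eqdef\inf_i\mathrm{div}(f_i)$ has locally finite support, so the Weierstrass theorem provides $g\in\O(\Sigma)$ with $\mathrm{div}(g)=D$. Then each $h_i\eqdef f_i/g$ is holomorphic and the $h_i$ have no common zero, so the morphism of coherent sheaves $\mathcal{O}^{\oplus n}\to\mathcal{O}$ sending $(s_i)$ to $\sum_i h_i s_i$ is surjective; since $\Sigma$ is Stein, the vanishing of $H^1$ for coherent sheaves (Cartan's Theorem~B) yields $a_i\in\O(\Sigma)$ with $\sum_i a_i h_i=1$, whence $g\in\langle f_1,\ldots,f_n\rangle=\langle g\rangle$. (Alternatively one may simply invoke the already-cited fact that $\O(\Sigma)$ is an elementary divisor domain, hence B\'ezout, but the direct argument is short.)

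For adequacy, fix $a\in\O(\Sigma)$ with $a\neq 0$ and $b\in\O(\Sigma)$; the case $b=0$ is handled by the trivial decomposition $a=1\cdot a$, so assume $b\neq 0$. Split the locally finite zero set $Z(a)$ as $Q\sqcup P$ with $Q=\{z\in Z(a)\mid b(z)\neq 0\}$ and $P=\{z\in Z(a)\mid b(z)=0\}$, and use the Weierstrass theorem to choose $r_0,s\in\O(\Sigma)$ with $\mathrm{div}(r_0)=\sum_{z\in Q}\mathrm{ord}_z(a)\,z$ and $\mathrm{div}(s)=\sum_{z\in P}\mathrm{ord}_z(a)\,z$. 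Then $\mathrm{div}(r_0 s)=\mathrm{div}(a)$, so $a/(r_0 s)$ is holomorphic and nowhere zero, hence a unit $u$ of $\O(\Sigma)$, and $r\eqdef u r_0$ satisfies $a=rs$ and $Z(r)=Q$. Since $Q$ is disjoint from $Z(b)$, the gcd of $r$ and $b$ has order zero at every point and is therefore a unit, which is the first clause of adequacy. On the other hand, any non-unit $s'$ dividing $s$ is non-zero and hence vanishes at some point $z_0$, which then lies in $Z(s)=P\subseteq Z(b)$, so $\gcd(s',b)$ vanishes at $z_0$ and is not a unit (when $P=\emptyset$ this clause is vacuous since $s$ is then a unit). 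These are exactly the two conditions of Definition~\ref{def:adequate}, so $\O(\Sigma)$ is an adequate B\'ezout domain. I do not expect a genuine obstacle here: once the triviality of $\Pic(\Sigma)$ and Cartan's theorems are in hand, everything reduces to bookkeeping of orders of vanishing, the only delicate point being the passage from ``no common zero'' to ``generates the unit ideal'', which is supplied by the B\'ezout identity $\langle f,h\rangle=\langle\gcd(f,h)\rangle$ established in the second step.
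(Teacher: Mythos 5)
Your proposal is correct, and it takes a genuinely more self-contained route than the paper, which does not argue the statement directly but instead quotes Helmer and Henriksen for the case $\Sigma=\C$ and then invokes Alling's valuation-theoretic study of meromorphic function fields over open Riemann surfaces to pass to general $\Sigma$. What you do instead is reconstruct the classical Helmer argument directly on an arbitrary open Riemann surface: the Weierstrass theorem for open Riemann surfaces (every locally finite effective divisor is the divisor of a global holomorphic function, available e.g. in Forster's book already cited in the paper) reduces all divisibility questions to pointwise orders of vanishing, Cartan's Theorem B on the Stein surface $\Sigma$ (Stein by Behnke--Stein, also already cited) upgrades ``no common zero'' to a B\'ezout identity, and the adequacy decomposition $a=rs$ is obtained by splitting $\mathrm{div}(a)$ into the part supported off $Z(b)$ and the part supported on $Z(b)$; your verification of the two clauses of Definition~\ref{def:adequate}, including the degenerate cases $b=0$ and $P=\emptyset$, is sound, and the equivalence between ``$\gcd(r,b)$ is a unit'' and ``$\langle r,b\rangle=R$'' is legitimately supplied by the B\'ezout property you establish first. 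The trade-off: the paper's citation-based proof is shorter and leans on the existing literature (in particular Alling's valuation theory, which gives additional structural information about $\O(\Sigma)$), whereas your argument is essentially complete modulo standard facts of complex analysis on open Riemann surfaces and makes transparent that adequacy is pure bookkeeping of zero orders; the only cosmetic point is that in the B\'ezout step the infimum of divisors should be taken over the non-zero $f_i$ only, which is how you evidently intend it.
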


\

\noindent The case $\Sigma=\C$ of this theorem was established in
\cite{Helmer2,Henriksen3}. This generalizes to any Riemann surface
using \cite{Alling1,Alling2}. Since $\O(\Sigma)$ is an adequate B\'ezout
domain, it is also a $PM^\ast$ ring and hence and EDD.

\subsection{The Noetherian case}
The following characterizations are well-known.

\

\begin{Proposition}
Let $A$ be a Noetherian integral domain. 
Then the following statements are equivalent: 
\begin{enumerate}
\itemsep 0.0em
\item $A$ is an EDD.
\item $A$ is a B\'ezout domain.
\item $A$ is a PID.
\end{enumerate}
\end{Proposition}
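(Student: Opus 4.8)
The plan is to establish the cycle of implications $(3)\Rightarrow(1)\Rightarrow(2)\Rightarrow(3)$, observing that the first two hold for an arbitrary integral domain and that only the last one actually uses the Noetherian hypothesis.

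First I would dispatch $(1)\Rightarrow(2)$, which is immediate from Proposition \ref{K}: by Kaplansky's characterization, every elementary divisor domain is in particular a B\'ezout domain. Next, for $(3)\Rightarrow(1)$ I would argue that a PID is an $F$-domain. Indeed, in a PID every non-zero non-unit admits a factorization into finitely many primes up to association, so any non-zero element is contained in only finitely many maximal ideals, namely those generated by its prime divisors; a PID is therefore an $F$-domain and hence an elementary divisor domain by \cite[Sec. 4]{ButtsDulin} (this is also recorded among the examples above). Alternatively, one may simply invoke the classical Smith normal form theorem over a PID.

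The substantive implication is $(2)\Rightarrow(3)$, and here I would use both halves of the hypothesis simultaneously: since $A$ is Noetherian, every ideal of $A$ is finitely generated; since $A$ is a B\'ezout domain, every finitely generated ideal of $A$ is principal. Combining the two, every ideal of $A$ is principal, so $A$ is a principal ideal domain. This is the only point at which Noetherianity enters, and it closes the loop.

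I do not expect a real obstacle: each step is either a citation of an earlier result or a one-line ring-theoretic argument. The only subtlety worth flagging is that, without the Noetherian hypothesis, it is an open problem whether $(2)$ implies $(1)$, i.e. whether every B\'ezout domain is an elementary divisor domain; it is precisely the Noetherian assumption that allows the cycle to be completed through $(2)\Rightarrow(3)\Rightarrow(1)$.
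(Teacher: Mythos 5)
Your proof is correct. The paper records this proposition as ``well-known'' and supplies no proof of its own, so there is no argument to diverge from; your cycle $(3)\Rightarrow(1)\Rightarrow(2)\Rightarrow(3)$ uses only facts already available in the appendix -- Kaplansky's characterization (Proposition \ref{K}) for $(1)\Rightarrow(2)$, the $F$-domain criterion of \cite{ButtsDulin} (or, just as well, the classical Smith normal form over a PID) for $(3)\Rightarrow(1)$, and the one-line observation that Noetherian plus B\'ezout forces every ideal to be principal for $(2)\Rightarrow(3)$. Your closing remark correctly isolates the single place where Noetherianity enters and why, without it, the implication $(2)\Rightarrow(1)$ remains the open B\'ezout--EDD problem mentioned elsewhere in the paper.
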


\

\noindent In particular, matrices valued in a Noetherian
domain $A$ admit a Smith normal form iff $A$ is a PID. It is obvious
that every PID is Noetherian.

\

\begin{Proposition}
Let $A$ be an integral domain. Then the following statements are
equivalent:
\begin{enumerate}
\itemsep 0.0em
\item $A$ is a PID.
\item $A$ is a UFD and a B\'ezout domain.
\item $A$ is a UFD and a Dedekind domain.
\item $A$ is a UFD and has Krull dimension one (equivalently, any
  non-zero prime ideal is maximal).
\end{enumerate}
\end{Proposition}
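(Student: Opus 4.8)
The plan is to close the cycle $(1)\Rightarrow(2)\Rightarrow(1)$ and $(1)\Rightarrow(3)\Rightarrow(4)\Rightarrow(1)$, which makes all four statements equivalent; since $(1)$ trivially implies $(2)$ and $(3)$ and the only genuinely non-formal step is $(4)\Rightarrow(1)$, I would organise the write-up as: (i) the implications out of $(1)$, recalled as standard facts; (ii) $(2)\Rightarrow(1)$; (iii) $(3)\Rightarrow(4)$; (iv) $(4)\Rightarrow(1)$. For (i) I would simply recall that a PID is a UFD (the ACC on principal ideals, which holds because a strictly ascending chain of principal ideals would force the number of irreducible factors to decrease strictly, gives existence of factorizations, while primality of irreducibles gives uniqueness), that a PID is a B\'ezout domain (every ideal, a fortiori every finitely generated ideal, is principal), and that a PID is Noetherian, integrally closed and has every non-zero prime ideal maximal, hence is a Dedekind domain of Krull dimension one. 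Statement $(3)\Rightarrow(4)$ is then immediate, since a Dedekind domain has Krull dimension at most one by definition while the UFD hypothesis is common to both.

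For $(2)\Rightarrow(1)$ the argument I have in mind is: a UFD satisfies the ascending chain condition on principal ideals, and in a B\'ezout domain finitely generated ideals coincide with principal ideals, so $A$ satisfies the ACC on finitely generated ideals; but then $A$ is Noetherian, because an ideal failing to be finitely generated would produce an infinite strictly ascending chain of finitely generated ideals; and a Noetherian B\'ezout domain is a PID, every ideal being finitely generated and hence principal.

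The heart of the matter is $(4)\Rightarrow(1)$. Here I would first observe that in the UFD $A$ every non-zero prime ideal $\mathfrak p$ contains a non-zero element, whose irreducible factors are prime, so $\mathfrak p$ contains a prime element $\pi$; then $\langle\pi\rangle$ is a non-zero prime ideal contained in $\mathfrak p$, and since every non-zero prime ideal is maximal we get $\mathfrak p=\langle\pi\rangle$. Thus every prime ideal of $A$ is principal, and it remains to invoke Kaplansky's theorem that an integral domain in which every prime ideal is principal is a PID. For completeness I would sketch the latter: assuming some ideal is non-principal, Zorn's lemma applies to the family of non-principal ideals --- the union of a chain of non-principal ideals is again non-principal, since if it equalled $\langle a\rangle$ then $a$ would already lie in some member of the chain, forcing that member to equal the union --- producing an ideal $I$ maximal among non-principal ideals; one shows $I$ is prime by the classical device: if $ab\in I$ with $a,b\notin I$, then $I+\langle a\rangle$ is principal, say $\langle c\rangle$, and $J\eqdef (I:c)$ contains $I+\langle b\rangle$ and so is strictly larger than $I$, hence principal, whence $I=cJ$ is principal, a contradiction. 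So $I$ is a non-principal prime, contradicting the hypothesis.

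The delicate points, and the only place where care is needed, are precisely in this Kaplansky argument: checking that the union of a chain of non-principal ideals stays non-principal so that Zorn's lemma applies, and verifying the identity $I=cJ$ for $J=(I:c)$. Everything else in the proof is bookkeeping with facts (unique factorization, the B\'ezout property, the basic theory of Dedekind domains) that are either classical or already recalled in the paper.
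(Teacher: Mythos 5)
Your proof is correct; note, however, that the paper offers no proof of this proposition at all — it appears in the appendix under ``The Noetherian case'' and is stated as a well-known characterization — so there is no argument in the paper to compare yours against. Your implication structure ((1)$\Leftrightarrow$(2) together with (1)$\Rightarrow$(3)$\Rightarrow$(4)$\Rightarrow$(1)) does close the equivalence, and the individual steps are sound: the ACCP-plus-B\'ezout argument for (2)$\Rightarrow$(1) (ACC on principal ideals gives ACC on finitely generated ideals, hence Noetherian, hence every ideal is principal), and, for the key step (4)$\Rightarrow$(1), the observation that in a UFD in which every non-zero prime ideal is maximal each non-zero prime $\mathfrak p$ equals $\langle \pi\rangle$ for a prime element $\pi\in\mathfrak p$, followed by Kaplansky's theorem that a domain whose prime ideals are all principal is a PID. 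Your sketch of Kaplansky's argument is accurate at the two points you flag as delicate: the union of a chain of non-principal ideals is non-principal (a generator would already lie in some member of the chain), and a maximal non-principal ideal $I$ is prime, via $I+\langle a\rangle=\langle c\rangle$, $J=(I:c)\supseteq I+\langle b\rangle$ principal, and $I=cJ$ (the inclusion $I\subseteq cJ$ using $I\subseteq\langle c\rangle$). The only cosmetic caveat is the usual convention issue about fields: a field is a PID of Krull dimension zero, while ``every non-zero prime ideal is maximal'' holds vacuously; since your proof of (4)$\Rightarrow$(1) uses only the latter reading — which is the one the paper's parenthetical indicates — this does not affect the argument.
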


\

\begin{Proposition}
Let $A$ be a Noetherian integral domain. Then the following statements
are equivalent:
\begin{enumerate}
\itemsep 0.0em
\item $A$ is a UFD.
\item $A$ is normal and its divisor class group vanishes.
\item Every height one principal ideal of $A$ is principal.
\end{enumerate}
\end{Proposition}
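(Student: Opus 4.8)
The plan is to prove the cycle $(1)\Rightarrow(2)\Rightarrow(3)\Rightarrow(1)$, reading condition (3) in its intended sense that every height one \emph{prime} ideal of $A$ is principal. The implication $(3)\Rightarrow(1)$ is the substantive elementary step (Kaplansky's criterion), while $(2)\Rightarrow(3)$ is where the theory of Krull domains enters.

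For $(1)\Rightarrow(2)$: a UFD is integrally closed in its fraction field $K$ by the classical Gauss-type argument --- if $\alpha=a/b\in K$ with $a,b$ coprime satisfies a monic equation over $A$, clearing denominators shows that every prime factor of $b$ divides $a$, so $b$ is a unit and $\alpha\in A$ --- hence $A$ is normal. The divisor class group of a UFD vanishes because every height one prime $\mathfrak{p}$ is principal: factoring any nonzero $x\in\mathfrak{p}$ into primes produces a prime element $p\in\mathfrak{p}$ with $\langle 0\rangle\subsetneq\langle p\rangle\subseteq\mathfrak{p}$, and since $\langle p\rangle$ is a nonzero prime and $\mathrm{ht}\,\mathfrak{p}=1$ we get $\mathfrak{p}=\langle p\rangle$; the classes of the height one primes generate $\mathrm{Cl}(A)$.

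For $(2)\Rightarrow(3)$: a Noetherian integrally closed domain is a Krull domain, and its divisor class group $\mathrm{Cl}(A)$ is the cokernel of the homomorphism $K^{\times}\to\mathrm{Div}(A)$ from principal divisors into the free abelian group on the height one primes of $A$. In a Krull domain every height one prime $\mathfrak{p}$ is divisorial, so the hypothesis $\mathrm{Cl}(A)=0$ says exactly that the class of $\mathfrak{p}$ in $\mathrm{Div}(A)$ is the divisor of some $x\in K^{\times}$; since $v_{\mathfrak{q}}(x)\geq 0$ for every height one prime $\mathfrak{q}$ we have $x\in A$, and comparing divisors gives $\langle x\rangle=\mathfrak{p}$. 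For the Krull property and this description of $\mathrm{Cl}(A)$ I would cite a standard reference (Bourbaki, \emph{Alg\`ebre commutative}, Ch.~VII, or Fossum's monograph on the divisor class group).

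For $(3)\Rightarrow(1)$: since $A$ is Noetherian it satisfies the ascending chain condition on principal ideals, so every nonzero nonunit of $A$ is a product of irreducibles; it therefore suffices to show each irreducible $p$ is prime. Let $\mathfrak{p}$ be a prime minimal over the nonzero ideal $\langle p\rangle$; by Krull's Hauptidealsatz $\mathrm{ht}\,\mathfrak{p}=1$, so by (3) $\mathfrak{p}=\langle q\rangle$ for some nonunit $q$. Writing $p=qr$ and using irreducibility of $p$ forces $r$ to be a unit, whence $\langle p\rangle=\langle q\rangle=\mathfrak{p}$ is prime; thus $A$ is a UFD. The main obstacle is the passage through Krull domain theory in $(2)\Rightarrow(3)$ (and the class-group half of $(1)\Rightarrow(2)$); everything else is elementary commutative algebra.
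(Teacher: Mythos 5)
Your proof is correct and complete. Note that the paper itself offers no argument here: this proposition appears in Appendix~\ref{app:edd} under ``The Noetherian case'' and is simply recorded as well-known, so there is no proof to compare against; your write-up supplies the standard one. You also correctly diagnosed the misprint in condition (3) (``height one principal ideal'' should read ``height one \emph{prime} ideal''---as literally stated the condition is vacuous), and each leg of your cycle is sound: the Gauss-type integral closure argument and the principality of height one primes for $(1)\Rightarrow(2)$, the Krull-domain description of $\mathrm{Cl}(A)$ as $\Div(A)$ modulo principal divisors (with $A=\cap_{\mathrm{ht}\,\mathfrak{q}=1}A_{\mathfrak{q}}$ giving $x\in A$, and principal ideals being divisorial giving $\langle x\rangle=\mathfrak{p}$) for $(2)\Rightarrow(3)$, and ACCP plus Krull's Hauptidealsatz and Kaplansky's criterion for $(3)\Rightarrow(1)$.
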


\

\begin{acknowledgements}
This work was supported by the research grant IBS-R003-S1. 
\end{acknowledgements}

\

\end{document}